\theoremstyle{plain}
	\newtheorem{thm}{Theorem}[section]
	\newtheorem{cor}[thm]{Corollary}
	\newtheorem{lem}[thm]{Lemma}
	\newtheorem{prop}[thm]{Proposition}
\theoremstyle{definition}
	\newtheorem{dfn}[thm]{Definition}
	\newtheorem{ntn}[thm]{Notation}
	\newtheorem{dfns}[thm]{Definitions}
	\newtheorem{ntns}[thm]{Notations}
\theoremstyle{remark}
	\newtheorem{rem}[thm]{Remark}
	\newtheorem{rems}[thm]{Remarks}
\numberwithin{equation}{section}
\numberwithin{figure}{section}
\newcommand{\C}{\mathbb{C}}
\newcommand{\D}{\mathcal{D}}
\newcommand{\E}{\mathcal{E}}
\newcommand{\N}{\mathbb{N}}
\newcommand{\R}{\mathbb{R}}
\newcommand{\X}{\mathcal{X}}
\newcommand{\cov}[2]{\Cov\!\left( #1, #2 \right)}
\newcommand{\dx}{\dmesure\!}
\newcommand{\debar}{\overline{\partial}}
\newcommand{\deron}[2]{\frac{\partial #1}{\partial #2}}
\newcommand{\esp}[2][]{\mathbb{E}_{#1}\!\left[ #2 \right]}
\newcommand{\espcond}[3][]{\mathbb{E}_{#1}\!\left[ #2\hspace{-1mm} \rule{0pt}{4mm}\mvert \! #3 \right]}
\newcommand{\odet}[1]{\norm{\det\! ^\perp\!\left(#1\right)}}
\newcommand{\mvert}{\mathrel{}\middle|\mathrel{}}
\newcommand{\norm}[1]{\left\lvert #1 \right\rvert}
\newcommand{\Norm}[1]{\left\lVert #1 \right\rVert}
\newcommand{\prsc}[2]{\left\langle #1\,, #2 \right\rangle}
\newcommand{\rmes}[1]{\norm{\dmesure\! V_{#1}}}
\newcommand{\var}[1]{\Var\!\left( #1 \right)}
\newcommand{\vol}[1]{\Vol\left(#1\right)}
\renewcommand{\L}{\mathcal{L}}
\renewcommand{\P}{\mathbb{P}}
\renewcommand{\S}{\mathbb{S}}
\renewcommand{\H}{H^0(\X,\E \otimes \L^d)}
\renewcommand{\epsilon}{\varepsilon}
\renewcommand{\geq}{\geqslant}
\renewcommand{\leq}{\leqslant}
\renewcommand{\tilde}{\widetilde}
\renewcommand{\hat}{\widehat}
\DeclareMathOperator{\Cov}{Cov}
\DeclareMathOperator{\dmesure}{d}
\DeclareMathOperator{\End}{End}
\DeclareMathOperator{\ev}{ev}
\DeclareMathOperator{\Id}{Id}
\DeclareMathOperator{\Var}{Var}
\DeclareMathOperator{\Vol}{Vol}
\author{Thomas Letendre\,\thanks{Thomas Letendre, Sorbonne Université, Institut de Mathématiques de Jussieu -- Paris Rive Gauche; e-mail: \url{letendre@math.cnrs.fr}. Thomas Letendre was supported by the French National Research Agency through the ANR grants SpInQS (ANR-17-CE40-0011) and UniRaNDom (ANR-17-CE40-0008).}
\and Martin Puchol\,\thanks{Martin Puchol, Université Paris-Sud, Laboratoire de Mathématiques d'Orsay, F-91405 Orsay, France; e-mail: \url{martin.puchol@math.cnrs.fr}. Martin Puchol was supported by the LABEX MILYON (ANR-10-LABX-0070) of Université de Lyon, within the program ``Investissements d'Avenir'' (ANR-11-IDEX-0007) operated by the French National Research Agency (ANR).}}
\date{\today}
\title{\vspace{-1cm} Variance of the volume of random real algebraic submanifolds~II}
\begin{document}

\maketitle

\vspace{-5mm}

\begin{abstract}
Let $\X$ be a complex projective manifold of dimension $n$ defined over the reals and let $M$ be its real locus. We study the vanishing locus $Z_{s_d}$ in $M$ of a random real holomorphic section $s_d$ of $\E \otimes \L^d$, where $\L \to \X$ is an ample line bundle and $\E\to \X$ is a rank $r$ Hermitian bundle, $r\in \{1,\dots,n\}$. We establish the asymptotics of the variance of the linear statistics associated with $Z_{s_d}$, as $d$ goes to infinity. These asymptotics are of order $d^{r -\frac{n}{2}}$. As a special case, we get the asymptotic variance of the volume of $Z_{s_d}$.

The present paper extends the results of \cite{Let2016a}, by the first-named author, in essentially two ways. First, our main theorem covers the case of maximal codimension ($r=n$), which was left out in \cite{Let2016a}. Second, we show that the leading constant in our asymptotics is positive. This last result is proved by studying the Wiener--It{\=o} expansion of the linear statistics associated with the common zero set in $\R\P^n$ of $r$ independent Kostlan--Shub--Smale polynomials.
\end{abstract}

\paragraph*{Keywords:} Random submanifolds, Kac--Rice formula, linear statistics, Kostlan--Shub--Smale polynomials, Bergman kernel, real projective manifold, Wiener--It{\=o} expansion.

\paragraph*{Mathematics Subject Classification 2010:} 14P99, 32A25, 53C40, 60G15, 60G57.

%%%%%%%%%%%%%%%%%%%%%%%%%%%%%%%%%%%%%%%%%%%%%%%%%%%%%%%%%%%%%%%%%%%%%%%%%%%%%%%%%%%%%%%%%%%%%%%%%%%%%%%%%%%%%%
%%%%%%%%%%%%%%%%%%%%%%%%%%%%%%%%%%%%%%%%%%%%%%%%%%%%%%%%%%%%%%%%%%%%%%%%%%%%%%%%%%%%%%%%%%%%%%%%%%%%%%%%%%%%%%

\section{Introduction}
\label{sec introduction}

In recent years, the study of random submanifolds has been a very active research area~\cite{CH2016,GW2015,GW2016,MPRW2016,NS2016,SW2016}. There exist several models of random submanifolds, built on the following principle. Given $M$ a dimension $n$ ambient manifold and $r \in \{1,\dots,n\}$, we consider the common zero set of~$r$ independent random functions on $M$. Under some technical assumption, this zero set is almost surely a codimension $r$ smooth submanifold.

In this paper, we are interested in a model of random real algebraic submanifolds in a projective manifold. It was introduced in this generality by Gayet and Welschinger in~\cite{GW2011} and studied in \cite{GW2015,GW2016,Let2016,Let2016a}, among others. This model is the real counterpart of the random complex algebraic submanifolds considered by Bleher, Shiffman and Zelditch~\cite{BSZ2000a,SZ1999,SZ2010}.

\paragraph*{Framework.}

Let us describe more precisely our framework. More details are given in Sect.~\ref{sec random real algebraic submanifolds}, below. Let $\X$ be a smooth complex projective manifold of dimension $n\geq 1$. Let $\L$ be an ample holomorphic line bundle over~$\X$ and let $\E$ be a rank $r \in \{1,\dots,n\}$ holomorphic vector bundle over~$\X$. We assume that $\X$, $\E$ and $\L$ are endowed with compatible real structures and that the real locus of $\X$ is not empty. We denote by $M$ this real locus which is a smooth closed (i.e.~compact without boundary) manifold of real dimension~$n$.

Let $h_\E$ and $h_\L$ denote Hermitian metrics on $\E$ and $\L$ respectively, which are compatible with the real structures. We assume that $h_\L$ has positive curvature $\omega$, so that $\omega$ is a Kähler form on $\X$. This $\omega$ induces a Riemannian metric $g$ on $\X$, hence on $M$. Let us denote by $\rmes{M}$ the Riemannian volume measure on $M$ induced by $g$.

For any $d \in \N$, the measure $\rmes{M}$ and the metrics $h_\E$ and $h_\L$ induce a Euclidean inner product on the space $\R \H$ of global real holomorphic sections of $\E \otimes \L^d \to \X$ (see Eq.~\eqref{eq definition inner product}). Given $s \in \R \H$, we denote by $Z_s=s^{-1}(0)\cap M$ the real zero set of $s$. For $d$ large enough, for almost every $s$ with respect to the Lebesgue measure, $Z_s$ is a codimension $r$ smooth closed submanifold of $M$, possibly empty. We denote by $\rmes{s}$ the Riemannian volume measure on $Z_s$ induced by $g$. In the following, we consider $\rmes{s}$ as a Radon measure on $M$, that is a continuous linear form on $(\mathcal{C}^0(M),\Norm{\cdot}_\infty)$, where $\Norm{\cdot}_\infty$ denotes the sup norm.

\begin{rem}
\label{rem codim max}
If $n=r$ then $Z_s$ is a finite subset of $M$ for almost every $s$. In this case, $\rmes{s}$ is the sum of the unit Dirac masses on the points of $Z_s$.
\end{rem}

Let $s_d$ be a standard Gaussian vector in $\R \H$. Then $\rmes{s_d}$ is a random positive Radon measure on $M$. We set $Z_{d}=Z_{s_d}$ and $\rmes{d}=\rmes{s_d}$ in order to simplify notations. We are interested in the asymptotic distribution of the linear statistics $\prsc{\rmes{d}}{\phi} = \int_{Z_d} \phi \rmes{d}$, where $\phi : M \to \R$ is a continuous test-function. In particular, $\prsc{\rmes{d}}{\mathbf{1}}$ is the volume of $Z_d$ (its cardinal if $n=r$), where $\mathbf{1}$ is the unit constant function on $M$.

As usual, we denote by $\esp{X}$ the mathematical expectation of the random vector $X$. The asymptotic expectation of $\prsc{\rmes{d}}{\phi}$ was computed in~\cite[Sect.~5.3]{Let2016}.

\begin{thm}[\cite{Let2016}]
\label{thm expectation}
Let $\X$ be a complex projective manifold of positive dimension $n$ defined over the reals, we assume that its real locus $M$ is non-empty. Let $\E \to \X$ be a rank $r \in \{1,\dots,n\}$ Hermitian vector bundle and let $\L \to \X$ be a positive Hermitian line bundle, both equipped with compatible real structures. For every $d \in \N$, let $s_d$ be a standard Gaussian vector in $\R \H$. Then the following holds as $d \to +\infty$:
\begin{equation*}
\label{eq expectation}
\forall \phi \in \mathcal{C}^0(M), \qquad \esp{\prsc{\rmes{d}}{\phi}} = d^\frac{r}{2} \left(\int_M \phi \rmes{M}\right) \frac{\vol{\S^{n-r}}}{\vol{\S^n}} + \Norm{\phi}_\infty O\!\left(d^{\frac{r}{2}-1}\right).
\end{equation*}
Moreover the error term $O\!\left(d^{\frac{r}{2}-1}\right)$ does not depend on $\phi$.
\end{thm}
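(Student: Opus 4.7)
The natural approach is via the Kac--Rice formula. For $d$ large enough, the bundle $\E \otimes \L^d$ is globally generated and the distribution of $s_d(x)$ is non-degenerate for every $x \in M$; hence $Z_d$ is almost surely smooth of codimension $r$, and Kac--Rice yields
\begin{equation*}
\esp{\prsc{\rmes{d}}{\phi}} = \int_M \phi(x)\, \mathcal{D}_d(x)\, \rmes{M}(x),\qquad
\mathcal{D}_d(x) = \frac{\espcond{\odet{\nabla s_d(x)}}{s_d(x)=0}}{(2\pi)^{r/2}\sqrt{\det \Sigma_d(x)}},
\end{equation*}
where $\Sigma_d(x) \in \End(\E_x)$ is the covariance of the Gaussian vector $s_d(x)$. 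My first step would be to justify this formula and to identify all its ingredients with derivatives of the Bergman kernel of $\E \otimes \L^d$.

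Second, I would express $\Sigma_d(x)$ and the joint covariance of $(s_d(x), \nabla s_d(x))$ in terms of the Bergman kernel $B_d$ of $\E \otimes \L^d$, i.e.\ the Schwartz kernel of the $L^2$-orthogonal projection onto $\R\H$. The reproducing property identifies the two-point covariance of $s_d$ with $B_d$, so that $\Sigma_d(x) = B_d(x,x)$ and the covariance of $\nabla s_d(x)$ is obtained by differentiating $B_d$ in both arguments along $T_xM$ and restricting to the diagonal. The key analytic input is the uniform near-diagonal asymptotic expansion of the Bergman kernel (Tian, Zelditch, Catlin, Dai--Liu--Ma, Ma--Marinescu): in normal coordinates around $x$ rescaled by $1/\sqrt{d}$, one has $d^{-n}B_d(\exp_x(u/\sqrt{d}),\exp_x(v/\sqrt{d})) \to \mathcal{B}_\infty(u,v)$ in every $\mathcal{C}^k$ norm, uniformly in $x \in M$, with an error of order $d^{-1/2}$. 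Each $u$-derivative of this expansion contributes an extra factor $\sqrt{d}$ in the corresponding covariance matrix.

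Third, I would substitute these asymptotics into $\mathcal{D}_d(x)$. The powers of $d$ coming from the normalization $(2\pi)^{r/2}\sqrt{\det \Sigma_d(x)}$ and from the conditional expectation of the Jacobian combine to give $\mathcal{D}_d(x) = d^{r/2} c_{n,r} + O(d^{r/2-1})$ uniformly in $x$, where $c_{n,r}$ is the Kac--Rice density of the universal real Bargmann--Fock model on $\R^n$. A direct Gaussian computation in this limit model --- or, equivalently, the classical computation for the Kostlan--Shub--Smale setting, which fits exactly in this framework --- yields $c_{n,r} = \vol{\S^{n-r}}/\vol{\S^n}$. Integrating against $\phi$ then produces \eqref{eq expectation}, the uniform error being absorbed into the factor $\Norm{\phi}_\infty$.

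The main technical obstacle is obtaining the error $O(d^{r/2-1})$ \emph{uniformly} in $x \in M$ with a constant independent of $\phi$. This requires the uniform version of the Bergman expansion together with stable bounds on the conditional Jacobian, which in turn rely on the non-degeneracy of $\Sigma_d(x)$ being bounded below uniformly in $x$ and $d$. One must also check that the off-diagonal part of $B_d$, outside a $\log(d)/\sqrt{d}$-neighborhood of the diagonal, contributes only to lower order: this follows from the sub-Gaussian off-diagonal decay of the Bergman kernel established by Ma--Marinescu.
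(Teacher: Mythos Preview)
Your approach is correct and matches the paper's route: Kac--Rice (Thm.~\ref{thm Kac-Rice 1}) combined with the on-diagonal Bergman asymptotics, exactly as packaged in Lemmas~\ref{lem asymptotic det Ed} and~\ref{lem estimates cond exp x}. The paper itself only cites this result from \cite{Let2016} rather than reproving it, but the ingredients you list are precisely those present here.

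Two small corrections. First, your last paragraph about off-diagonal decay is unnecessary for this statement: the expectation is a one-point quantity, and the Kac--Rice density $\mathcal{D}_d(x)$ depends only on $E_d(x,x)$ and its derivatives at the diagonal. Off-diagonal estimates enter only in the variance computation. Second, you quote the Bergman error as $O(d^{-1/2})$, but on the diagonal (which is all that is used here) the Dai--Liu--Ma expansion gives a full $O(d^{-1})$ remainder; this is what produces the sharp $O(d^{r/2-1})$ in~\eqref{eq expectation} rather than $O(d^{r/2-1/2})$.
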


The asymptotic variance of $\prsc{\rmes{d}}{\phi}$, as $d$ goes to infinity, was proved to be a $O\!\left(d^{r-\frac{n}{2}}\right)$ when the codimension of $Z_d$ is $r <n$ (see~\cite[Thm.~1.6]{Let2016a}). Our first main theorem (Thm.~\ref{thm asymptotics variance} below) extends this result to  the maximal codimension case.

\paragraph*{Statement of the main results.}

Before we state our main result, let us introduce some more notations. We denote by $\cov{X}{Y} = \esp{\left(X-\esp{X}\right)\left(Y-\esp{Y}\right)}$ the covariance of the real random variables $X$ and $Y$. Let $\var{X}=\cov{X}{X}$ denote the variance of $X$. Finally, we call \emph{variance} of $\rmes{d}$ and we denote by $\var{\rmes{d}}$ the symmetric bilinear form on $\mathcal{C}^0(M)$ defined by:
\begin{equation*}
\label{def cov dVd}
\forall \phi_1,\phi_2 \in \mathcal{C}^0(M), \qquad \var{\rmes{d}}\left(\phi_1,\phi_2\right) = \cov{\prsc{\rmes{d}}{\phi_1}}{\prsc{\rmes{d}}{\phi_2}}.
\end{equation*}

\begin{dfn}
\label{def continuity modulus}
Let $\phi \in \mathcal{C}^0(M)$, we denote by $\varpi_\phi$ its \emph{continuity modulus}, which is the function from $(0,+\infty)$ to $[0,+\infty)$ defined by:
\begin{equation*}
\varpi_\phi: \epsilon \longmapsto \sup \left\{ \norm{\phi(x)-\phi(y)} \mvert (x,y) \in M^2, \rho_g(x,y)\leq \epsilon \right\},
\end{equation*}
where $\rho_g(\cdot,\cdot)$ stands for the geodesic distance on $(M,g)$.
\end{dfn}

We denote by $\mathcal{M}_{rn}(\R)$ the space of matrices of size $r \times n$ with real coefficients.

\begin{dfn}
\label{def Jacobian}
Let $L : V \to V'$ be a linear map between two Euclidean spaces. We denote the \emph{Jacobian} of $L$ by $\odet{L}= \sqrt{\det\left(LL^*\right)}$, where $L^*:V' \to V$ is the adjoint operator of $L$. Similarly, let $A \in \mathcal{M}_{rn}(\R)$, we define its \emph{Jacobian} to be $\odet{A} = \sqrt{\det\left(A A^\text{t}\right)}$.
\end{dfn}

\begin{dfn}
\label{def XYt}
For every $t>0$, we define $\left(X(t),Y(t)\right)$ to be a centered Gaussian vector in $\mathcal{M}_{rn}(\R)\times \mathcal{M}_{rn}(\R)$ such that the following hold:
\begin{itemize}
\item the couples $\left(X_{ij}(t),Y_{ij}(t)\right)$ with $i,j \in \{1,\dots,n\}$ are independent from one another,
\item the variance matrix of $\left(X_{ij}(t),Y_{ij}(t)\right)$ is:
\begin{align*}
&\begin{pmatrix}
1-\frac{te^{-t}}{1-e^{-t}}& e^{-\frac{t}{2}}\left(1-\frac{t}{1-e^{-t}}\right)\\
e^{-\frac{t}{2}}\left(1-\frac{t}{1-e^{-t}}\right) & 1-\frac{te^{-t}}{1-e^{-t}}
\end{pmatrix} & &\text{if } j=1, \text{ and} & &\begin{pmatrix}
1& e^{-\frac{t}{2}}\\ e^{-\frac{t}{2}} & 1
\end{pmatrix} & &\text{otherwise.}
\end{align*}
\end{itemize}
\end{dfn}

We can now state our main result.
\begin{thm}
\label{thm asymptotics variance}
Let $\X$ be a complex projective manifold of dimension $n\geq 1$ defined over the reals, we assume that its real locus $M$ is non-empty. Let $\E \to \X$ be a rank $r \in \{1,\dots,n\}$ Hermitian vector bundle and let $\L \to \X$ be a positive Hermitian line bundle, both equipped with compatible real structures. For every $d \in \N$, let $s_d$ be a standard Gaussian vector in $\R \H$.

Let $\beta \in (0,\frac{1}{2})$, then there exists $C_\beta >0$ such that, for all $\alpha \in \left(0,1\right)$, for all $\phi_1$ and $\phi_2 \in \mathcal{C}^0(M)$, the following holds as $d \to +\infty$:
\begin{multline}
\label{eq asymptotics variance}
\var{\rmes{d}}\left(\phi_1,\phi_2\right) = d^{r-\frac{n}{2}} \left(\int_M \phi_1 \phi_2 \rmes{M}\right) \left(\frac{\vol{\S^{n-1}}}{(2\pi)^r} \mathcal{I}_{n,r} + \delta_{rn}\frac{2}{\vol{\S^n}}\right)\\
+ \Norm{\phi_1}_{\infty}\Norm{\phi_2}_{\infty}  O\!\left(d^{r-\frac{n}{2}-\alpha}\right) + \Norm{\phi_1}_{\infty}\varpi_{\phi_2}\left(C_\beta d^{-\beta}\right) O\!\left(d^{r-\frac{n}{2}}\right),
\end{multline}
where $\delta_{rn}$ is the Kronecker symbol, equal to $1$ if $r=n$ and $0$ otherwise, and
\begin{equation}
\label{eq def Inr}
\mathcal{I}_{n,r} = \frac{1}{2} \int_0^{+\infty} \left(\frac{\esp{\odet{X(t)}\odet{Y(t)}}}{\left(1-e^{-t}\right)^\frac{r}{2}} -(2\pi)^r\left(\frac{\vol{\S^{n-r}}}{\vol{\S^n}}\right)^2\right)t^{\frac{n-2}{2}}\dx t< + \infty.
\end{equation}
Moreover the error terms $O\!\left(d^{r-\frac{n}{2}-\alpha}\right)$ and $O\!\left(d^{r-\frac{n}{2}}\right)$ in~\eqref{eq asymptotics variance} do not depend on $\left(\phi_1,\phi_2\right)$.
\end{thm}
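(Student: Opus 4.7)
The plan is to apply the Kac--Rice formula to the second moment $\esp{\prsc{\rmes{d}}{\phi_1}\prsc{\rmes{d}}{\phi_2}}$ and subtract $\esp{\prsc{\rmes{d}}{\phi_1}}\esp{\prsc{\rmes{d}}{\phi_2}}$, whose asymptotics are given by Theorem~\ref{thm expectation}. For $r<n$, the measure $\rmes{d}$ is smooth along $Z_d$ and Kac--Rice gives the second moment as a double integral on $M\times M$ of a density built from the joint Gaussian law of $(s_d(x),s_d(y),\nabla s_d(x),\nabla s_d(y))$. For $r=n$, by Remark~\ref{rem codim max} the measure $\rmes{d}$ is a sum of unit Dirac masses, so one first splits
\begin{equation*}
\esp{\prsc{\rmes{d}}{\phi_1}\prsc{\rmes{d}}{\phi_2}} = \esp{\prsc{\rmes{d}}{\phi_1\phi_2}} + \esp{\sum_{x\neq y\in Z_d}\phi_1(x)\phi_2(y)},
\end{equation*}
handles the diagonal piece by Theorem~\ref{thm expectation} applied to $\phi_1\phi_2$ --- since $\vol{\S^0}=2$ this produces exactly $d^{\frac{n}{2}}\frac{2}{\vol{\S^n}}\int_M\phi_1\phi_2\rmes{M}$, the $\delta_{rn}$ contribution in~\eqref{eq asymptotics variance} --- and treats the off-diagonal sum by Kac--Rice as in the case $r<n$.

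The analysis of the double integral is split along the diagonal at scale $\rho_g(x,y)=d^{-\beta}$. On the far region, the off-diagonal exponential decay of the Bergman kernel of $\E\otimes\L^d$ (Ma--Marinescu / Dai--Liu--Ma) bounds the contribution by $e^{-cd^{1-2\beta}}$, absorbed in $O\!\left(d^{r-\frac{n}{2}-\alpha}\right)$. On the near region, use normal coordinates at $x$ and write $y=\exp_x(u/\sqrt{d})$ with $\Norm{u}\leq d^{\frac{1}{2}-\beta}$. The near-diagonal Bergman kernel expansion identifies the rescaled joint covariance of $(s_d,\nabla s_d)$ at $(x,y)$ with the universal Bargmann--Fock model on $\R^n$; the rescaled conditional law of $(\nabla s_d(x),\nabla s_d(y))$ given $s_d(x)=s_d(y)=0$ then converges to $(X(t),Y(t))$ of Definition~\ref{def XYt}, with $t=\Norm{u}^2$. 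Combining the Jacobians $\odet{\cdot}$ from the Kac--Rice density, the $(1-e^{-t})^{r/2}$ factor from the conditional renormalization, and the $\vol{\S^{n-1}}t^{(n-2)/2}$ arising from polar coordinates in $u$, produces the integrand of $\mathcal{I}_{n,r}$. The continuity modulus term arises from replacing $\phi_2(y)$ by $\phi_2(x)$ in the near-diagonal integral at the cost of $\Norm{\phi_1}_\infty\varpi_{\phi_2}(C_\beta d^{-\beta})$.

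For finiteness of $\mathcal{I}_{n,r}$, near $t=0$ a direct expansion of the covariance blocks in Definition~\ref{def XYt} for $j=1$ gives diagonal entries $\frac{t}{2}+O(t^2)$ and off-diagonal entries $-\frac{t}{2}+O(t^2)$, so the first columns of $X(t)$ and $Y(t)$ have magnitude $O(\sqrt{t})$ and $\esp{\odet{X(t)}\odet{Y(t)}}=O(t)$; after division by $(1-e^{-t})^{r/2}\sim t^{r/2}$ and multiplication by $t^{(n-2)/2}$, the integrand is $O(t^{(n-r-1)/2})$, integrable since $n\geq r$. Near $t=+\infty$, $X(t)$ and $Y(t)$ become independent at exponential rate $e^{-t/2}$, and the subtracted constant $(2\pi)^r(\vol{\S^{n-r}}/\vol{\S^n})^2$ --- the squared asymptotic expectation density from Theorem~\ref{thm expectation} that cancels the product of expectations --- produces exponential decay.

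The main obstacle will be the uniform passage to the limit inside the Kac--Rice integrand. The functional $\esp{\odet{\nabla s_d(x)}\odet{\nabla s_d(y)}\mvert s_d(x)=s_d(y)=0}$ is non-smooth in the Gaussian parameters, and its conditional covariance degenerates both as $\rho_g(x,y)\to 0$ (rank drop along the diagonal) and, when $r=n$, through the switch from a Riemannian Jacobian to the full determinant $\norm{\det}$. A dominated-convergence envelope, uniform in $d$ and integrable against $t^{\frac{n-2}{2}}\dx t$ in both regimes, has to be produced; obtaining this envelope together with a quantitative remainder in the Ma--Marinescu expansion strong enough to yield the $O\!\left(d^{r-\frac{n}{2}-\alpha}\right)$ error is the core technical step.
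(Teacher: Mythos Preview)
Your overall architecture matches the paper's exactly: Kac--Rice for the second moment, separation of the diagonal in the case $r=n$ via Theorem~\ref{thm expectation} applied to $\phi_1\phi_2$, a near/far split of the off-diagonal integral, rescaling $y=\exp_x(u/\sqrt d)$, and identification of the limit with the Bargmann--Fock model to produce $\mathcal I_{n,r}$. Your argument for the finiteness of $\mathcal I_{n,r}$ is also the paper's.

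Two points deserve correction.

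\textbf{Splitting scale.} The paper does \emph{not} split at $\rho_g(x,y)=d^{-\beta}$ but at $\rho_g(x,y)=b_n\frac{\ln d}{\sqrt d}$, with $b_n$ chosen from the constant in the Ma--Marinescu off-diagonal bound. This matters: the near-diagonal Bergman estimates (Prop.~\ref{prop near diag estimates}) only give a uniform $O(d^{-\alpha})$ error for $\Norm{u}\leq b_n\ln d$, because the Dai--Liu--Ma remainder carries a factor $(1+\Norm{u})^{2n+12}$. With your split, the rescaled near region extends to $\Norm{u}\leq d^{\frac12-\beta}$, where that polynomial growth kills the $d^{-1}$ gain and you lose the $O(d^{r-\frac n2-\alpha})$ error for general $\alpha\in(0,1)$. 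The $C_\beta d^{-\beta}$ in the statement is not the splitting radius; it is just an upper bound for $b_n\frac{\ln d}{\sqrt d}$ used to rewrite the continuity-modulus term.

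\textbf{The core technical step.} You correctly name the obstacle---the conditional covariance $\Lambda_d(z)$ degenerates as $z\to 0$, and for $r=n$ the Jacobian becomes a full determinant---but ``a dominated-convergence envelope'' is not how the paper resolves it, and an additive estimate $\Lambda_d(z)=\Lambda(z)+O(d^{-\alpha})$ is insufficient because the smallest eigenvalue of $\Lambda(z)$ is of order $\Norm{z}^4$. The paper's key new ingredient (Lemmas~\ref{lem asymptotic Theta d}, \ref{lem asymptotic Omega d}, \ref{lem asymptotic Lambda d}) is a \emph{multiplicative} comparison
\[
\Theta(z)^{-\frac12}\Theta_d(z)\Theta(z)^{-\frac12}=\Id+O(d^{-\alpha}),\qquad
\Omega(z)^{-\frac12}\Omega_d(z)\Omega(z)^{-\frac12}=\Id+O(d^{-\alpha}),\qquad
\Lambda(z)^{-\frac12}\Lambda_d(z)\Lambda(z)^{-\frac12}=\Id+O(d^{-\alpha}),
\]
uniform in $x\in M$ and $z\in B_{T_xM}(0,b_n\ln d)\setminus\{0\}$. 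Proving these requires explicitly diagonalizing $\Omega(z)$ (eigenvalues $v_1,\dots,v_4$ with $v_2(t)\sim t^3/48$) and matching the diagonal degeneracy of $\Omega(z)$ with Taylor expansions of $e_d-e_\infty$ up to order~$6$; this is why the $\mathcal C^6$ estimates of Dai--Liu--Ma are invoked. From the multiplicative estimate on $\Lambda_d$ one gets $\esp{\odet{X_d(z)}\odet{Y_d(z)}}=\esp{\odet{X_\infty(z)}\odet{Y_\infty(z)}}(1+O(d^{-\alpha}))$ uniformly, with an extra refinement near $z=0$ when $r=n$ to absorb the vanishing of the leading term. Without this multiplicative control the uniform convergence of $D_d(x,z)$ to $D_{n,r}(\Norm z^2)$ fails near the diagonal, which is precisely why the earlier paper \cite{Let2016a} could not reach $r=n$.
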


\begin{rem}
Applying Thm.~\ref{thm asymptotics variance} with $\phi_1=\mathbf{1}=\phi_2$ gives the asymptotic variance of the Riemannian volume of $Z_d$.
\end{rem}

\begin{thm}
\label{thm positivity}
For any $n \in \N^*$ and $r \in \{1,\dots,n\}$, the universal constant:
\begin{equation*}
\frac{\vol{\S^{n-1}}}{(2\pi)^r} \mathcal{I}_{n,r} + \delta_{rn}\frac{2}{\vol{\S^n}}
\end{equation*}
appearing in Thm.~\ref{thm asymptotics variance} is positive.
\end{thm}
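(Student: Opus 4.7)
The plan is to specialize to the Kostlan--Shub--Smale setting and to exploit the orthogonality of the Wiener--Itô chaos decomposition. Since the constant
\[
C_{n,r} := \frac{\vol{\S^{n-1}}}{(2\pi)^r} \mathcal{I}_{n,r} + \delta_{rn}\frac{2}{\vol{\S^n}}
\]
is universal in $(n,r)$, Theorem~\ref{thm asymptotics variance} applied with $\phi_1=\phi_2=\mathbf{1}$ in the model $(\X,\L,\E,M)=(\C\P^n,\mathcal{O}(1),\mathcal{O}(1)^{\oplus r},\R\P^n)$ with its Fubini--Study structure yields
\[
\var{\prsc{\rmes{d}}{\mathbf{1}}} = C_{n,r}\,\vol{\R\P^n}\,d^{r-\frac{n}{2}} + o\!\left(d^{r-\frac{n}{2}}\right).
\]
In this model, a standard Gaussian $s_d \in \R \H$ identifies with an $r$-tuple of independent Kostlan--Shub--Smale polynomials of degree $d$, and $Z_d$ is their common real zero set in $\R\P^n$. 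The left-hand side being non-negative gives $C_{n,r}\geq 0$ for free; the real task is to rule out equality.

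Next, I would view $\prsc{\rmes{d}}{\mathbf{1}}$ as a square-integrable functional of the Gaussian vector $s_d$ and expand it in Wiener--Itô chaoses,
\[
\prsc{\rmes{d}}{\mathbf{1}} = \sum_{q \geq 0} I_q^{(d)}, \qquad I_q^{(d)} \in \text{$q$-th chaos of } s_d.
\]
Because $\rmes{-s_d}=\rmes{s_d}$, this random variable is an even functional of $s_d$, so $I_q^{(d)}=0$ for every odd $q$. Orthogonality of distinct chaoses then gives
\[
\var{\prsc{\rmes{d}}{\mathbf{1}}} = \sum_{q \geq 1}\var{I_q^{(d)}} \geq \var{I_2^{(d)}}.
\]
Consequently it suffices to exhibit a constant $c>0$ such that $\var{I_2^{(d)}} \geq c\,d^{r-\frac{n}{2}}(1+o(1))$, since combined with the asymptotic above this forces $C_{n,r}\geq c/\vol{\R\P^n}>0$.

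To obtain such a bound on the second chaos, I would approximate the singular measure $\rmes{d}$ by a Kac--Rice-type surrogate (replacing the Dirac mass on $\{s_d=0\}$ by a smooth bump times $\odet{\nabla s_d}$), compute the explicit second Hermite projection of each approximation, and pass to the limit. The resulting $I_2^{(d)}$ takes the form $\int_{\R\P^n} Q_d(x)\,\rmes{M}(x)$, where $Q_d(x)$ is a specific quadratic expression in the normalized jet $\left(s_d(x),\nabla s_d(x)\right)$. The variance of $I_2^{(d)}$ is then a double integral over $\R\P^n \times \R\P^n$ whose kernel is a positive-semidefinite bilinear form in the normalized Bergman kernel and its covariant derivatives; using the near-diagonal rescaling $y=\exp_x(u/\sqrt{d})$ and the off-diagonal decay estimates already developed in~\cite{Let2016a}, this integral is asymptotic to $d^{r-\frac{n}{2}}$ times the integral over $\R^n$ of a non-negative, not identically zero universal function of the rescaled limit kernel. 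The main obstacle I foresee is rigorously extracting the second chaos component $I_2^{(d)}$ from the singular measure $\rmes{d}$---justifying commutation of the chaos projection with the regularizing limit---and then verifying that the universal limiting integrand is genuinely not identically zero, via an explicit computation of Hermite coefficients against the Gaussian jet described in Definition~\ref{def XYt}. Should the second chaos unexpectedly vanish in the limit, one would have to repeat the analysis at $q=4$, but the structure of $\rmes{d}$ strongly suggests that $q=2$ already suffices.
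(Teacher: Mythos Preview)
Your proposal is correct and follows essentially the same route as the paper: specialize to Kostlan--Shub--Smale polynomials on $\R\P^n$, lower-bound $\var{\prsc{\rmes{d}}{\mathbf{1}}}$ by the variance of its second chaotic component, and show the latter is asymptotic to a positive multiple of $d^{r-\frac{n}{2}}$. The one implementation difference worth noting is that the paper avoids the regularization step you anticipate as the main obstacle: rather than approximating $\rmes{d}$ by smooth surrogates, it computes $\esp{A\,\prsc{\rmes{d}}{\phi}}$ for an arbitrary $A\in L^2(\dx\nu_d)$ directly via a Kac--Rice formula (valid because the probability space is finite-dimensional), which yields the chaos projections without any limiting argument; the $O_{n+1}(\R)$-invariance of the KSS model is then used to reduce the resulting double integral to a single radial integral whose limit is computed explicitly and seen to be positive.
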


\begin{rem}
Thm.~\ref{thm positivity} was proved for $r=n=1$ in~\cite{Dal2015}, and for $r=n \geq 2$ in~\cite{AADL2017}. Note that Thm.~\ref{thm positivity} states that $\mathcal{I}_{n,r} >0$ if $r<n$, but this is not necessarily the case when $r=n$. Indeed, $\mathcal{I}_{1,1} <0$ by \cite[Prop. 3.1 and Rem. 1]{Dal2015}.
\end{rem}

Let us state some corollaries of Thm.~\ref{thm asymptotics variance}. Cor.~\ref{cor concentration}, \ref{cor connected components} and~\ref{cor as convergence} below are extensions to the case $r \leq n$ of Cor.~1.9, 1.10 and~1.11 of~\cite{Let2016a}, respectively. The proofs that Thm.~\ref{thm asymptotics variance} implies Cor.~\ref{cor concentration}, \ref{cor connected components} and~\ref{cor as convergence} were given in \cite[Sect.~5]{Let2016a} in the case $r<n$. They are still valid for $r\leq n$. We do not reproduce these proofs in the present paper.

\begin{cor}[Concentration in probability]
\label{cor concentration}
In the same setting as Thm.~\ref{thm asymptotics variance}, let $\alpha > - \frac{n}{4}$ and let $\phi \in \mathcal{C}^0(M)$. Then, for every $\epsilon >0$, we have:
\begin{equation*}
\P\left( d^{-\frac{r}{2}}\norm{\prsc{\rmes{d}}{\phi}-\rule{0pt}{4mm}\esp{\prsc{\rmes{d}}{\phi}}} > d^{\alpha} \epsilon \right)  = \frac{1}{\epsilon^2} O\!\left( d^{-(\frac{n}{2}+2\alpha)} \right),
\end{equation*}
where the error term is independent of $\epsilon$, but depends on $\phi$.
\end{cor}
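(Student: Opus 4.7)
The plan is to deduce this concentration bound directly from Chebyshev's inequality, using Theorem~\ref{thm asymptotics variance} to control the variance of the linear statistic. The setup and the conclusion are both quantitative but the argument itself is essentially two lines.

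First I would record that for any real random variable $X$ with finite variance and any $t>0$, Chebyshev's inequality gives $\P(|X - \esp{X}| > t) \leq t^{-2} \var{X}$. I apply this to $X = \prsc{\rmes{d}}{\phi}$ with the threshold $t = d^{\frac{r}{2}+\alpha}\epsilon$, which yields
\begin{equation*}
\P\!\left( d^{-\frac{r}{2}}\norm{\prsc{\rmes{d}}{\phi}-\esp{\prsc{\rmes{d}}{\phi}}} > d^{\alpha}\epsilon \right) \leq \frac{1}{d^{r+2\alpha}\epsilon^2}\var{\rmes{d}}(\phi,\phi).
\end{equation*}

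Next I invoke Theorem~\ref{thm asymptotics variance} with $\phi_1=\phi_2=\phi$. Choosing for instance any $\alpha' \in (0,1)$ and any $\beta \in (0,\frac{1}{2})$, the theorem provides a constant depending on $\phi$, $\alpha'$, $\beta$ such that $\var{\rmes{d}}(\phi,\phi) = O(d^{r-\frac{n}{2}})$ as $d\to+\infty$ (the dominant term, the $d^{r-\frac{n}{2}-\alpha'}$ error, and the $\varpi_\phi(C_\beta d^{-\beta})\, O(d^{r-\frac{n}{2}})$ error are all $O(d^{r-\frac{n}{2}})$ since $\phi$ is fixed and uniformly continuous on the compact manifold $M$, so $\varpi_\phi(C_\beta d^{-\beta}) \to 0$). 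Substituting into the Chebyshev bound gives
\begin{equation*}
\P\!\left( d^{-\frac{r}{2}}\norm{\prsc{\rmes{d}}{\phi}-\esp{\prsc{\rmes{d}}{\phi}}} > d^{\alpha}\epsilon \right) \leq \frac{1}{\epsilon^2}\,\frac{O(d^{r-\frac{n}{2}})}{d^{r+2\alpha}} = \frac{1}{\epsilon^2}\,O\!\left(d^{-(\frac{n}{2}+2\alpha)}\right),
\end{equation*}
which is the claimed bound. The constraint $\alpha>-\frac{n}{4}$ is precisely what ensures that the right-hand side tends to zero, so that the statement is a genuine concentration result; it plays no other role in the proof.

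There is essentially no obstacle here: the only subtlety is that the $O(\cdot)$ in Theorem~\ref{thm asymptotics variance} is uniform in $(\phi_1,\phi_2)$ up to the explicit factors $\Norm{\phi_i}_\infty$ and $\varpi_{\phi_i}$, so that plugging in a fixed $\phi$ absorbs those factors into a constant depending on $\phi$, as reflected in the statement.
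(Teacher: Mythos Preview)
Your proof is correct and is exactly the standard Chebyshev argument the paper has in mind; indeed the paper does not reproduce a proof of this corollary but refers to \cite[Sect.~5]{Let2016a}, where the same one-line application of Chebyshev's inequality together with the variance asymptotic is used.
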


\begin{cor}
\label{cor connected components}
In the same setting as Thm.~\ref{thm asymptotics variance}, let $U \subset M$ be an open subset, then as $d\to + \infty$ we have $\P\left( Z_d \cap U = \emptyset \right) = O\!\left( d^{-\frac{n}{2}} \right)$.
\end{cor}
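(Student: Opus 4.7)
The plan is to reduce the geometric event $\{Z_d \cap U = \emptyset\}$ to a concentration statement for a well-chosen linear statistic, and then apply Chebyshev's inequality using the first-order asymptotic of Theorem~\ref{thm expectation} and the variance bound of Theorem~\ref{thm asymptotics variance}.

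The case $U = \emptyset$ being trivial, I assume $U$ is non-empty. First I would fix a non-negative test function $\phi \in \mathcal{C}^0(M)$ with support contained in $U$ and such that $c := \int_M \phi \rmes{M} > 0$; any bump function around a point of $U$ does the job, using that $(M,g)$ is a Riemannian manifold and $\rmes{M}$ has full support. The key observation is that, since $\phi \geq 0$ vanishes outside $U$ and $\rmes{d}$ is a positive Radon measure supported on $Z_d$, we have the deterministic inclusion
\begin{equation*}
\left\{ Z_d \cap U = \emptyset \right\} \subset \left\{ \prsc{\rmes{d}}{\phi} = 0 \right\}.
\end{equation*}
Because $\prsc{\rmes{d}}{\phi} \geq 0$, the event $\{\prsc{\rmes{d}}{\phi} = 0\}$ is contained in $\{|\prsc{\rmes{d}}{\phi} - \esp{\prsc{\rmes{d}}{\phi}}| \geq \esp{\prsc{\rmes{d}}{\phi}}\}$, and Markov's inequality applied to the square then yields
\begin{equation*}
\P\left( Z_d \cap U = \emptyset \right) \leq \frac{\var{\rmes{d}}(\phi,\phi)}{\esp{\prsc{\rmes{d}}{\phi}}^2}.
\end{equation*}

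To finish, I would plug in the two main asymptotic inputs. Theorem~\ref{thm expectation} applied to $\phi$ gives $\esp{\prsc{\rmes{d}}{\phi}} = c \, d^{r/2} \frac{\vol{\S^{n-r}}}{\vol{\S^n}} + O(d^{r/2 - 1})$, so that for $d$ large enough the denominator above is bounded below by a positive constant times $d^{r}$. Theorem~\ref{thm asymptotics variance} applied to $\phi_1 = \phi_2 = \phi$ (with any admissible $\alpha, \beta$) bounds the numerator by $O(d^{r - n/2})$, with an implicit constant depending on $\phi$. Taking the ratio gives the desired $O(d^{-n/2})$.

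There is no serious obstacle: the argument is a textbook second-moment bound, and all the real analytic work has already been absorbed into Theorems~\ref{thm expectation} and~\ref{thm asymptotics variance}. The only small point worth double-checking is the positivity of the leading term in the expectation, i.e.\ that $\vol{\S^{n-r}} > 0$, which holds since $r \leq n$; this is what guarantees that $\esp{\prsc{\rmes{d}}{\phi}}$ grows like $d^{r/2}$ rather than vanishing to leading order, and hence makes the Chebyshev bound effective.
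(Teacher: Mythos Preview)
Your argument is correct and is precisely the standard second-moment method that the paper has in mind: it does not reproduce the proof but refers to \cite[Sect.~5]{Let2016a}, where exactly this Chebyshev-type bound from Theorems~\ref{thm expectation} and~\ref{thm asymptotics variance} is carried out. There is nothing to add.
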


Let us denote by $\dx \nu_d$ the standard Gaussian measure on $\R\H$ (see~\eqref{eq Gaussian density}). Let $\dx \nu$ denote the product measure $\bigotimes_{d \in \N} \dx \nu_d$ on $\prod_{d \in \N} \R \H$. Then we have the following.

\begin{cor}[Strong law of large numbers]
\label{cor as convergence}
In the setting of Thm.~\ref{thm asymptotics variance}, let us assume $n\geq 3$. Let $(s_d)_{d \in \N} \in \displaystyle\prod_{d \in \N} \R \H$ be a random sequence of sections. Then, $\dx \nu$-almost surely,
\begin{equation*}
d^{-\frac{r}{2}}\rmes{s_d} \xrightarrow[d \to +\infty]{} \frac{\vol{\S^{n-r}}}{\vol{\S^n}}\rmes{M},
\end{equation*}
in the sense of the weak convergence of measures. That is, $\dx \nu$-almost surely,
\begin{equation*}
\forall \phi \in \mathcal{C}^0(M), \qquad d^{-\frac{r}{2}}\prsc{\rmes{s_d}}{\phi} \xrightarrow[d \to +\infty]{} \frac{\vol{\S^{n-r}}}{\vol{\S^n}}\left(\int_M \phi \rmes{M}\right).
\end{equation*}
\end{cor}

\paragraph*{Related works and novelty of the main results.}

This paper extends the results of~\cite{Let2016a}, by the first-named author. In~\cite[Thm.~1.6]{Let2016a}, our main result (Thm.~\ref{thm asymptotics variance} above) was proved for $r<n$ and $\alpha \in (0,\alpha_0)$, where $\alpha_0 \in (0,1)$ is some explicit constant depending on $n$ and $r$. The main novelty in Thm.~\ref{thm asymptotics variance} is that it covers the case of maximal codimension ($r=n$), that is the case where $Z_d$ is almost surely a finite subset of $M$. This case was not considered in~\cite{Let2016a} because of additional singularities arising in the course of the proof, which caused it to fail when $r=n$.

An important contribution of the present paper is that we prove new estimates (see Lem.~\ref{lem asymptotic Theta d}, \ref{lem asymptotic Omega d} and \ref{lem asymptotic Lambda d}) for operators related to the Bergman kernel of $\E \otimes \L^d$, which is the correlation kernel of the random field $(s_d(x))_{x \in M}$. These estimates are one of the key improvements that allow us to prove Thm.~\ref{thm asymptotics variance} in the case $r=n$. They also allow us to consider $\alpha\in (0,1)$ instead of $\alpha \in (0,\alpha_0)$. Finally, the use of these estimates greatly clarifies the proof of Thm.~\ref{thm asymptotics variance} in the case $r<n$, compared to the proof given in~\cite{Let2016a}. For this reason, we give the proof of Thm.~\ref{thm asymptotics variance} in the general case $r \in \{1,\dots,n\}$ and not only for $r=n$. This does not lengthen the proof.

The second main contribution of this article is the proof of the positivity of the leading constant in Thm.~\ref{thm asymptotics variance} (cf.~Thm.~\ref{thm positivity}). This result did not appear in~\cite{Let2016a}. Since the leading constant in Thm.~\ref{thm asymptotics variance} is universal, when $r=n$ one can deduce Thm.~\ref{thm positivity} from results of Dalmao \cite{Dal2015} (if $r=n=1$) and Armentano--Azaïs--Dalmao--Le\'{o}n \cite{AADL2017} (if $r=n\geq 2$). In~\cite{AADL2017,Dal2015}, the authors proved Thm.~\ref{thm asymptotics variance} in the special case where $Z_d$ is the zero set in $\R\P^n$ of $n$ independent Kostlan--Shub--Smale polynomials (see Sect.~\ref{subsec KSS polynomials} below). Their results include the positivity of the leading constant, hence implies Thm.\ref{thm positivity} in this case. To the best of our knowledge, Thm.~\ref{thm positivity} is completely new for $r<n$.

Note that when $n=r=1$, our setting covers the case of the binomial polynomials on $\C$ with standard Gaussian coefficients. Much more is known in this case, including variance estimates for the number of real zeros of non-Gaussian ensembles of real polynomials (see \cite{TV2015}).

Our proof of Thm.~\ref{thm positivity} uses the Wiener--It{\=o} expansion of the linear statistics associated with the field $(s_d(x))_{x \in M}$. This kind of expansion has been studied by Slud~\cite{Slu1991} and Kratz--Le\'{o}n~\cite{KL1997,KL2001}. It was used in a random geometry context in~\cite{AADL2017,Dal2015,DNPR2016,MPRW2016}. In~\cite{DNPR2016,MPRW2016}, the authors used these Wiener chaos techniques to prove Central Limit Theorems for the volume of the zero set of Arithmetic Random Waves on the two-dimensional flat torus (see also~\cite{Dal2015} in an algebraic setting). In~\cite{AADL2017,Dal2015}, these methods where used to prove Thm.~\ref{thm positivity} when $r=n$.

In the related setting of Riemannian Random Waves, Canzani and Hanin \cite{CH2016a} obtained recently an asymptotic upper bound for the variance of the linear statistics. To the best of our knowledge, in this Riemannian setting, the precise asymptotics of the variance of the volume of random submanifolds are known only when the ambient manifold is $\S^2$ (cf.~\cite{Wig2010}) or $\mathbb{T}^2$ (cf.~\cite{DNPR2016,KKW2013}). We refer to the introduction of~\cite{Let2016a} for more details about related works.

\paragraph*{About the proofs.}

The proof of Thm.~\ref{thm asymptotics variance} broadly follows the lines of the proof of~\cite[Thm.~1.6]{Let2016a}. The random section $s_d$ defines a centered Gaussian field $(s_d(x))_{x \in M}$, whose correlation kernel is $E_d$, the Bergman kernel of $\E \otimes \L^d$ (see~Sect.~\ref{subsec correlation kernel}). Thanks to results of Dai--Liu--Ma~\cite{DLM2006} and Ma--Marinescu~\cite{MM2015}, we know that this kernel decreases exponentially fast outside of the diagonal $\Delta = \{(x,y) \in M^2 \mid x=y \}$ and that it admits a universal local scaling limit close to~$\Delta$ (see Sect.~\ref{sec estimates for the bergman kernel} for details).

By an application of Kac--Rice formulas (cf.~Thm.~\ref{thm Kac-Rice 1} and~\ref{thm Kac-Rice 2}), we can express the covariance of $\prsc{\rmes{d}}{\phi_1}$ and $\prsc{\rmes{d}}{\phi_2}$ as a double integral over $M \times M$ of $\phi_1(x)\phi_2(y)$ times a density function $\D_d(x,y)$ that depends only on $E_d$. Our main concern is to understand the asymptotics of the integral of $\D_d(x,y)$, as $d \to +\infty$.

Thanks to the exponential decay of the Bergman kernel, we can show that the leading term in our asymptotics is given by the integral of $\D_d$ over a neighborhood $\Delta_d$ of $\Delta$, of typical size $\frac{1}{\sqrt{d}}$ (see~Prop.~\ref{prop off diagonal is small}). Changing variables so that we integrate on a domain of typical size independent of $d$ leads to the apparition of a factor $d^{-\frac{n}{2}}$. Besides, $\D_d$ takes values of order $d^r$ on $\Delta_d$ (see Prop.~\ref{prop asymptotic Ddxz}). This explains why the asymptotic variance is of order $d^{r -\frac{n}{2}}$ in Thm.~\ref{thm asymptotics variance}.

The behavior of $E_d$ allows to prove that $\D_d$ admits a universal local scaling limit on $\Delta_d$. The main difficulty in our proof is to show that the convergence to this scaling limit is uniform on $\Delta_d$ (see~Prop.~\ref{prop asymptotic Ddxz} for a precise statement). This difficulty comes from the fact that $\D_d$ is singular along $\Delta$, just like almost everything in this problem. This is where our proof differs from~\cite{Let2016a}. In~\cite{Let2016a}, the uniform convergence of $\D_d$ to its scaling limit on $\Delta_d$ is not established, and one has to work around this lack of uniformity. This yields a complicated proof that fails when $r=n$. Here, we manage to prove this uniform convergence, thanks to some new key estimates (see Lem.~\ref{lem asymptotic Theta d}, \ref{lem asymptotic Omega d} and \ref{lem asymptotic Lambda d}) that form the technical core of the paper. This allows us to both improve on the results of~\cite{Let2016a} and simplify their proof.

As we explained, our proof relies on two properties of the Bergman kernel $E_d$: namely, the existence of a scaling limit around any point at scale $\frac{1}{\sqrt{d}}$, and its exponential decay outside of the diagonal. These features are also exhibited by Bergman kernels in other settings such as those of~\cite{Bay2018a} or~\cite{Ber2009}, so one might hope to generalize our results to these settings, at least in the bulk. Unfortunately, we also need a precise understanding of the scaling limit of $E_d$, which is possible in our framework because it is universal (it only depends on $n$) and invariant under isometries (see Sect.~\ref{sec properties of the limit distribution} for more details). As far as we know, it is much more complicated to study this scaling limit in other settings (such as those of~\cite{Bay2018a} and~\cite{Ber2009}), so we do not pursue this line of inquiry in the present paper and leave it for future research.

Let us now discuss the proof of Thm.~\ref{thm positivity}. One would expect to prove this by computing a good lower bound for $\mathcal{I}_{n,r}$, directly from its expression (see Eq.~\eqref{eq def Inr}). To the best of our knowledge this approach fails, and we have to use subtler techniques.

Since the leading constant in~\eqref{eq asymptotics variance} only depends on $n$ and $r$, we can focus on the case of the volume of $Z_d$ (where $\phi_1=\mathbf{1}=\phi_2$) in a particular geometric setting. We consider the common real zero set of $r$ independent Kostlan--Shub--Smale polynomials in $\R \P^n$ (see Sect.~\ref{subsec KSS polynomials} for details). This allows for explicit computations since the Bergman kernel is explicitly known in this setting. Moreover, the distribution of these polynomials is invariant under the action of $O_{n+1}(\R)$ on $\R\P^n$, which leads to useful independence proprieties that are not satisfied in general.

In this framework, we adapt the strategy of~\cite{AADL2017,Dal2015} to the case $r<n$. First, we compute the Wiener--It{\=o} expansion of the volume of $Z_d$. That is, we expand $\vol{Z_d}$ as $\sum_{q \in \N} \vol{Z_d}[q]$, where the convergence is in the space of $L^2$ random variables on our probability space, and $\vol{Z_d}[q]$ denotes the $q$-th chaotic component of $\vol{Z_d}$. In particular, $\vol{Z_d}[0]$ is the expectation of $\vol{Z_d}$ and the $(\vol{Z_d}[q])_{q \in \N}$ are pairwise orthogonal $L^2$ random variables. Hence,
\begin{equation*}
\var{\vol{Z_d}} = \sum_{q \geq 1} \var{\vol{Z_d}[q]}.
\end{equation*}
The chaotic components of odd order of $\vol{Z_d}$ are zero, but we prove that $\var{\vol{Z_d}[2]}$ is equivalent to $d^{r-\frac{n}{2}}C$ as $d \to +\infty$, where $C>0$ (see~Lem.~\ref{lem second chaos is positive}). This implies Thm.~\ref{thm positivity}.

\paragraph*{Outline of the paper.}

In Sect.~\ref{sec random real algebraic submanifolds} we describe precisely our framework and the construction of the random measures $\rmes{d}$. We also introduce the Bergman kernel of $\E \otimes \L^d$ and prove that it is the correlation kernel of $(s_d(x))_{x \in M}$. In Sect.~\ref{sec estimates for the bergman kernel}, we recall estimates for the Bergman kernel, and its scaling limit. Sect.~\ref{sec properties of the limit distribution} is dedicated to the study of the Bargmann--Fock process, that is the Gaussian centered random process on $\R^n$ whose correlation function is:
\begin{equation*}
(w,z) \longmapsto \exp\left(-\frac{1}{2}\Norm{w-z}^2\right).
\end{equation*}
This field is the local scaling limit of the random field $(s_d(x))_{x \in M}$, in a sense to be made precise below. Sect.~\ref{sec proof of the main theorem} and~\ref{sec proof of the positivity} are concerned with the proofs of Thm.~\ref{thm asymptotics variance} and Thm.~\ref{thm positivity} respectively. Note that in Sect.~\ref{sec proof of the positivity} we have to study in details the model of Kostlan--Shub--Smale polynomials, which is the simplest example of our general real algebraic setting. We conclude this paper by two appendices, App.~\ref{sec technical computations for Section limit distrib} and App.~\ref{sec technical computations for Section main proof}, in which we gathered the proofs of the technical lemmas of Sect.~\ref{sec properties of the limit distribution} and Sect.~\ref{sec proof of the main theorem} respectively.

\paragraph*{Acknowledgments.}

We wish to thank Federico Dalmao and Maurizia Rossi for interesting discussions about Wiener--It{\=o} expansions, and Xiaonan Ma for helpful discussions about Bergman kernels and for having encouraged our collaboration. Thomas Letendre also thanks Damien Gayet for suggesting this problem in the first place, and for his support.

%%%%%%%%%%%%%%%%%%%%%%%%%%%%%%%%%%%%%%%%%%%%%%%%%%%%%%%%%%%%%%%%%%%%%%%%%%%%%%%%%%%%%%%%%%%%%%%%%%%%%%%%%%%%%%
%%%%%%%%%%%%%%%%%%%%%%%%%%%%%%%%%%%%%%%%%%%%%%%%%%%%%%%%%%%%%%%%%%%%%%%%%%%%%%%%%%%%%%%%%%%%%%%%%%%%%%%%%%%%%%

\tableofcontents

%%%%%%%%%%%%%%%%%%%%%%%%%%%%%%%%%%%%%%%%%%%%%%%%%%%%%%%%%%%%%%%%%%%%%%%%%%%%%%%%%%%%%%%%%%%%%%%%%%%%%%%%%%%%%%
%%%%%%%%%%%%%%%%%%%%%%%%%%%%%%%%%%%%%%%%%%%%%%%%%%%%%%%%%%%%%%%%%%%%%%%%%%%%%%%%%%%%%%%%%%%%%%%%%%%%%%%%%%%%%%

\section{Random real algebraic submanifolds}
\label{sec random real algebraic submanifolds}

In this section, we introduce the main objects we will be studying throughout this paper. We first recall some basic definitions in Sect.~\ref{subsec random vectors}. In Sect.~\ref{subsec general setting}, we introduce our geometric framework. In Sect.~\ref{subsec random real algebraic submanifolds}, we describe our model of random real algebraic submanifolds. Finally, we relate these random submanifolds to Bergman kernels in Sect.~\ref{subsec correlation kernel}.

%%%%%%%%%%%%%%%%%%%%%%%%%%%%%%%%%%%%%%%%%%%%%%%%%%%%%%%%%%%%%%%%%%%%%%%%%%%%%%%%%%%%%%%%%%%%%%%%%%%%%%%%%%%%%%

\subsection{Random vectors}
\label{subsec random vectors}

Let us recall some facts about random vectors (see for example \cite[appendix~A]{Let2016}). In this paper, we only consider centered random vectors, so we give the following definitions in this setting.

Let $X_1$ and $X_2$ be centered random vectors taking values in Euclidean (or Hermitian) vector spaces $V_1$ and $V_2$ respectively, then we define their \emph{covariance operator} as:
\begin{equation*}
\label{eq def covariance operator}
\cov{X_1}{X_2} : v \longmapsto \esp{X_1 \prsc{v}{X_2}}
\end{equation*}
from $V_2$ to $V_1$. For all $v \in V_2$, we set $v^*=\prsc{\cdot}{v} \in V_2^*$. Then $\cov{X_1}{X_2} = \esp{X_1 \otimes X_2^*}$ is an element of $V_1 \otimes V_2^*$. Let $X$ be a centered random vector in a Euclidean space $V$. The \emph{variance operator} of $X$ is defined as $\var{X} = \cov{X}{X} = \esp{X \otimes X^*} \in V \otimes V^*$.  Let $\Lambda$ be a non-negative self-adjoint operator on $(V,\prsc{\cdot}{\cdot})$, we denote by $X \sim \mathcal{N}(\Lambda)$ the fact that $X$ is a centered Gaussian vector with variance operator $\Lambda$. This means that the characteristic function of $X$ is $\xi \mapsto \exp\left(-\frac{1}{2}\prsc{\Lambda \xi}{\xi}\right)$. Finally, we say that $X \in V$ is a \emph{standard} Gaussian vector if $X \sim \mathcal{N}(\Id)$, where $\Id$ is the identity operator on $V$.

If $\Lambda$ is positive, the distribution of $X \sim \mathcal{N}(\Lambda)$ admits the density:
\begin{equation}
\label{eq Gaussian density}
x \mapsto \frac{1}{\sqrt{2\pi}^{N}\sqrt{\det(\Lambda)}} \exp\left(-\frac{1}{2}\prsc{\Lambda^{-1}x}{x}\right)
\end{equation}
with respect to the normalized Lebesgue measure of $V$, where $N = \dim(V)$. Otherwise, $X$ takes values in $\ker(\Lambda)^\perp$ almost surely, and it admits a similar density as a variable in $\ker(\Lambda)^\perp$.

%%%%%%%%%%%%%%%%%%%%%%%%%%%%%%%%%%%%%%%%%%%%%%%%%%%%%%%%%%%%%%%%%%%%%%%%%%%%%%%%%%%%%%%%%%%%%%%%%%%%%%%%%%%%%%

\subsection{General setting}
\label{subsec general setting}

Let us introduce more precisely our geometric framework. Let $\X$ be a smooth complex projective manifold of positive complex dimension $n$. Let $c_\X$ be a real structure on $\X$, i.e.~an anti-holomorphic involution. The real locus of $(\X,c_\X)$ is the set $M$ of fixed points of $c_\X$. In the following, we assume that $M$ is non-empty. It is known that $M$ is a smooth closed submanifold of $\X$ of real dimension~$n$ (see \cite[chap.~1]{Sil1989}).

Let $\E \to \X$ be a holomorphic vector bundle of rank $r \in \{1,\dots,n\}$, we denote by $\pi_\E$ its bundle projection. Let $c_\E$ be a real structure on $\E$, compatible with $c_\X$ in the sense that $c_\X \circ \pi_\E = \pi_\E \circ c_\E$ and $c_\E$ is fiberwise $\C$-anti-linear. Let $h_\E$ be a Hermitian metric on $\E$ such that $c_\E^\star(h_\E)=\overline{h_\E}$. A Hermitian metric satisfying this condition is said to be \emph{real}. Similarly, let $\L \to \X$ be an ample holomorphic line bundle equipped with a compatible real structure $c_\L$ and a real Hermitian metric~$h_\L$.

We assume that $(\L,h_\L)$ has positive curvature, that is its curvature form $\omega$ is Kähler. Recall that, if $\zeta_0$ is a local non-vanishing holomorphic section of $\L$, then $\omega = \frac{1}{2i}\partial\debar \ln\left(h_\L(\zeta_0,\zeta_0)\right)$ locally. This Kähler form defines a Riemannian metric $g$ on $\X$ (see~\cite[Sect.~0.2]{GH1994} for example). In turn, $g$ induces a Riemannian volume measure on $\X$ and on any smooth submanifold of $\X$. We denote by $\dx V_\X=\frac{\omega^n}{n!}$ the Riemannian volume form on $(\X,g)$. Similarly, let $\rmes{M}$ denote the Riemannian measure on~$(M,g)$.

Let $d \in \N$, we endow $\E \otimes \L^d$ with the real structure $c_d = c_\E \otimes (c_\L)^d$, which is compatible with $c_\X$, and the real Hermitian metric $h_d = h_\E \otimes h_\L^d$. Let $\Gamma(\E \otimes \L^d)$ denote the space of smooth sections of $\E \otimes \L^d$, we define a Hermitian inner product on $\Gamma(\E \otimes \L^d)$ by:
\begin{equation}
\label{eq definition inner product}
\forall s_1, s_2 \in \Gamma(\E \otimes \L^d), \qquad \prsc{s_1}{s_2} = \int_\X h_d(s_1(x),s_2(x)) \dx V_\X.
\end{equation}

\begin{rem}
\label{rem bracket}
In this paper, $\prsc{\cdot}{\cdot}$ will either denote the inner product of a Euclidean (or Hermitian) space or the duality pairing between a Banach space and its topological dual. Which one should be clear from the context.
\end{rem}

We say that a section $s \in \Gamma(\E \otimes \L^d)$ is \emph{real} if it is equivariant for the real structures, that is: $c_d \circ s = s \circ c_\X$. We denote by $\R \Gamma(\E \otimes \L^d)$ the real vector space of real smooth sections of $\E \otimes \L^d$. The restriction of $\prsc{\cdot}{\cdot}$ to $\R \Gamma(\E \otimes \L^d)$ is a Euclidean inner product. Note that, despite their name, real sections are defined on the whole complex locus $\X$ and that the Euclidean inner product is defined by integrating over $\X$, not just $M$.

Let $x \in M$, then the fiber $(\E \otimes \L^d)_x$ is a dimension~$r$ complex vector space and the restriction of $c_d$ to this space is a $\C$-anti-linear involution. We denote by $\R(\E \otimes \L^d)_x$ the set of fixed points of this involution, which is a real $r$-dimensional vector space. Then, $\R(\E \otimes \L^d) \to M$ is a rank $r$ real vector bundle and, for any $s \in \R \Gamma(\E \otimes \L^d)$, the restriction of $s$ to $M$ is a smooth section of $\R(\E \otimes \L^d) \to M$.

Let $\H$ denote the space of global holomorphic sections of $\E \otimes \L^d$. This space is known to be finite-dimensional (compare \cite[Thm.~1.4.1]{MM2007}). Let $N_d$ denote the complex dimension of $\H$. We denote by:
\begin{equation*}
\R \H = \left\{ s \in \H \mvert c_d \circ s = s \circ c_\X \right\}
\end{equation*}
the space of global real holomorphic sections of $\E \otimes\L^d$. The restriction of the inner product \eqref{eq definition inner product} to $\R \H$ (resp.~$\H$) makes it into a Euclidean (resp.~Hermitian) space of real (resp.~complex) dimension $N_d$.

%%%%%%%%%%%%%%%%%%%%%%%%%%%%%%%%%%%%%%%%%%%%%%%%%%%%%%%%%%%%%%%%%%%%%%%%%%%%%%%%%%%%%%%%%%%%%%%%%%%%%%%%%%%%%%

\subsection{Random real algebraic submanifolds}
\label{subsec random real algebraic submanifolds}

This section is concerned with the definition of the random submanifolds we consider and some related random variables.

Let $d \in \N$ and $s \in \R \H$, we denote by $Z_s$ the real zero set $s^{-1}(0) \cap M$ of $s$. If the restriction of $s$ to $M$ vanishes transversally, then $Z_s$ is a smooth closed submanifold of codimension~$r$ of $M$ (note that this includes the case where $Z_s$ is empty). In this case, we denote by $\rmes{s}$ the Riemannian volume measure on $Z_s$ induced by $g$. In the following, we consider $\rmes{s}$ as the continuous linear form on $(\mathcal{C}^0(M),\Norm{\cdot}_\infty)$ defined by:
\begin{equation*}
\label{eq def linear stats}
\forall \phi \in \mathcal{C}^0(M), \qquad \prsc{\rmes{s}}{\phi} = \int_{x \in Z_s} \phi(x) \rmes{s}.
\end{equation*}

\begin{dfn}[compare \cite{Nic2015}]
\label{def 0 ample}
We say that $\R \H$ is \emph{$0$-ample} if, for any $x \in M$, the evaluation map $\ev_x^d:s \mapsto s(x)$ from $\R\H$ to $\R \left(\E \otimes \L^d\right)_x$ is surjective.
\end{dfn}

\begin{lem}
\label{lem def d1}
There exists $d_1 \in \N$, depending only on $\X$, $\E$ and $\L$, such that for all $d \geq d_1$, $\R\H$ is $0$-ample.
\end{lem}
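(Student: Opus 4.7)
The plan is to reduce the real $0$-ampleness of $\R\H$ to the classical complex $0$-ampleness of $\E \otimes \L^d$ via an averaging argument using the real structure.

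First, I would appeal to Serre's classical global-generation theorem: since $\L$ is ample on the projective manifold $\X$ and $\E$ is a (locally free) coherent sheaf, there exists an integer $d_0$, depending only on $\X$, $\E$ and $\L$, such that for every $d \geq d_0$ the sheaf $\E \otimes \L^d$ is globally generated. Equivalently, for every $x \in \X$ the complex evaluation map $\H \to (\E \otimes \L^d)_x$ is surjective. Compactness of $\X$ is what makes $d_0$ uniform in $x$; one may derive this either directly from Serre or from Kodaira--Serre vanishing applied to the short exact sequence $0 \to \E \otimes \L^d \otimes \mathcal{I}_x \to \E \otimes \L^d \to (\E \otimes \L^d)_x \to 0$ attached to the ideal sheaf $\mathcal{I}_x$ of $x$.

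Next I would pass from complex to real sections by averaging. Define $\mathcal{A} : \H \to \H$ by $\mathcal{A}(s) = \tfrac{1}{2}\bigl(s + c_d \circ s \circ c_\X\bigr)$. The compatibility of $c_d$ with $c_\X$ together with $c_d^2 = \Id$ and $c_\X^2 = \Id$ imply that $\mathcal{A}$ is an $\R$-linear projector onto $\R\H$: indeed, $c_d \circ \mathcal{A}(s) \circ c_\X = \mathcal{A}(s)$ for every $s \in \H$, and $\mathcal{A}$ acts as the identity on $\R\H$. Now fix $x \in M$, so that $c_\X(x) = x$, and observe that $\bigl(\mathcal{A}(s)\bigr)(x) = \tfrac{1}{2}\bigl(s(x) + c_d(s(x))\bigr)$, which is exactly the projection of $s(x)$ onto the fixed locus $\R(\E \otimes \L^d)_x$ inside the underlying real vector space of $(\E \otimes \L^d)_x$.

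Setting $d_1 := d_0$, the lemma then follows at once. Indeed, for $d \geq d_1$, $x \in M$ and any target vector $w \in \R(\E \otimes \L^d)_x$, the complex global generation yields some $s \in \H$ with $s(x) = w$; then $\mathcal{A}(s) \in \R\H$ satisfies $\bigl(\mathcal{A}(s)\bigr)(x) = \tfrac{1}{2}\bigl(w + c_d(w)\bigr) = w$ because $w$ is already fixed by $c_d$. Hence $\ev_x^d : \R\H \to \R(\E \otimes \L^d)_x$ is surjective, uniformly in $x \in M$. The only non-trivial input in this sketch is the uniform complex global-generation statement; this is standard, so no serious obstacle arises.
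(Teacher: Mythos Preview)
Your argument is correct. The averaging projector $\mathcal{A}(s)=\tfrac{1}{2}(s+c_d\circ s\circ c_\X)$ does land in $\R\H$ (the paper records, just after Eq.~\eqref{eq dfn RH0}, that $\dim_\R \R\H = \dim_\C \H$, so $s\mapsto c_d\circ s\circ c_\X$ is a well-defined $\C$-anti-linear involution of $\H$), and your evaluation identity at a real point $x\in M$ is exactly the fibrewise projection onto $\R(\E\otimes\L^d)_x$. Combined with Serre's global generation for $\E\otimes\L^d$ with $d$ large, this yields the surjectivity of $\ev_x^d$ uniformly in $x$.

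However, the paper takes a different route. It defers the proof to Corollary~\ref{cor def d1}, which follows from Lemma~\ref{lem asymptotic det Ed}: the diagonal Bergman kernel estimate of Dai--Liu--Ma (Theorem~\ref{thm Dai Liu Ma}) gives $E_d(x,x)=(d/\pi)^n\bigl(\Id+O(d^{-1})\bigr)$ uniformly in $x$, hence $\odet{\ev_x^d}=\det(E_d(x,x))^{1/2}\neq 0$ for $d$ large. Your approach is more elementary and self-contained, relying only on classical algebraic geometry plus the real structure; the paper's approach is analytic and exploits machinery already in place for the rest of the article. The trade-off is that the paper's computation simultaneously produces the quantitative asymptotic $\odet{\ev_x^d}=(d/\pi)^{rn/2}(1+O(d^{-1}))$, which is genuinely needed later (e.g.\ in Lemma~\ref{lem estimates cond exp x} and in the proof of Proposition~\ref{prop off diagonal is small}), whereas your argument gives only the qualitative surjectivity.
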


\begin{proof}
This can be deduced from the Riemann--Roch formula, for example. It is also a by-product of the computations of the present paper and will be proved later on, see Cor.~\ref{cor def d1} below.
\end{proof}

Let us now consider a random section in $\R\H$. Recall that $\R\H$, endowed with the inner product \eqref{eq definition inner product}, is a Euclidean inner product of dimension $N_d$. Let $s_d$ be a standard Gaussian vector in $\R \H$.

\begin{lem}
\label{lem as submanifold}
For every $d \geq d_1$, $Z_{s_d}$ is almost surely a smooth closed codimension~$r$ submanifold of~$M$.
\end{lem}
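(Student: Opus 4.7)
The plan is to apply a parametric transversality argument in the spirit of Thom's theorem, using $0$-ampleness as the main input.

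First, I would introduce the global evaluation map
\begin{equation*}
F : M \times \R \H \longrightarrow \R(\E \otimes \L^d), \qquad (x,s) \longmapsto s(x),
\end{equation*}
viewed as a smooth map of smooth manifolds, together with the zero section $Z_0 \subset \R(\E \otimes \L^d)$, which is a smooth submanifold diffeomorphic to $M$. The key observation is that for every $(x,s) \in F^{-1}(Z_0)$, the partial differential of $F$ in the $\R\H$-direction is exactly the evaluation map $\ev_x^d$, which is surjective for $d \geq d_1$ by Lemma~\ref{lem def d1}. Hence $F$ is a submersion at every point of $F^{-1}(Z_0)$, so in particular $F$ is transverse to $Z_0$, and the preimage
\begin{equation*}
\Sigma := F^{-1}(Z_0) = \{ (x,s) \in M \times \R\H \mid s(x) = 0 \}
\end{equation*}
is a smooth submanifold of $M \times \R\H$ of codimension $r$.

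Second, I would consider the projection $\pi : \Sigma \to \R\H$ onto the second factor. By Sard's theorem applied to this smooth map between finite-dimensional manifolds, the set of critical values of $\pi$ has Lebesgue measure zero in $\R\H$. If $s \in \R\H$ is a regular value of $\pi$, then at every $x \in \pi^{-1}(s)$ the restriction $s\big|_M : M \to \R(\E \otimes \L^d)$ is transverse to $Z_0$ (this is precisely the translation of $s$ being a regular value); equivalently, $s\big|_M$ vanishes transversally on $M$. By the implicit function theorem, for such $s$ the zero set $Z_s = s^{-1}(0) \cap M$ is a smooth submanifold of $M$ of codimension $r$ (possibly empty). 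Compactness of $M$ ensures that $Z_s$ is closed.

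Third, to transfer this from "Lebesgue-a.e." to "almost surely" under the law of $s_d$, I would invoke the fact that the standard Gaussian distribution on the finite-dimensional Euclidean space $\R\H$ admits the density \eqref{eq Gaussian density} with respect to Lebesgue measure, hence is absolutely continuous with respect to it. Therefore the set of critical values of $\pi$ also has measure zero for the Gaussian law of $s_d$, which yields the statement. The only real obstacle is verifying that $F$ is a submersion along $F^{-1}(Z_0)$, and this is exactly what $0$-ampleness provides; the rest is a direct application of Sard's theorem and absolute continuity.
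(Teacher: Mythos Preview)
Your argument is correct and is precisely the parametric transversality argument that the paper invokes (citing \cite[sect.~2.6]{Let2016} for the details): $0$-ampleness makes the universal evaluation map transverse to the zero section, Sard's theorem applied to the projection $\Sigma \to \R\H$ yields transversality for Lebesgue-almost every $s$, and absolute continuity of the Gaussian transfers this to almost sure transversality. The only minor imprecision is that surjectivity of $\ev_x^d$ alone gives transversality of $F$ to $Z_0$ (covering the normal directions) rather than full submersion, but since the projection of $F$ to $M$ is obviously submersive the stronger claim also holds.
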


\begin{proof}
Since $d \geq d_1$, $\R\H$ is $0$-ample. By a transversality argument (see~\cite[Sect.~2.6]{Let2016} for details), this implies that the restriction of $s$ to $M$ vanishes transversally for almost every $s \in \R\H$ (with respect to the Lebesgue measure). Thus, almost surely, $s_d$ restricted to $M$ vanishes transversally, and its zero set is a smooth closed submanifold of codimension~$r$.
\end{proof}

From now on, we only consider the case $d \geq d_1$, so that $Z_{s_d}$ is almost surely a random smooth closed submanifold of $M$ of codimension $r$. For simplicity, we denote $Z_d = Z_{s_d}$ and $\rmes{d}=\rmes{s_d}$. Let $\phi\in \mathcal{C}^0(M)$ and $d \geq d_1$, the real random variable $\prsc{\rmes{d}}{\phi}$ is called the \emph{linear statistic} of degree~$d$ associated with $\phi$. For example, $\prsc{\rmes{d}}{\mathbf{1}}$ is the volume of $Z_d$.

%%%%%%%%%%%%%%%%%%%%%%%%%%%%%%%%%%%%%%%%%%%%%%%%%%%%%%%%%%%%%%%%%%%%%%%%%%%%%%%%%%%%%%%%%%%%%%%%%%%%%%%%%%%%%%

\subsection{The correlation kernel}
\label{subsec correlation kernel}

For any $d \in \N$, the random section $s_d \in \R\H$ defines a centered Gaussian process $(s_d(x))_{x \in \X}$. In this section, we recall the relation between the distribution of this process and the Bergman kernel of $\E \otimes \L^d$.

Recall that $(\mathcal{E}\otimes \mathcal{L}^d) \boxtimes (\mathcal{E}\otimes \mathcal{L}^d)^*$ stands for the bundle $P_1^\star\left(\mathcal{E}\otimes \mathcal{L}^d\right) \otimes P_2^\star \left(\left(\mathcal{E}\otimes \mathcal{L}^d\right)^*\right)$ over $\mathcal{X}\times \mathcal{X}$, where $P_1$ (resp.~$P_2$) denotes the projection from $\X \times \X$ onto the first (resp.~second) $\X$ factor. The distribution of $(s_d(x))_{x \in \X}$ is characterized by its covariance kernel, that is the section of $(\mathcal{E}\otimes \mathcal{L}^d) \boxtimes (\mathcal{E}\otimes \mathcal{L}^d)^*$ defined by: $(x,y) \mapsto \cov{s_d(x)}{s_d(y)} = \esp{s_d(x) \otimes s_d(y)^*}$.

\begin{dfn}
\label{def Bergman kernel}
Let $E_d$ denote the \emph{Bergman kernel} of $\E \otimes \L^d \to \X$, that is the Schwartz kernel of the orthogonal projection from $\Gamma(\E \otimes \L^d)$ onto $\H$.
\end{dfn}

Let $(s_{1,d},\dots,s_{N_d,d})$ be an orthonormal basis of $\R \H$, then it is also an orthonormal basis of $\H$. Recall that, $E_d$ is given by:
\begin{equation*}
\label{eq Bergman in coordinates}
E_d : (x,y) \longmapsto \sum_{i=1}^{N_d} s_{i,d}(x) \otimes s_{i,d}(y)^*.
\end{equation*}
This shows that $E_d$ is a real global holomorphic section of $(\mathcal{E}\otimes \mathcal{L}^d) \boxtimes (\mathcal{E}\otimes \mathcal{L}^d)^*$. The following proves that the distribution of $(s_d(x))_{x \in \X}$ is totally described by $E_d$.

\begin{prop}[compare \cite{Let2016a}, Prop.~2.6]
\label{prop covariance equals Bergman}
Let $d \in \N$ and let $s_d$ be a standard Gaussian vector in $\R \H$. Then, for all $x$ and $y \in \X$, we have: $\cov{s_d(x)}{s_d(y)} = E_d(x,y)$.
\end{prop}

Thus, the Bergman kernel of $\E \otimes \L^d$ gives the correlations between the values of the random section $s_d$. By taking partial derivatives of this relation, we obtain the correlations between the values of $s_d$ and its derivatives. More details about what follows can be found in \cite[Sect.~2.3]{Let2016a}.

Let $\nabla^d$ be a metric connection on $\E \otimes \L^d$, it induces a dual connection $(\nabla^d)^*$ on $(\E \otimes \L^d)^*$, which is compatible with the metric (cf.~\cite[Sect.~0.5]{GH1994}). We can then define a natural metric connection $\nabla_1^d$ on $P_1^\star(\E \otimes \L^d) \to \X \times \X$ whose partial derivatives are: $\nabla^d$ with respect to the first variable, and the trivial connection with respect to the second. Similarly, $(\nabla^d)^*$ induces a metric connection $\nabla_2^d$ on $P_2^\star\left((\E \otimes \L^d)^*\right)$ and $\nabla_1^d \otimes \Id + \Id \otimes \nabla_2^d$ is a metric connection on $(\E \otimes \L^d) \boxtimes (\E \otimes \L^d)^*$.

We denote by $\partial_x$ (resp. $\partial_y$) the partial derivative of $\nabla_1^d \otimes \Id + \Id \otimes \nabla_2^d$ with respect to the first (resp. second) variable. Let $\partial_y^\sharp E_d(x,y) \in T_y\X \otimes \left(\E \otimes \L^d\right)_x \otimes \left(\E \otimes \L^d\right)_y^*$ be defined by:
\begin{equation*}
\label{eq def sharp}
\forall w \in T_y\X, \qquad \partial_y^\sharp E_d(x,y) \cdot w^* = \partial_y E_d(x,y) \cdot w.
\end{equation*}
Similarly, let $\partial_x\partial_y^\sharp E_d(x,y) \in T^*_x\X \otimes T_y\X \otimes \left(\E \otimes \L^d\right)_x \otimes \left(\E \otimes \L^d\right)_y^*$ be defined by:
\begin{equation*}
\label{eq def sharp 2}
\forall (v,w) \in T_x\X \times T_y\X, \qquad \partial_x\partial_y^\sharp E_d(x,y) \cdot (v,w^*) = \partial_x\partial_y E_d(x,y)\cdot (v,w).
\end{equation*}
The following corollary was proved in~\cite[Cor.~2.13]{Let2016a}.
\begin{cor}
\label{cor variance of the 1-jet sharp}
Let $d \in \N$, let $\nabla^d$ be a metric connection on $\E \otimes \L^d$ and let $s_d$ be a standard Gaussian vector in $\R \H$. Then, for all $x$ and $y \in \X$, we have:
\begin{align*}
\cov{\nabla^d_xs}{s(y)} &= \esp{\nabla^d_xs \otimes s(y)^*} = \partial_x E_d(x,y),\\
\cov{s(x)}{\nabla^d_ys} &= \esp{s(x) \otimes \left(\nabla^d_ys\right)^*} =\partial_y^\sharp E_d(x,y),\\
\cov{\nabla^d_xs}{\nabla^d_ys} &= \esp{\nabla^d_xs \otimes \left(\nabla^d_ys\right)^*} = \partial_x\partial_y^\sharp E_d(x,y).
\end{align*}
\end{cor}

%%%%%%%%%%%%%%%%%%%%%%%%%%%%%%%%%%%%%%%%%%%%%%%%%%%%%%%%%%%%%%%%%%%%%%%%%%%%%%%%%%%%%%%%%%%%%%%%%%%%%%%%%%%%%%
%%%%%%%%%%%%%%%%%%%%%%%%%%%%%%%%%%%%%%%%%%%%%%%%%%%%%%%%%%%%%%%%%%%%%%%%%%%%%%%%%%%%%%%%%%%%%%%%%%%%%%%%%%%%%%

\section{Estimates for the Bergman kernel}
\label{sec estimates for the bergman kernel}

In this section, we recall useful estimates for the Bergman kernels. In Sect.~\ref{subsec real normal trivialization} we recall the definition of a preferred trivialization of $\E \otimes \L^d \to \X$. Then we state near-diagonal and off-diagonal estimates for a scaled version of $E_d$ in Sect.~\ref{subsec near-diagonal estimates} and~\ref{subsec off-diagonal estimates}.

%%%%%%%%%%%%%%%%%%%%%%%%%%%%%%%%%%%%%%%%%%%%%%%%%%%%%%%%%%%%%%%%%%%%%%%%%%%%%%%%%%%%%%%%%%%%%%%%%%%%%%%%%%%%%%

\subsection{Real normal trivialization}
\label{subsec real normal trivialization}

\begin{ntn}
In the following, $B_A(a,R)$ denotes the open ball of center $a$ and radius $R$ in the metric space~$A$.
\end{ntn}

Let $d \in \N$, let us define a preferred trivialization of $\E \otimes \L^d$ in a neighborhood of any point of~$M$. The properties of this trivialization were studied in \cite[Sect.~3.1]{Let2016a}. Recall that the metric $g$ on $\X$ is induced by the curvature of $(\L,h_\L)$. Since, $h_\L$ is compatible with the real structures, $c_\X$ is an isometry of $(\X,g)$ (see \cite[Sect.~2.1]{Let2016a} for details).

Let $R >0$ be such that $2R$ is less than the injectivity radius of $\X$. Let $x \in M$, then the exponential map $\exp_x : B_{T_x\X}(0,2R) \to B_\X(x,2R)$ defines a chart around $x$ such that:
\begin{equation*}
\label{eq real equivariance exp}
d_xc_\X = (\exp_x)^{-1} \circ c_\X \circ \exp_x.
\end{equation*}
In particular, since $T_xM = \ker(d_xc_\X - \Id)$, we have that $\exp_x$ induces a diffeomorphism from $B_{T_xM}(0,2R)$ to $B_M(x,2R)$ that coincides with the exponential map of $(M,g)$ at $x$.

We can now trivialize $\E \otimes \L^d$ over $B_\X(x,2R)$, by identifying each fiber with $\left(\E \otimes \L^d\right)_x$ by parallel transport along geodesics, with respect to the Chern connection of $\E \otimes \L^d$. This defines a bundle map 
\begin{equation*}
\varphi_x : B_{T_x\X}(0,2R) \times \left(\E \otimes \L^d\right)_x \to \left(\E \otimes \L^d\right)_{/B_\X(x,2R)}
\end{equation*}
that covers $\exp_x$. We call this trivialization the \emph{real normal trivialization} of $\E \otimes \L^d$ around $x$.

\begin{dfn}
\label{def real connection}
A connection $\nabla^d$ on $\E \otimes \L^d \to \X$ is said to be \emph{real} if for every smooth section $s$ we have:
\begin{equation*}
\forall y \in \X, \qquad \nabla^d_y \left( c_d \circ s \circ c_\X\right) = c_d \circ \nabla^d_{c_\X(y)} s \circ d_yc_\X.
\end{equation*}
Such a connection induces a connection on $\R\!\left(\E \otimes \L^d\right) \to M$ by restriction.
\end{dfn}

Recall that $c_d$ denotes the real structure of $\E \otimes \L^d$. Let $c_{d,x}$ denote its restriction to $\left(\E \otimes \L^d\right)_x$, then $(d_xc_\X,c_{d,x})$ is a $\C$-anti-linear involution of $B_{T_x\X}(0,2R) \times \left(\E \otimes \L^d\right)_x$ which is compatible with the real structure on the first factor. Since the Chern connection of $\E \otimes \L^d$ is real (see~\cite[Lem.~3.4]{Let2016a}), the real normal trivialization is well-behaved with respect to the real structures, in the sense that for all $z \in B_{T_x\X}(0,2R)$ and $\zeta \in \left(\E \otimes \L^d\right)_x$,
\begin{equation*}
\label{eq equivariance trivialization real structures}
c_d(\varphi_x(z,\zeta))= \varphi_x\left(d_xc_\X \cdot z, c_{d,x}(\zeta)\right).
\end{equation*}
Thus, $\varphi_x$ can be restricted to a bundle map
\begin{equation*}
B_{T_xM}(0,2R) \times \R\!\left(\E \otimes \L^d\right)_x \to \R\!\left(\E \otimes \L^d\right)_{/B_M(x,2R)}
\end{equation*} 
that covers $\exp_x$.

Finally, it is known (cf.~\cite[Sect.~3.1]{Let2016a}) that $\varphi_x$ is a unitary trivialization, i.e.~its restriction to each fiber is an isometry. Similarly, its restriction to the real locus is an orthogonal trivialization of $\R\!\left(\E \otimes \L^d\right)_{/B_M(x,2R)}$.

The point is the following. The usual differentiation for maps from $T_x\X$ to $\left(\E \otimes \L^d\right)_x$ defines locally a connection $\tilde{\nabla}^d$ on $\left(\E \otimes \L^d\right)_{/B_{\X}(x,2R)}$ via the real normal trivialization. Since this trivialization is well-behaved with respect to both the real and the metric structures, $\tilde{\nabla}^d$ is a real metric connection. Then, by a partition of unity argument, there exists a global real metric connection $\nabla^d$ on $\E \otimes \L^d$ that agrees with $\tilde{\nabla}^d$ on $B_\X(x,R)$, i.e.~$\nabla^d$ is trivial in the real normal trivialization, in a neighborhood of $x$. The existence of such a connection will be useful in the proof of our main theorem.

%%%%%%%%%%%%%%%%%%%%%%%%%%%%%%%%%%%%%%%%%%%%%%%%%%%%%%%%%%%%%%%%%%%%%%%%%%%%%%%%%%%%%%%%%%%%%%%%%%%%%%%%%%%%%%

\subsection{Near-diagonal estimates}
\label{subsec near-diagonal estimates}

In this section, we state estimates for a scaled version of the Bergman kernel in a neighborhood of the diagonal of $M \times M$. As in the previous section, let $R>0$ be such that $2R$ is less than the injectivity radius of $\X$. Let $x \in M$, then the real normal trivialization $\varphi_x$ induces a trivialization of $\left(\E \otimes \L^d\right) \boxtimes \left(\E \otimes \L^d\right)^*$ over $B_\X(x,2R) \times B_\X(x,2R)$ that covers $\exp_x \times \exp_x$. This trivialization agrees with the real normal trivialization of $\left(\E \otimes \L^d\right) \boxtimes \left(\E \otimes \L^d\right)^*$ around $(x,x)$.

In the normal chart $\exp_x$, the Riemannian measure $\dx V_\X$ admits a positive density with respect to the Lebesgue measure of $(T_x\X,g_x)$, denoted by $\kappa : B_{T_x\X}(0,2R) \to \R_+$. Then, in the chart $\exp_x:B_{T_xM}(0,2R) \to B_M(x,2R)$, the density of $\rmes{M}$ with respect to the Lebesgue measure of $(T_xM,g_x)$ is $\sqrt{\kappa}$.

Let us identify $E_d$ with its expression in the real normal trivialization of $\left(\E \otimes \L^d\right) \boxtimes \left(\E \otimes \L^d\right)^*$ around $(x,x)$. Thus, the restriction of $E_d$ to the real locus is a map from $T_xM \times T_xM$ to $\End\left(\R\!\left(\E \otimes \L^d\right)_x\right)$. Then, by~\cite[Thm.~4.18']{DLM2006} (see also~\cite[Thm.~3.5]{Let2016a} for a statement with the same notations as the present paper) we get the following estimate for $E_d$ and its derivatives of order at most~$6$.

\begin{thm}[Dai--Liu--Ma]
\label{thm Dai Liu Ma}
There exist $C_1$ and $C_2 >0$, such that $\forall k \in \{0,1,\dots,6\}$, $\forall d \in \N^*$, $\forall x \in M$, $\forall w,z \in B_{T_xM}(0,R)$,
\begin{multline*}
\Norm{D^k_{(w,z)}\left(E_d(w,z) - \left(\frac{d}{\pi}\right)^n\frac{\exp\left(-\frac{d}{2}\Norm{z-w}^2\right)}{\sqrt{\kappa(w)}\sqrt{\kappa(z)}}\Id_{\R(\E\otimes\L^d)_x}\right)}\\
\leq C_1 d^{n+\frac{k}{2}-1}\left(1+\sqrt{d}(\Norm{w}+\Norm{z})\right)^{2n+12} \exp\left(-C_2\sqrt{d}\Norm{z-w}\right)+O\!\left(d^{-\infty}\right),
\end{multline*}
where $D^k$ is the $k$-th differential for a map from $T_xM \times T_xM$ to $\End\left(\R\!\left(\E \otimes \L^d\right)_x\right)$, the norm on $T_xM$ is induced by $g_x$ and the norm on $\left(T^*_xM\right)^{\otimes k}\otimes \End\left(\left(\E \otimes \L^d\right)_x\right)$ is induced by $g_x$ and $(h_d)_x$.

The notation $O\!\left(d^{-\infty}\right)$ means that, for any $l \in \N$, this term is $O\!\left(d^{-l}\right)$ with a constant that does not depend on $x$, $w$, $z$ nor $d$.
\end{thm}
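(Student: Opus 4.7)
The plan is to follow the spectral-analytic strategy of Dai--Liu--Ma. Observe that $E_d$ is the Schwartz kernel of the orthogonal projector onto $\H \subset \Gamma(\E \otimes \L^d)$, and coincides with the spectral projector of the Kodaira Laplacian $\Box_d = \debar^*_d \debar_d$ onto its kernel. The positivity of $\L$, via the Bochner--Kodaira--Nakano inequality, produces a spectral gap: for $d$ large, $\ker \Box_d = \H$ while the rest of the spectrum lies above $Cd$ for some $C>0$. One may therefore replace $E_d$ by any spectral projector $\mathbf{1}_{[0,a_d]}(\Box_d)$ with $a_d < Cd$ and analyse the latter via functional calculus.

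The first step is localization. Using finite propagation speed for $\cos(t\sqrt{\Box_d})$, one represents the spectral projector as $\int \hat{f}(t/\sqrt{d}) \cos(t\sqrt{\Box_d})\,dt$ for a compactly supported even cutoff, and shows that outside any fixed neighborhood of the diagonal $E_d(w,z)$ is $O(d^{-\infty})$. This is the origin of the $O(d^{-\infty})$ error term in the statement, and reduces the analysis to a near-diagonal regime of size $O(1/\sqrt{d})$ after rescaling.

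Next, fix $x_0 \in M$ and work in the real normal trivialization $\varphi_{x_0}$ of Section~\ref{subsec real normal trivialization}, together with the normal chart $\exp_{x_0}$; reality of the trivialization ensures that the estimates descend to $\R(\E\otimes\L^d)_{x_0}$. Rescale via $Z = \sqrt{d}\,z$, mapping $B_{T_{x_0}\X}(0,R)$ onto $B(0,\sqrt{d}\,R)$. The rescaled operator $d^{-1}\Box_d$ admits a Taylor expansion in $t = 1/\sqrt{d}$ whose leading term is a model harmonic oscillator $\mathcal{L}_0$ on $\C^n$, determined by $\omega(x_0)$ and by the endomorphism data coming from $\E$. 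This $\mathcal{L}_0$ is of Bargmann--Fock type, and the Schwartz kernel of its ground-state projector is, in the correct normalization, $\pi^{-n}\exp(-\tfrac12 \|Z-W\|^2) \otimes \Id_{\R(\E\otimes\L^d)_{x_0}}$. Undoing the rescaling produces the leading term $(d/\pi)^n \exp(-\tfrac d2 \|z-w\|^2)$, and the density factors $\sqrt{\kappa(w)\kappa(z)}$ enter because $E_d$ is a kernel with respect to $\dx V_\X$ while the rescaling matches the flat Lebesgue measure of $(T_{x_0}\X, g_{x_0})$ only to leading order.

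Finally, the error estimates come from perturbation theory. Writing the rescaled operator as $\mathcal{L}_0 + \sum_{j \geq 1} t^j \mathcal{O}_j$ and expanding the projector through a Cauchy contour integral $\tfrac{1}{2i\pi}\int_\Gamma (\lambda - \Box_d^{\mathrm{resc}})^{-1}\,d\lambda$ around $0$, each term is estimated in weighted Sobolev norms with Gaussian decay. Converting $C^k$ bounds in the rescaled variable back to $C^k$ bounds in $z$ costs factors of $\sqrt{d}^{\,k}$, accounting for the prefactor $d^{n+k/2-1}$ in the error. The polynomial weight $(1+\sqrt{d}(\|w\|+\|z\|))^{2n+12}$ tracks the Sobolev losses needed to control up to six derivatives uniformly on balls of radius $\sqrt{d}\,R$, and the exponential decay $\exp(-C_2 \sqrt{d}\|z-w\|)$ follows from Agmon-type estimates on the resolvent kernel of $\mathcal{L}_0$. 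The principal obstacle is maintaining this uniformity simultaneously in $x_0 \in M$, in $w, z \in B(0,R)$ and across all derivatives up to order six, while preserving the sharp Gaussian decay; this careful bookkeeping is the technical core of the Dai--Liu--Ma theorem.
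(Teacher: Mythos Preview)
The paper does not prove this theorem: it is quoted as a black box from \cite{DLM2006} (specifically their Theorem~4.18'), with a pointer to \cite{Let2016a} for the reformulation in the present notations. So there is no ``paper's own proof'' to compare against.

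Your sketch is a faithful outline of the Dai--Liu--Ma strategy: spectral gap from Bochner--Kodaira--Nakano to identify $E_d$ with a low-energy spectral projector, localization via finite propagation speed for the wave kernel to isolate the near-diagonal and produce the $O(d^{-\infty})$ remainder, rescaling by $\sqrt{d}$ in normal coordinates to reduce to a perturbation of the model harmonic oscillator $\mathcal{L}_0$, and contour-integral/resolvent expansion with weighted Sobolev (Agmon) estimates to control the remainder with the stated polynomial weight and exponential decay. The accounting you give for the powers of $d$, the density factor $\sqrt{\kappa(w)\kappa(z)}$, and the exponent $2n+12$ (six derivatives plus Sobolev losses) is correct in spirit. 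As a proof proposal this is acceptable as a high-level summary, but it is not a proof: each of the bullet points you list (uniformity in $x_0$, the precise weighted Sobolev machinery, the passage from rescaled to unrescaled $C^k$ norms) requires substantial work that you have only named, not carried out. Since the present paper treats the result as a citation, that level of detail is appropriate here.
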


Recall that $x$ is fixed. We denote by the $e_d$ the following scaled version of the Bergman kernel around $x$:
\begin{equation}
\label{eq def ed}
\forall w,z \in B_{T_xM}\left(0,2R \sqrt{d}\right), \qquad e_d(w,z) = \left(\frac{\pi}{d}\right)^n E_d\left(\exp_x\left(\frac{w}{\sqrt{d}}\right),\exp_x\left(\frac{z}{\sqrt{d}}\right)\right).
\end{equation}
We consider $e_d$ as a map with values in $\End \left(\R\!\left(\E \otimes \L^d\right)_x\right)$ using the real normal trivialization around $x$. Note that $e_d$ highly depends on $x$, even if this is not reflected in the notation. In the following, the base point $x$ will always be clear from the context.

Let $\xi : \R^n \times \R^n \to \R$ be defined by $\xi(w,z) = \exp \left(-\frac{1}{2}\Norm{w-z}^2\right)$, where $\Norm{\cdot}$ is the usual Euclidean norm. Let $x \in M$, any isometry from $T_xM$ to $\R^n$ allows us to see $\xi$ as a map from $T_xM \times T_xM$ to $\R$. Let $b_n$ be a positive constant depending only on $n$ and whose value will be chosen later on. Then, using the same notations as in Thm.~\ref{thm Dai Liu Ma} we get the following.

\begin{prop}
\label{prop near diag estimates}
Let $\alpha \in (0,1)$, then there exists $C>0$, depending only on $\alpha$, $n$ and the geometry of $\X$, such that $\forall k \in \{0,1,\dots,6\}$, $\forall d \in \N^*$, $\forall x \in M$, $\forall w,z \in B_{T_xM}(0,b_n \ln d)$, we have:
\begin{equation*}
\label{eq near diag estimates}
\Norm{D^k_{(w,z)}e_d - \left(D^k_{(w,z)}\xi\right)\Id_{\R(\E\otimes\L^d)_x}} \leq Cd^{-\alpha}.
\end{equation*}
\end{prop}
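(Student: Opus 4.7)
The plan is to deduce Proposition~\ref{prop near diag estimates} from Theorem~\ref{thm Dai Liu Ma} by evaluating that estimate at the point $(w/\sqrt{d}, z/\sqrt{d})$, rescaling, and comparing the resulting leading profile with $\xi(w,z)\Id$. In the real normal trivialization, the identity $e_d(w,z) = (\pi/d)^n E_d(w/\sqrt{d}, z/\sqrt{d})$ together with the chain rule gives
\[ D^k_{(w,z)} e_d = \left(\frac{\pi}{d}\right)^n d^{-k/2}\, (D^k E_d)\!\left(\frac{w}{\sqrt{d}}, \frac{z}{\sqrt{d}}\right), \]
and similarly for any scaled profile. Setting
\[ f_d(w,z) = \frac{1}{\sqrt{\kappa(w/\sqrt{d})\, \kappa(z/\sqrt{d})}}, \]
the factor $d^{n+k/2-1}$ in the Dai--Liu--Ma bound is exactly compensated by $(\pi/d)^n d^{-k/2}$, so we obtain, uniformly in $x \in M$,
\[ \Norm{D^k_{(w,z)} e_d - D^k_{(w,z)}\!\bigl(\xi(w,z) f_d(w,z) \Id\bigr)} \leq C\, d^{-1} (1 + \Norm{w} + \Norm{z})^{2n+12}\, e^{-C_2 \Norm{w-z}} + O(d^{-\infty}). \]
On $B_{T_xM}(0, b_n \ln d)$ the polynomial factor is $O((\ln d)^{2n+12})$, so this right-hand side is $O(d^{-1}(\ln d)^{2n+12})$, which is $o(d^{-\alpha})$ for any $\alpha < 1$.

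It then remains to bound $D^k_{(w,z)}\!\bigl(\xi(w,z)(f_d(w,z) - 1)\Id\bigr)$. Because $\kappa$ is the Riemannian volume density in normal coordinates, one has $\kappa(0)=1$ and $d\kappa_0 = 0$, hence $\kappa(u) = 1 + O(\Norm{u}^2)$ uniformly in $x \in M$ by compactness, which yields
\[ f_d(w,z) - 1 = O\!\left(\frac{\Norm{w}^2 + \Norm{z}^2}{d}\right) = O\!\left(\frac{(\ln d)^2}{d}\right) \]
on the ball $B_{T_xM}(0, b_n \ln d)$. Writing $F(u,v) = 1/\sqrt{\kappa(u)\kappa(v)}$, the chain rule gives $D^l f_d(w,z) = d^{-l/2}(D^l F)(w/\sqrt{d}, z/\sqrt{d})$. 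Since $F(0,0) = 1$ and $DF(0,0) = 0$, Taylor expansion shows that $\Norm{D^l(f_d - 1)}$ is $O(d^{-1}(\ln d)^{A_l})$ for $l \in \{0,1,2\}$ and $O(d^{-l/2})$ for $l \in \{3,\dots,6\}$.

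By the Leibniz rule,
\[ D^k\!\bigl(\xi(f_d - 1)\bigr) = \sum_{j+l=k} \binom{k}{j} D^j \xi \cdot D^l(f_d - 1). \]
Each $D^j \xi(w,z)$ is a polynomial of degree $j$ in $w-z$ multiplied by the Gaussian $\xi(w,z)$, hence uniformly bounded on $\R^n \times \R^n$ by a constant depending only on $n$ and $j$. Combining with the estimates on $D^l(f_d-1)$ yields $\Norm{D^k(\xi(f_d-1))} = O(d^{-1}(\ln d)^B)$ for some $B$, which is $O(d^{-\alpha})$ for any $\alpha \in (0,1)$. Together with the estimate from the first paragraph this proves the proposition.

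The main technical point, rather than a deep obstacle, is the bookkeeping in the Leibniz expansion. It is essential that $DF(0,0) = 0$, so that the $l = 1$ term contributes $O((\ln d)/d)$ rather than $O(d^{-1/2})$; without this the approach would only yield $\alpha < 1/2$. All constants entering the argument can be chosen uniformly in $x \in M$ by compactness of $M$ and smoothness of $\kappa$, which confirms that the constant $C$ depends only on $\alpha$, $n$, and the geometry of $\X$, as required.
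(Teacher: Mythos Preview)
Your proof is correct and follows essentially the same approach as the paper: apply Theorem~\ref{thm Dai Liu Ma} to the scaled kernel $e_d$, which handles the difference $e_d - \xi f_d\,\Id$, and then control $\xi(f_d-1)$ and its derivatives using the normal-coordinate expansion $\kappa(u)=1+O(\Norm{u}^2)$, $D\kappa(0)=0$, together with the Leibniz rule. The paper phrases the second step in terms of $\tilde\kappa(z)=\kappa(z/\sqrt d)$ rather than $F=1/\sqrt{\kappa\kappa}$, and records the cruder bound $D^k_z\tilde\kappa=O(d^{-1})$ for $k\geq 2$ instead of your sharper $O(d^{-k/2})$, but the argument is the same and your emphasis on the necessity of $DF(0,0)=0$ is well placed.
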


\begin{proof}
First, we apply Thm.~\ref{thm Dai Liu Ma} for the scaled kernel $e_d$. This yields that $\forall k \in \{0,1,\dots,6\}$, $\forall d \in \N^*$, $\forall x \in M$, $\forall w,z \in B_{T_xM}(0,b_n \ln d)$:
\begin{equation*}
\Norm{D^k_{(w,z)}\left(e_d(w,z)-\frac{\xi(w,z)}{\sqrt{\tilde{\kappa}(w)\tilde{\kappa}(z)}}\Id_{\R(\E\otimes\L^d)_x}\right)} \leq \frac{C_1}{d} \left(1+2b_n \ln d\right)^{2n+12} +O\!\left(d^{-\infty}\right) = O\!\left(d^{-\alpha}\right),
\end{equation*}
where $\tilde{\kappa}: z \mapsto \kappa\left(\frac{z}{\sqrt{d}}\right)$. Since we used normal coordinates to define $\kappa$, the following relations hold uniformly on $B_{T_x\X}(0,R)$:
\begin{align*}
\kappa(z) &= 1 + O\!\left(\Norm{z}^2\right), & D_z\kappa &= O\!\left(\Norm{z}\right) & &\text{and} & \forall k \in \{2,\dots,6\}, \ D_z^k \kappa & = O(1).
\end{align*}
By compactness, these estimates can be made independent of $x \in M$. Then, we get the following estimates for $\tilde{\kappa}$ and its derivatives, uniformly in $x \in M$ and $z \in B_{T_xM}(0,b_n \ln d)$:
\begin{align*}
\tilde{\kappa}(z) &= 1 + O\!\left(\frac{(b_n \ln d)^2}{d}\right), & D_z\tilde{\kappa} &= O\!\left(\frac{b_n \ln d}{d}\right) & &\text{and} & \forall k \in \{2,\dots,6\}, \ D_z^k \tilde{\kappa} & = O\!\left(\frac{1}{d}\right).
\end{align*}
Therefore, $\forall k \in \{0,1,\dots,6\}$, $\forall d \in \N^*$, $\forall x \in M$, $\forall w,z \in B_{T_xM}(0,b_n \ln d)$:
\begin{equation*}
\Norm{D^k_{(w,z)}\left(\frac{\xi(w,z)}{\sqrt{\tilde{\kappa}(w)\tilde{\kappa}(z)}}\right) - D^k_{(w,z)}\xi} = O\!\left(d^{-\alpha}\right). \qedhere
\end{equation*}
\end{proof}

We will use the expressions of some of the partial derivatives of $\xi$. Let us choose any orthonormal basis of $T_xM$ and denote by $\partial_{x_i}$ (resp.~$\partial_{y_i}$) the partial derivative with respect to the $i$-th component of the the first (resp.~second) variable.

\begin{lem}
\label{lem values of limit derivatives}
Let $i,j \in \{1,\dots,n\}$, for all $w=(w_1,\dots,w_n)$ and $z=(z_1,\dots,z_n) \in T_xM$ we have:
\begin{align*}
\partial_{x_i}\xi(w,z) &= -(w_i - z_i)\exp \left(-\frac{1}{2}\Norm{w-z}^2\right),\\
\partial_{y_j}\xi(x,y) &= (w_j - z_j)\exp \left(-\frac{1}{2}\Norm{w-z}^2\right),\\
\text{and} \qquad \partial_{x_i}\partial_{y_j}\xi(x,y) &= \left(\delta_{ij} -(w_i - z_i)(w_j-z_j)\right)\exp \left(-\frac{1}{2}\Norm{w-z}^2\right),
\end{align*}
where $\delta_{ij}$ equals $1$ if $i=j$ and $0$ otherwise.
\end{lem}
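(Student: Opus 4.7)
The plan is simply to write $\xi$ explicitly in the chosen orthonormal coordinates on $T_xM$ and to apply the chain rule. Identifying $T_xM$ with $\R^n$ through the fixed orthonormal basis, we have
\begin{equation*}
\xi(w,z) = \exp\!\left(-\tfrac{1}{2}\sum_{k=1}^{n}(w_k - z_k)^2\right),
\end{equation*}
so that each of the three formulas will be a direct consequence of elementary partial differentiation.

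First I would compute the first-order derivatives. Since only the summand $(w_i - z_i)^2$ in the exponent depends on $w_i$, the chain rule immediately gives $\partial_{x_i}\xi(w,z) = -(w_i - z_i)\,\xi(w,z)$. Differentiating instead with respect to the $j$-th coordinate of the second variable produces an extra minus sign coming from $\partial_{z_j}(w_j - z_j) = -1$, yielding $\partial_{y_j}\xi(w,z) = (w_j - z_j)\,\xi(w,z)$. These are the first two formulas.

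For the mixed second derivative, I would start from the expression just obtained for $\partial_{y_j}\xi$ and differentiate it with respect to $w_i$ using the Leibniz rule. The linear factor $(w_j - z_j)$ contributes a Kronecker symbol $\delta_{ij}$ when differentiated in $w_i$, while the Gaussian factor contributes $-(w_i - z_i)\,\xi(w,z)$. Adding these two contributions yields exactly $\partial_{x_i}\partial_{y_j}\xi(w,z) = (\delta_{ij} - (w_i - z_i)(w_j - z_j))\,\xi(w,z)$.

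There is no real obstacle here: the lemma is a routine application of the chain rule and the Leibniz rule to the exponential of a quadratic polynomial. The only point worth highlighting is the origin of the Kronecker symbol, which comes solely from differentiating the linear prefactor $(w_j - z_j)$ appearing in $\partial_{y_j}\xi$; it is precisely this term that prevents $\partial_{x_i}\partial_{y_j}\xi$ from factoring as the product of $\partial_{x_i}\xi$ and $\partial_{y_j}\xi$ divided by $\xi$.
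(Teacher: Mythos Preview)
Your proof is correct and matches the paper's approach exactly: the paper's proof consists of the single sentence ``This is given by a direct computation,'' and what you wrote is precisely that computation carried out explicitly.
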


\begin{proof}
This is given by a direct computation.
\end{proof}

%%%%%%%%%%%%%%%%%%%%%%%%%%%%%%%%%%%%%%%%%%%%%%%%%%%%%%%%%%%%%%%%%%%%%%%%%%%%%%%%%%%%%%%%%%%%%%%%%%%%%%%%%%%%%%

\subsection{Off-diagonal estimates}
\label{subsec off-diagonal estimates}

Finally, let us recall estimates quantifying the long range decay of the Bergman kernel $E_d$. These estimates were proved by Ma and Marinescu in \cite[Thm.~5]{MM2015}.

Let $S$ be a smooth section of $\R\!\left(\E \otimes \L^d\right) \boxtimes \R\!\left(\E \otimes \L^d\right)^*$ and  let $x,y \in M$. We denote by $\Norm{S(x,y)}_{\mathcal{C}^k}$ the maximum of the norms of $S$ and its derivatives of order at most $k$ at $(x,y)\in M\times M$, where the derivatives of $S$ are computed with respect to the connection induced by the Chern connection of $\E\otimes\L^d$ and the Levi--Civita connection on $M$. The norms are the natural ones induced by $h_d$ and $g$.

\begin{thm}[Ma--Marinescu]
\label{thm off diag estimates}
There exist $d_0 \in \N^*$, and positive constants $C_1$ and $C_2$ such that, for all $k \in \{0,1,2\}$, $\forall d \geq d_0$, $\forall x,y \in M$, we have:
\begin{equation*}
\Norm{E_d(x,y)}_{\mathcal{C}^k} \leq C_1 d^{n+\frac{k}{2}} \exp \left(-C_2 \sqrt{d}\, \rho_g(x,y)\right),
\end{equation*}
where $\rho_g(\cdot,\cdot)$ denotes the geodesic distance in $(M,g)$.
\end{thm}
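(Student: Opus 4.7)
The plan is to follow the spectral gap + finite propagation speed approach pioneered by Bismut--Vasserot and refined by Ma--Marinescu. The Bergman kernel $E_d$ is the Schwartz kernel of the orthogonal projector $P_d$ from $L^2(\X,\E\otimes\L^d)$ onto $H^0(\X,\E\otimes\L^d)$. Introduce the modified Dirac operator $D_d=\sqrt{2}(\bar\partial+\bar\partial^*)$ acting on $\Omega^{0,\bullet}(\X,\E\otimes\L^d)$. Since $(\L,h_\L)$ has positive curvature, Bismut--Vasserot's spectral gap gives a constant $\mu_0>0$ and $d_1\in\N$ with
\[
\mathrm{spec}(D_d^2)\subset\{0\}\cup[\mu_0 d,+\infty),\qquad d\geq d_1,
\]
and $\ker(D_d)=H^0(\X,\E\otimes\L^d)$ for such $d$, so that $P_d=\mathbf{1}_{\{0\}}(D_d^2)$.

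Second, I would use functional calculus. Fix an even cut-off $\varphi\in\mathcal{C}^\infty_c(\R)$ with $\varphi\equiv 1$ on $[-\mu_0/2,\mu_0/2]$ and $\mathrm{supp}\,\varphi\subset(-\mu_0,\mu_0)$; then $\varphi(D_d^2/d)=P_d$ for $d\geq d_1$. Writing $\varphi(u^2)=\widetilde\varphi(u)$ and using Fourier inversion,
\[
P_d=\varphi(D_d^2/d)=\frac{1}{2\pi}\int_{\R}\widehat{\widetilde\varphi}(t)\,\cos\bigl(t D_d/\sqrt d\bigr)\,dt.
\]
Now apply finite propagation speed for the half-wave equation on the Clifford bundle: the Schwartz kernel of $\cos(tD_d/\sqrt d)$ is supported in $\{(x,y)\in\X^2 : \rho_g(x,y)\leq |t|/\sqrt d\}$. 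Therefore for $x\neq y$, only values $|t|\geq\sqrt d\,\rho_g(x,y)$ contribute to $E_d(x,y)$.

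Third, to upgrade rapid decay into true exponential decay, I would choose $\varphi$ in a Gevrey class (or, equivalently, take its holomorphic extension to a strip of width $c$) so that $|\widehat{\widetilde\varphi}(t)|\leq C_1 e^{-c|t|}$. Combining with the finite propagation cut-off,
\[
\|E_d(x,y)\|\leq C\int_{|t|\geq\sqrt d\,\rho_g(x,y)}e^{-c|t|}\,dt\leq C'e^{-c\sqrt d\,\rho_g(x,y)}.
\]
The prefactor $d^{n+k/2}$ and the derivative estimates for $k\in\{0,1,2\}$ are obtained by commuting $\nabla_x^{k_1}\nabla_y^{k_2}$ (with $k_1+k_2=k$) past $\cos(tD_d/\sqrt d)$, using Sobolev embedding on $\X$ of order $>n/2$ and the on-diagonal bound $\|E_d(x,x)\|\leq C d^n$ coming from the Dai--Liu--Ma asymptotic expansion (Theorem~\ref{thm Dai Liu Ma}). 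Each derivative in the spatial variable produces an extra factor $\sqrt d$ because of the rescaling hidden in $D_d/\sqrt d$, yielding the announced $d^{n+k/2}$ growth.

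The main obstacle is the exponential rate: smoothness of $\varphi$ alone gives only rapid polynomial decay in $\sqrt d\,\rho_g(x,y)$. Extracting a genuine exponential bound forces one to work in a Gevrey (or almost-analytic) functional calculus, and to check uniformity of the constants $C_1,C_2,c$ with respect to $(x,y)\in M\times M$. This uniformity follows from compactness of $\X$, but it must be propagated carefully through the wave-kernel estimates, which is the technical heart of~\cite{MM2015}.
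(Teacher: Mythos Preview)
The paper does not give its own proof of this theorem: it is quoted as a result of Ma--Marinescu \cite[thm.~5]{MM2015} and used as a black box, so there is no ``paper's proof'' to compare against.

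Your outline is a faithful sketch of the method actually used in that reference (and, in a weaker form, in the earlier Ma--Marinescu monograph): the Bismut--Vasserot spectral gap isolates $\ker D_d$ from the rest of the spectrum; functional calculus writes $P_d=F(D_d/\sqrt d)$ for a suitable even function $F$; finite propagation speed of the hyperbolic operator $\cos(tD_d)$ localizes the Schwartz kernel; and a choice of $F$ whose Fourier transform decays exponentially converts the localization into the exponential bound $e^{-C_2\sqrt d\,\rho_g(x,y)}$. The $d^{n+k/2}$ prefactor is recovered from Sobolev embedding together with the on-diagonal asymptotics of Theorem~\ref{thm Dai Liu Ma}, each covariant derivative costing a factor $\sqrt d$ because of the rescaling. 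You have also correctly identified the only genuine subtlety, namely that smoothness of the cut-off alone gives rapid decay but not exponential decay, so that one must work with functions admitting a holomorphic extension to a strip (or equivalently with compactly supported Fourier transform). This is exactly the point handled in \cite{MM2015}.
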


%%%%%%%%%%%%%%%%%%%%%%%%%%%%%%%%%%%%%%%%%%%%%%%%%%%%%%%%%%%%%%%%%%%%%%%%%%%%%%%%%%%%%%%%%%%%%%%%%%%%%%%%%%%%%%
%%%%%%%%%%%%%%%%%%%%%%%%%%%%%%%%%%%%%%%%%%%%%%%%%%%%%%%%%%%%%%%%%%%%%%%%%%%%%%%%%%%%%%%%%%%%%%%%%%%%%%%%%%%%%%

\section{Properties of the limit distribution}
\label{sec properties of the limit distribution}

The estimates of Sect.~\ref{subsec near-diagonal estimates} show that the family of random fields $(s_d(x))_{x \in M}$ has a local scaling limit around any point $x \in M$, as $d$ goes to infinity. Moreover, the limit field does not depend on $x$. The limit is a Gaussian centered random process from $\R^n$ to $\R^r$ whose correlation kernel is $e_\infty : (w,z) \mapsto \xi(w,z) I_r$, where $I_r$ stands for the identity of $\R^r$ and $\xi$ was defined in Sect.~\ref{subsec near-diagonal estimates}. This limit process is known as the Bargmann--Fock process.

The goal of this section is to establish some properties of the Bargmann--Fock process. These results will be useful in the next section to prove that, for $d$ large enough, the local behavior of $s_d$ around any given $x \in M$ is the same as that of the limit process.

In the following, we denote by $(s(z))_{z \in \R^n}$ a copy of the Bargmann--Fock process. Since $\xi$ is smooth, we can assume the trajectories of $s$ to be smooth. Note that $s$ is both stationary and isotropic. Moreover, since $e_\infty = \xi I_r$, the field $s$ is just a tuple of $r$ independent identically distributed centered Gaussian fields whose correlation kernel is $\xi$.

%%%%%%%%%%%%%%%%%%%%%%%%%%%%%%%%%%%%%%%%%%%%%%%%%%%%%%%%%%%%%%%%%%%%%%%%%%%%%%%%%%%%%%%%%%%%%%%%%%%%%%%%%%%%%%

\subsection{Variance of the values}
\label{subsec variance of the values}

The first thing we want to understand about $s$ is the distribution of $(s(0),s(z))\in \R^r \oplus \R^r$ for any $z \in \R^n$. In the following, we canonically identify $\R^r \oplus \R^r$ with $\R^2 \otimes \R^r$.

Let $z \in \R^n$, then $(s(0),s(z))$ is a centered Gaussian vector in $\R^2 \otimes \R^r$ whith variance operator:
\begin{equation}
\label{eq def Theta}
\Theta(z) = \begin{pmatrix}
e_\infty(0,0) & e_\infty(0,z) \\ e_\infty(z,0) & e_\infty(z,z)
\end{pmatrix} = \begin{pmatrix}
\xi(0,0)I_r & \xi(0,z)I_r \\ \xi(z,0)I_r & \xi(z,z)I_r
\end{pmatrix} = \begin{pmatrix}
1 & e^{-\frac{1}{2}\Norm{z}^2} \\ e^{-\frac{1}{2}\Norm{z}^2} & 1
\end{pmatrix} \otimes I_r.
\end{equation}

Let $Q = \frac{1}{\sqrt{2}} \left(\begin{smallmatrix} 1 & -1 \\ 1 & 1 \end{smallmatrix} \right) \in O_2(\R)$ denote the rotation of angle $\frac{\pi}{4}$ in $\R^2$. We can explicitly diagonalize $\Theta(z)$ as follows.

\begin{lem}
\label{lem diagonalization Theta}
For any $z \in \R^n$ we have the following:
\begin{equation*}
\left(Q \otimes I_r\right) \Theta(z) \left(Q \otimes I_r\right)^{-1} = \begin{pmatrix}
1 - e^{-\frac{1}{2}\Norm{z}^2} & 0 \\ 0 & 1+ e^{-\frac{1}{2}\Norm{z}^2} \end{pmatrix} \otimes I_r.
\end{equation*}
\end{lem}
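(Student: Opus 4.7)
The plan is to reduce the claim to an elementary $2 \times 2$ matrix computation via the mixed--product property of Kronecker products. Set $a = e^{-\frac{1}{2}\Norm{z}^2}$. Then \eqref{eq def Theta} gives directly $\Theta(z) = M(z) \otimes I_r$ with
$$M(z) = \begin{pmatrix} 1 & a \\ a & 1 \end{pmatrix}.$$
Since $Q \otimes I_r$ is invertible with inverse $Q^{-1} \otimes I_r$, and since $(A \otimes I_r)(B \otimes I_r) = (AB) \otimes I_r$ for all $2 \times 2$ real matrices $A, B$, the left-hand side of the claimed identity becomes
$$\bigl(Q \otimes I_r\bigr)\Theta(z)\bigl(Q \otimes I_r\bigr)^{-1} = \bigl(Q\, M(z)\, Q^{-1}\bigr) \otimes I_r.$$
Thus everything reduces to the scalar identity $Q\, M(z)\, Q^{-1} = \operatorname{diag}(1-a,\, 1+a)$.

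To verify the latter, I would just perform the two matrix multiplications. The cleanest route is to check the equivalent form $M(z) = Q^{-1} D\, Q$ with $D = \operatorname{diag}(1-a, 1+a)$: computing $Q^{-1} D = Q^{\mathrm t} D = \tfrac{1}{\sqrt{2}}\bigl(\begin{smallmatrix} 1-a & 1+a \\ -(1-a) & 1+a \end{smallmatrix}\bigr)$ and multiplying by $Q = \tfrac{1}{\sqrt{2}}\bigl(\begin{smallmatrix} 1 & -1 \\ 1 & 1 \end{smallmatrix}\bigr)$ on the right yields $\tfrac{1}{2}\bigl(\begin{smallmatrix} 2 & 2a \\ 2a & 2 \end{smallmatrix}\bigr) = M(z)$. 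Conceptually, $M(z)$ is symmetric with eigenpairs $\bigl((1,1)^{\mathrm t}/\sqrt{2},\, 1+a\bigr)$ and $\bigl((-1,1)^{\mathrm t}/\sqrt{2},\, 1-a\bigr)$, and the columns of $Q$ are precisely these eigenvectors in reverse order, which explains both the appearance of $Q$ and the specific ordering $(1-a, 1+a)$ of the eigenvalues on the diagonal.

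There is essentially no obstacle: the only point requiring a bit of care is the ordering of the eigenvalues, since using the other rotation by $\pi/4$ (or equivalently swapping the two columns of $Q$) would exchange the two diagonal entries. With the $Q$ fixed in the statement, the computation above pins down the order uniquely.
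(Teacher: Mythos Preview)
Your proof is correct and follows essentially the same approach as the paper: reduce via the tensor structure $\Theta(z)=M(z)\otimes I_r$ and the mixed-product rule to the elementary $2\times 2$ identity $Q\,M(z)\,Q^{\mathrm t}=\operatorname{diag}(1-a,1+a)$, then check this by direct multiplication. The paper's proof is the same computation stated in one line; your only addition is the eigenvector interpretation, which is a nice gloss though the phrase ``in reverse order'' is slightly imprecise (it is the columns of $Q^{-1}=Q^{\mathrm t}$, not of $Q$, that determine the diagonal ordering in $QMQ^{-1}$), but this does not affect the argument.
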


\begin{proof}
Since $\left(Q \otimes I_r\right)^{-1} = Q^\text{t} \otimes I_r$, by Eq.~\eqref{eq def Theta}, it is enough to notice that:
\begin{equation*}
Q \begin{pmatrix}
1 & e^{-\frac{1}{2}\Norm{z}^2} \\ e^{-\frac{1}{2}\Norm{z}^2} & 1
\end{pmatrix} Q^\text{t} = \begin{pmatrix}
1 - e^{-\frac{1}{2}\Norm{z}^2} & 0 \\ 0 & 1+ e^{-\frac{1}{2}\Norm{z}^2} \end{pmatrix}.\qedhere
\end{equation*}
\end{proof}

\begin{lem}
\label{lem det Theta}
For all $z \in \R^n$, $\det\left(\Theta(z)\right) = \left(1-e^{-\Norm{z}^2}\right)^r$. In particular, the distribution of $(s(0),s(z))$ is non-degenerate for all $z \in \R^n \setminus \{0\}$.
\end{lem}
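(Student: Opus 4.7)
The plan is to deduce Lemma~\ref{lem det Theta} directly from Lemma~\ref{lem diagonalization Theta} combined with the standard determinant identity for Kronecker products. Since $Q \in O_2(\R)$, the matrix $Q \otimes I_r$ is orthogonal, so conjugation by it preserves the determinant. Hence $\det(\Theta(z))$ equals the determinant of the right-hand side in Lemma~\ref{lem diagonalization Theta}.

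Next, I will invoke the formula $\det(A \otimes B) = \det(A)^m \det(B)^k$ for $A$ of size $k \times k$ and $B$ of size $m \times m$. Here $A = \mathrm{diag}\bigl(1 - e^{-\frac{1}{2}\Norm{z}^2}, 1 + e^{-\frac{1}{2}\Norm{z}^2}\bigr)$ is $2 \times 2$ and $B = I_r$, so the determinant of the Kronecker product is $\det(A)^r$. A direct computation yields
\begin{equation*}
\det(A) = \bigl(1 - e^{-\frac{1}{2}\Norm{z}^2}\bigr)\bigl(1 + e^{-\frac{1}{2}\Norm{z}^2}\bigr) = 1 - e^{-\Norm{z}^2},
\end{equation*}
and the announced formula $\det(\Theta(z)) = \bigl(1 - e^{-\Norm{z}^2}\bigr)^r$ follows.

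Finally, to conclude the non-degeneracy statement, I will observe that for every $z \in \R^n \setminus \{0\}$ we have $e^{-\Norm{z}^2} < 1$, so $\det(\Theta(z)) > 0$. Since $\Theta(z)$ is the variance operator of the Gaussian vector $(s(0), s(z))$ in $\R^{2r}$, it is non-negative symmetric, and positivity of its determinant ensures that it is in fact positive definite. Hence $(s(0), s(z))$ admits a smooth density on $\R^{2r}$ given by formula~\eqref{eq Gaussian density}, which is precisely the non-degeneracy claim.

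There is no real obstacle here: the work has already been done in Lemma~\ref{lem diagonalization Theta}, and the proof reduces to a two-line computation together with an elementary property of Kronecker products. The only minor care needed is the correct exponent in $\det(A \otimes I_r) = \det(A)^r$, which comes from the fact that $I_r$ contributes $r$ identical $2 \times 2$ diagonal blocks after an appropriate reordering of basis vectors of $\R^2 \otimes \R^r$.
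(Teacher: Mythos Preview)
Your proof is correct and essentially the same as the paper's. The only difference is that the paper takes the determinant directly from the tensor product expression in Eq.~\eqref{eq def Theta} rather than first passing through the diagonalization of Lemma~\ref{lem diagonalization Theta}; since conjugation by $Q\otimes I_r$ preserves the determinant anyway, the two routes amount to the same one-line computation $\det(A\otimes I_r)=\det(A)^r$ with $\det(A)=1-e^{-\Norm{z}^2}$.
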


\begin{proof}
We take the determinant of both sides in Eq.~\eqref{eq def Theta}.
\end{proof}

%%%%%%%%%%%%%%%%%%%%%%%%%%%%%%%%%%%%%%%%%%%%%%%%%%%%%%%%%%%%%%%%%%%%%%%%%%%%%%%%%%%%%%%%%%%%%%%%%%%%%%%%%%%%%%

\subsection[Variance of the 1-jets]{Variance of the $1$-jets}
\label{subsec variance of the 1-jets}

Let us now study the variance structure of the $1$-jets of $s$. For any $z \in \R^n$, we know that $(s(0),s(z),d_0s,d_zs)$ is a centered Gaussian vector in:
\begin{equation*}
\label{eq splitting}
\R^r \oplus \R^r \oplus \left((\R^n)^* \otimes \R^r\right) \oplus \left((\R^n)^* \otimes \R^r\right) \simeq \left(\R \oplus \R \oplus (\R^n)^* \oplus (\R^n)^*\right) \otimes \R^r.
\end{equation*}
Our goal in this section is to better understand its variance operator $\Omega(z)$. In the following, we write $\Omega(z)$ by blocks according to the previous splitting. Let $\partial_x$ (resp.~$\partial_y$) denote the partial derivative with respect to the first (resp.~second) variable for maps from $\R^n \times \R^n$ to $\End(\R^r)$. Let us also define $\partial_y^\sharp$ as in Sect.~\ref{subsec correlation kernel}. Then, we have:
\begin{equation}
\label{eq expression Omega}
\Omega(z) = \left( \begin{array}{cc|cc}
e_\infty(0,0) & e_\infty(0,z) & \partial^\sharp_y e_\infty(0,0) & \partial^\sharp_y e_\infty(0,z) \\ \rule[-5pt]{0pt}{15pt} e_\infty(z,0) & e_\infty(z,z) & \partial^\sharp_y e_\infty(z,0) & \partial^\sharp_y e_\infty(z,z) \\
\hline
\rule[-5pt]{0pt}{15pt}
\partial_x e_\infty(0,0) & \partial_x e_\infty(0,z) & \partial_x \partial^\sharp_y e_\infty(0,0) & \partial_x \partial^\sharp_y e_\infty(0,z) \\
\partial_x e_\infty(z,0) & \partial_x e_\infty(z,z) & \partial_x \partial^\sharp_y e_\infty(z,0) & \partial_x \partial^\sharp_ye_\infty(z,z)
\end{array} \right) = \Omega'(z) \otimes I_r,
\end{equation}
where
\begin{equation*}
\label{eq def Omega prime}
\Omega'(z) = \left( \begin{array}{cc|cc}
\xi(0,0) & \xi(0,z) & \partial^\sharp_y \xi(0,0) & \partial^\sharp_y \xi(0,z) \\ \rule[-5pt]{0pt}{15pt} \xi(z,0) & \xi(z,z) & \partial^\sharp_y \xi(z,0) & \partial^\sharp_y \xi(z,z) \\
\hline
\rule[-5pt]{0pt}{15pt}
\partial_x \xi(0,0) & \partial_x \xi(0,z) & \partial_x \partial^\sharp_y \xi(0,0) & \partial_x \partial^\sharp_y \xi(0,z) \\
\partial_x \xi(z,0) & \partial_x \xi(z,z) & \partial_x \partial^\sharp_y \xi(z,0) & \partial_x \partial^\sharp_y \xi(z,z)
\end{array} \right).
\end{equation*}

Let $\left(\deron{}{x_1},\dots,\deron{}{x_n}\right)$ be any orthonormal basis of $\R^n$ such that $z = \Norm{z}\deron{}{x_1}$ and let $(dx_1,\dots,dx_n)$ denote its dual basis. Let $(e_1,e_2)$ denote the canonical basis of $\R^2$, we denote by $\mathcal{B}_z$ the following orthonormal basis of $\R^2 \otimes \left(\R \oplus (\R^n)^*\right) \simeq \R \oplus \R \oplus (\R^n)^* \oplus (\R^n)^*$:
\begin{equation*}
\label{eq def Bz}
\mathcal{B}_z = (e_1\otimes 1, e_2 \otimes 1, e_1 \otimes dx_1,e_2 \otimes dx_1, \dots, e_1 \otimes dx_n, e_2 \otimes dx_n).
\end{equation*}

\begin{lem}
\label{lem variance operator 1-jets}
For any $z \in \R^n$, the matrix of $\Omega'(z)$ in the basis $\mathcal{B}_z$ is:
\begin{equation*}
\left(\begin{array}{c|c}
\tilde{\Omega}(\Norm{z}^2) & 0 \\
\hline
\rule{0pt}{14pt}
0 & \left(\begin{smallmatrix}
1 & e^{-\frac{1}{2}\Norm{z}^2} \\ e^{-\frac{1}{2}\Norm{z}^2} & 1
\end{smallmatrix}\right) \otimes I_{n-1}
\end{array}\right),
\end{equation*}
where $I_{n-1}$ is the identity matrix of size $n-1$ and, for all $t \geq 0$, we set:
\begin{equation}
\label{eq def Omega tilde}
\tilde{\Omega}(t) = \begin{pmatrix}
1 & e^{-\frac{1}{2}t} & 0 & -\sqrt{t}e^{-\frac{1}{2}t}\\
e^{-\frac{1}{2}t} & 1 & \sqrt{t}e^{-\frac{1}{2}t} & 0\\
0 & \sqrt{t}e^{-\frac{1}{2}t} & 1 & (1-t)e^{-\frac{1}{2}t}\\
-\sqrt{t}e^{-\frac{1}{2}t} & 0 & (1-t)e^{-\frac{1}{2}t} & 1
\end{pmatrix}.
\end{equation}
\end{lem}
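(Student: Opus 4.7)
The proof is essentially a direct computation using Lemma~\ref{lem values of limit derivatives} and the specific choice of orthonormal basis adapted to $z$. My plan is as follows.

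First, I would observe that since $e_\infty = \xi \, I_r$ and $\Omega'(z)$ is the scalar part of $\Omega(z)$, it suffices to evaluate $\xi$, $\partial_x \xi$, $\partial_y \xi$, $\partial_x \partial_y \xi$ (each understood via the musical isomorphism in the $\sharp$ components, which in an orthonormal basis of $\R^n$ is just the identification between $(\R^n)^*$ and $\R^n$) at the four points $(0,0)$, $(0,z)$, $(z,0)$, $(z,z)$. The key advantage of the adapted basis $(\partial/\partial x_1,\dots,\partial/\partial x_n)$, in which $z = \|z\|\,\partial/\partial x_1$, is that the vector $w-z$ has at most one nonzero component at each of these four points. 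Setting $t = \|z\|^2$, Lemma~\ref{lem values of limit derivatives} then yields $\partial_{x_i}\partial_{y_j}\xi(w,z) = (\delta_{ij} - (w_i-z_i)(w_j-z_j))e^{-\|w-z\|^2/2}$, which at $(0,z)$ and $(z,0)$ gives $(1-t)e^{-t/2}$ if $i=j=1$, $\delta_{ij}e^{-t/2}$ if $i\geq 2$ or $j\geq 2$, and at $(0,0)$ and $(z,z)$ gives $\delta_{ij}$. Similarly $\partial_{x_i}\xi$ and $\partial_{y_j}\xi$ vanish at $(0,0)$ and $(z,z)$, and at $(0,z)$ and $(z,0)$ they are zero unless the index equals $1$, in which case they equal $\pm\sqrt{t}\,e^{-t/2}$.

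Next, I would reorganise the entries according to $\mathcal{B}_z$, which pairs $e_1\otimes dx_i$ with $e_2\otimes dx_i$ for each fixed $i$ (as opposed to the natural basis of~\eqref{eq def Omega prime}, which groups the two tangent blocks together). The central observation is that every matrix entry coupling a basis vector $e_\bullet\otimes 1$ or $e_\bullet\otimes dx_1$ with a basis vector $e_\bullet\otimes dx_i$ for $i\geq 2$ involves either $\partial_{y_i}\xi$, $\partial_{x_i}\xi$, or $\partial_{x_1}\partial_{y_i}\xi$ (or $\partial_{x_i}\partial_{y_1}\xi$) evaluated at one of the four points, and by the computation above each of these vanishes. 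Hence in the basis $\mathcal{B}_z$ the matrix is block-diagonal, with a first $4\times 4$ block indexed by $(e_1\otimes 1, e_2\otimes 1, e_1\otimes dx_1, e_2\otimes dx_1)$ and $n-1$ identical $2\times 2$ blocks indexed by $(e_1\otimes dx_i, e_2\otimes dx_i)$ for $i\in\{2,\dots,n\}$.

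Finally, I would read off the two blocks. The $2\times 2$ blocks for $i\geq 2$ contain only $\partial_{x_i}\partial_{y_i}\xi$ evaluated at the four points, and are equal to $\bigl(\begin{smallmatrix}1 & e^{-t/2}\\ e^{-t/2} & 1\end{smallmatrix}\bigr)$, which tensored over $i\in\{2,\dots,n\}$ gives the second summand in the claimed decomposition. The $4\times 4$ block is exactly the matrix $\tilde{\Omega}(t)$ of~\eqref{eq def Omega tilde}, with the signs of the $\sqrt{t}\,e^{-t/2}$ entries dictated by whether we are computing $\partial_{x_1}\xi$ or $\partial_{y_1}\xi$ and whether evaluation is at $(0,z)$ or $(z,0)$.

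The main potential pitfall is bookkeeping: keeping straight the sign conventions coming from Lemma~\ref{lem values of limit derivatives} (where $\partial_{x_i}\xi$ and $\partial_{y_j}\xi$ carry opposite signs) and the correspondence between the natural block decomposition of~\eqref{eq def Omega prime} and the interleaved basis $\mathcal{B}_z$. No real difficulty beyond this organisational issue is expected, since the vanishing of the off-diagonal blocks is forced by the simple algebraic fact that $w-z$ is a multiple of $\partial/\partial x_1$ at each of the four evaluation points.
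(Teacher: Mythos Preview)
Your proposal is correct and follows essentially the same approach as the paper: the paper's proof simply states that a direct computation using Lemma~\ref{lem values of limit derivatives} and the fact that $z=(\Norm{z},0,\dots,0)$ in the chosen basis yields the result. Your write-up is a correct, more detailed version of that computation, including the explicit verification that the off-block entries vanish because $w-z$ lies in the span of $\partial/\partial x_1$ at each of the four evaluation points.
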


\begin{proof}
A direct computation yields the result, using the fact that $z = (\Norm{z},0,\dots,0)$ in the basis $\left(\deron{}{x_1},\dots,\deron{}{x_n}\right)$. Recall that the partial derivatives of $\xi$ are given by Lem.~\ref{lem values of limit derivatives}.
\end{proof}

Let $z \in \R^n$, and recall that $z^* \in (\R^n)^*$ was defined as $z^* = \prsc{\cdot}{z}$, where $\prsc{\cdot}{\cdot}$ is the canonical scalar product of~$\R^n$. We denote by $z^* \otimes z \in \End((\R^n)^*)$ the map $\eta \mapsto \eta(z)z^*$. Then Lem.~\ref{lem variance operator 1-jets} shows that, as an operator on $\R \oplus \R \oplus (\R^n)^* \oplus (\R^n)^*$:
\begin{equation}
\label{eq Omega prime}
\Omega'(z) = \left(\begin{array}{cc|cc}
1 & e^{-\frac{1}{2}\Norm{z}^2} & 0 & -e^{-\frac{1}{2}\Norm{z}^2} z \\
e^{-\frac{1}{2}\Norm{z}^2} & 1 & e^{-\frac{1}{2}\Norm{z}^2} z & 0 \\
\hline
\rule{0pt}{12pt} 0 & e^{-\frac{1}{2}\Norm{z}^2} z^* & I_n & e^{-\frac{1}{2}\Norm{z}^2}(I_n - z^* \otimes z) \\
-e^{-\frac{1}{2}\Norm{z}^2} z^* & 0 & e^{-\frac{1}{2}\Norm{z}^2}(I_n - z^* \otimes z) & I_n
\end{array}\right),
\end{equation}
where $z^*$ is to be understood as the constant map $t \mapsto z^*$ from $\R$ to $(\R^n)^*$, $z$ is to be understood as the evaluation at the point $z$ from $(\R^n)^*$ to $\R$ and $I_n$ is the identity of $\R^n$. Indeed both sides of~\eqref{eq Omega prime} have the same matrix in the basis $\mathcal{B}_z$.

We will now diagonalize explicitly $\Omega(z)$, as we did for $\Theta(z)$ in the previous section. The main step is to diagonalize $\tilde{\Omega}(z)$.

\begin{dfns}
\label{def a and vi}
We denote by $v_1$, $v_2$, $v_3$, $v_4$ and $a$ the following functions from $[0,+\infty)$ to $\R$:
\begin{align*}
v_1 : t \longmapsto & 1 - e^{-\frac{1}{2}t}\left(\frac{t}{2} - \sqrt{1 + \left(\frac{t}{2}\right)^2}\right), & v_2 : t \longmapsto & 1 - e^{-\frac{1}{2}t}\left(\frac{t}{2} + \sqrt{1 + \left(\frac{t}{2}\right)^2}\right),\\
v_3 : t \longmapsto & 1 + e^{-\frac{1}{2}t}\left(\frac{t}{2} - \sqrt{1 + \left(\frac{t}{2}\right)^2}\right), & v_4 : t \longmapsto & 1 + e^{-\frac{1}{2}t}\left(\frac{t}{2} + \sqrt{1 + \left(\frac{t}{2}\right)^2}\right),\\
a : t \longmapsto & \frac{1- \frac{t}{2}}{\sqrt{1 + \left(\frac{t}{2}\right)^2}}.
\end{align*}
\end{dfns}

Note that, for all $t \geq 0$, $\norm{a(t)} \leq 1$, so that the following makes sense.

\begin{dfns}
\label{def b and P}
Let $b_+ : t \mapsto \sqrt{1+a(t)}$ and $b_-: t \mapsto \sqrt{1-a(t)}$ from $[0,+\infty)$ to $\R$. For all $t \geq 0$, let us also denote:
\begin{equation*}
P(t) = \frac{1}{2} \begin{pmatrix}
b_-(t) & -b_-(t) & -b_+(t) & -b_+(t)\\
b_+(t) & -b_+(t) & b_-(t) & b_-(t)\\
b_-(t) & b_-(t) & -b_+(t) & b_+(t)\\
b_+(t) & b_+(t) & b_-(t) & -b_-(t)
\end{pmatrix}.
\end{equation*}
One can check that, for all $t \geq 0$, $P(t)$ is an orthogonal matrix.
\end{dfns}

\begin{lem}
\label{lem diagonalization Omega tilde}
For every $t \in [0,+\infty)$, we have:
\begin{equation*}
P(t) \tilde{\Omega}(t) P(t)^{-1} = \begin{pmatrix}
v_1(t) & 0 & 0 & 0\\
0 & v_2(t) & 0 & 0\\
0 & 0 & v_3(t) & 0\\
0 & 0 & 0 & v_4(t)
\end{pmatrix}.
\end{equation*}
\end{lem}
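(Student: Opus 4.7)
The plan is to verify directly that the transpose of each row of $P(t)$ is an eigenvector of $\tilde{\Omega}(t)$ with the prescribed eigenvalue. Since $P(t)$ is already asserted to be orthogonal in Definition~\ref{def b and P}, this immediately yields $P(t)\tilde{\Omega}(t)P(t)^{-1} = \operatorname{diag}(v_1(t), v_2(t), v_3(t), v_4(t))$.

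To streamline the algebra, I would first write $\tilde{\Omega}(t) = I_4 + e^{-t/2} A(t)$, with
\[
A(t) = \begin{pmatrix} 0 & 1 & 0 & -\sqrt{t} \\ 1 & 0 & \sqrt{t} & 0 \\ 0 & \sqrt{t} & 0 & 1-t \\ -\sqrt{t} & 0 & 1-t & 0 \end{pmatrix},
\]
and introduce the shorthand $\delta(t) = \sqrt{1+(t/2)^2}$, so that $a(t) = (1-t/2)/\delta(t)$ and $v_j(t) = 1 + e^{-t/2}\mu_j(t)$, where $(\mu_1,\mu_2,\mu_3,\mu_4) = (\delta - t/2,\, -\delta - t/2,\, -\delta + t/2,\, \delta + t/2)$. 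It then suffices to prove that $A(t) u_j = \mu_j(t) u_j$ for $j = 1,\dots,4$, where $u_j$ denotes the transpose of the $j$-th row of $P(t)$.

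Each of these four eigenvector equations splits into four scalar identities of the form $\alpha(t) b_+(t) + \beta(t) b_-(t) = 0$, with $\alpha,\beta$ polynomial in $\sqrt{t}$ and $\delta(t)$. They all follow from three elementary identities,
\[
b_+^2 + b_-^2 = 2, \qquad b_+^2 - b_-^2 = 2a, \qquad b_+ b_- = \sqrt{1-a^2} = \sqrt{t}/\delta,
\]
the last being a reformulation of the key algebraic relation $\delta(t)^2 - (1 - t/2)^2 = t$. As a representative sample: the first component of $A(t)u_1 = \mu_1(t) u_1$ reduces, after using $(u_1)_1 = b_-/2$, $(u_1)_4 = -b_+/2$, to the identity $\sqrt{t}\, b_+ = (1 + \delta - t/2)\, b_-$; squaring and substituting $b_\pm^2 = 1 \pm a$ turns this into exactly $\delta^2 - (1 - t/2)^2 = t$.

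The computation is purely mechanical and presents no conceptual obstacle; the only real pitfall is sign bookkeeping across the four rows and across the alternating positions of $\pm b_+$ and $\pm b_-$ in $P(t)$. The orthogonality of $P(t)$ provides a convenient consistency check: any sign mistake in one eigenvector would immediately break an orthogonality relation, so the four eigenvalue verifications can be carried out independently and then cross-validated at the end.
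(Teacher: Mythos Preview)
Your proposal is correct. Writing $\tilde{\Omega}(t) = I_4 + e^{-\frac{t}{2}} A(t)$ and checking that the four rows of $P(t)$ are eigenvectors of $A(t)$ with eigenvalues $\mu_j(t)$ is a valid and complete verification; the three identities you isolate for $b_\pm$ are exactly what is needed, and the sign issue when squaring is harmless because $1+\delta(t)-\tfrac{t}{2} \geq 1$ for all $t\geq 0$.

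The paper takes a different, more structural route: it factors $P(t) = (A(t)\otimes I_2)\,\sigma\,(Q\otimes I_2)$ as a product of three explicit orthogonal matrices, and performs the conjugation in three steps. Conjugation by $Q\otimes I_2$ and then by the permutation $\sigma$ reduces $\tilde{\Omega}(t)$ to $I_4 + e^{-\frac{t}{2}}\bigl(\begin{smallmatrix}1 & \sqrt{t}\\ \sqrt{t} & t-1\end{smallmatrix}\bigr)\otimes\bigl(\begin{smallmatrix}-1 & 0\\ 0 & 1\end{smallmatrix}\bigr)$, revealing that the $4\times 4$ diagonalization is secretly a tensor product of two $2\times 2$ problems; the final conjugation by $A(t)\otimes I_2$ diagonalizes the first factor. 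This factorization explains \emph{why} $P(t)$ has its particular shape and why the eigenvalues come in the pairs $(\mu_1,\mu_3)=\pm(\delta-\tfrac{t}{2})$ and $(\mu_2,\mu_4)=\pm(\delta+\tfrac{t}{2})$. Your direct verification is more elementary and requires no insight into this tensor structure, at the cost of sixteen scalar identities (all reducing, as you note, to $\delta^2-(1-\tfrac{t}{2})^2=t$). Either approach is perfectly adequate for the lemma as stated.
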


\begin{proof}
See Appendix~\ref{sec technical computations for Section limit distrib}.
\end{proof}

\begin{cor}
\label{cor diagonalization Omega'}
Let $z \in \R^n$, identifying $\Omega'(z)$ with its matrix in $\mathcal{B}_z$, we have:
\begin{multline*}
\left(\begin{array}{c|c}
P(\Norm{z}^2) & 0 \\
\hline
0 & Q \otimes I_{n-1}
\end{array}\right)
\Omega'(z)
\left(\begin{array}{c|c}
P(\Norm{z}^2) & 0 \\
\hline
0 & Q \otimes I_{n-1}
\end{array}\right)^{-1}\\
= 
\left(\begin{array}{c|c}
\begin{smallmatrix}
v_1(\Norm{z}^2) & 0 & 0 & 0\\
0 & v_2(\Norm{z}^2) & 0 & 0\\
0 & 0 & v_3(\Norm{z}^2) & 0\\
0 & 0 & 0 & \rule[-3pt]{0pt}{1pt} v_4(\Norm{z}^2)
\end{smallmatrix} & 0 \\
\hline
\rule{0pt}{16pt}
0 & \left(\begin{smallmatrix}
1 - e^{-\frac{1}{2}\Norm{z}^2} & 0 \\ 0 & 1 + e^{-\frac{1}{2}\Norm{z}^2}
\end{smallmatrix}\right) \otimes I_{n-1}
\end{array}\right).
\end{multline*}
\end{cor}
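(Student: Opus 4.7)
The plan is to assemble the corollary from the two preceding results, since Lemma~\ref{lem variance operator 1-jets} has already done the work of putting $\Omega'(z)$ into block-diagonal form in the basis $\mathcal{B}_z$. Because the conjugating matrix displayed in the statement is itself block-diagonal (with blocks of sizes $4$ and $2(n-1)$ matching those of $\Omega'(z)$), the computation splits into two independent pieces, one for each block.

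First, I would recall from Lemma~\ref{lem variance operator 1-jets} that, in the basis $\mathcal{B}_z$, $\Omega'(z)$ is block-diagonal with upper-left block $\tilde{\Omega}(\Norm{z}^2)$ and lower-right block $\left(\begin{smallmatrix} 1 & e^{-\frac{1}{2}\Norm{z}^2}\\ e^{-\frac{1}{2}\Norm{z}^2} & 1\end{smallmatrix}\right)\otimes I_{n-1}$. Conjugation by a block-diagonal orthogonal matrix with blocks $P(\Norm{z}^2)$ and $Q\otimes I_{n-1}$ (which is indeed orthogonal, since both $P(\Norm{z}^2)$ and $Q$ are orthogonal by Definition~\ref{def b and P} and by construction respectively) preserves this block decomposition, so it suffices to diagonalize each block separately.

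For the upper-left block, Lemma~\ref{lem diagonalization Omega tilde} applied at $t=\Norm{z}^2$ gives immediately
\[
P(\Norm{z}^2)\,\tilde{\Omega}(\Norm{z}^2)\,P(\Norm{z}^2)^{-1} = \operatorname{diag}\bigl(v_1(\Norm{z}^2),v_2(\Norm{z}^2),v_3(\Norm{z}^2),v_4(\Norm{z}^2)\bigr).
\]
For the lower-right block, I would reuse the identity already established in the proof of Lemma~\ref{lem diagonalization Theta}, namely
\[
Q \begin{pmatrix} 1 & e^{-\frac{1}{2}\Norm{z}^2}\\ e^{-\frac{1}{2}\Norm{z}^2} & 1\end{pmatrix} Q^{\mathrm{t}} = \begin{pmatrix} 1-e^{-\frac{1}{2}\Norm{z}^2} & 0 \\ 0 & 1+e^{-\frac{1}{2}\Norm{z}^2}\end{pmatrix},
\]
and tensor both sides with $I_{n-1}$, using that $(Q\otimes I_{n-1})^{-1} = Q^{\mathrm{t}}\otimes I_{n-1}$ and that conjugation commutes with tensor product. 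Assembling the two diagonalized blocks yields exactly the matrix in the statement.

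There is no genuine obstacle here: the corollary is a bookkeeping consequence of Lemmas~\ref{lem diagonalization Theta}, \ref{lem variance operator 1-jets} and~\ref{lem diagonalization Omega tilde}. The only subtle point worth emphasizing is the choice of orthonormal basis of $\R^n$ adapted to $z$ (with first vector $z/\Norm{z}$, which forces $z=0$ in the degenerate case), together with the compatible grouping of the coordinates in $\mathcal{B}_z$ as the pair associated with $dx_1$ followed by the pairs associated with $dx_2,\dots,dx_n$; this is exactly what makes the second block a tensor product with $I_{n-1}$ and what allows the conjugation to be written as $Q\otimes I_{n-1}$.
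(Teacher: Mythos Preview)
Your proposal is correct and matches the paper's intent: the corollary is stated without proof precisely because it follows immediately by applying Lemma~\ref{lem diagonalization Omega tilde} to the $4\times 4$ block and the computation from Lemma~\ref{lem diagonalization Theta} to the remaining $2(n-1)\times 2(n-1)$ block of the matrix of $\Omega'(z)$ in $\mathcal{B}_z$ given by Lemma~\ref{lem variance operator 1-jets}. Your write-up makes this bookkeeping explicit and is entirely in line with the paper's approach.
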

By Eq.~\eqref{eq expression Omega}, we get a diagonalization of $\Omega(z)$ by tensoring each factor by $I_r$ in Corollary~\ref{cor diagonalization Omega'}.

\begin{lem}
\label{lem non-degeneracy Omega}
For all $z \in \R^n \setminus \{0\}$, we have $\det\left(\Omega(z)\right) >0$, that is the distribution of the random vector $(s(0),s(z),d_0s,d_zs)$ is non-degenerate.
\end{lem}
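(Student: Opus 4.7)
The plan is to reduce the non-degeneracy of $\Omega(z)$ to a list of explicit one-variable inequalities via the diagonalization already established in Corollary~\ref{cor diagonalization Omega'}. First, the tensor product identity $\Omega(z) = \Omega'(z) \otimes I_r$ from~\eqref{eq expression Omega}, together with the standard identity $\det(A \otimes I_r) = \det(A)^r$ applied to $A = \Omega'(z)$ of size $2n+2$, shows that $\det(\Omega(z)) = \det(\Omega'(z))^r$. It therefore suffices to prove $\det(\Omega'(z)) > 0$ for $z \neq 0$.

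Next, Corollary~\ref{cor diagonalization Omega'} exhibits $\Omega'(z)$ as similar to a block-diagonal matrix whose eigenvalues, with $t = \Norm{z}^2 > 0$, are $v_1(t)$, $v_2(t)$, $v_3(t)$, $v_4(t)$ each with multiplicity one, together with $1 - e^{-t/2}$ and $1 + e^{-t/2}$, each with multiplicity $n-1$. The quantities $1 \pm e^{-t/2}$ are manifestly positive for $t > 0$, so the proof reduces to checking that $v_i(t) > 0$ for $i \in \{1,2,3,4\}$ and $t > 0$. The cases $i = 1$ and $i = 4$ are immediate from the inequality $\sqrt{1+(t/2)^2} > t/2$, which forces $v_1(t) > 1$ and $v_4(t) > 1$. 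For $i = 3$, one writes $v_3(t) = 1 - e^{-t/2}\bigl(\sqrt{1+(t/2)^2} - t/2\bigr)$; both factors are strictly less than $1$ for $t > 0$ (the second one by the identity $\sqrt{1+u^2} - u = (\sqrt{1+u^2} + u)^{-1}$ with $u = t/2$), giving $v_3(t) > 0$.

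The main point, and the only step I expect to require more than a line, is the strict positivity of $v_2(t)$ for $t > 0$, because $v_2(0) = 0$ and $v_2$ could a priori dip negative for small $t$. Write $v_2(t) = 1 - f(t)$ with $f(t) = e^{-t/2}\bigl(t/2 + \sqrt{1+(t/2)^2}\bigr)$, so that $f(0) = 1$, and aim to show $f$ is strictly decreasing on $(0,\infty)$. A direct computation of $f'(t)$, after setting $u = t/2$, reduces the negativity of $f'(t)$ to the inequality $(1-u)\sqrt{1+u^2} < 1 - u + u^2$ for $u > 0$. When $u \geq 1$ the left-hand side is non-positive while the right-hand side equals $(u - 1/2)^2 + 3/4 > 0$. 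When $0 < u < 1$ both sides are positive and squaring reduces the inequality to the elementary identity
\[
(1-u+u^2)^2 - (1-u)^2(1+u^2) = u^2 > 0.
\]
Hence $f' < 0$ on $(0,\infty)$, so $f(t) < f(0) = 1$ for $t > 0$, yielding $v_2(t) > 0$ and completing the proof.
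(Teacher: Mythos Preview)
Your proof is correct. Both you and the paper reduce to showing $\det(\tilde{\Omega}(t))>0$ for $t>0$ via the block/tensor structure, but you then diverge: you argue that each eigenvalue $v_i(t)$ is strictly positive (the only nontrivial case being $v_2$, handled by your monotonicity argument for $f$), whereas the paper multiplies the eigenvalues out into the closed form
\[
\det(\tilde{\Omega}(t)) = v_1(t)v_2(t)v_3(t)v_4(t) = 1 - (t^2+2)e^{-t} + e^{-2t}
\]
and proves this function positive by two successive derivatives and convexity of the exponential. The paper's route avoids the square roots entirely and is a touch slicker; your route yields the slightly stronger statement that every individual eigenvalue is positive (though this also follows a posteriori from the fact that $\Omega'(z)$ is a covariance and hence non-negative definite). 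Either way the argument is short and elementary.
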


\begin{proof}
See Appendix~\ref{sec technical computations for Section limit distrib}.
\end{proof}

%%%%%%%%%%%%%%%%%%%%%%%%%%%%%%%%%%%%%%%%%%%%%%%%%%%%%%%%%%%%%%%%%%%%%%%%%%%%%%%%%%%%%%%%%%%%%%%%%%%%%%%%%%%%%%

\subsection{Conditional variance of the derivatives}
\label{subsec conditional variance of the derivatives}

The next step is to study the conditonal distribution of $(d_0s,d_zs)$ given that $s(0)=0=s(z)$, for any $z \in \R^n \setminus \{0\}$. Recall that $\left(s(0),s(z),d_0s,d_zs\right)$ is a centered Gaussian vector with variance $\Omega(z)$ (see Eq.~\eqref{eq expression Omega}). Moreover, if $z \neq 0$, the distribution of $(s(0),s(z))$ is non-degenerate by Lem.~\ref{lem det Theta}.

Thus, $(d_0s,d_zs)$ given that $s(0)=0=s(z)$ is a centered Gaussian vector in $\left((\R^n)^* \oplus (\R^n)^*\right) \otimes \R^r$ with variance operator:
\begin{equation*}
\Lambda(z) = \left(\begin{smallmatrix}
\partial_x \partial^\sharp_y e_\infty(0,0) & \partial_x \partial^\sharp_y e_\infty(0,z) \\
\partial_x \partial^\sharp_y e_\infty(z,0) & \partial_x \partial^\sharp_ye_\infty(z,z)
\end{smallmatrix}\right) - \left(\begin{smallmatrix}
\partial_x e_\infty(0,0) & \partial_x e_\infty(0,z)\\
\partial_x e_\infty(z,0) & \partial_x e_\infty(z,z)
\end{smallmatrix}\right)\hspace{-2pt}
\left(\begin{smallmatrix}
e_\infty(0,0) & e_\infty(0,z) \\ e_\infty(z,0) & e_\infty(z,z)
\end{smallmatrix}\right)^{\hspace{-2pt}-1}\hspace{-2pt}
\left(\begin{smallmatrix}
\partial^\sharp_y e_\infty(0,0) & \partial^\sharp_y e_\infty(0,z) \\
\partial^\sharp_y e_\infty(z,0) & \partial^\sharp_y e_\infty(z,z)
\end{smallmatrix}\right).
\end{equation*}
By Equations~\eqref{eq expression Omega} and~\eqref{eq Omega prime}, for all $z \in \R^n \setminus \{0\}$, we have $\Lambda(z) = \Lambda'(z) \otimes I_r$, where:
\begin{equation}
\label{eq expression Lambda prime}
\Lambda'(z) = \begin{pmatrix}
I_n - \frac{e^{-\Norm{z}^2}}{1-e^{-\Norm{z}^2}} z^* \otimes z & e^{-\frac{1}{2}\Norm{z}^2}\left(I_n - \frac{1}{1-e^{-\Norm{z}^2}}z^* \otimes z\right)\\
e^{-\frac{1}{2}\Norm{z}^2}\left(I_n - \frac{1}{1-e^{-\Norm{z}^2}}z^* \otimes z\right) & I_n - \frac{e^{-\Norm{z}^2}}{1-e^{-\Norm{z}^2}} z^* \otimes z
\end{pmatrix}.
\end{equation}

As in the previous section, let us denote by $\left(\deron{}{x_1},\dots,\deron{}{x_n}\right)$ an orthonormal basis of $\R^n$ such that $z = \Norm{z}\deron{}{x_1}$ and let $(dx_1,\dots,dx_n)$ denote its dual basis. Let $(e_1,e_2)$ denote the canonical basis of $\R^2$, we define $\mathcal{B}'_z$ to be the following orthonormal basis of $\R^2 \otimes(\R^n)^* \simeq (\R^n)^* \oplus (\R^n)^*$:
\begin{equation*}
\label{eq def B prime z}
\mathcal{B}'_z = (e_1 \otimes dx_1,e_2 \otimes dx_1, \dots, e_1 \otimes dx_n, e_2 \otimes dx_n).
\end{equation*}

\begin{lem}
\label{lem conditional variance operator 1-jets}
For any $z \in \R^n \setminus \{0\}$, the matrix of $\Lambda'(z)$ in the basis $\mathcal{B}'_z$ is:
\begin{equation*}
\left(\begin{array}{c|c}
\tilde{\Lambda}(\Norm{z}^2) & 0 \\
\hline
\rule{0pt}{14pt}
0 & \left(\begin{smallmatrix}
1 & e^{-\frac{1}{2}\Norm{z}^2} \\ e^{-\frac{1}{2}\Norm{z}^2} & 1
\end{smallmatrix}\right) \otimes I_{n-1}
\end{array}\right),
\end{equation*}
where, for all $t >0$, we set:
\begin{equation}
\label{eq def Lambda tilde}
\tilde{\Lambda}(t) = \begin{pmatrix}
1 - \frac{te^{-t}}{1-e^{-t}} & e^{-\frac{1}{2}t}\left(1 - \frac{t}{1-e^{-t}}\right)\\
e^{-\frac{1}{2}t}\left(1 - \frac{t}{1-e^{-t}}\right) & 1 - \frac{te^{-t}}{1-e^{-t}}
\end{pmatrix}.
\end{equation}
\end{lem}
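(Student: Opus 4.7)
The strategy is to start from the explicit expression \eqref{eq expression Lambda prime} of $\Lambda'(z)$ and simply compute its matrix entries in the basis $\mathcal{B}'_z$. Our choice of orthonormal basis $(dx_1, \ldots, dx_n)$ of $(\R^n)^*$ was made precisely so that $z^* = \Norm{z}\, dx_1$; this will diagonalize everything in sight.

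First, I would observe that the operator $z^* \otimes z$ on $(\R^n)^*$ sends $dx_j$ to $dx_j(z)\, z^* = \Norm{z}^2 \delta_{1j}\, dx_1$, so it is diagonal in $(dx_1, \ldots, dx_n)$ with a single nonzero entry $\Norm{z}^2$ in position $(1,1)$. Each of the four $n \times n$ blocks appearing in \eqref{eq expression Lambda prime} has the form $\alpha I_n + \beta\, z^* \otimes z$ for scalars $\alpha,\beta$ depending only on $\Norm{z}$, and is therefore also diagonal in the basis $(dx_1, \ldots, dx_n)$.

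Second, I would pass from the natural basis $(e_1 \otimes dx_1, \ldots, e_1 \otimes dx_n, e_2 \otimes dx_1, \ldots, e_2 \otimes dx_n)$ of $\R^2 \otimes (\R^n)^*$ in which \eqref{eq expression Lambda prime} is written by $2 \times 2$ blocks of size $n$ each, to the basis $\mathcal{B}'_z$, in which the ordering groups by $dx_j$ first. Since each $n \times n$ block is diagonal in $(dx_1, \ldots, dx_n)$, this re-indexing turns the matrix into a block diagonal one consisting of $n$ blocks of size $2$, the $j$-th of which collects in its four positions the $(j,j)$ entries of the top-left, top-right, bottom-left and bottom-right $n \times n$ blocks of \eqref{eq expression Lambda prime} respectively.

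Finally, setting $t = \Norm{z}^2$, a direct inspection of those diagonal entries gives that the block for $j = 1$ reads
\[
\begin{pmatrix} 1 - \frac{t e^{-t}}{1 - e^{-t}} & e^{-\frac{t}{2}}\bigl(1 - \frac{t}{1-e^{-t}}\bigr) \\ e^{-\frac{t}{2}}\bigl(1 - \frac{t}{1-e^{-t}}\bigr) & 1 - \frac{t e^{-t}}{1 - e^{-t}} \end{pmatrix} = \tilde{\Lambda}(t),
\]
while the blocks for $j \in \{2, \ldots, n\}$ (for which the $z^* \otimes z$ term contributes nothing) are all equal to $\bigl(\begin{smallmatrix} 1 & e^{-t/2} \\ e^{-t/2} & 1 \end{smallmatrix}\bigr)$; gathering these $n-1$ identical blocks yields the announced tensor factor $\bigl(\begin{smallmatrix} 1 & e^{-t/2} \\ e^{-t/2} & 1 \end{smallmatrix}\bigr) \otimes I_{n-1}$. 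The argument is entirely routine and closely mirrors the proof of Lemma~\ref{lem variance operator 1-jets} for $\Omega'(z)$; there is no substantive obstacle, the only mildly delicate point being the bookkeeping of the reordering from the natural $2$-block basis to $\mathcal{B}'_z$.
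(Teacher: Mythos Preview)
Your proof is correct and follows the same approach as the paper's own proof, which simply observes that $z^*\otimes z$ has matrix $\left(\begin{smallmatrix}\Norm{z}^2 & 0\\ 0 & 0\end{smallmatrix}\right)$ in $(dx_1,\dots,dx_n)$ and then reads off the result from Eq.~\eqref{eq expression Lambda prime}. You have spelled out the basis reordering and the block-by-block identification more explicitly than the paper does, but the underlying argument is identical.
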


\begin{proof}
Since $z = \Norm{z} \deron{}{x_1}$, we have $z^* \otimes z = \Norm{z}^2 dx_1 \otimes \deron{}{x_1}$. Hence, the matrix of $z^*\otimes z$ in $(dx_1,\dots,dx_n)$ is:
\begin{equation*}
\left(\begin{array}{c|c}
\Norm{z}^2 & 0
\\
\hline
0 & 0
\end{array}\right).
\end{equation*}
Then the conclusion follows from Eq.~\eqref{eq expression Lambda prime}.
\end{proof}

\begin{rem}
\label{rem extension of Lambda}
We can extend continuously $\tilde{\Lambda}$ at $t=0$ by setting $\tilde{\Lambda}(0) = 0$. This yields continuous extensions of $\Lambda'$ and $\Lambda$ at $z=0$. Note that $\Lambda(0)$ is not the variance operator of $(d_0s,d_0s)$ given that $s(0)=0$.
\end{rem}

\begin{dfns}
\label{def u}
Let $u_1$ and $u_2$ denote the following functions from $\R$ to $\R$:
\begin{align*}
u_1 : t \longmapsto & \frac{1 - e^{-t} + te^{-\frac{1}{2}t}}{1+e^{-\frac{1}{2}t}} & u_2 : t \longmapsto & \left\{\begin{aligned}
&\frac{1 - e^{-t} - te^{-\frac{1}{2}t}}{1-e^{-\frac{1}{2}t}} & &\text{if }t \neq 0,\\
&0 & &\text{if }t=0.
\end{aligned}\right.
\end{align*}
\end{dfns}

Once again, we will need an explicit diagonalization of $\Lambda(z)$. Such a diagonalization is given by the following lemma, once we tensor each factor by $I_r$.

\begin{lem}
\label{lem diagonalization Lambda prime}
Let $z \in \R^n$, identifying $\Lambda'(z)$ with its matrix in $\mathcal{B}'_z$, we have:
\begin{equation*}
\left(Q \otimes I_n\right) \Lambda'(z) \left(Q \otimes I_n\right)^{-1} = 
\left(\begin{array}{c|c}
\begin{smallmatrix}
u_1(\Norm{z}^2) & 0\\
0 & u_2(\Norm{z}^2)\rule[-2pt]{0pt}{1pt}
\end{smallmatrix} & 0 \\
\hline
\rule{0pt}{16pt}
0 & \left(\begin{smallmatrix}
1 - e^{-\frac{1}{2}\Norm{z}^2} & 0 \\ 0 & 1 + e^{-\frac{1}{2}\Norm{z}^2}
\end{smallmatrix}\right) \otimes I_{n-1}
\end{array}\right).
\end{equation*}
\end{lem}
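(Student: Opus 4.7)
The plan is to read off the starting matrix of $\Lambda'(z)$ in the basis $\mathcal{B}'_z$ from Lemma~\ref{lem conditional variance operator 1-jets}: it is block diagonal with an upper $2\times 2$ block $\tilde\Lambda(\Norm{z}^2)$ acting on the pair $(e_1\otimes dx_1, e_2\otimes dx_1)$, and a lower $(2n-2)\times(2n-2)$ block which, in the ordering of $\mathcal{B}'_z$, is block diagonal with $n-1$ identical copies of $\left(\begin{smallmatrix} 1 & e^{-\frac{1}{2}\Norm{z}^2} \\ e^{-\frac{1}{2}\Norm{z}^2} & 1 \end{smallmatrix}\right)$. Since $\mathcal{B}'_z$ lists its vectors with the $(\R^n)^*$-index outer and the $\R^2$-index inner, the operator $Q\otimes I_n$, viewed as a matrix in $\mathcal{B}'_z$, is itself block diagonal with $n$ copies of $Q$ on the diagonal. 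Consequently, conjugation by $Q\otimes I_n$ acts separately on each of the $n$ diagonal $2\times 2$ blocks of $\Lambda'(z)$, and the proof reduces to conjugating each such block by $Q$.

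For the $n-1$ lower blocks the calculation is identical to the one in the proof of Lemma~\ref{lem diagonalization Theta}: for any real $c$,
\begin{equation*}
Q\begin{pmatrix} 1 & c \\ c & 1 \end{pmatrix} Q^{-1} = \begin{pmatrix} 1-c & 0 \\ 0 & 1+c \end{pmatrix}.
\end{equation*}
Applying this with $c = e^{-\frac{1}{2}\Norm{z}^2}$ and reassembling the $n-1$ resulting diagonal blocks produces the lower $(2n-2)\times(2n-2)$ block $\mathrm{diag}\bigl(1-e^{-\frac{1}{2}\Norm{z}^2}, 1+e^{-\frac{1}{2}\Norm{z}^2}\bigr)\otimes I_{n-1}$ appearing in the statement.

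The main computational step is to show that $Q\,\tilde\Lambda(t)\, Q^{-1} = \mathrm{diag}(u_1(t), u_2(t))$. Since $\tilde\Lambda(t) = \left(\begin{smallmatrix} a & b \\ b & a \end{smallmatrix}\right)$ with $a = 1 - \frac{te^{-t}}{1-e^{-t}}$ and $b = e^{-t/2}\bigl(1 - \frac{t}{1-e^{-t}}\bigr)$, conjugation by $Q$ yields $\mathrm{diag}(a-b, a+b)$, so the task is to verify the scalar identities $a-b = u_1(t)$ and $a+b = u_2(t)$. The key algebraic observation is the factorization $1 - e^{-t} = (1 - e^{-t/2})(1 + e^{-t/2})$: for the difference, reducing to the common denominator $1+e^{-t/2}$ gives $a-b = \frac{(1-e^{-t/2})(1+e^{-t/2}) + t e^{-t/2}}{1+e^{-t/2}} = \frac{1-e^{-t}+te^{-t/2}}{1+e^{-t/2}} = u_1(t)$, and analogously for the sum one reduces to the denominator $1-e^{-t/2}$ and obtains $u_2(t)$. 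The case $t=0$ is covered by continuous extension, consistent with Remark~\ref{rem extension of Lambda} and the definition of $u_2$. I expect no conceptual obstacle here; the only risk is a sign or simplification slip in these manipulations.
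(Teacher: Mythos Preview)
Your proof is correct and follows the same approach as the paper: reduce via Lemma~\ref{lem conditional variance operator 1-jets} to conjugating each $2\times 2$ diagonal block by $Q$, then verify $Q\tilde\Lambda(t)Q^\text{t}=\mathrm{diag}(u_1(t),u_2(t))$. The paper's proof states only this last identity and leaves the check to the reader, whereas you carry it out explicitly; your algebraic verification that $a\mp b = u_{1,2}(t)$ via the factorization $1-e^{-t}=(1-e^{-t/2})(1+e^{-t/2})$ is accurate.
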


\begin{proof}
By Lem.~\ref{lem conditional variance operator 1-jets}, we only need to check that, for all $t \geq 0$,
\begin{equation*}
Q \tilde{\Lambda}(t) Q^\text{t} = \begin{pmatrix} u_1(t) & 0 \\ 0 & u_2(t)\end{pmatrix}.\qedhere
\end{equation*}
\end{proof}

\begin{lem}
\label{lem non-degeneracy Lambda}
For all $z \in \R^n \setminus \{0\}$, we have $\det\left(\Lambda(z)\right) >0$, i.e.~the distribution of $(d_0s,d_zs)$ given that $s(0)=0=s(z)$ is non-degenerate.
\end{lem}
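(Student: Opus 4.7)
The plan is to deduce $\det(\Lambda(z)) > 0$ directly from the explicit diagonalization of $\Lambda(z)$, which was essentially carried out in Lemma \ref{lem diagonalization Lambda prime}.

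First, since $\Lambda(z) = \Lambda'(z) \otimes I_r$ (cf.~Eq.~\eqref{eq expression Lambda prime}), we have $\det(\Lambda(z)) = \det(\Lambda'(z))^r$, so it is enough to show $\det(\Lambda'(z)) > 0$. By Lemma \ref{lem diagonalization Lambda prime}, $\Lambda'(z)$ is conjugate via the orthogonal operator $Q \otimes I_n$ to a block-diagonal matrix, hence
\begin{equation*}
\det(\Lambda'(z)) = u_1(\Norm{z}^2)\, u_2(\Norm{z}^2) \left(1 - e^{-\frac{1}{2}\Norm{z}^2}\right)^{n-1}\left(1 + e^{-\frac{1}{2}\Norm{z}^2}\right)^{n-1}.
\end{equation*}
For $z \neq 0$, the two exponential factors are strictly positive, so the problem reduces to proving $u_1(t) > 0$ and $u_2(t) > 0$ for every $t > 0$.

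For $u_1$, the denominator $1 + e^{-t/2}$ is positive, and the numerator equals $(1 - e^{-t}) + te^{-t/2}$, a sum of two non-negative terms that are both strictly positive for $t > 0$. So $u_1(t) > 0$ is immediate.

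For $u_2$, the key step is to rewrite the numerator using $1 - e^{-t} = (1 - e^{-t/2})(1 + e^{-t/2})$, which gives
\begin{equation*}
u_2(t) = (1 + e^{-t/2}) - \frac{t\, e^{-t/2}}{1 - e^{-t/2}}.
\end{equation*}
Positivity of $u_2(t)$ is then equivalent to $(1 - e^{-t/2})(1 + e^{-t/2}) > t\, e^{-t/2}$, i.e.~$1 - e^{-t} > t\, e^{-t/2}$, which rearranges to the classical inequality
\begin{equation*}
2\sinh\!\left(\tfrac{t}{2}\right) > t \qquad \text{for all } t > 0.
\end{equation*}
This follows from $\sinh(x) > x$ for $x > 0$, a standard consequence of the Taylor expansion of $\sinh$. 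Combining these observations yields $\det(\Lambda'(z)) > 0$, hence $\det(\Lambda(z)) > 0$. The only mild obstacle is handling $u_2$, since its numerator is not manifestly positive; the trick is the factorization of $1 - e^{-t}$ that transforms the inequality into the well-known $\sinh$ estimate.
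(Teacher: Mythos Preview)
Your proof is correct and follows the same overall reduction as the paper: use $\Lambda(z)=\Lambda'(z)\otimes I_r$ and the diagonalization of $\Lambda'(z)$ from Lemma~\ref{lem diagonalization Lambda prime} to reduce to the positivity of $u_1(t)u_2(t)\bigl(1-e^{-t}\bigr)^{n-1}$ for $t=\Norm{z}^2>0$.

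The only difference lies in how the positivity of $u_1(t)u_2(t)$ is established. The paper computes $u_1(t)u_2(t)=\det\bigl(\tilde{\Lambda}(t)\bigr)=\det\bigl(\tilde{\Omega}(t)\bigr)/(1-e^{-t})$ and then invokes the proof of Lemma~\ref{lem non-degeneracy Omega}, where $\det\bigl(\tilde{\Omega}(t)\bigr)>0$ was shown via a convexity argument applied to $f(t)=1-(t^2+2)e^{-t}+e^{-2t}$. You instead prove $u_1(t)>0$ and $u_2(t)>0$ separately, reducing the nontrivial case $u_2$ to the elementary inequality $2\sinh(t/2)>t$. Your argument is self-contained and slightly more direct, since it does not rely on the earlier lemma; the paper's argument is more economical in that it recycles work already done for $\Omega$. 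Both are equally valid.
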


\begin{proof}
See Appendix~\ref{sec technical computations for Section limit distrib}.
\end{proof}

By Lem.~\ref{lem non-degeneracy Omega}, $\Omega(z)$ is a positive self-adjoint operator on $\left(\R \oplus \R \oplus (\R^n)^* \oplus (\R^n)^*\right) \otimes \R^r$, for all $z \in \R^n \setminus \{0\}$, and so is its inverse. Hence, $\Omega(z)^{-1}$ admits a unique positive square root, that we denote by $\Omega(z)^{-\frac{1}{2}}$. Similarly, by Lem.~\ref{lem non-degeneracy Lambda}, $\Lambda(z)$ is a positive self-adjoint operator on $\left((\R^n)^* \oplus (\R^n)^*\right) \otimes \R^r$ and we denote by $\Lambda(z)^\frac{1}{2}$ its positive square root.

\begin{lem}
\label{lem boundedness sqrt Lambda sqrt Omega}
The map $z \longmapsto \begin{pmatrix}
0 & \Lambda(z)^\frac{1}{2}
\end{pmatrix}\Omega(z)^{-\frac{1}{2}}$ is bounded on $\R^n \setminus \{0\}$.
\end{lem}

\begin{proof}
See Appendix~\ref{sec technical computations for Section limit distrib}.
\end{proof}

%%%%%%%%%%%%%%%%%%%%%%%%%%%%%%%%%%%%%%%%%%%%%%%%%%%%%%%%%%%%%%%%%%%%%%%%%%%%%%%%%%%%%%%%%%%%%%%%%%%%%%%%%%%%%%

\subsection{Finiteness of the leading constant}
\label{subsec finiteness of the leading constant}

The goal of this section, it to prove that the constant $\mathcal{I}_{n,r}$ defined by Eq.~\eqref{eq def Inr} and appearing in Thm.~\ref{thm asymptotics variance} is well-defined and finite.

\begin{dfn}
\label{def Dnr}
Let $n \in \N^*$ and $r\in \{1,\dots,n\}$, for every $t >0$ we set:
\begin{equation*}
\label{eq def Dnr}
D_{n,r}(t) = \frac{\esp{\odet{X(t)}\odet{Y(t)}}}{\left(1-e^{-t}\right)^\frac{r}{2}} - (2\pi)^r \left(\frac{\vol{\S^{n-r}}}{\vol{\S^n}}\right)^2,
\end{equation*}
where $(X(t),Y(t))$ is the centered Gaussian vector in $\mathcal{M}_{rn}(\R)\times\mathcal{M}_{rn}(\R)$ defined in Def.~\ref{def XYt}.
\end{dfn}

By the definition of $\mathcal{I}_{n,r}$ (see Eq.~\eqref{eq def Inr}), we have:
\begin{equation*}
\mathcal{I}_{n,r} = \frac{1}{2} \int_0^{+\infty} D_{n,r}(t)t^\frac{n-2}{2} \dx t.
\end{equation*}
Hence, we have to prove that $t \mapsto D_{n,r}(t)t^\frac{n-2}{2}$ is integrable on $(0,+\infty)$, which boils down to computing the asymptotic expansions of $\esp{\odet{X(t)}\odet{Y(t)}}$ as $t \to 0$ and as $t \to +\infty$.

Let us now relate this to the Bargmann--Fock process $(s(z))_{z \in \R^n}$.
\begin{lem}
\label{lem relation Bargmann--Fock XY}
Let $z \in \R^n \setminus\{0\}$. Let $\left(\deron{}{x_1},\dots,\deron{}{x_n}\right)$ be an orthonormal basis of $\R^n$ such that $z = \Norm{z}\deron{}{x_1}$ and let $(\zeta_1,\dots,\zeta_r)$ be any orthonormal basis of $\R^r$. Then, the matrices of $d_0s$ and $d_zs$ in these bases, given that $s(0)=0=s(z)$, form a random vector in $\mathcal{M}_{rn}(\R) \times \mathcal{M}_{rn}(\R)$ which is distributed as $\left(X(\Norm{z}^2),Y(\Norm{z}^2)\right)$.
\end{lem}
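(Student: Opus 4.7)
The plan is to compute the conditional covariance structure of the pair $(d_0 s, d_z s)$ given $s(0)=0=s(z)$ in the chosen bases, and then to match it term-by-term with the covariance structure of $(X(\|z\|^2), Y(\|z\|^2))$ prescribed by Definition~\ref{def XYt}. Since everything in sight is jointly Gaussian and centered, matching covariances suffices to identify the distributions.

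First I would recall from the discussion opening Section~\ref{subsec conditional variance of the derivatives} that the Gaussian conditioning formula yields that, conditionally on $s(0)=0=s(z)$, the vector $(d_0 s, d_z s)$ is centered Gaussian with variance operator $\Lambda(z)=\Lambda'(z)\otimes I_r$. The tensor factor $I_r$ comes from the fact that $e_\infty = \xi\,I_r$, so that the $r$ coordinate processes $s_1,\dots,s_r$ of $s$ (in the fixed basis $(\zeta_1,\dots,\zeta_r)$) are i.i.d.\ scalar Gaussian fields on $\R^n$ with covariance $\xi$; the conditioning $s(0)=0=s(z)$ decouples across coordinates, so the rows $((d_0 s)_{ij},(d_z s)_{ij})_{j=1,\dots,n}$ of the matrices, indexed by $i\in\{1,\dots,r\}$, are mutually independent, each with covariance described by $\Lambda'(z)$. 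This already matches the independence-across-$i$ structure of Definition~\ref{def XYt}.

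Next I would read off the within-row covariances from Lemma~\ref{lem conditional variance operator 1-jets}. In the basis $\mathcal{B}'_z$, the operator $\Lambda'(z)$ is block diagonal: the $2\times 2$ block corresponding to the pair $(e_1\otimes dx_1, e_2\otimes dx_1)$, i.e.\ to the pair $((d_0 s_i)_1,(d_z s_i)_1)$ of derivatives along the direction $z/\|z\|$, is exactly $\tilde{\Lambda}(\|z\|^2)$, and the remaining block, corresponding to the derivatives along $(dx_2,\dots,dx_n)$, is $\bigl(\begin{smallmatrix} 1 & e^{-\|z\|^2/2}\\ e^{-\|z\|^2/2} & 1\end{smallmatrix}\bigr)\otimes I_{n-1}$. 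In particular, for fixed $i$ the pairs $((d_0 s)_{ij},(d_z s)_{ij})$ are independent across $j\in\{1,\dots,n\}$, and their individual $2\times 2$ variance matrices are $\tilde{\Lambda}(\|z\|^2)$ for $j=1$ and $\bigl(\begin{smallmatrix} 1 & e^{-\|z\|^2/2}\\ e^{-\|z\|^2/2} & 1\end{smallmatrix}\bigr)$ for $j\geq 2$.

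Finally, comparing with Definition~\ref{def XYt} applied at $t=\|z\|^2$ closes the argument: the covariance matrix of $(X_{ij}(t),Y_{ij}(t))$ is, by definition, $\tilde{\Lambda}(t)$ when $j=1$ and $\bigl(\begin{smallmatrix} 1 & e^{-t/2}\\ e^{-t/2} & 1\end{smallmatrix}\bigr)$ otherwise, with independence across all pairs $(i,j)$. Hence the two centered Gaussian vectors have identical covariance, and therefore identical distribution. There is no real obstacle here; the only thing to be careful about is bookkeeping of the basis orderings, making sure that ``the $j=1$ direction'' on the Bargmann--Fock side (i.e.\ the direction $\partial/\partial x_1 = z/\|z\|$) is matched with the $j=1$ column of $X(t)$ and $Y(t)$ on the Definition~\ref{def XYt} side, which is precisely the role of the assumption $z=\|z\|\,\partial/\partial x_1$.
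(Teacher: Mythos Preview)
Your proof is correct and follows essentially the same approach as the paper: both arguments use Lemma~\ref{lem conditional variance operator 1-jets} to read off the block-diagonal covariance structure of $(d_0 s, d_z s)$ given $s(0)=0=s(z)$ in the basis $\mathcal{B}'_z$, and then match it entrywise with Definition~\ref{def XYt} at $t=\Norm{z}^2$. You are slightly more explicit about why the tensor factor $I_r$ in $\Lambda(z)=\Lambda'(z)\otimes I_r$ yields independence across the index $i$, but otherwise the reasoning is identical.
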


\begin{proof}
Let us denote by $\tilde{X}(z)$ and $\tilde{Y}(z)$ the matrices of $d_0s$ and $d_zs$ in the bases $\left(\deron{}{x_1},\dots,\deron{}{x_n}\right)$ and $(\zeta_1,\dots,\zeta_r)$, given that $s(0)=0=s(z)$. We denote by $\tilde{X}_{ij}(z)$ (resp.~$\tilde{Y}_{ij}(z)$) the coefficients of $\tilde{X}$ (resp.~$\tilde{Y}$) for $i \in \{1,\dots,r\}$ and $j \in \{1,\dots,n\}$. By Lem.~\ref{lem conditional variance operator 1-jets}, the couples $\left(\tilde{X}_{ij},\tilde{Y}_{ij}\right)$ are centered Gaussian vectors in $\R^2$ which are independant from one another. Moreover, the variance matrix of $\left(\tilde{X}_{ij}(z),\tilde{Y}_{ij}(z)\right)$ equals $\tilde{\Lambda}(\Norm{z}^2)$ if $j=1$ and $\left(\begin{smallmatrix}
1 & e^{-\frac{1}{2}\Norm{z}^2} \\ e^{-\frac{1}{2}\Norm{z}^2} & 1
\end{smallmatrix}\right)$ otherwise. By Def.~\ref{def XYt}, this is exactly saying that $\left(\tilde{X}(z),\tilde{Y}(z)\right)$ is distributed as $\left(X(\Norm{z}^2),Y(\Norm{z}^2)\right)$.
\end{proof}

\begin{lem}
\label{lem estimates 0 expectation of odet XY}
Let $n \in \N^*$ and $r \in \{1,\dots,n\}$. Then, as $t \to 0$, we have the following:
\begin{equation*}
\esp{\odet{X(t)}\odet{Y(t)}} \sim \left\{ \begin{matrix}
\dfrac{(n-1)!}{(n-r-1)!} & \text{if } r<n,\\
\dfrac{n!}{2}t & \text{if } r=n.
\end{matrix}\right.
\end{equation*}
\end{lem}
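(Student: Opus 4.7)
The plan is to analyze $(X(t),Y(t))$ column by column, separating the degenerating first column from columns $2,\ldots,n$. By Definitions~\ref{def XYt} and~\ref{def u} together with Lemma~\ref{lem conditional variance operator 1-jets}, the covariance matrix of $(X_{i1}(t),Y_{i1}(t))$ is $\tilde\Lambda(t)$, whose eigenvalues $u_1(t),u_2(t)$ satisfy $u_1(t)\sim t$ and $u_2(t)=O(t^2)$ as $t\to 0$; in particular both $X_{i1}(t)$ and $Y_{i1}(t)$ are of order $\sqrt t$. For $j\geq 2$, the covariance $\bigl(\begin{smallmatrix}1 & e^{-t/2}\\ e^{-t/2} & 1\end{smallmatrix}\bigr)$ degenerates to $\bigl(\begin{smallmatrix}1 & 1\\ 1 & 1\end{smallmatrix}\bigr)$, so $X_{ij}(t)$ and $Y_{ij}(t)$ converge in $L^p$ (for every $p\geq 1$) to a common standard Gaussian $Z_{ij}$, with $X_{ij}(t)-Y_{ij}(t)=O(\sqrt t)$. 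Letting $Z'=(Z_{ij})_{1\leq i\leq r,\,2\leq j\leq n}$, of size $r\times(n-1)$ with iid $\mathcal{N}(0,1)$ entries, both $X(t)$ and $Y(t)$ converge in $L^p$ to $[0\mid Z']$.

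For $r<n$, continuity of the Jacobian together with uniform integrability (provided by the Gaussian moment bounds on the Frobenius norms of $X(t),Y(t)$, which dominate $\odet{X(t)}$ and $\odet{Y(t)}$) yields
\begin{equation*}
\esp{\odet{X(t)}\odet{Y(t)}} \xrightarrow[t\to 0]{} \esp{\det\bigl(Z'(Z')^\text{t}\bigr)}.
\end{equation*}
The matrix $Z'(Z')^\text{t}$ is Wishart $W_r(n-1,I_r)$, so its expected determinant equals $\prod_{j=0}^{r-1}(n-1-j) = (n-1)!/(n-r-1)!$, giving the claimed leading term.

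For $r=n$, expansion along the first column gives $\det X(t)=\sum_i X_{i1}(t)\, C^X_i(t)$ and $\det Y(t)=\sum_i Y_{i1}(t)\, C^Y_i(t)$, where the cofactors depend only on the last $n-1$ columns and are therefore independent of the first columns. Both $C^X_i(t),C^Y_i(t)$ converge in every $L^p$ to $C_i=(-1)^{i+1}\det(Z'_{-i})$ (where $Z'_{-i}$ denotes $Z'$ with row $i$ removed), with $O(\sqrt t)$ error. Introduce the decorrelating variables $A_i=(X_{i1}(t)-Y_{i1}(t))/\sqrt 2$ and $B_i=(X_{i1}(t)+Y_{i1}(t))/\sqrt 2$, which are jointly independent in $i$ and independent of the $C^X_i,C^Y_i$, with $\Var(A_i)=u_1(t)\sim t$ and $\Var(B_i)=u_2(t)=O(t^2)$. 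Substituting $X_{i1}=(A_i+B_i)/\sqrt 2$, $Y_{i1}=(-A_i+B_i)/\sqrt 2$ in the product $\det X(t)\cdot\det Y(t)$ and expanding, the only surviving contribution at order $t$ is the $A$--$A$ term with cofactors replaced by their limits, namely $-\tfrac{1}{2}\bigl(\sum_i A_i C_i\bigr)^2$; all other terms are $o(t)$ in $L^1$ by Cauchy--Schwarz and the stated scalings. Taking absolute values and using the independence of $(A_i)$ from $(C_i)$ yields
\begin{equation*}
\esp\!\left[\Bigl(\textstyle\sum_i A_i C_i\Bigr)^{\!2}\right] = u_1(t)\,\esp\!\left[\textstyle\sum_i C_i^2\right] = u_1(t)\,\esp\!\left[\det\bigl((Z')^\text{t} Z'\bigr)\right],
\end{equation*}
by the Cauchy--Binet identity. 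Since $(Z')^\text{t} Z'\sim W_{n-1}(n,I_{n-1})$ has expected determinant $n!$, we get $\esp{\odet{X(t)}\odet{Y(t)}}\sim \tfrac{n!}{2}\,t$.

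The main obstacle is the case $r=n$: since $\det X(0)=\det Y(0)=0$, the value $\esp{\odet X\odet Y}$ vanishes at $t=0$ and one must extract its leading order, not just the limit. This requires quantitative $L^p$ control of the error: showing that the cross terms involving $B_i$ (of order $t$) and the cofactor deviations $C^X_i-C_i,\,C^Y_i-C_i$ (of order $\sqrt t$) contribute only $o(t)$ after multiplication by an $A$-term (of order $\sqrt t$) and taking expectation. The case $r<n$ is only a continuity-and-uniform-integrability statement and is comparatively routine.
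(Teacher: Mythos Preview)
Your argument is correct and follows the same overall strategy as the paper: realize $(X(t),Y(t))$ via a fixed Gaussian input composed with $\Lambda(z)^{1/2}$, so that as $t\to 0$ the first column collapses while columns $2,\dots,n$ converge to a common standard Gaussian matrix $Z'$; then pass to the limit (Cauchy--Binet/Wishart for $r<n$, extra work for $r=n$). For $r<n$ your proof and the paper's are essentially identical, the paper phrasing ``dominated convergence with fixed background variables'' where you say ``$L^p$ convergence plus uniform integrability''.

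The only genuine difference is in the $r=n$ step. The paper uses multilinearity of the determinant to factor out the scale of the first column: it writes
\[
\Psi(t,A,B)=\tfrac{t}{2}\,\bigl|\det\bigl(\sqrt{2/t}\,\alpha A_1+\sqrt{2/t}\,\beta B_1,\dots\bigr)\bigr|\cdot\bigl|\det\bigl(\sqrt{2/t}\,\beta A_1+\sqrt{2/t}\,\alpha B_1,\dots\bigr)\bigr|,
\]
observes that $\sqrt{2/t}\,\alpha\to 1/\sqrt2$ and $\sqrt{2/t}\,\beta\to -1/\sqrt2$, and then applies dominated convergence directly to the bounded factor---no error bookkeeping is needed. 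Your route instead expands $\det X\cdot\det Y$ along the first column, rotates to the eigenbasis $(A_i,B_i)$ of $\tilde\Lambda(t)$, identifies the leading contribution $-\tfrac12(\sum_i A_iC_i)^2$, and bounds every cross term by Cauchy--Schwarz using $\Var A_i\sim t$, $\Var B_i=O(t^2)$, and $C_i^{X,Y}-C_i=O(\sqrt t)$ in $L^p$. This is valid (the triangle inequality $\bigl|\,|f|-|g|\,\bigr|\le|f-g|$ transfers the $o(t)$ control to the absolute values), but it trades a one-line multilinearity trick for an itemized error analysis. Both arrive at $\tfrac{n!}{2}t$ via the same Wishart computation $\esp{\det((Z')^{\mathrm t}Z')}=n!$.
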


\begin{proof}
See Appendix~\ref{sec technical computations for Section limit distrib}.
\end{proof}

\begin{lem}
\label{lem estimates infty expectation of odet XY}
For all $n \in \N^*$ and $r \in \{1,\dots,n\}$, we have the following as $t \to +\infty$:
\begin{equation*}
\esp{\odet{X(t)}\odet{Y(t)}} = (2\pi)^r \left(\frac{\vol{\S^{n-r}}}{\vol{\S^n}}\right)^2 + O\!\left(te^{-\frac{t}{2}}\right)
\end{equation*}
\end{lem}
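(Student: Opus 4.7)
The plan is to identify the $t \to +\infty$ limit of $(X(t),Y(t))$, compute the limit value of the expectation, and then quantify the rate of convergence. From Definition~\ref{def XYt}, each $2 \times 2$ covariance block of $(X_{ij}(t),Y_{ij}(t))$ equals $I_2 + O(te^{-t/2})$ as $t \to +\infty$, the dominant contribution coming from the off-diagonal entry $e^{-t/2}(1-t/(1-e^{-t})) \sim -te^{-t/2}$ of the $j=1$ block. Since the pairs are independent across $(i,j)$, the joint covariance $V(t)$ of $(X(t),Y(t)) \in \mathcal{M}_{rn}(\R) \oplus \mathcal{M}_{rn}(\R)$ satisfies $\Norm{V(t) - I} = O(te^{-t/2})$, so $(X(t),Y(t))$ converges in distribution to a pair $(X_\infty,Y_\infty)$ of independent standard Gaussian $r \times n$ matrices. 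By independence together with the classical identity $\esp{\odet{X_\infty}} = (2\pi)^{r/2}\vol{\S^{n-r}}/\vol{\S^n}$ (obtainable from the Bartlett decomposition $\odet{X_\infty} = \prod_{i=1}^{r}\chi_{n-i+1}$, or recognizable as the Kac--Rice leading constant of Theorem~\ref{thm expectation} applied to the Bargmann--Fock scaling limit), the limit value of $\esp{\odet{X(t)}\odet{Y(t)}}$ is exactly $(2\pi)^r (\vol{\S^{n-r}}/\vol{\S^n})^2$.

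The core of the proof is to propagate the $O(\epsilon)$ covariance error, with $\epsilon := te^{-t/2}$, into an $O(\epsilon)$ error in the expectation. I will couple $(X(t),Y(t)) = V(t)^{1/2}(X_\infty,Y_\infty)$; writing $V(t)^{1/2} = I + B(t)$ with $\Norm{B(t)} = O(\epsilon)$ yields $\Norm{X(t) - X_\infty}, \Norm{Y(t) - Y_\infty} \leq C\epsilon \Norm{Z}$, where $Z := (X_\infty,Y_\infty)$. Expanding $\det(X(t)X(t)^\text{t})$ as a polynomial of total degree $2r$ in the entries of $X(t) = X_\infty + \delta_X$ gives
\[
\norm{\det(X(t)X(t)^\text{t}) - \det(X_\infty X_\infty^\text{t})} \leq C\epsilon \Norm{Z}^{2r},
\]
and symmetrically for $Y$. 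The main obstacle will be converting these polynomial-level estimates on $\det$ into estimates on $\odet{\cdot} = \sqrt{\det(\cdot)}$ without losing a factor of $\epsilon^{1/2}$ to the square root.

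To overcome this, I will split into two cases, reflecting whether $1/\odet{X_\infty}$ is integrable (it is iff $r<n$). When $r<n$, I apply the identity $\norm{\odet{A}-\odet{B}} = \norm{\det AA^\text{t} - \det BB^\text{t}}/(\odet{A}+\odet{B}) \leq \norm{\det AA^\text{t} - \det BB^\text{t}}/\odet{X_\infty}$ combined with the pointwise bound $\odet{Y(t)} \leq \Norm{Y(t)}^r \leq C\Norm{Z}^r$; Hölder's inequality together with finiteness of $\esp{\odet{X_\infty}^{-q}}$ for some $q \in (1, n-r+1)$ (read off from the Bartlett decomposition since $\esp{\chi_k^{-q}}<+\infty$ for $q<k$) then yields $\esp{\Norm{Z}^{3r}/\odet{X_\infty}} < +\infty$, and hence the desired $O(\epsilon)$ bound via a symmetric telescoping on $\odet{X(t)}\odet{Y(t)} - \odet{X_\infty}\odet{Y_\infty}$. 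When $r=n$, this inverse moment fails, but then $\odet{X(t)}\odet{Y(t)} = \norm{\det X(t) \det Y(t)}$, so I can bypass the square root via $\norm{\norm{a}-\norm{b}} \leq \norm{a-b}$: the difference $\det X(t)\det Y(t) - \det X_\infty \det Y_\infty$ is a polynomial of total degree $2n$ each of whose nonzero monomials carries at least one factor of $\delta_X$ or $\delta_Y$, hence is bounded by $C\epsilon\Norm{Z}^{2n}$ with finite expectation, yielding the $O(\epsilon)$ bound directly.
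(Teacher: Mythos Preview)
Your argument is correct, but the paper's proof is considerably shorter and avoids your case split on $r$. Rather than coupling the samples via $V(t)^{1/2}$ and then wrestling with the non-Lipschitz square root in $\odet{\cdot}$, the paper perturbs the \emph{density}: it writes $\esp{\odet{X(t)}\odet{Y(t)}}$ as an integral against the Gaussian density with covariance $\hat{\Lambda}(t)$, then bounds
\[
\norm{\exp\!\left(-\tfrac{1}{2}\prsc{\hat{\Lambda}(t)^{-1}L}{L}\right)-e^{-\frac{1}{2}\Norm{L}^2}}
\leq \tfrac{1}{2}\Norm{L}^2\,\Norm{\hat{\Lambda}(t)^{-1}-\Id}\,e^{-\frac{1}{4}\Norm{L}^2}
\]
via the Mean Value Theorem (for $t$ large). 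Integrating this against $\odet{X}\odet{Y}$ gives the $O(te^{-t/2})$ remainder directly, uniformly in $r$, because the integrand $\odet{X}\odet{Y}\Norm{L}^2 e^{-\Norm{L}^2/4}$ has finite integral without any inverse-moment or Bartlett considerations. Your route works, but it purchases the same bound at the cost of negative-moment estimates when $r<n$ and a separate determinant-product argument when $r=n$; the density-side perturbation sidesteps both.
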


\begin{proof}
See Appendix~\ref{sec technical computations for Section limit distrib}.
\end{proof}

Lem.~\ref{lem estimates 0 expectation of odet XY} and~\ref{lem estimates infty expectation of odet XY} and the definition of $D_{n,r}$ (Def.~\ref{def Dnr}) allows to derive the following.

\begin{cor}
\label{cor estimates Dnr and integrability}
Let $n \in \N^*$ and $r \in \{1,\dots,n\}$, then we have: 
\begin{equation*}
t^\frac{n-2}{2}D_{n,r}(t) = \left\{ \begin{aligned} &O\!\left(\frac{1}{\sqrt{t}}\right) & &\text{as }t \to 0,\\ &O\!\left(e^{-\frac{t}{4}}\right) & &\text{as }t \to +\infty. \end{aligned}\right.
\end{equation*}
In particular, $\mathcal{I}_{n,r} = \frac{1}{2} \int_0^{+\infty} D_{n,r}(t)t^\frac{n-2}{2} \dx t$ is well-defined and finite.
\end{cor}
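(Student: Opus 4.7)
The plan is to derive the two claimed estimates directly by plugging the asymptotics of Lemmas~\ref{lem estimates 0 expectation of odet XY} and~\ref{lem estimates infty expectation of odet XY} into the definition of $D_{n,r}$ and then deduce integrability by the standard fact that $t \mapsto t^{-1/2}$ is integrable near~$0$ and that $e^{-t/4}$ is integrable near~$+\infty$.

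First, I would handle the behavior as $t \to 0^+$. Since $(1-e^{-t})^{r/2} \sim t^{r/2}$ and the constant $(2\pi)^r (\vol{\S^{n-r}}/\vol{\S^n})^2$ is just $O(1)$, the dominant contribution to $D_{n,r}(t)$ comes from the ratio $\esp{\odet{X(t)}\odet{Y(t)}}/(1-e^{-t})^{r/2}$. In the case $r<n$, Lemma~\ref{lem estimates 0 expectation of odet XY} gives $\esp{\odet{X(t)}\odet{Y(t)}} = O(1)$, so $D_{n,r}(t) = O(t^{-r/2})$ and therefore
\begin{equation*}
t^{\frac{n-2}{2}} D_{n,r}(t) = O\!\left(t^{\frac{n-r-2}{2}}\right) = O\!\left(t^{-1/2}\right),
\end{equation*}
since $r \leq n-1$ implies $(n-r-2)/2 \geq -1/2$. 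In the case $r=n$, Lemma~\ref{lem estimates 0 expectation of odet XY} yields $\esp{\odet{X(t)}\odet{Y(t)}} = O(t)$, so $D_{n,n}(t) = O(t^{1 - n/2})$ and $t^{(n-2)/2}D_{n,n}(t) = O(1)$, which is a fortiori $O(t^{-1/2})$. In either case we obtain the claimed estimate at~$0$.

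Next, I would treat the behavior as $t \to +\infty$. Here $(1-e^{-t})^{-r/2} = 1 + O(e^{-t})$, and by Lemma~\ref{lem estimates infty expectation of odet XY} we have $\esp{\odet{X(t)}\odet{Y(t)}} = (2\pi)^r (\vol{\S^{n-r}}/\vol{\S^n})^2 + O(te^{-t/2})$. Multiplying these two expansions and subtracting the constant yields $D_{n,r}(t) = O(te^{-t/2})$. Therefore
\begin{equation*}
t^{\frac{n-2}{2}} D_{n,r}(t) = O\!\left(t^{\frac{n}{2}} e^{-t/2}\right) = O\!\left(e^{-t/4}\right),
\end{equation*}
using that $t^{n/2} e^{-t/4} \to 0$ as $t \to +\infty$.

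Finally, since $D_{n,r}$ is continuous on $(0,+\infty)$ by the explicit formula in Def.~\ref{def Dnr}, the two estimates just established show that $t \mapsto D_{n,r}(t) t^{(n-2)/2}$ is integrable on a neighborhood of $0$ (because $t^{-1/2}$ is) and on a neighborhood of $+\infty$ (because $e^{-t/4}$ is). Hence the integral defining $\mathcal{I}_{n,r}$ converges absolutely and is finite. There is no serious obstacle here; the only care needed is to check that, in the case $r<n$, the exponent $(n-r-2)/2$ is bounded below by $-1/2$, which is exactly the borderline case $r = n-1$.
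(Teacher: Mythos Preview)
Your argument is correct and follows exactly the route the paper intends: the paper simply states that the corollary is derived from Lemmas~\ref{lem estimates 0 expectation of odet XY} and~\ref{lem estimates infty expectation of odet XY} together with Def.~\ref{def Dnr}, and you have written out precisely that derivation.

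One small slip worth tightening: in the case $r=n$ you write $D_{n,n}(t)=O(t^{1-n/2})$ and then $t^{(n-2)/2}D_{n,n}(t)=O(1)$, but this forgets the constant $(2\pi)^n(\vol{\S^0}/\vol{\S^n})^2$ subtracted in the definition of $D_{n,r}$. For $n\geq 2$ the term $t^{1-n/2}$ dominates the constant and your bound stands, but for $n=1$ the ratio goes to $0$ while the constant does not, so $D_{1,1}(t)=O(1)$ and $t^{-1/2}D_{1,1}(t)=O(t^{-1/2})$, not $O(1)$. The final estimate $O(t^{-1/2})$ you claim is still correct in all cases, so this does not affect the conclusion.
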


%%%%%%%%%%%%%%%%%%%%%%%%%%%%%%%%%%%%%%%%%%%%%%%%%%%%%%%%%%%%%%%%%%%%%%%%%%%%%%%%%%%%%%%%%%%%%%%%%%%%%%%%%%%%%%
%%%%%%%%%%%%%%%%%%%%%%%%%%%%%%%%%%%%%%%%%%%%%%%%%%%%%%%%%%%%%%%%%%%%%%%%%%%%%%%%%%%%%%%%%%%%%%%%%%%%%%%%%%%%%%

\section{Proof of Theorem~\ref{thm asymptotics variance}}
\label{sec proof of the main theorem}

This section is concerned with the proof of our main result (Thm.~\ref{thm asymptotics variance}).  Recall that $\X$ is a compact Kähler manifold of complex dimension $n\geq 1$ defined over the reals and that $M$ denotes its real locus, assumed to be non-empty. Let $\E\to \X$ be a rank $r \in\{1,\dots,n\}$ real Hermitian vector bundle and $\L\to \X$ be a real Hermitian line bundle whose curvature form is $\omega$, the Kähler form of~$\X$. We assume that $\E$ and $\L$ are endowed with compatible real structures. For all $d \in \N$, we still denote by $E_d$ the Bergman kernel of $\E \otimes \L^d$. Finally, let $s_d$ denote a standard Gaussian vector in $\R \H$, whose real zero set is denoted by $Z_d$, and let $\rmes{d}$ denote the Riemannian volume measure on $Z_d$.

In Sect.~\ref{subsec Kac-Rice formulas} we recall Kac--Rice formulas and use them to derive an integral expression of $\var{\rmes{d}}$. Sect.~\ref{subsec expression of some covariances} is concerned with the study of some relevant random variables related to $(s_d(x))_{x \in M}$. Finally, we conclude the proof in two steps, in Sect.~\ref{subsec far off-diagonal term} and~\ref{subsec near-diagonal term}.

%%%%%%%%%%%%%%%%%%%%%%%%%%%%%%%%%%%%%%%%%%%%%%%%%%%%%%%%%%%%%%%%%%%%%%%%%%%%%%%%%%%%%%%%%%%%%%%%%%%%%%%%%%%%%%

\subsection{Kac--Rice formulas}
\label{subsec Kac-Rice formulas}

In this section, we use Kac--Rice formulas to derive an integral expression of $\var{\rmes{d}}$. Classical references for this material are~\cite[chap.~11.5]{AT2007} and~\cite[Thm.~6.3]{AW2009}. Since our probability space is the finite-dimensional vector space $\R\H$, it is possible to derive Kac--Rice formulas under weaker hypothesis than those given in~\cite{AT2007} and~\cite{AW2009}. This uses Federer's coaera formula and the so-called double fibration trick, see~\cite[App.~C]{Let2016} and the references therein. The first Kac--Rice formula we state (Thm.~\ref{thm Kac-Rice 1} below) was proved in~\cite[Thm.~5.3]{Let2016} and the second (Thm.~\ref{thm Kac-Rice 2} below) was proved in~\cite[Thm.~4.4]{Let2016a}.

Recall that the Jacobian $\odet{L}$ of an operator $L$ was defined in Def.~\ref{def Jacobian}, that $d_1$ was defined in Lem.~\ref{lem def d1} and that a connection is said to be real if it satisfies the condition given in Def.~\ref{def real connection}.

\begin{thm}[Kac--Rice formula 1]
\label{thm Kac-Rice 1}
Let $d \geq d_1$, let $\nabla^d$ be any real connection on $\E \otimes \L^d$ and let $s_d \sim \mathcal{N}(\Id)$ in $\R\H$. Then for all $\phi \in \mathcal{C}^0(M)$ we have:
\begin{equation}
\label{eq thm Kac-Rice 1}
\esp{\int_{x \in Z_d} \phi(x) \rmes{d}}= (2\pi)^{-\frac{r}{2}} \int_{x \in M} \frac{\phi(x)}{\odet{\ev_x^d}}\espcond{\odet{\nabla^d_{x}s_d}}{s_d(x)=0} \rmes{M}.
\end{equation}
The expectation on the right-hand side of Eq.~\eqref{eq thm Kac-Rice 1} is to be understood as the conditional expectation of $\odet{\nabla^d_{x}s_d}$ given that $s_d(x)=0$.
\end{thm}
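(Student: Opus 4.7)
The plan is to combine Federer's coarea formula on $M$ with an explicit computation of the Gaussian density of $s_d(x)$ at zero---the double fibration trick of~\cite[App.~C]{Let2016}. First, for $s \in \R\H$ whose restriction $s_{|M}$ vanishes transversally (which holds for $\nu_d$-almost every $s$ under the $0$-ampleness hypothesis $d \geq d_1$, by the transversality argument of~\cite[Sect.~2.6]{Let2016}), the coarea formula applied to $s_{|M}$ in a real normal trivialization gives the distributional identity
\begin{equation*}
\int_{Z_s} \phi(x) \rmes{s} = \int_M \phi(x) \odet{d_x(s_{|M})}\, \delta_0\!\left(s(x)\right) \rmes{M},
\end{equation*}
where $\delta_0$ denotes the Dirac mass at the origin of $\R(\E \otimes \L^d)_x$. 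Since $\nabla^d$ is real and $s(x)=0$, the covariant derivative $\nabla^d_x s$ restricted to $T_xM$ agrees with $d_x(s_{|M})$, so the Jacobian equals $\odet{\nabla^d_x s}$ and is independent of the choice of real connection.

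Next I would take the expectation in $s$ and exchange integrals by Fubini, giving
\begin{equation*}
\esp{\prsc{\rmes{d}}{\phi}} = \int_M \phi(x)\, \esp{\odet{\nabla^d_x s_d}\, \delta_0\!\left(s_d(x)\right)} \rmes{M}.
\end{equation*}
The bracket disintegrates in the standard way as the product of the Gaussian density $p_{s_d(x)}(0)$ with the conditional expectation $\espcond{\odet{\nabla^d_x s_d}}{s_d(x)=0}$. The random vector $s_d(x) = \ev_x^d(s_d)$ is centered Gaussian on $\R(\E \otimes \L^d)_x$ with variance operator $\ev_x^d (\ev_x^d)^*$; by $0$-ampleness this operator is positive definite, so~\eqref{eq Gaussian density} yields $p_{s_d(x)}(0) = \left[(2\pi)^{r/2}\odet{\ev_x^d}\right]^{-1}$. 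Substituting into the displayed formula produces~\eqref{eq thm Kac-Rice 1} exactly.

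The main subtlety is making the distributional coarea identity rigorous, since $\delta_0(s(x))$ is not a function. The cleanest route is the incidence-variety approach: set $\Sigma_d = \{(x,s) \in M \times \R\H \mvert s(x) = 0\}$, which under $0$-ampleness is a smooth submanifold of codimension $r$, and apply the coarea formula separately to the two projections $\Sigma_d \to M$ and $\Sigma_d \to \R\H$. The Jacobians of these projections evaluate to $\odet{\ev_x^d}$ and $\odet{\nabla^d_x s}$ respectively, and integrating out the Gaussian density along the fiber of the first projection recovers~\eqref{eq thm Kac-Rice 1}. Full details in the present Euclidean--Gaussian setting are given in~\cite[App.~C]{Let2016}; the upshot is that all genuinely analytic steps (the two coarea applications and the Gaussian disintegration) are automatic once the $0$-ampleness condition $d \geq d_1$ guarantees the requisite surjectivity and transversality almost surely.
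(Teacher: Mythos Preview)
Your proposal is correct and follows precisely the approach the paper indicates: the paper does not reprove this statement but cites \cite[thm.~5.3]{Let2016}, noting that the argument ``uses Federer's coarea formula and the so-called double fibration trick, see~\cite[App.~C]{Let2016}.'' Your sketch reproduces exactly that strategy---coarea on each projection of the incidence variety $\Sigma_d$, identification of the two Jacobians with $\odet{\ev_x^d}$ and $\odet{\nabla_x^d s}$, and the Gaussian density computation---so there is nothing to add.
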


\begin{ntn}
\label{notation Delta}
Let $\Delta = \{(x,y) \in M^2 \mid x=y\}$ denote the diagonal of $M^2$.
\end{ntn}

\begin{dfn}
\label{def eval xy}
Let $d \in \N$ and let $(x,y) \in M^2 \setminus \Delta$ we denote by $\ev^d_{x,y}$ the evaluation map:
\begin{equation*}
\label{eq dfn ev map}
\begin{array}{rccc}
\ev_{x,y}^d:& \R \H & \longrightarrow & \R\!\left(\E \otimes \L^d\right)_x \oplus \R\!\left(\E \otimes \L^d\right)_y.\\
& s & \longmapsto & \left(s(x),s(y)\right).
\end{array}
\end{equation*}
\end{dfn}

\begin{lem}
\label{lem def d2}
There exists $d_2 \in \N$, such that for all $(x,y) \in M^2 \setminus \Delta$, $\ev^d_{x,y}$ is surjective.
\end{lem}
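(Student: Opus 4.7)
My plan is to prove Lemma~\ref{lem def d2} by the same Bergman kernel strategy used for Lemma~\ref{lem def d1} (which will follow from Corollary~\ref{cor def d1}). The key identity is
\begin{equation*}
\ev^d_{x,y} \circ (\ev^d_{x,y})^* = G^d_{x,y} := \begin{pmatrix} E_d(x,x) & E_d(x,y) \\ E_d(y,x) & E_d(y,y) \end{pmatrix} \in \End\!\left(\R(\E \otimes \L^d)_x \oplus \R(\E \otimes \L^d)_y\right),
\end{equation*}
which follows by expanding $E_d = \sum_i s_{i,d} \otimes s_{i,d}^*$ in any orthonormal basis $(s_{i,d})$ of $\R\H$. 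Since $\operatorname{image}(G^d_{x,y}) = \operatorname{image}(\ev^d_{x,y})$, surjectivity of $\ev^d_{x,y}$ is equivalent to invertibility of $G^d_{x,y}$, the two-point analogue of the single-point variance operator behind Lemma~\ref{lem def d1}.

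I would then fix $\beta \in (0,\tfrac{1}{2})$ and split the problem according to whether $\rho_g(x,y) \geq d^{-\beta}$ or not. In the off-diagonal regime, Theorem~\ref{thm off diag estimates} gives $\Norm{E_d(x,y)} \leq C_1 d^n \exp\!\left(-C_2 d^{\frac{1}{2}-\beta}\right) = O(d^{-\infty})$, while the diagonal blocks are $(d/\pi)^n I_r + O(d^{n-1})$ by Theorem~\ref{thm Dai Liu Ma} at $w=z=0$, so $(\pi/d)^n G^d_{x,y}$ is uniformly close to $I_{2r}$ and invertible for $d$ large. In the near-diagonal regime, write $y = \exp_x(z/\sqrt{d})$ in the real normal trivialization around $x$; then $0 < \Norm{z} < d^{\frac{1}{2}-\beta}$ and Proposition~\ref{prop near diag estimates} yields
\begin{equation*}
(\pi/d)^n G^d_{x,y} = \begin{pmatrix} 1 & e^{-\Norm{z}^2/2} \\ e^{-\Norm{z}^2/2} & 1 \end{pmatrix} \otimes I_r + O(d^{-\alpha})
\end{equation*}
for any fixed $\alpha \in (0,1)$, with leading Schur complement $(1 - e^{-\Norm{z}^2})I_r \sim \Norm{z}^2 I_r$ as $\Norm{z} \to 0$.

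The main obstacle is the ultra-close sub-regime $0 < \Norm{z} \lesssim d^{-\alpha/2}$, in which the $O(d^{-\alpha})$ error of Proposition~\ref{prop near diag estimates} overwhelms the leading Schur complement and the naive determinantal bound fails. To resolve it I would reduce to the $1$-jet at $x$ via an invertible change of basis: writing $\epsilon = \rho_g(x,y)$ and $v = \exp_x^{-1}(y)/\epsilon$ for the unit direction from $x$ to $y$, the matrix $T_\epsilon = \left(\begin{smallmatrix} I_r & 0 \\ -\epsilon^{-1} I_r & \epsilon^{-1} I_r \end{smallmatrix}\right)$ is invertible, so invertibility of $G^d_{x,y}$ is equivalent to that of $T_\epsilon G^d_{x,y} T_\epsilon^{\mathrm{t}}$. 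Taylor expanding $E_d$ to second order at $(x,x)$ and using the $k \leq 2$ estimates of Theorem~\ref{thm Dai Liu Ma} together with the values $\partial_{x_i}\xi(0,0) = 0$ and $\partial_{x_i}\partial_{y_j}\xi(0,0) = \delta_{ij}$ from Lemma~\ref{lem values of limit derivatives}, one finds that as $y \to x$,
\begin{equation*}
T_\epsilon G^d_{x,y} T_\epsilon^{\mathrm{t}} \longrightarrow \begin{pmatrix} E_d(x,x) & \partial_y^\sharp E_d(x,x) \cdot v \\ \partial_x E_d(x,x) \cdot v & \partial_x \partial_y^\sharp E_d(x,x) \cdot (v,v) \end{pmatrix} \sim (d/\pi)^n \begin{pmatrix} I_r & 0 \\ 0 & d\, I_r \end{pmatrix},
\end{equation*}
which is positive-definite for $d$ large. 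Combining the two regimes and their sub-regimes will produce a uniform threshold $d_2$ such that $\ev^d_{x,y}$ is surjective for every $d \geq d_2$ and $(x,y) \in M^2 \setminus \Delta$.
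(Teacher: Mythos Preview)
Your overall strategy is sound and close in spirit to the paper's, but there are two technical imprecisions worth flagging, and the near-diagonal step is handled differently.

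\textbf{The split point.} You cut at $\rho_g(x,y)=d^{-\beta}$ and then invoke Proposition~\ref{prop near diag estimates} on the near side. That proposition is only stated for $w,z\in B_{T_xM}(0,b_n\ln d)$, i.e.\ for $\rho_g(x,y)<b_n\ln d/\sqrt d$, whereas your near-diagonal region allows $\Norm{z}$ up to $d^{1/2-\beta}\gg b_n\ln d$. The fix is painless: split at $b_n\ln d/\sqrt d$ as the paper does (Def.~\ref{def bn and Delta d}); the gap $b_n\ln d/\sqrt d\le\rho_g(x,y)<d^{-\beta}$ is already covered by your off-diagonal argument via Theorem~\ref{thm off diag estimates}.

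\textbf{Uniformity in the ultra-close regime.} You write that $T_\epsilon G^d_{x,y}T_\epsilon^{\mathrm t}$ ``converges as $y\to x$'' to the $1$-jet matrix. A pointwise limit for fixed $d$ is not enough: you need that for all large $d$ and all $(x,y)$ with $\Norm z\lesssim d^{-\alpha/2}$ simultaneously, the matrix is invertible. The remedy is to replace the limit by an integral-remainder Taylor bound (e.g.\ $\Norm z^{-2}\bigl(e_d(z,z)-e_d(z,0)-e_d(0,z)+e_d(0,0)\bigr)=\int_0^1\!\!\int_0^1\partial_x\partial_y e_d(sz,tz)\cdot(v,v)\,ds\,dt$) and then apply Proposition~\ref{prop near diag estimates}; after the further normalisation $\mathrm{diag}(I_r,d^{-1/2}I_r)$ on each side one gets $I_{2r}+O(d^{-\alpha/2})$ uniformly, which is what you need.

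\textbf{Comparison with the paper.} The paper splits at $\Delta_d$, uses Lemma~\ref{lem almost uncorrelated Theta} on $M^2\setminus\Delta_d$, and on $\Delta_d\setminus\Delta$ invokes its key Lemma~\ref{lem asymptotic Theta d}: $\Theta(z)^{-1/2}\Theta_d(z)\Theta(z)^{-1/2}=\Id+O(d^{-\alpha})$, from which $\det\Theta_d(z)=\det\Theta(z)\bigl(1+O(d^{-\alpha})\bigr)>0$ follows at once for all $z\neq0$. That lemma is proved by conjugating with the orthogonal matrix $Q\otimes I_r$ that diagonalises $\Theta(z)$ and then Taylor-expanding the second-difference $\epsilon_d(z,z)-\epsilon_d(z,0)-\epsilon_d(0,z)+\epsilon_d(0,0)$ --- essentially the same finite-difference idea you use, but packaged so that a single estimate covers the whole of $\Delta_d\setminus\Delta$ without a further ultra-close sub-split. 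Your $T_\epsilon$ trick is a more hands-on, non-orthogonal version of the same reduction; it is a perfectly valid alternative for this lemma, but since Lemma~\ref{lem asymptotic Theta d} is needed anyway for the proof of Theorem~\ref{thm asymptotics variance}, the paper gets Lemma~\ref{lem def d2} essentially for free from machinery it has to build regardless.
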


This was proved in \cite[Prop.~4.2]{Let2016a} in the case $r<n$, using Kodaira's embedding theorem. The proof can be adapted verbatim to the case $r\leq n$. We will give an alternative proof using only estimates on the Bergman kernel (Lem.~\ref{lem almost uncorrelated Theta} and~\ref{lem asymptotic Theta d}), see p.~\pageref{proof existence d2} below. 

\begin{thm}[Kac--Rice formula 2]
\label{thm Kac-Rice 2}
Let $d \geq d_2$, let $\nabla^d$ be any real connection on $\E \otimes \L^d$ and let $s_d \sim \mathcal{N}(\Id)$ in $\R \H$. Then for all $\phi_1$ and $\phi_2 \in \mathcal{C}^0(M)$ we have:
\begin{equation}
\label{eq thm Kac-Rice var}
\begin{aligned}
\esp{\int_{(x,y) \in (Z_d)^2 \setminus \Delta} \phi_1(x)\phi_2(y) \rmes{d}^2}&= \\
\frac{1}{(2\pi)^r} \int_{(x,y) \in M^2 \setminus \Delta} \frac{\phi_1(x)\phi_2(y)}{\odet{\ev_{x,y}^d}}&
\espcond{\odet{\nabla^d_{x}s_d}\odet{\nabla^d_{y}s_d}}{\ev_{x,y}^d(s_d)=0} \rmes{M}^2.
\end{aligned}
\end{equation}
Here, $\rmes{M}^2$ (resp.~$\rmes{d}^2$) stands for the product measure on $M^2$ (resp.~$(Z_d)^2$) induced by $\rmes{M}$ (resp.~$\rmes{d}$). The expectation on the right-hand side of Eq.~\eqref{eq thm Kac-Rice var} is the conditional expectation of $\odet{\nabla^d_{x}s_d}\odet{\nabla^d_{y}s_d}$ given that $\ev_{x,y}^d(s_d)=0$.
\end{thm}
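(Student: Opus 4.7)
The plan is to derive~\eqref{eq thm Kac-Rice var} via the double fibration trick, applying Federer's coarea formula to the two natural projections from an incidence submanifold. This is the same strategy used in~\cite{Let2016} to establish Theorem~\ref{thm Kac-Rice 1}; the only new ingredients here are the surjectivity furnished by Lemma~\ref{lem def d2} and the bookkeeping of pairs instead of single points.

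First, I would introduce the incidence set
\begin{equation*}
\tilde{I} = \left\{ (s, x, y) \in \R\H \times (M^2 \setminus \Delta) \mvert s(x) = 0, \ s(y) = 0 \right\}.
\end{equation*}
Because $d \geq d_2$, Lemma~\ref{lem def d2} guarantees that $\ev_{x,y}^d$ is surjective for every $(x,y) \in M^2 \setminus \Delta$, so the constraint map $(s,x,y) \mapsto (s(x), s(y))$ is a submersion and $\tilde{I}$ is a smooth submanifold of codimension $2r$ in $\R\H \times (M^2 \setminus \Delta)$. On $\tilde{I}$ I would consider the two natural projections $\pi_1 : (s,x,y) \mapsto s$ and $\pi_2 : (s,x,y) \mapsto (x,y)$. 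Since $d \geq d_1 \leq d_2$, for almost every $s \in \R\H$ (with respect to the Lebesgue measure) the section $s$ vanishes transversally along $M$, so the fiber $\pi_1^{-1}(s)$ is canonically identified with $(Z_s)^2 \setminus \Delta$; for every $(x,y) \in M^2 \setminus \Delta$, the fiber $\pi_2^{-1}(x,y)$ is the linear subspace $\ker(\ev_{x,y}^d) \subset \R\H$ of codimension $2r$.

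Next, I would apply the coarea formula to each of the two projections. The Jacobian of $\pi_1$ at a point $(s, x, y) \in \tilde{I}$ with $s$ transverse at $x$ and $y$ factorizes as $\odet{\nabla^d_x s}\odet{\nabla^d_y s}$ for any real connection $\nabla^d$, since the value $\nabla^d s$ at a zero of~$s$ is intrinsic; the Jacobian of $\pi_2$ contributes, via the natural identification of the normal bundle of $\tilde{I}$ with $\R(\E\otimes\L^d)_x \oplus \R(\E\otimes\L^d)_y$, the factor $\odet{\ev_{x,y}^d}^{-1}$. Equating the two expressions for the integral over $\tilde{I}$ of $\phi_1(x)\phi_2(y)$ against the standard Gaussian density of $\R\H$ times $\rmes{M}^2$ yields a representation of the left-hand side of~\eqref{eq thm Kac-Rice var} as
\begin{equation*}
\int_{M^2 \setminus \Delta} \frac{\phi_1(x)\phi_2(y)}{\odet{\ev_{x,y}^d}}\left( \int_{\ker \ev_{x,y}^d} \odet{\nabla^d_x s}\odet{\nabla^d_y s}\, d\gamma_{x,y}(s)\right) \rmes{M}^2,
\end{equation*}
where $\gamma_{x,y}$ is the standard Gaussian measure on $\ker(\ev_{x,y}^d)$. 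Using the orthogonal decomposition $\R\H = \ker(\ev_{x,y}^d) \oplus \ker(\ev_{x,y}^d)^\perp$, the pushforward of the standard Gaussian on $\ker(\ev_{x,y}^d)^\perp$ by $\ev_{x,y}^d$ is centered Gaussian with variance $\ev_{x,y}^d(\ev_{x,y}^d)^*$, which has density $(2\pi)^{-r}\odet{\ev_{x,y}^d}^{-1}$ at $0 \in \R^{2r}$. The inner integral therefore equals $(2\pi)^{-r}\odet{\ev_{x,y}^d}^{-1}$ times $\espcond{\odet{\nabla^d_x s_d}\odet{\nabla^d_y s_d}}{\ev_{x,y}^d(s_d) = 0}$, yielding~\eqref{eq thm Kac-Rice var}.

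The main technical point is the careful verification of the disintegration of the Gaussian measure on $\R\H$ against the Riemannian fiber measures on $\tilde{I}$, and the identification of the two Jacobians; this is most delicate in the case $r = n$, where $(Z_d)^2 \setminus \Delta$ is discrete and $\rmes{d}^2$ is a counting measure, so that the coarea formula applied to $\pi_1$ degenerates into a finite sum. One has to check that the transversality provided by Lemma~\ref{lem def d2} is strong enough to make the resulting finite-sum interpretation consistent with the continuous formulation used when $r<n$.
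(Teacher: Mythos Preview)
Your approach via the double-fibration trick and Federer's coarea formula is exactly the strategy behind the referenced proof, and the paper itself does not reprove the result: its proof consists entirely of the sentence ``This formula was proved in~\cite[thm.~4.4]{Let2016a} in the case $1 \leq r<n$. The hypothesis $r<n$ does not appear in the proof and can be changed to $r \leq n$ without any other modification.'' In particular, your concern in the final paragraph that $r=n$ is ``most delicate'' is explicitly dismissed: the coarea argument goes through verbatim, the zero-dimensionality of the fibres of $\pi_1$ when $r=n$ being handled automatically by the general coarea formula (integration over a zero-dimensional fibre is a finite sum).

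One bookkeeping slip in your sketch is worth flagging. In your displayed formula, $\gamma_{x,y}$ is the standard Gaussian on $\ker(\ev_{x,y}^d)$, so the inner integral already \emph{is} the conditional expectation $\espcond{\odet{\nabla^d_x s_d}\odet{\nabla^d_y s_d}}{\ev_{x,y}^d(s_d)=0}$, with no extra factor. Your subsequent claim that this inner integral equals $(2\pi)^{-r}\odet{\ev_{x,y}^d}^{-1}$ times the conditional expectation therefore introduces a spurious $\odet{\ev_{x,y}^d}^{-1}$. The missing $(2\pi)^{-r}$ in the displayed formula should instead come from restricting the $N_d$-dimensional Gaussian density $(2\pi)^{-N_d/2}e^{-\Norm{s}^2/2}$ to the $(N_d-2r)$-dimensional kernel; the factor $\odet{\ev_{x,y}^d}^{-1}$ arises once, from the ratio of the normal Jacobians of $\pi_1$ and $\pi_2$. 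This is purely a matter of tracking constants and does not affect the validity of the strategy.
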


\begin{proof}
This formula was proved in~\cite[Thm.~4.4]{Let2016a} in the case $1 \leq r<n$. The hypothesis $r<n$ does not appear in the proof and can be changed to $r \leq n$ without any other modification.
\end{proof}

\begin{dfn}
\label{def Dd}
Let $d \geq \max(d_1,d_2)$, and let $\nabla^d$ be any real connection on $\E \otimes \L^d$. We denote by $\D_d : M^2 \setminus \Delta \to \R$ the map defined by:
\begin{multline*}
\label{eq def density}
\D_d(x,y) = \left(\frac{\espcond{\odet{\nabla^d_{x}s_d}\!\odet{\nabla^d_{y}s_d}}{s_d(x)=0=s_d(y)}}{\odet{\ev_{x,y}^d}}\right.\\
\left.-\frac{\espcond{\odet{\nabla^d_{x}s_d}}{s_d(x)=0}\espcond{\odet{\nabla^d_{y}s_d}}{s_d(y)=0}}{\odet{\ev_x^d}\odet{\ev_y^d}}\right).
\end{multline*}
\end{dfn}

\begin{rem}
\label{rem choice of nablad}
Note that $\D_d$ does not depend on the choice of $\nabla^d$. Indeed, we only consider derivatives of $s_d$ at points where it vanishes.
\end{rem}

\begin{prop}
\label{prop integral expression variance}
For all $d \geq \max(d_1,d_2)$, we have for any $\phi_1, \phi_2 \in \mathcal{C}^0(M)$:
\begin{equation*}
\var{\rmes{d}}\left(\phi_1,\phi_2\right) = \frac{1}{(2\pi)^r} \int_{M^2 \setminus \Delta}\phi_1(x)\phi_2(y) \D_d(x,y) \rmes{M}^2 + \delta_{rn}\ \esp{\prsc{\rmes{d}}{\phi_1\phi_2}},
\end{equation*}
where $\delta_{rn}$ equals $1$ if $r=n$ and $0$ otherwise.
\end{prop}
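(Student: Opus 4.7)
The plan is to write the covariance of the linear statistics as $\cov{\prsc{\rmes{d}}{\phi_1}}{\prsc{\rmes{d}}{\phi_2}} = \esp{\prsc{\rmes{d}}{\phi_1}\prsc{\rmes{d}}{\phi_2}} - \esp{\prsc{\rmes{d}}{\phi_1}}\esp{\prsc{\rmes{d}}{\phi_2}}$ and to compute each piece by applying the appropriate Kac--Rice formula, handling separately a potential diagonal contribution which is precisely the source of the $\delta_{rn}$ term.

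First I would unpack the product $\prsc{\rmes{d}}{\phi_1}\prsc{\rmes{d}}{\phi_2}$ as the integral of $\phi_1(x)\phi_2(y)$ against the product measure $\rmes{d}\otimes \rmes{d}$ on $Z_d\times Z_d$, and split this integral along the partition $Z_d\times Z_d = \bigl((Z_d)^2\setminus \Delta\bigr) \sqcup \bigl((Z_d)^2\cap \Delta\bigr)$. The key observation is that when $r<n$, almost surely $Z_d$ is a smooth submanifold of dimension $n-r\geq 1$ (Lem.~\ref{lem as submanifold}), so its diagonal has vanishing $\rmes{d}^2$-measure; whereas when $r=n$, $Z_d$ is almost surely a finite set and $\rmes{d}=\sum_{x\in Z_d}\delta_x$ (Rem.~\ref{rem codim max}), so that
\begin{equation*}
\int_{(Z_d)^2\cap \Delta} \phi_1(x)\phi_2(y)\rmes{d}^2 = \sum_{x\in Z_d}\phi_1(x)\phi_2(x) = \prsc{\rmes{d}}{\phi_1\phi_2}.
\end{equation*}
In both cases, taking expectations yields
\begin{equation*}
\esp{\prsc{\rmes{d}}{\phi_1}\prsc{\rmes{d}}{\phi_2}} = \esp{\int_{(Z_d)^2\setminus \Delta}\phi_1(x)\phi_2(y)\rmes{d}^2} + \delta_{rn}\,\esp{\prsc{\rmes{d}}{\phi_1\phi_2}},
\end{equation*}
and Thm.~\ref{thm Kac-Rice 2} rewrites the first expectation as the first of the two integrals appearing in $\D_d$, divided by $(2\pi)^r$ and integrated against $\phi_1(x)\phi_2(y)$ over $M^2\setminus \Delta$.

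Next, using Thm.~\ref{thm Kac-Rice 1} for each factor and Fubini, the product of expectations equals
\begin{equation*}
\esp{\prsc{\rmes{d}}{\phi_1}}\esp{\prsc{\rmes{d}}{\phi_2}} = \frac{1}{(2\pi)^r} \int_{M^2} \frac{\phi_1(x)\phi_2(y)}{\odet{\ev_x^d}\odet{\ev_y^d}}\espcond{\odet{\nabla^d_x s_d}}{s_d(x)=0}\espcond{\odet{\nabla^d_y s_d}}{s_d(y)=0}\rmes{M}^2.
\end{equation*}
Since $M^2 \setminus \Delta$ differs from $M^2$ by a set of $\rmes{M}^2$-measure zero (as $\dim M=n\geq 1$ forces $\Delta$ to have codimension $n$ in $M^2$), this integral may be restricted to $M^2\setminus \Delta$, which matches the second term defining $\D_d$ in Def.~\ref{def Dd}.

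Subtracting these two expressions produces exactly the integrand $\D_d(x,y)$ of Def.~\ref{def Dd}, with the leftover $\delta_{rn}\,\esp{\prsc{\rmes{d}}{\phi_1\phi_2}}$ outside. The only slightly delicate point will be justifying Fubini and the absolute integrability on $M^2\setminus \Delta$ of the integrand obtained after subtraction, but this is routine given the boundedness of $\phi_1,\phi_2$ together with the integrability estimates already built into the two Kac--Rice formulas (which assume $d\geq \max(d_1,d_2)$, guaranteeing the surjectivities of $\ev_x^d$ and $\ev_{x,y}^d$ via Lem.~\ref{lem def d1} and Lem.~\ref{lem def d2}, so the Jacobian denominators never vanish). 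Thus the main conceptual step is the clean bookkeeping of the diagonal contribution, which is exactly what produces the Kronecker $\delta_{rn}$.
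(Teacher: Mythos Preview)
Your proposal is correct and follows essentially the same approach as the paper: expand the covariance, split the product integral over $(Z_d)^2$ along the diagonal (which contributes only when $r=n$), apply Thm.~\ref{thm Kac-Rice 2} to the off-diagonal part and Thm.~\ref{thm Kac-Rice 1} to each expectation factor, then subtract to recover $\D_d$. The only cosmetic difference is that the paper does not spell out the Fubini step or the fact that $\Delta\subset M^2$ has measure zero, but these are exactly the minor points you flag.
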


\begin{proof}
This was proved in \cite[Sect.~4.2]{Let2016a} for $r<n$, the case $r=n$ requires an extra argument. The following proof if valid for any $r \in \{1,\dots,n\}$. Let $\phi_1$ and $\phi_2 \in \mathcal{C}^0(M)$, we have:
\begin{equation}
\label{eq integral expression variance}
\var{\rmes{d}}\left(\phi_1,\phi_2\right) = \esp{\prsc{\rmes{d}}{\phi_1}\prsc{\rmes{d}}{\phi_2}} - \esp{\prsc{\rmes{d}}{\phi_1}}\esp{\prsc{\rmes{d}}{\phi_2}}.
\end{equation}
Since $Z_d$ has almost surely dimension $n-r$, the diagonal in $Z_d \times Z_d$ is negligible if and only if $r<n$. Moreover, if $r=n$ then both $\rmes{d}$ and $\rmes{d}^2$ are counting measures. Then,
\begin{align*}
\prsc{\rmes{d}}{\phi_1}\prsc{\rmes{d}}{\phi_2} &= \int_{(x,y)\in (Z_d)^2} \phi_1(x)\phi_2(y) \rmes{d}^2\\
&= \int_{(x,y)\in (Z_d)^2 \setminus \Delta} \phi_1(x)\phi_2(y) \rmes{d}^2 + \delta_{rn} \int_{x \in Z_d}\phi_1(x)\phi_2(x) \rmes{d},
\end{align*}
almost surely. Hence
\begin{equation}
\label{eq integral expression}
\esp{\prsc{\rmes{d}}{\phi_1}\prsc{\rmes{d}}{\phi_2}} = \esp{\int_{(x,y)\in (Z_d)^2 \setminus \Delta} \phi_1(x)\phi_2(y) \rmes{d}^2} + \delta_{rn} \esp{\prsc{\rmes{d}}{\phi_1\phi_2}}.
\end{equation}
We apply Thm.~\ref{thm Kac-Rice 2} to the first term on the right-hand side of Eq.~\eqref{eq integral expression}. Similarly, we apply Thm.~\ref{thm Kac-Rice 1} to $\esp{\prsc{\rmes{d}}{\phi_i}}$ for $i\in \{1,2\}$. This yields the result by Eq.~\eqref{eq integral expression variance}.
\end{proof}

By Thm.~\ref{thm expectation}, if $r=n$, for all $\phi_1,\phi_2 \in \mathcal{C}^0(M)$ we have:
\begin{equation*}
\esp{\prsc{\rmes{d}}{\phi_1\phi_2}} = d^\frac{n}{2} \frac{2}{\vol{\S^n}}\left(\int_M \phi_1\phi_2 \rmes{M}\right) + \Norm{\phi_1}_\infty \Norm{\phi_2}_\infty O\!\left(d^{\frac{n}{2}-1}\right).
\end{equation*}
Hence, in order to prove Thm.~\ref{thm asymptotics variance}, we have to show that, for any $n \in \N^*$ and $r \in \{1,\dots,n\}$:
\begin{multline}
\label{eq main estimate}
\int_{M^2 \setminus \Delta}\phi_1(x)\phi_2(y) \D_d(x,y) \rmes{M}^2 = d^{r-\frac{n}{2}} \left(\int_M \phi_1 \phi_2 \rmes{M}\right) \vol{\S^{n-1}}\mathcal{I}_{n,r}\\
+ \Norm{\phi_1}_{\infty}\Norm{\phi_2}_{\infty}  O\!\left(d^{r-\frac{n}{2}-\alpha}\right) + \Norm{\phi_1}_{\infty}\varpi_{\phi_2}\left(C_\beta d^{-\beta}\right) O\!\left(d^{r-\frac{n}{2}}\right),
\end{multline}
where $\alpha$, $\beta$, $C_\beta$ and $\mathcal{I}_{n,r}$ are as in Thm.~\ref{thm asymptotics variance}.

This is done in two steps. The mass of the integral on the left-hand side of Eq.~\eqref{eq main estimate} concentrates in a neighborhood of $\Delta$ of typical size $\frac{1}{\sqrt{d}}$. More precisely, let us now fix the value of the constant $b_n$ appearing in Prop.~\ref{prop near diag estimates}.

\begin{dfns}
\label{def bn and Delta d}
We set $b_n = \frac{1}{C_2}\left(\frac{n}{2}+1\right)$, where $C_2>0$ is the constant appearing in the exponential in Thm.~\ref{thm off diag estimates}. Moreover, for all $d \in \N^*$, we denote:
\begin{equation*}
\Delta_d = \left\{(x,y) \in M^2 \mvert \rho_g(x,y) < b_n\frac{\ln d}{\sqrt{d}} \right\},
\end{equation*}
where, $\rho_g$ is the geodesic distance in $(M,g)$.
\end{dfns}

In Sect.~\ref{subsec far off-diagonal term} below, we show that, in Eq.~\eqref{eq main estimate}, the integral over $M^2 \setminus \Delta_d$ only contributes what turns out to be an error term. We refer to this term as the \emph{far off-diagonal term}. In Sect.~\ref{subsec near-diagonal term} we complete the proof of~\eqref{eq main estimate} by studying the \emph{near-diagonal term}, that is the integral of $\phi_1(x)\phi_2(y)\D_d(x,y)$ over $\Delta_d \setminus \Delta$. This turns out to be the leading term.

%%%%%%%%%%%%%%%%%%%%%%%%%%%%%%%%%%%%%%%%%%%%%%%%%%%%%%%%%%%%%%%%%%%%%%%%%%%%%%%%%%%%%%%%%%%%%%%%%%%%%%%%%%%%%%

\subsection{Expression of some covariances}
\label{subsec expression of some covariances}

In order to prove~\eqref{eq main estimate}, we need to study the distribution of the random variables appearing in the definition of $\D_d$ (see Def.~\ref{def Dd}). The purpose of this section is to introduce several variance operators that will appear in the proof. In the following $\nabla^d$ denotes a real connection on $\E \otimes \L^d \to \X$.

%%%%%%%%%%%%%%%%%%%%%%%%%%%%%%%%%%%%%%%%%%%%%%%%%%%%%%%%%%%%%%%%%%%%%%%%%%%%%%%%%%%%%%%%%%%%%%%%%%%%%%%%%%%%%%

\subsubsection{Uncorrelated terms}
\label{subsubsec uncorrelated terms}

First of all, let us consider the distribution of $s_d(x)$ for any $x \in M$. Since $s_d \sim \mathcal{N}(\Id)$ and $\ev^d_x$ is linear (see Def.~\ref{def 0 ample}), $s_d(x)$ is a centered Gaussian vector in $\R \left(\E \otimes \L^d\right)_x$ with variance operator $\ev^d_x(\ev^d_x)^*=E_d(x,x)$.

\begin{lem}
\label{lem asymptotic det Ed}
For all $x \in M$, we have $\odet{\ev_x^d} = \left(\dfrac{d}{\pi}\right)^\frac{rn}{2}(1+O(d^{-1}))$, where the error term is independent of $x$.
\end{lem}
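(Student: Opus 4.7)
The plan is short, since this is essentially a direct consequence of the diagonal values of the Bergman kernel.

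First, I would identify the Gram operator $\ev_x^d (\ev_x^d)^*$ explicitly. Since $s_d \sim \mathcal{N}(\Id)$ in $\R\H$ and $\ev_x^d$ is linear, the random vector $s_d(x) = \ev_x^d(s_d)$ in $\R(\E \otimes \L^d)_x$ is centered Gaussian with variance operator $\ev_x^d (\ev_x^d)^*$. On the other hand, Proposition~\ref{prop covariance equals Bergman} applied at $y = x$ gives $\Var(s_d(x)) = E_d(x,x) \in \End\!\left(\R(\E \otimes \L^d)_x\right)$. Hence $\ev_x^d (\ev_x^d)^* = E_d(x,x)$ and, by Definition~\ref{def Jacobian},
\begin{equation*}
\odet{\ev_x^d}^2 = \det E_d(x,x),
\end{equation*}
a determinant of an operator on the $r$-dimensional real space $\R(\E \otimes \L^d)_x$.

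Next, I would evaluate $E_d(x,x)$ via the near-diagonal asymptotics. Reading Theorem~\ref{thm Dai Liu Ma} at the single point $w = z = 0$ in the real normal trivialization around $x$ (and using $\kappa(0) = 1$, since we are in Riemannian normal coordinates), we obtain uniformly in $x \in M$
\begin{equation*}
E_d(x,x) = \left(\frac{d}{\pi}\right)^{\!n} \Id_{\R(\E\otimes\L^d)_x} + O\!\left(d^{n-1}\right).
\end{equation*}
Equivalently, one may take the scaled kernel $e_d$ of Prop.~\ref{prop near diag estimates} at $w=z=0$ with $k=0$; this yields $e_d(0,0) = \Id + O(d^{-\alpha})$ for any $\alpha \in (0,1)$. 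The sharper $O(d^{-1})$ needed here follows directly from Thm.~\ref{thm Dai Liu Ma} at $(0,0)$, where the prefactor $(1+\sqrt{d}(\Norm{w}+\Norm{z}))^{2n+12}$ equals $1$ and the remainder is exactly $O(d^{n-1})$ uniformly in $x \in M$ by compactness of $M$.

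Finally, I would take the determinant of an $r \times r$ Hermitian operator of the form $\lambda_d \Id + R_d$ with $\lambda_d = (d/\pi)^n$ and $\Norm{R_d} = O(d^{n-1})$, obtaining
\begin{equation*}
\det E_d(x,x) = \lambda_d^{\,r} \det\!\left(\Id + \lambda_d^{-1} R_d\right) = \left(\frac{d}{\pi}\right)^{\!rn}\!\left(1 + O(d^{-1})\right),
\end{equation*}
with error uniform in $x \in M$. Taking the positive square root gives
\begin{equation*}
\odet{\ev_x^d} = \left(\frac{d}{\pi}\right)^{\!\frac{rn}{2}}\!(1 + O(d^{-1})).
\end{equation*}
There is no real obstacle here; the only point worth checking carefully is that the Dai--Liu--Ma error at $(0,0)$ is genuinely $O(d^{n-1})$ with a constant independent of $x$, which is ensured by the compactness of $M$.
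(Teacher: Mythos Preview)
Your proof is correct and follows the same approach as the paper's: identify $\odet{\ev_x^d}^2 = \det E_d(x,x)$ and apply the Dai--Liu--Ma expansion (Thm.~\ref{thm Dai Liu Ma}) at $w=z=0$ to get $E_d(x,x) = (d/\pi)^n(\Id + O(d^{-1}))$. The paper's proof is a one-liner stating exactly these two facts; you have simply spelled out the details more carefully.
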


\begin{proof}
We have $\odet{\ev_x^d} = \det(E_d(x,x))^\frac{1}{2}$, and by Thm.~\ref{thm Dai Liu Ma} $E_d(x,x) = \left(\dfrac{d}{n}\right)^{n} \left(\Id +O(d^{-1})\right)$.
\end{proof}

\begin{cor}
\label{cor def d1}
There exists $d_1 \in \N$, such that for all $d \geq d_1$, for all $x \in M$, $\ev_x^d$ is surjective, that is  $(s_d(x))$ is non-degenerate.
\end{cor}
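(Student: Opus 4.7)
The plan is to deduce Corollary~\ref{cor def d1} directly from Lemma~\ref{lem asymptotic det Ed}, which is the substantive input. The key observation I would make first is that the surjectivity of the linear map $\ev_x^d : \R H^0(\X, \E \otimes \L^d) \to \R(\E \otimes \L^d)_x$ is equivalent to the strict positivity of its Jacobian $\odet{\ev_x^d} = \sqrt{\det\!\left(\ev_x^d (\ev_x^d)^*\right)}$. Indeed, $\ev_x^d(\ev_x^d)^*$ is a self-adjoint operator on the finite-dimensional space $\R(\E \otimes \L^d)_x$, so its determinant is positive iff it is invertible iff $\ker((\ev_x^d)^*) = \{0\}$ iff $\ev_x^d$ is surjective. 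This same condition is equivalent to $s_d(x) = \ev_x^d(s_d)$ being non-degenerate as a Gaussian vector, since its variance operator is precisely $\ev_x^d(\ev_x^d)^* = E_d(x,x)$.

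Next I would invoke Lemma~\ref{lem asymptotic det Ed}, which asserts the asymptotic $\odet{\ev_x^d} = (d/\pi)^{rn/2}\bigl(1 + O(d^{-1})\bigr)$ with an error term independent of $x \in M$. Since this error term is uniform, there exists $d_1 \in \N$ such that for every $d \geq d_1$ the factor $1 + O(d^{-1})$ is at least, say, $\tfrac{1}{2}$, and thus $\odet{\ev_x^d} \geq \tfrac{1}{2}(d/\pi)^{rn/2} > 0$ for every $x \in M$. By the equivalence above, this yields the surjectivity of $\ev_x^d$ and the non-degeneracy of $s_d(x)$, uniformly for $x \in M$ and $d \geq d_1$.

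There is no real obstacle here: the content is entirely packaged in the off-diagonal/diagonal behavior of the Bergman kernel recorded in Theorem~\ref{thm Dai Liu Ma} and used in Lemma~\ref{lem asymptotic det Ed}. The only point that would be worth stating explicitly in the writeup is the uniformity in $x$, which is ultimately inherited from the compactness of $M$ and the uniform constants in the Dai--Liu--Ma expansion; this is what makes it possible to choose a single integer $d_1$ working for all base points simultaneously.
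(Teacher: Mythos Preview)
Your proposal is correct and is exactly the argument the paper intends: Corollary~\ref{cor def d1} is stated immediately after Lemma~\ref{lem asymptotic det Ed} and follows from it by the equivalence you spell out between surjectivity of $\ev_x^d$ and positivity of $\odet{\ev_x^d}$, together with the uniformity in $x$ of the asymptotic.
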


Then, let $d \in \N$ and $x \in M$, we denote by $j_x^d:s \mapsto (s(x),\nabla^d_xs)$ the evaluation of the $1$-jet of a section at the point $x$. The distribution of the random vector $(s_d(x),\nabla^d_xs_d)$ is a centered Gaussian in $\R \left(\E \otimes \L^d\right)_x  \oplus \left(\R \left(\E \otimes \L^d\right)_x \otimes T_x^*M\right)$ with variance operator:
\begin{align*}
\label{eq variance 1-jet}
j_x^d(j_x^d)^* = \esp{j_x^d(s_d)\otimes j_x^d(s_d)^*} &= \begin{pmatrix}
\esp{s_d(x) \otimes s_d(x)^*} & \esp{s_d(x) \otimes (\nabla^d_xs_d)^*}\\
\esp{(\nabla^d_xs_d) \otimes s_d(x)^*} & \esp{(\nabla^d_xs_d) \otimes (\nabla^d_xs_d)^*}
\end{pmatrix}\\
&= \begin{pmatrix}
E_d(x,x) & \partial_y^\sharp E_d(x,x)\\
\partial_xE_d(x,x) & \partial_x\partial_y^\sharp E_d(x,x)
\end{pmatrix}.
\end{align*}
If $d \geq d_1$, then $s_d(x)$ is non-degenerate and the distribution of $\nabla^d_xs_d$ given that $s_d(x)=0$ is a centered Gaussian whose variance equals
\begin{equation*}
\partial_x\partial_y^\sharp E_d(x,x)- \partial_xE_d(x,x) \left(E_d(x,x)\right)^{-1}\partial_y^\sharp E_d(x,x).
\end{equation*}
By Thm.~\ref{thm Dai Liu Ma}, this variance equals
\begin{equation*}
\frac{d^{n+1}}{\pi^n}\left(\Id_{\R \left(\E \otimes \L^d\right)_x \otimes T_x^*M} + O(d^{-1})\right)
\end{equation*}
as $d$ goes to infinity and the error does not depend on $x$.

\begin{rem}
\label{rem choice of connection}
If $(s,x)$ is such that $s(x)=0$, then $\nabla^d_xs$ does not depend on the connection $\nabla^d$. This explains why the distribution of $\nabla^d_xs_d$ given that $s_d(x)=0$, in particular its variance, does not depend on $\nabla^d$.
\end{rem}

\begin{lem}
\label{lem estimates cond exp x}
For every $x \in M$, we have:
\begin{equation*}
\espcond{\odet{\nabla^d_xs_d}}{s_d(x)=0} = \left(\frac{d^{n+1}}{\pi^n}\right)^\frac{r}{2} (2\pi)^\frac{r}{2} \frac{\vol{\S^{n-r}}}{\vol{\S^n}} \left(1 + O\!\left(d^{-1}\right)\right),
\end{equation*}
where the error term is independent of $x$.
\end{lem}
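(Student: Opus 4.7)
The plan is to rewrite the conditional expectation as the expected Jacobian of a near-standard Gaussian $r \times n$ matrix, and then to identify the limiting value by a classical Wishart computation. By Definition~\ref{def Upsilon d}, the random vector $\left(\frac{\pi^n}{d^{n+1}}\right)^{1/2} \nabla^d_x s_d$, conditioned on $s_d(x) = 0$, has law $\mathcal{N}(\Upsilon_d(x))$ on the $rn$-dimensional real vector space $\R(\E \otimes \L^d)_x \otimes T^*_xM$. Choosing orthonormal bases of $\R(\E\otimes\L^d)_x$ and of $T_xM$ identifies this space with $\mathcal{M}_{rn}(\R)$, and the Jacobian $\odet{\cdot}$ is independent of this choice. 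Since $\odet{\lambda L} = \lambda^r \odet{L}$ for $\lambda > 0$, this yields
\begin{equation*}
\espcond{\odet{\nabla^d_x s_d}}{s_d(x) = 0} = \left(\frac{d^{n+1}}{\pi^n}\right)^{r/2} \esp{\odet{Y_d(x)}},
\end{equation*}
where $Y_d(x) \sim \mathcal{N}(\Upsilon_d(x))$ is viewed as a random element of $\mathcal{M}_{rn}(\R)$. The lemma therefore reduces to proving
\begin{equation*}
\esp{\odet{Y_d(x)}} = (2\pi)^{r/2} \frac{\vol{\S^{n-r}}}{\vol{\S^n}} \left(1 + O(d^{-1})\right) \quad \text{uniformly in } x \in M.
\end{equation*}

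To establish this, I compare $Y_d(x)$ with a standard Gaussian matrix $Y_0 \sim \mathcal{N}(\Id)$ in $\mathcal{M}_{rn}(\R)$. By Lemma~\ref{lem asymptotic Upsilon d}, $\Upsilon_d(x) = \Id + R_d(x)$ with $\Norm{R_d(x)} \leq C/d$ uniformly in $x$, whence $\det \Upsilon_d(x) = 1 + O(d^{-1})$ and $\Upsilon_d(x)^{-1} - \Id = O(d^{-1})$ in operator norm. Writing the density of $Y_d(x)$ at $u \in \mathcal{M}_{rn}(\R)$ as the density of $Y_0$ multiplied by $\det(\Upsilon_d(x))^{-1/2} \exp\!\left(-\tfrac{1}{2}\prsc{(\Upsilon_d(x)^{-1} - \Id)u}{u}\right)$, a Taylor expansion on the region $\Norm{u}^2 \leq d/(2C)$ shows this ratio equals $1 + O(d^{-1}(1 + \Norm{u}^2))$, while on the complementary tail a crude exponential bound, combined with the polynomial growth $\odet{u} = O(\Norm{u}^r)$, contributes only $O(d^{-\infty})$ when integrated against the Gaussian weight. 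Since all moments of $\odet{Y_0}$ are finite, this gives $\esp{\odet{Y_d(x)}} = \esp{\odet{Y_0}} + O(d^{-1})$ uniformly in $x$.

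The final step is to compute $\esp{\odet{Y_0}}$. The matrix $Y_0 Y_0^{\mathrm{t}}$ follows the Wishart distribution $W_r(n, \Id)$, and the Bartlett decomposition yields $\det(Y_0 Y_0^{\mathrm{t}}) \stackrel{d}{=} \prod_{i=1}^r \chi^2_{n-i+1}$ as a product of independent chi-squared variables, so $\odet{Y_0}$ is distributed as $\prod_{i=1}^r \chi_{n-i+1}$. Using $\esp{\chi_k} = \sqrt{2}\,\Gamma((k+1)/2)/\Gamma(k/2)$, the product telescopes and, after substituting $\vol{\S^k} = 2\pi^{(k+1)/2}/\Gamma((k+1)/2)$, yields
\begin{equation*}
\esp{\odet{Y_0}} = 2^{r/2}\,\frac{\Gamma((n+1)/2)}{\Gamma((n-r+1)/2)} = (2\pi)^{r/2}\frac{\vol{\S^{n-r}}}{\vol{\S^n}},
\end{equation*}
as required. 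The main technical point is the uniform density comparison in the middle step: the ratio of Gaussian densities is not globally bounded in $u$, so one must carefully exploit the decay of $\phi_\Id$ to dominate the polynomial growth of $\odet{\cdot}$ on the tail region $\Norm{u}^2 > d/(2C)$, all constants depending only on the uniform bound $\Norm{R_d(x)} \leq C/d$.
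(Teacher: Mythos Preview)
Your proof is correct and follows essentially the same three-step structure as the paper's: extract the scaling factor via the definition of $\Upsilon_d(x)$, compare $\mathcal{N}(\Upsilon_d(x))$ to $\mathcal{N}(\Id)$ using Lemma~\ref{lem asymptotic Upsilon d}, and then evaluate the standard-Gaussian Jacobian moment. The only difference is that the paper outsources the density-comparison step and the value $\esp{\odet{Y_0}} = (2\pi)^{r/2}\vol{\S^{n-r}}/\vol{\S^n}$ to \cite[lem.~4.7]{Let2016a} and \cite[lem.~A.14]{Let2016}, whereas you supply self-contained arguments (density ratio with tail bound, and Bartlett decomposition) for both.
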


\begin{proof}
This was proved in~\cite[Lem.~4.7]{Let2016a} for $r<n$. The proof is the same here.
\end{proof}

%%%%%%%%%%%%%%%%%%%%%%%%%%%%%%%%%%%%%%%%%%%%%%%%%%%%%%%%%%%%%%%%%%%%%%%%%%%%%%%%%%%%%%%%%%%%%%%%%%%%%%%%%%%%%%

\subsubsection{Correlated terms far from the diagonal}
\label{subsubsec correlated terms far from the diagonal}

Let us now focus on variables where non trivial correlations may appear in the limit. Let $d \in \N$, for all $(x,y)\in M^2 \setminus \Delta$, the random vector $\ev^d_{x,y}(s_d)=\left(s_d(x),s_d(y)\right)$ is a centered Gaussian vector with variance operator:
\begin{equation}
\label{eq variance evxy}
\ev^d_{x,y}(\ev^d_{x,y})^* = \esp{\ev^d_{x,y}(s_d) \otimes \ev^d_{x,y}(s_d)^*} = \begin{pmatrix}
E_d(x,x) & E_d(x,y)\\ E_d(y,x) & E_d(y,y)
\end{pmatrix},
\end{equation}
where we decomposed this operator according to the direct sum $\R\!\left(\E \otimes \L^d\right)_x \oplus \R\!\left(\E \otimes \L^d\right)_y$.

\begin{dfn}
\label{def Theta d xy}
For all $d \in \N$, for all $(x,y) \in M^2 \setminus \Delta$, we denote by:
\begin{equation*}
\Theta_d(x,y) = \left(\frac{\pi}{d}\right)^n \begin{pmatrix}
E_d(x,x) & E_d(x,y)\\ E_d(y,x) & E_d(y,y)
\end{pmatrix},
\end{equation*}
the variance of the centered Gaussian vector $\left(\frac{\pi}{d}\right)^\frac{n}{2}(s_d(x),s_d(y))$.
\end{dfn}

Note that, by Lem.~\ref{lem def d2}, for all $d \geq d_2$, $\ev_{x,y}^d(\ev_{x,y}^d)^*$ is non-singular, i.e.~$\left(s_d(x),s_d(y)\right)$ is non-degenerate and $\Theta_d(x,y)$ is non-singular.

Let $d \in \N$ and $(x,y) \in M^2 \setminus \Delta$, we denote by $j_{x,y}^d:s \mapsto \left(s(x),s(y),\nabla^d_xs,\nabla^d_ys\right)$ the evaluation of the $1$-jets at $(x,y)$. Then $j_{x,y}^d(s_d) = \left(s_d(x),s_d(y),\nabla^d_xs_d,\nabla^d_ys_d\right)$ is a centered Gaussian vector in:
\begin{equation*}
\label{eq target space 1jet}
\R\!\left(\E \otimes \L^d\right)_x \oplus \R\!\left(\E \otimes \L^d\right)_y \oplus \left(\R\!\left(\E \otimes \L^d\right)_x \otimes T^*_xM\right) \oplus \left(\R\!\left(\E \otimes \L^d\right)_y \otimes T^*_yM\right),
\end{equation*}
whose variance operator $j_{x,y}^d(j_{x,y}^d)^*$ equals:
\begin{equation*}
\esp{j_{x,y}^d(s_d) \otimes \left(j_{x,y}^d(s_d)\right)^*} = \begin{pmatrix}
E_d(x,x) & E_d(x,y) & \partial_y^\sharp E_d(x,x) & \partial_y^\sharp E_d(x,y)\\
E_d(y,x) & E_d(y,y) & \partial_y^\sharp E_d(y,x) & \partial_y^\sharp E_d(y,y)\\
\partial_xE_d(x,x) & \partial_xE_d(x,y) & \partial_x\partial_y^\sharp E_d(x,x) & \partial_x\partial_y^\sharp E_d(x,y)\\
\partial_xE_d(y,x) & \partial_xE_d(y,y) & \partial_x\partial_y^\sharp E_d(y,x) & \partial_x\partial_y^\sharp E_d(y,y)
\end{pmatrix}.
\end{equation*}

\begin{dfn}
\label{def Omega d xy}
For all $d \in \N$, for all $(x,y) \in M^2 \setminus \Delta$, we denote by:
\begin{equation*}
\Omega_d(x,y) = \left(\frac{\pi}{d}\right)^n \begin{pmatrix}
E_d(x,x) & E_d(x,y) & d^{-\frac{1}{2}}\partial_y^\sharp E_d(x,x) & d^{-\frac{1}{2}}\partial_y^\sharp E_d(x,y)\\
E_d(y,x) & E_d(y,y) & d^{-\frac{1}{2}}\partial_y^\sharp E_d(y,x) & d^{-\frac{1}{2}}\partial_y^\sharp E_d(y,y)\\
d^{-\frac{1}{2}}\partial_xE_d(x,x) & d^{-\frac{1}{2}}\partial_xE_d(x,y) & d^{-1}\partial_x\partial_y^\sharp E_d(x,x) & d^{-1}\partial_x\partial_y^\sharp E_d(x,y)\\
d^{-\frac{1}{2}}\partial_xE_d(y,x) & d^{-\frac{1}{2}}\partial_xE_d(y,y) & d^{-1}\partial_x\partial_y^\sharp E_d(y,x) & d^{-1}\partial_x\partial_y^\sharp E_d(y,y)
\end{pmatrix},
\end{equation*}
the variance operator of the centered Gaussian vector: $\left(\frac{\pi}{d}\right)^\frac{n}{2}\left(s_d(x),s_d(y),\frac{1}{\sqrt{d}}\nabla^d_xs_d,\frac{1}{\sqrt{d}}\nabla^d_ys_d\right)$.
\end{dfn}

Let us now assume that $d \geq d_2$, so that the distribution of $\left(s_d(x),s_d(y)\right)$ is non-degenerate. Then the distribution of $\left(\nabla_x^ds,\nabla_y^ds\right)$ given that $s_d(x)=0=s_d(y)$ is a centered Gaussian with variance operator:
\begin{equation*}
\label{eq conditional variance full}
\left(\begin{smallmatrix}
\partial_x\partial_y^\sharp E_d(x,x) & \partial_x\partial_y^\sharp E_d(x,y)\\
\partial_x\partial_y^\sharp E_d(y,x) & \partial_x\partial_y^\sharp E_d(y,y)
\end{smallmatrix}\right)-\left(\begin{smallmatrix}
\partial_xE_d(x,x) & \partial_xE_d(x,y)\\
\partial_xE_d(y,x) & \partial_xE_d(y,y)
\end{smallmatrix}\right)
\left(\begin{smallmatrix}
E_d(x,x) & E_d(x,y)\\
E_d(y,x) & E_d(y,y)
\end{smallmatrix}\right)^{-1}
\left(\begin{smallmatrix}
\partial_y^\sharp E_d(x,x) & \partial_y^\sharp E_d(x,y)\\
\partial_y^\sharp E_d(y,x) & \partial_y^\sharp E_d(y,y)
\end{smallmatrix}\right).
\end{equation*}

\begin{dfn}
\label{def Lambda d xy}
For all $d \geq d_2$, for all $(x,y) \in M^2 \setminus \Delta$, we set:
\begin{align*}
\Lambda_d(x,y) = & \frac{\pi^n}{d^{n+1}}\left(\begin{pmatrix}
\partial_x\partial_y^\sharp E_d(x,x) & \partial_x\partial_y^\sharp E_d(x,y)\\
\partial_x\partial_y^\sharp E_d(y,x) & \partial_x\partial_y^\sharp E_d(y,y)
\end{pmatrix}\right.\\
&-\left.\begin{pmatrix}
\partial_xE_d(x,x) & \partial_xE_d(x,y)\\
\partial_xE_d(y,x) & \partial_xE_d(y,y)
\end{pmatrix}
\begin{pmatrix}
E_d(x,x) & E_d(x,y)\\
E_d(y,x) & E_d(y,y)
\end{pmatrix}^{-1}
\begin{pmatrix}
\partial_y^\sharp E_d(x,x) & \partial_y^\sharp E_d(x,y)\\
\partial_y^\sharp E_d(y,x) & \partial_y^\sharp E_d(y,y)
\end{pmatrix}\right),
\end{align*}
which is the variance of the Gaussian vector $\left(\dfrac{\pi^n}{d^{n+1}}\right)^\frac{1}{2}\left(\nabla^d_xs_d,\nabla^d_ys_d\right)$ given that $s_d(x)=0=s_d(y)$.
\end{dfn}

\begin{rem}
\label{rem choice of connection 2}
Once again, $\Lambda_d(x,y)$ is independent of the choice of $\nabla^d$, and so is the distribution of $(\nabla^d_xs_d,\nabla^d_ys_d)$ given that $s_d(x)=0=s_d(y)$. On the other hand, the distribution of $\left(s_d(x),s_d(y),\nabla^d_xs_d,\nabla^d_ys_d\right)$ heavily depends on $\nabla^d$, and so does $\Omega_d(x,y)$. Hence, we will need to specify a choice of $\nabla^d$ at some point when dealing with $\Omega_d$.
\end{rem}

%%%%%%%%%%%%%%%%%%%%%%%%%%%%%%%%%%%%%%%%%%%%%%%%%%%%%%%%%%%%%%%%%%%%%%%%%%%%%%%%%%%%%%%%%%%%%%%%%%%%%%%%%%%%%%

\subsubsection{Correlated terms close to the diagonal}
\label{subsubsec correlated terms close to the diagonal}

Finally, we need to consider the distribution of the $1$-jets of $s_d$ at $x$ and $y \in M$, when the distance between $x$ and $y$ is of order $\frac{1}{\sqrt{d}}$. As in Sect.~\ref{sec estimates for the bergman kernel}, let $R>0$ be such that $2R$ is less than the injectivity radius of $\X$. There exists $d_3 \in \N$ such that, for all $d \geq d_3$, $b_n \frac{\ln d}{\sqrt{d}} \leq R$.

Let $d \geq d_3$ and let $(x,y) \in \Delta_d \setminus \Delta$. Using the real normal trivialization of $\E \otimes \L^d$ around~$x$ (see Sect.~\ref{subsec real normal trivialization}), we can see $\left(\frac{\pi}{d}\right)^\frac{n}{2}(s_d(x),s_d(y))$ as a random vector in $\R\!\left(\E \otimes \L^d\right)_x \oplus \R\!\left(\E \otimes \L^d\right)_x$. Since the distance from $x$ to $y$ is smaller than the injectivity radius of $M$, we can write $y$ as $\exp_x\left(\frac{z}{\sqrt{d}}\right)$ for some $z \in T_xM$. Moreover $\Norm{z} = \sqrt{d} \, \rho_g(x,y) < b_n \ln d$.

\begin{dfn}
\label{def Theta dz}
Let $d \geq d_3$, let $x \in M$ and let $z \in B_{T_xM}(0,b_n\ln d) \setminus\{0\}$, we set:
\begin{equation*}
\Theta_d(z) = \Theta_d\left(x,\exp_x\left(\frac{z}{\sqrt{d}}\right)\right),
\end{equation*}
seen as an operator on $\R\!\left(\E \otimes \L^d\right)_x \oplus \R\!\left(\E \otimes \L^d\right)_x$ via the real normal trivialization centered at $x$.
\end{dfn}

\begin{rem}
\label{rem bad notation}
Beware that $\Theta_d(z)$ depends on $x$, even if this is not reflected in the notation. However, we will show that the limit of $\Theta_d(z)$ as $d \to +\infty$ does not depend on $x$.
\end{rem}

Recall that $e_d$ was defined by Eq.~\eqref{eq def ed} as a map from $T_xM\times T_xM$ to $\End\left(\R\!\left(\E\otimes \L^d\right)_x\right)$. The definitions of $\Theta_d(x,y)$ (Def.~\ref{def Theta d xy}) and $e_d$ show that, for all $d \geq d_3$, for all $x \in M$ and for all $z \in B_{T_xM}(0,b_n\ln d)\setminus\{0\}$:
\begin{equation}
\label{eq expression Theta dz}
\Theta_d(z) = \begin{pmatrix}
e_d(0,0) & e_d(0,z) \\ e_d(z,0) & e_d(z,z)
\end{pmatrix}.
\end{equation}

We can define $\Omega_d(z)$ and $\Lambda_d(z)$ similarly and express them in terms of $e_d$ and its derivatives.

\begin{dfn}
\label{def Omega dz}
Let $d \geq d_3$, let $x \in M$ and let $z \in B_{T_xM}(0,b_n\ln d)\setminus\{0\}$, we set:
\begin{equation*}
\Omega_d(z) = \Omega_d\left(x,\exp_x\left(\frac{z}{\sqrt{d}}\right)\right),
\end{equation*}
seen as an operator on $\left(\R \oplus \R \oplus T_x^*M \oplus T_x^*M\right) \otimes \R\!\left(\E \otimes \L^d\right)_x$ via the real normal trivialization centered at $x$.
\end{dfn}

Let $\nabla^d$ be a real connection on $\E \otimes \L^d$ such that, in the real normal trivialization around $x$, this connection coincides over the ball $B_{T_x\X}(0,R)$ with the usual differentiation for maps from $T_x\X$ to $\left(\E \otimes \L^d\right)_x$. The existence of such a connection was established at the end of Sect.~\ref{subsec real normal trivialization}. Then, by Def.~\ref{def Omega d xy} and~\ref{def Omega dz}, we have for all $d\geq d_3$, for all $x \in M$ and for all $z \in B_{T_xM}(0,b_n \ln d)\setminus\{0\}$:
\begin{equation}
\label{eq expression Omega dz}
\Omega_d(z) = \begin{pmatrix}
e_d(0,0) & e_d(0,z) & \partial_y^\sharp e_d(0,0) & \partial_y^\sharp e_d(0,z)\\
e_d(z,0) & e_d(z,z) & \partial_y^\sharp e_d(z,0) & \partial_y^\sharp e_d(z,z)\\
\partial_xe_d(0,0) & \partial_xe_d(0,z) & \partial_x\partial_y^\sharp e_d(0,0) & \partial_x\partial_y^\sharp e_d(0,z)\\
\partial_xe_d(z,0) & \partial_xe_d(z,z) & \partial_x\partial_y^\sharp e_d(z,0) & \partial_x\partial_y^\sharp e_d(z,z)
\end{pmatrix}.
\end{equation}

\begin{dfn}
\label{dfn Lambda dz}
Let $d \geq \max(d_2,d_3)$, let $x \in M$ and let $z \in B_{T_x^*M}(0,b_n\ln d)\setminus\{0\}$, we set:
\begin{equation*}
\Lambda_d(z) = \Lambda_d\left(x,\exp_x\left(\frac{z}{\sqrt{d}}\right)\right),
\end{equation*}
seen as an operator on $\left(T_x^*M \oplus T_x^*M\right) \otimes \R\!\left(\E \otimes \L^d\right)_x$ via the real normal trivialization around~$x$.
\end{dfn}

Then, for all $d \geq \max(d_2,d_3)$, for all $x \in M$ and all $z \in B_{T_x^*M}(0,b_n\ln d)\setminus\{0\}$ we have:
\begin{equation*}
\label{eq expression Lambda dz}
\Lambda_d(z)\! =\! \begin{pmatrix}
\partial_x\partial_y^\sharp e_d(0,0) & \partial_x\partial_y^\sharp e_d(0,z)\\
\partial_x\partial_y^\sharp e_d(z,0) & \partial_x\partial_y^\sharp e_d(z,z)
\end{pmatrix}-\begin{pmatrix}
\partial_xe_d(0,0) & \partial_xe_d(0,z)\\
\partial_xe_d(z,0) & \partial_xe_d(z,z)
\end{pmatrix}
\!\Theta_d(z)^{-1} \hspace{-3pt}
\begin{pmatrix}
\partial_y^\sharp e_d(0,0) & \partial_y^\sharp e_d(0,z)\\
\partial_y^\sharp e_d(z,0) & \partial_y^\sharp e_d(z,z)
\end{pmatrix}\! .
\end{equation*}

%%%%%%%%%%%%%%%%%%%%%%%%%%%%%%%%%%%%%%%%%%%%%%%%%%%%%%%%%%%%%%%%%%%%%%%%%%%%%%%%%%%%%%%%%%%%%%%%%%%%%%%%%%%%%%

\subsection{Far off-diagonal term}
\label{subsec far off-diagonal term}

In this section, we state that the far off-diagonal term in Eq.~\eqref{eq main estimate} only contributes an error term. This was already proved in~\cite{Let2016a} for $r<n$. The proof is the same for $r=n$, so we refer to \cite{Let2016a} for the proof. Lem.~\ref{lem almost uncorrelated Theta} below is used in the proof of Prop.~\ref{prop off diagonal is small} but is also of independent interest for our purpose.

\begin{prop}
\label{prop off diagonal is small}
Let $\phi_1,\phi_2 \in \mathcal{C}^0(M)$, then we have the following as $d \to +\infty$:
\begin{equation*}
\int_{M^2 \setminus \Delta_d} \phi_1(x)\phi_2(y) \D_d(x,y) \rmes{M}^2 = \Norm{\phi_1}_\infty\Norm{\phi_2}_\infty O\!\left(d^{r-\frac{n}{2}-1}\right),
\end{equation*}
where the error term is independent of $(\phi_1,\phi_2)$.
\end{prop}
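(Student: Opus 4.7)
\medskip

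\textbf{Proof plan for Proposition~\ref{prop off diagonal is small}.} The strategy is to show that, uniformly on $M^2\setminus \Delta_d$, the density $\D_d(x,y)$ itself is $O(d^{r-\frac{n}{2}-1})$, so that after integration against $\phi_1(x)\phi_2(y)$ on the bounded-volume set $M^2$ we obtain the claimed bound. Everything rests on the exponential off-diagonal decay of the Bergman kernel (Thm.~\ref{thm off diag estimates}) combined with our choice of $b_n=\frac{1}{C_2}(\frac{n}{2}+1)$, which yields, for $k\in\{0,1,2\}$ and $(x,y)\in M^2\setminus\Delta_d$,
\begin{equation*}
\Norm{E_d(x,y)}_{\mathcal{C}^k}\ \leq\ C_1\, d^{n+\frac{k}{2}}\,e^{-C_2\sqrt{d}\,\rho_g(x,y)}\ =\ O\!\left(d^{\frac{n+k}{2}-1}\right).
\end{equation*}
In particular, after the normalizations of Defs.~\ref{def Theta d xy} and~\ref{def Omega d xy}, each block of $\Theta_d(x,y)$ and $\Omega_d(x,y)$ involving \emph{both} points $x$ and $y$ (i.e.~$E_d(x,y)$, $\partial_x E_d(x,y)$, $\partial^\sharp_y E_d(x,y)$, $\partial_x\partial^\sharp_y E_d(x,y)$) becomes $O(d^{-\frac{n}{2}-1})$, whereas the on-diagonal blocks (evaluated at $(x,x)$ and $(y,y)$) stay of order $1$ by Thm.~\ref{thm Dai Liu Ma} and Lem.~\ref{lem asymptotic Upsilon d}.

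First I would use this block-almost-diagonal structure to decouple the two points. For $\Theta_d(x,y)$, a block-inverse computation gives $\Theta_d(x,y)^{-1}=\operatorname{diag}(\Theta_d(x,x)^{-1},\Theta_d(y,y)^{-1})+O(d^{-\frac{n}{2}-1})$ and, taking determinants,
\begin{equation*}
\odet{\ev_{x,y}^d}\ =\ \odet{\ev_x^d}\,\odet{\ev_y^d}\,\bigl(1+O(d^{-\frac{n}{2}-1})\bigr),
\end{equation*}
uniformly on $M^2\setminus\Delta_d$. Next I would do the same for the conditional variance operator $\Lambda_d(x,y)$ of Def.~\ref{def Lambda d xy}: the Schur complement that defines it is a sum of a block-diagonal ``main'' part, whose $x$ and $y$ blocks are respectively $\Upsilon_d(x)$ and $\Upsilon_d(y)$ up to an $O(d^{-\frac{n}{2}-1})$ correction, and cross blocks of size $O(d^{-\frac{n}{2}-1})$.

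Then comes the key decoupling lemma. If $(L_1,L_2)$ is a centered Gaussian vector with a block covariance $\left(\begin{smallmatrix}A_1 & B\\ B^* & A_2\end{smallmatrix}\right)$ such that $A_1\to \Upsilon_d(x)$, $A_2\to\Upsilon_d(y)$ are uniformly invertible (by Lem.~\ref{lem asymptotic Upsilon d}) and $\Norm{B}=O(d^{-\frac{n}{2}-1})$, then conditioning $L_2$ on $L_1$ shifts its mean by $B^*A_1^{-1}L_1$ which is $O(d^{-\frac{n}{2}-1})$ and leaves a variance $A_2-B^*A_1^{-1}B=A_2+O(d^{-n-2})$. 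A standard comparison of Gaussian expectations of the Lipschitz functional $M\mapsto\odet{M}$ (bounded on the region of interest by Lem.~\ref{lem asymptotic Upsilon d}) then yields
\begin{equation*}
\esp{\odet{L_1}\odet{L_2}}\ =\ \esp{\odet{L_1'}}\,\esp{\odet{L_2'}}\,\bigl(1+O(d^{-\frac{n}{2}-1})\bigr),
\end{equation*}
where $L_i'\sim\mathcal{N}(A_i)$. Applying this with the blocks of $\Lambda_d(x,y)$ and rescaling as in Def.~\ref{def Upsilon d} gives
\begin{equation*}
\espcond{\odet{\nabla^d_x s_d}\odet{\nabla^d_y s_d}}{s_d(x)=0=s_d(y)}
=\espcond{\odet{\nabla^d_x s_d}}{s_d(x)=0}\espcond{\odet{\nabla^d_y s_d}}{s_d(y)=0}\bigl(1+O(d^{-\frac{n}{2}-1})\bigr).
\end{equation*}

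Combining the two decoupling estimates in Def.~\ref{def Dd} makes the two terms defining $\D_d(x,y)$ agree up to a multiplicative factor $1+O(d^{-\frac{n}{2}-1})$. Since by Lem.~\ref{lem asymptotic det Ed} and Lem.~\ref{lem estimates cond exp x} each of the ratios $\espcond{\odet{\nabla^d_x s_d}}{s_d(x)=0}/\odet{\ev_x^d}$ is $O(d^{r/2})$ uniformly in $x$, we conclude
\begin{equation*}
\D_d(x,y)\ =\ O\!\left(d^r\cdot d^{-\frac{n}{2}-1}\right)\ =\ O\!\left(d^{r-\frac{n}{2}-1}\right),\qquad \text{uniformly on }M^2\setminus\Delta_d.
\end{equation*}
Integrating against $\phi_1(x)\phi_2(y)$ over the set $M^2\setminus\Delta_d$ of bounded volume and noting that all error terms above are independent of $\phi_1,\phi_2$ yields the proposition.

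The main obstacle is the decoupling lemma for $\esp{\odet{L_1}\odet{L_2}}$: one must control the Gaussian expectation of the unbounded, singular functional $\odet{\cdot}$ when the block covariance is only \emph{nearly} block-diagonal. This is handled by writing $L_2$ conditionally on $L_1$ as $B^*A_1^{-1}L_1+L_2''$ with $L_2''\sim\mathcal{N}(A_2+O(d^{-n-2}))$ independent of $L_1$, and bounding the resulting remainder using that $A_1,A_2$ stay uniformly away from being degenerate (so in particular all error estimates are uniform in $(x,y)\in M^2\setminus\Delta_d$). The case $r=n$ brings no additional difficulty here, since we are separated from the diagonal by at least $b_n\frac{\ln d}{\sqrt d}$; the $r=n$ singularity of $\D_d$ is confined to $\Delta$ itself and will only matter in the near-diagonal analysis of Section~\ref{subsec near-diagonal term}.
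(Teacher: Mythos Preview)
Your proposal is correct and follows essentially the same route as the paper: both arguments show via the off-diagonal decay (Thm.~\ref{thm off diag estimates}) and the choice of $b_n$ that $\Theta_d(x,y)$ and $\Lambda_d(x,y)$ are block-diagonal up to $O(d^{-\frac{n}{2}-1})$ (the paper's Lemmas~\ref{lem almost uncorrelated Theta} and~\ref{lem almost uncorrelated Lambda}), then decouple the conditional expectation multiplicatively (the paper's Lemma~\ref{lem far off estimate cond exp}, which invokes~\cite[lem.~4.10]{Let2016a} where you sketch a conditioning argument), and conclude $\D_d(x,y)=O(d^{r-\frac{n}{2}-1})$ uniformly on $M^2\setminus\Delta_d$.
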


\begin{lem}
\label{lem almost uncorrelated Theta}
For every $(x,y) \in M^2 \setminus \Delta_d$, we have:
\begin{equation*}
\Theta_d(x,y) = \left(\frac{\pi}{d}\right)^n \begin{pmatrix}
E_d(x,x) & 0\\ 0 & E_d(y,y)
\end{pmatrix} \left(\Id + O\!\left(d^{-\frac{n}{2}-1}\right)\right),
\end{equation*}
where the error term is independent of $(x,y) \in M^2 \setminus \Delta_d$.
\end{lem}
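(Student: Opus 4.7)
The plan is to combine the near-diagonal asymptotic of Theorem~\ref{thm Dai Liu Ma} (applied on the diagonal) with the off-diagonal exponential decay of Theorem~\ref{thm off diag estimates}. By Definition~\ref{def Theta d xy}, we can split
\begin{equation*}
\Theta_d(x,y) = \left(\frac{\pi}{d}\right)^n \begin{pmatrix} E_d(x,x) & 0 \\ 0 & E_d(y,y) \end{pmatrix} + \left(\frac{\pi}{d}\right)^n \begin{pmatrix} 0 & E_d(x,y) \\ E_d(y,x) & 0 \end{pmatrix},
\end{equation*}
so the task reduces to showing that, after factoring out the block-diagonal term, the remainder has operator norm $O(d^{-n/2-1})$ uniformly in $(x,y) \in M^2 \setminus \Delta_d$.

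First I would control the off-diagonal block. By Theorem~\ref{thm off diag estimates} with $k=0$, there exist $C_1, C_2 > 0$ such that $\Norm{E_d(x,y)} \leq C_1 d^n \exp(-C_2 \sqrt{d}\, \rho_g(x,y))$ for all $x,y \in M$ and $d$ large. Since $(x,y) \notin \Delta_d$ means $\rho_g(x,y) \geq b_n \ln d / \sqrt{d}$, and since $b_n$ was precisely chosen in Definition~\ref{def bn and Delta d} so that $C_2 b_n = n/2 + 1$, we obtain the uniform bound
\begin{equation*}
\Norm{E_d(x,y)} \leq C_1 d^{n} \cdot d^{-C_2 b_n} = C_1 d^{n/2 - 1}.
\end{equation*}

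Next I would invert the diagonal block. By Theorem~\ref{thm Dai Liu Ma} applied at $w=z=0$, we have $E_d(x,x) = (d/\pi)^n (\Id_{\R(\E\otimes\L^d)_x} + O(d^{-1}))$ uniformly in $x$. Hence for $d$ large enough $E_d(x,x)$ is invertible and $\Norm{E_d(x,x)^{-1}} = O(d^{-n})$ uniformly. Factoring out the block-diagonal part then gives
\begin{equation*}
\Theta_d(x,y) = \left(\frac{\pi}{d}\right)^n \begin{pmatrix} E_d(x,x) & 0 \\ 0 & E_d(y,y) \end{pmatrix} \left( \Id + R_d(x,y) \right),
\end{equation*}
with $R_d(x,y) = \bigl(\begin{smallmatrix} 0 & E_d(x,x)^{-1} E_d(x,y) \\ E_d(y,y)^{-1} E_d(y,x) & 0 \end{smallmatrix}\bigr)$. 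Combining the two bounds above, $\Norm{R_d(x,y)} = O(d^{-n}) \cdot O(d^{n/2-1}) = O(d^{-n/2-1})$ uniformly in $(x,y) \in M^2 \setminus \Delta_d$, which is exactly the claimed estimate. There is no serious obstacle here: the content is entirely in the calibration $C_2 b_n = n/2 + 1$ built into Definition~\ref{def bn and Delta d}, which is designed to make the off-diagonal decay win against the $d^n$ normalization by a factor $d^{-n/2-1}$.
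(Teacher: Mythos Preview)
Your proof is correct and follows essentially the same route as the paper: use Theorem~\ref{thm off diag estimates} together with the calibration $C_2 b_n = \frac{n}{2}+1$ to bound the off-diagonal blocks by $O(d^{n/2-1})$, and use Theorem~\ref{thm Dai Liu Ma} on the diagonal to see that the block-diagonal part is $\Id + O(d^{-1})$, hence invertible with bounded inverse. The paper writes this as an additive $O(d^{-n/2-1})$ perturbation of a term equal to $\Id + O(d^{-1})$, which is exactly equivalent to your explicit multiplicative factorization with $R_d(x,y)$.
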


\begin{proof}
Since $(x,y)\in M^2 \setminus \Delta_d$, we have $\rho_g(x,y) \geq b_n\frac{\ln d}{\sqrt{d}}$. With our choice of $b_n$ (see Def.~\ref{def bn and Delta d}), the error term in Thm.~\ref{thm off diag estimates} is then $O(d^{\frac{n-k}{2}-1})$, uniformly on $M^2 \setminus \Delta_d$. Thus, by Thm.~\ref{thm off diag estimates},
\begin{equation*}
\Theta_d(x,y) = \left(\frac{\pi}{d}\right)^n \begin{pmatrix}
E_d(x,x) & 0\\ 0 & E_d(y,y)
\end{pmatrix} + O\!\left(d^{-\frac{n}{2}-1}\right).
\end{equation*}
The results follows from the fact that the leading term is $\Id + O(d^{-1})$, by Thm.~\ref{thm Dai Liu Ma}.
\end{proof}

%%%%%%%%%%%%%%%%%%%%%%%%%%%%%%%%%%%%%%%%%%%%%%%%%%%%%%%%%%%%%%%%%%%%%%%%%%%%%%%%%%%%%%%%%%%%%%%%%%%%%%%%%%%%%%

\subsection{Near-diagonal term}
\label{subsec near-diagonal term}

In this section, we conclude the proof of Thm.~\ref{thm asymptotics variance}, up to the technical lemmas whose proofs were postponed until Appendices~\ref{sec technical computations for Section limit distrib} and~\ref{sec technical computations for Section main proof}.

\begin{dfn}
\label{def Dd xz}
Let $d \geq \max(d_1,d_2,d_3)$, let $x \in M$ and let $z \in B_{T_xM}(0,b_n\ln d) \setminus \{0\}$, we define:
\begin{equation*}
D_d(x,z) = d^{-r} \D_d\left(x,\exp_x \left(\frac{z}{\sqrt{d}}\right)\right).
\end{equation*}
\end{dfn}

Recall that $D_{n,r}$ was defined by Def.~\ref{def Dnr}. The main result of this section is the following.

\begin{prop}
\label{prop asymptotic Ddxz}
Let $\alpha \in \left(0,1\right)$, then for all $x \in M$, for all $z \in B_{T_xM}(0,b_n \ln d) \setminus \{0\}$ we have:
\begin{equation*}
D_d(x,z) = D_{n,r}\left(\Norm{z}^2\right)\left(1+ O\!\left(d^{-\alpha}\right)\right) + O\!\left(d^{-\alpha}\right),
\end{equation*}
where the error terms do not depend on $(x,z)$.
\end{prop}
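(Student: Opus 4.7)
The plan is to unfold Def.~\ref{def Dd} after the substitution $y = \exp_x(z/\sqrt{d})$ and rewrite $D_d(x,z)$ entirely in terms of the scaled operators of Sect.~\ref{subsec expression of some covariances}. Using $\odet{\ev_{x,y}^d} = (d/\pi)^{rn}\sqrt{\det\Theta_d(z)}$ and $\odet{\ev_x^d} = (d/\pi)^{rn/2}\sqrt{\det e_d(0,0)}$, together with the scaling identities
\[
\espcond{\odet{\nabla_x^d s_d}\odet{\nabla_y^d s_d}}{s_d(x)=0=s_d(y)} = \left(\frac{d^{n+1}}{\pi^n}\right)^{\!r}\esp{\odet{X_d(z)}\odet{Y_d(z)}},
\]
where $(X_d(z),Y_d(z)) \sim \mathcal{N}(\Lambda_d(z))$, and the analogous one-point formula with $L_d \sim \mathcal{N}(\Upsilon_d)$, the prefactor $d^{-r}$ in Def.~\ref{def Dd xz} exactly cancels every power of $d$ and leaves
\[
D_d(x,z) = \frac{\esp{\odet{X_d(z)}\odet{Y_d(z)}}}{\sqrt{\det\Theta_d(z)}} - \frac{\esp{\odet{L_d(x)}}\esp{\odet{L_d(y)}}}{\sqrt{\det e_d(0,0)\det e_d(z,z)}}.
\]

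I would then identify the pointwise Bargmann--Fock limit of each ratio. Lem.~\ref{lem det Theta} gives $\det\Theta(z) = (1-e^{-\Norm{z}^2})^r$ and Lem.~\ref{lem relation Bargmann--Fock XY} identifies the conditional $1$-jets of the Bargmann--Fock process with $(X(\Norm{z}^2), Y(\Norm{z}^2))$, so the first ratio should converge to $\esp{\odet{X(\Norm{z}^2)}\odet{Y(\Norm{z}^2)}}/(1-e^{-\Norm{z}^2})^{r/2}$. By Prop.~\ref{prop near diag estimates} and Lem.~\ref{lem estimates cond exp x}, the second ratio converges to $(2\pi)^r(\vol{\S^{n-r}}/\vol{\S^n})^2$. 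Their difference is precisely $D_{n,r}(\Norm{z}^2)$ of Def.~\ref{def Dnr}.

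The bulk of the work is to make this convergence quantitative and uniform in $x \in M$ and $z \in B_{T_xM}(0, b_n\ln d)\setminus\{0\}$. This is where the three technical lemmas announced in the introduction (asymptotics of $\Theta_d$, $\Omega_d$, $\Lambda_d$) enter: they should provide, on the relevant range, operator-norm estimates $\Theta_d(z) = \Theta(z) + O(d^{-\alpha})$, $\Omega_d(z) = \Omega(z) + O(d^{-\alpha})$, and $\Lambda_d(z) = \Lambda(z) + O(d^{-\alpha})$, obtained by inserting the near-diagonal estimate Prop.~\ref{prop near diag estimates} into Eq.~\eqref{eq expression Theta dz}--\eqref{eq expression Lambda dz} and matching against the explicit derivatives of $\xi$ from Lem.~\ref{lem values of limit derivatives}. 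The uncorrelated ratio is then immediate: both its numerator and denominator converge uniformly to nonzero limits.

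The main obstacle is the correlated ratio, because both $\det\Theta(z) = (1-e^{-\Norm{z}^2})^r$ and the smaller eigenvalue $u_2(\Norm{z}^2)$ of $\Lambda(z)$ (Def.~\ref{def u}, Lem.~\ref{lem diagonalization Lambda prime}) vanish as $z \to 0$, so a naive conversion of the additive $O(d^{-\alpha})$ estimates into multiplicative bounds on $\sqrt{\det\Theta_d(z)}$ or on $\esp{\odet{X_d(z)}\odet{Y_d(z)}}$ diverges. To circumvent this I would use the explicit diagonalizations of Cor.~\ref{cor diagonalization Omega'} and Lem.~\ref{lem diagonalization Lambda prime} to isolate the degenerate directions, combined with the uniform bound of Lem.~\ref{lem boundedness sqrt Lambda sqrt Omega} which precisely compensates the degeneration of $\Lambda(z)^{1/2}$ against that of $\Omega(z)^{-1/2}$. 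This should enable a controlled Gaussian comparison $\mathcal{N}(\Lambda_d(z)) \leftrightarrow \mathcal{N}(\Lambda(z))$ yielding $\esp{\odet{X_d(z)}\odet{Y_d(z)}} = \esp{\odet{X(\Norm{z}^2)}\odet{Y(\Norm{z}^2)}}(1 + O(d^{-\alpha})) + O(d^{-\alpha})$, together with $\sqrt{\det\Theta_d(z)} = \sqrt{\det\Theta(z)}(1 + O(d^{-\alpha}))$ at the denominator. Combining these uniform estimates of the two ratios and taking the difference delivers the claimed asymptotic, with the multiplicative error attached to the $D_{n,r}(\Norm{z}^2)$ part absorbing the residual degeneration at the origin.
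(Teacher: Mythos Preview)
Your overall route matches the paper's proof: unfold $D_d(x,z)$ via the scaled operators, identify each piece with its Bargmann--Fock analogue, and invoke the technical lemmas on $\Theta_d$, $\Omega_d$, $\Lambda_d$ together with Lemma~\ref{lem boundedness sqrt Lambda sqrt Omega} to make the convergence uniform. One point, however, needs tightening.

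You describe the technical lemmas as giving \emph{additive} estimates $\Theta_d(z) = \Theta(z) + O(d^{-\alpha})$, $\Lambda_d(z) = \Lambda(z) + O(d^{-\alpha})$, and then propose to work around their insufficiency. The paper's Lemmas~\ref{lem asymptotic Theta d}--\ref{lem asymptotic Lambda d} are instead stated in the \emph{conjugated} form $\Theta(z)^{-1/2}\Theta_d(z)\Theta(z)^{-1/2} = \Id + O(d^{-\alpha})$ (and likewise for $\Omega$, $\Lambda$); this is the whole point of Remark~\ref{rem bilinear and not linear}. The distinction is not cosmetic: the additive form does \emph{not} yield $\sqrt{\det\Theta_d(z)} = \sqrt{\det\Theta(z)}(1+O(d^{-\alpha}))$ uniformly when $\det\Theta(z)\to 0$, whereas the conjugated form does so at once. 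Similarly, your stated estimate
\[
\esp{\odet{X_d(z)}\odet{Y_d(z)}} = \esp{\odet{X(\Norm{z}^2)}\odet{Y(\Norm{z}^2)}}(1+O(d^{-\alpha})) + O(d^{-\alpha})
\]
is too weak: once divided by $\sqrt{\det\Theta(z)} \sim \Norm{z}^r$, the additive $O(d^{-\alpha})$ term blows up near $z=0$. The paper's Lemma~\ref{lem asymptotic conditional expectation} gives the purely multiplicative form (with a separate argument for small $\Norm{z}$ when $r=n$), and this is precisely what your compensation idea via Lemma~\ref{lem boundedness sqrt Lambda sqrt Omega} is meant to produce --- but you must aim for the conjugated form of the three lemmas from the start, not the additive one.
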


First, let us prove that Prop.~\ref{prop integral expression variance}, \ref{prop off diagonal is small} and~\ref{prop asymptotic Ddxz} together imply Thm.~\ref{thm asymptotics variance}.

\begin{proof}[Proof of Thm.~\ref{thm asymptotics variance}]
The main point is to compute the asymptotics of the near-diagonal term in Eq.~\eqref{eq main estimate}. Let us fix $\alpha \in \left(0,1\right)$, $\beta \in \left(0,\frac{1}{2}\right)$ and $\phi_1, \phi_2 \in \mathcal{C}^0(M)$. Let $x \in M$, recall that $\sqrt{\kappa}$ is the density of $\rmes{M}$ with respect to the Lebesgue measure, in the exponential chart centered at $x$, where $\kappa$ was introduced in Sect.~\ref{subsec near-diagonal estimates}. Then, by a change of variable $y = \exp_x\left(\frac{z}{\sqrt{d}}\right)$, we have:
\begin{multline}
\label{eq near diag integral}
\int_{\Delta_d \setminus \Delta} \phi_1(x)\phi_2(y) \D_d(x,y) \rmes{M}^2 =\\
d^{r-\frac{n}{2}}\int_{x \in M} \phi_1(x) \int_{z \in B_{T_xM}\left(0,b_n \ln d\right)} \phi_2\left(\exp_x\!\left(\frac{z}{\sqrt{d}}\right)\right) D_d(x,z) \kappa\left(\frac{z}{\sqrt{d}}\right)^\frac{1}{2} \dx z\rmes{M}.
\end{multline}
As we already discussed in Sect.~\ref{subsec near-diagonal estimates}, $\kappa(z) = 1 +O(\Norm{z}^2)$ and the error term is independent of~$x$. Hence, $\kappa\left(\frac{z}{\sqrt{d}}\right)^\frac{1}{2} = 1 +O\!\left(\frac{(\ln d)^2}{d}\right)$, and by Prop.~\ref{prop asymptotic Ddxz}, for all $\gamma \in \left(\alpha,1\right)$,
\begin{multline}
\label{eq inner int 1}
\int_{z \in B_{T_xM}\left(0,b_n \ln d\right)} \phi_2\left(\exp_x\!\left(\frac{z}{\sqrt{d}}\right)\right) D_d(x,z) \kappa\left(\frac{z}{\sqrt{d}}\right)^\frac{1}{2} \dx z =\\
\left(\int_{z \in B_{T_xM}\left(0,b_n \ln d\right)} \phi_2\left(\exp_x\!\left(\frac{z}{\sqrt{d}}\right)\right) D_{n,r}(\Norm{z}^2) \dx z\right) \left(1+ O\!\left(d^{-\gamma}\right)\right) + \Norm{\phi_2}_{\infty} O\!\left(\frac{(\ln d)^n}{d^\gamma}\right).
\end{multline}
Since $\gamma > \alpha$, $(\ln d)^n d^{-\gamma} = O(d^{-\alpha})$. Similarly, there exists $C_\beta >0$ such that $b_n \frac{\ln d}{\sqrt{d}} \leq C_\beta d^{-\beta}$ for all $d \in \N^*$. Then we have:
\begin{multline}
\label{eq inner int 2}
\norm{\int_{z \in B_{T_xM}\left(0,b_n \ln d\right)} \left(\phi_2\left(\exp_x\!\left(\frac{z}{\sqrt{d}}\right)\right)-\phi_2(x)\right) D_{n,r}(\Norm{z}^2) \dx z} \\
\leq \varpi_{\phi_2}\left(C_\beta d^{-\beta}\right) \int_{z \in B_{T_xM}\left(0,b_n \ln d\right)} \norm{D_{n,r}(\Norm{z}^2)} \dx z,
\end{multline}
where $\varpi_{\phi_2}$ is the continuity modulus of $\phi_2$ (see Def.~\ref{def continuity modulus}). Besides, by Cor.~\ref{cor estimates Dnr and integrability},
\begin{equation}
\label{eq integral Dnr z}
\begin{aligned}
\int_{z \in B_{T_xM}\left(0,b_n \ln d\right)} \norm{D_{n,r}(\Norm{z}^2)} \dx z &= \vol{\S^{n-1}}\frac{1}{2}\int_{t=0}^{(b_n \ln d)^2} D_{n,r}(t)t^\frac{n-2}{2} \dx t\\
&= \vol{\S^{n-1}} \left(\mathcal{I}_{n,r} + O\!\left(e^{-\frac{1}{4}(b_n \ln d)^2}\right) \right),
\end{aligned}
\end{equation}
and the error term is $O(d^{-1})$, since $\frac{1}{4}(b_n \ln d)^2 \geq \ln d$ for $d$ large enough. By Eq.~\eqref{eq inner int 1}, \eqref{eq inner int 2} and~\eqref{eq integral Dnr z}, the innermost integral on the right-hand side of Eq.~\eqref{eq near diag integral} equals:
\begin{equation*}
\phi_2(x) \vol{\S^{n-1}} \mathcal{I}_{n,r} + \varpi_{\phi_2}\left(C_\beta d^{-\beta}\right)O(1) + \Norm{\phi_2}_\infty O(d^{-\alpha}),
\end{equation*}
and the error terms are independent of $x \in M$ and $(\phi_1,\phi_2)$. Finally, by Eq.~\eqref{eq near diag integral},
\begin{multline*}
\int_{\Delta_d \setminus \Delta} \phi_1(x)\phi_2(y) \D_d(x,y) \rmes{M}^2 = d^{r-\frac{n}{2}}\left(\int_M \phi_1\phi_2 \rmes{M}\right) \vol{\S^{n-1}} \mathcal{I}_{n,r}\\
+ \Norm{\phi_1}_\infty \varpi_{\phi_2}\left(C_\beta d^{-\beta}\right)O(d^{r-\frac{n}{2}}) + \Norm{\phi_1}_\infty\Norm{\phi_2}_\infty O(d^{r-\frac{n}{2}-\alpha}).
\end{multline*}
We conclude the proof by combining this last relation with Prop.~\ref{prop integral expression variance}, \ref{prop off diagonal is small} and, in the case $r=n$, Thm.~\ref{thm expectation} for $\phi_1\phi_2$.
\end{proof}

The remainder of this section is mostly dedicated to the proof of Prop.~\ref{prop asymptotic Ddxz}. We will deduce this proposition from several technical lemmas stated below.

Let $x \in M$, then, any choice of an isometry between $T_xM$ and $\R^n$ and an isometry between $\R\!\left(\E\otimes\L^d\right)_x$ and $\R^r$ allows us to see the Bargmann--Fock process $(s(z))_{z \in \R^n}$, studied in Sect.~4, as a smooth Gaussian process from $T_xM$ to $\R\!\left(\E\otimes\L^d\right)_x$. The distribution of this process does not depend on our choice of isometries. Thus, in the following, we can consider $\Theta(z)$ and $\Theta_d(z)$ (resp.~$\Omega(z)$ and $\Omega_d(z)$, resp.~$\Lambda(z)$ and $\Lambda_d(z)$) as operators on the same space.

\begin{lem}
\label{lem asymptotic Theta d}
Let $\alpha \in \left(0,1\right)$, then for all $x \in M$, for all $z \in B_{T_xM}(0,b_n \ln d) \setminus \{0\}$ we have:
\begin{equation*}
\Theta(z)^{-\frac{1}{2}} \Theta_d(z) \Theta(z)^{-\frac{1}{2}} = \Id + O\!\left(d^{-\alpha}\right),
\end{equation*}
where the error term does not depend on $(x,z)$.
\end{lem}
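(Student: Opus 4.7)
My plan is to diagonalize $\Theta(z)$ explicitly using Lem.~\ref{lem diagonalization Theta} and exploit cancellations in the block structure of $\Theta_d(z)-\Theta(z)$ in the diagonalizing basis. The main obstacle is that the smallest eigenvalue of $\Theta(z)$ is $1 - e^{-\frac{1}{2}\Norm{z}^2}$, which vanishes like $\frac{1}{2}\Norm{z}^2$ as $z\to 0$, so $\Theta(z)^{-\frac{1}{2}}$ has operator norm of order $\Norm{z}^{-1}$ near the origin. The naive bound $\Theta_d(z)-\Theta(z) = O(d^{-\alpha'})$ supplied by Prop.~\ref{prop near diag estimates} is therefore insufficient; one needs to show that $\Theta_d(z)-\Theta(z)$ has additional decay in $\Norm{z}$ precisely in the direction of the small eigenvalue of $\Theta(z)$.

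Given $\alpha\in(0,1)$, I would fix $\alpha'\in(\alpha,1)$ and apply Prop.~\ref{prop near diag estimates} with $\alpha'$ to obtain $\Norm{D^k g_d} \leq C d^{-\alpha'}$ for $k\in\{0,1,2\}$, uniformly in $x\in M$ and on $B_{T_xM}(0,b_n\ln d)^2$, where $g_d = e_d - \xi\Id$. Setting $\tilde Q = Q\otimes I_r$, Lem.~\ref{lem diagonalization Theta} gives $\tilde Q\Theta(z)\tilde Q^{\text{t}} = \Sigma(z) := \mathrm{diag}(1 - e^{-\frac{1}{2}\Norm{z}^2},\,1 + e^{-\frac{1}{2}\Norm{z}^2})\otimes I_r$. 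A direct block computation shows that the four $r\times r$ blocks of $\tilde Q(\Theta_d(z)-\Theta(z))\tilde Q^{\text{t}}$ are the four sign combinations $\tfrac{1}{2}(\pm g_d(0,0)\pm g_d(0,z)\pm g_d(z,0)\pm g_d(z,z))$. The key point is that the $(1,1)$ block, which is paired with the small eigenvalue of $\Sigma(z)$, is the mixed second difference $\tfrac{1}{2}(g_d(0,0)-g_d(0,z)-g_d(z,0)+g_d(z,z))$; writing it as $\tfrac{1}{2}\int_0^1\!\int_0^1 \partial_x\partial_y g_d(sz,tz)(z,z)\,ds\,dt$ gives the bound $O(\Norm{z}^2 d^{-\alpha'})$. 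Similarly the off-diagonal blocks, being first differences of $g_d$, are $O(\Norm{z} d^{-\alpha'})$, and the $(2,2)$ block is $O(d^{-\alpha'})$.

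Since $\tilde Q\Theta(z)^{-\frac{1}{2}}\tilde Q^{\text{t}} = \Sigma(z)^{-\frac{1}{2}}$, conjugating by $\Sigma(z)^{-\frac{1}{2}}$ multiplies the $(1,1)$, off-diagonal, and $(2,2)$ blocks by $(1-e^{-\frac{1}{2}\Norm{z}^2})^{-1}$, $(1-e^{-\Norm{z}^2})^{-\frac{1}{2}}$, and $(1+e^{-\frac{1}{2}\Norm{z}^2})^{-1}$ respectively. The elementary inequality $1 - e^{-\frac{1}{2}\Norm{z}^2} \geq c\min(1,\Norm{z}^2)$ (valid for $\Norm{z}\leq b_n\ln d$) shows that the extra factors of $\Norm{z}^2$ and $\Norm{z}$ coming from the second and first differences absorb the singular denominators when $\Norm{z}$ is small, while for large $\Norm{z}\leq b_n\ln d$ the denominators are bounded and the numerators contribute at worst a factor $(\ln d)^2$. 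Combining all four blocks gives $\Theta(z)^{-\frac{1}{2}}(\Theta_d(z)-\Theta(z))\Theta(z)^{-\frac{1}{2}} = O((\ln d)^2 d^{-\alpha'}) = O(d^{-\alpha})$, since $\alpha'>\alpha$ is arbitrary. The crux of the argument, and the main obstacle, is the second-difference cancellation in the $(1,1)$ block, without which the singularity of $\Theta(z)^{-\frac{1}{2}}$ at $z=0$ would not be compensated.
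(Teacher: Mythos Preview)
Your proof is correct and follows essentially the same approach as the paper: conjugate by $Q\otimes I_r$ to diagonalize $\Theta(z)$, identify the $(1,1)$ block of $\Theta_d(z)-\Theta(z)$ in that basis as the mixed second difference of $\epsilon_d=e_d-\xi\Id$, and use the $C^2$ bound from Prop.~\ref{prop near diag estimates} (with an auxiliary exponent $\alpha'>\alpha$, the paper's $\beta$) to gain the factor $\Norm{z}^2$ that cancels the small eigenvalue $1-e^{-\frac{1}{2}\Norm{z}^2}$, leaving an $O((\ln d)^2 d^{-\alpha'})=O(d^{-\alpha})$ error. Your integral representation of the second difference is equivalent to the paper's ``second order Taylor expansion'' argument.
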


\begin{proof}
See Appendix~\ref{sec technical computations for Section main proof}.
\end{proof}

\begin{rem}
\label{rem bilinear and not linear}
One could wonder why Lem.~\ref{lem asymptotic Theta d} does not state that $\Theta_d(z) = \Theta(z)\left(\Id + O\!\left(d^{-\alpha}\right)\right)$, which would be somewhat simpler. First, note that this statement is not equivalent to Lem.~\ref{lem asymptotic Theta d}, since some of the eigenvalues of $\Theta(z)$ converge to $0$ as $z \to 0$. In fact, this alternative statement turns out to be false in general. Moreover, even if $\Theta_d(z)$ is a linear map, it represents a variance, i.e.~something intrinsically bilinear. It is then quite natural to consider $\Theta(z)^{-\frac{1}{2}} \Theta_d(z) \Theta(z)^{-\frac{1}{2}}$ since this is how $\Theta_d(z)$ transforms if we act on $\R\!\left(\E\otimes\L^d\right)_x$ by $\Theta(z)^\frac{1}{2}$. This remark also applies to Lem.~\ref{lem asymptotic Omega d} and~\ref{lem asymptotic Lambda d} below.
\end{rem}

Let us forget about the proof of Prop.~\ref{prop asymptotic Ddxz} for a minute, and prove the existence of $d_2$ (see Lem.~\ref{lem def d2}) as a corollary of Lem.~\ref{lem almost uncorrelated Theta} and~\ref{lem asymptotic Theta d}. Note that the proofs of these lemmas only rely on the estimates of Sect.~\ref{sec estimates for the bergman kernel}, so there is no logical loop here.

\begin{proof}[Proof of Lem.~\ref{lem def d2}]
\label{proof existence d2}
We want to prove that, as soon as $d$ is large enough, $\ev_{x,y}^d$ is surjective for all $(x,y) \in M^2 \setminus \Delta$, that is $\det\left(\ev_{x,y}^d(\ev_{x,y}^d)^*\right) \neq 0$. By Eq.~\eqref{eq variance evxy} and the definition of $\Theta_d$ (Def.~\ref{def Theta d xy}),
\begin{equation*}
\det\left(\ev_{x,y}^d(\ev_{x,y}^d)^*\right) = \left(\frac{d}{\pi}\right)^{2rn} \det\left(\Theta_d(x,y)\right),
\end{equation*}
so we have to show that $\det\left(\Theta_d(x,y)\right)$ does not vanish on $M^2 \setminus \Delta$, for $d$ large enough. By Lem.~\ref{lem almost uncorrelated Theta} and Thm.~\ref{thm Dai Liu Ma},
\begin{equation}
\label{eq det Theta d off-diag}
\det\left(\Theta_d(x,y)\right) = 1 + O\!\left(d^{-1}\right),
\end{equation}
uniformly on $M^2 \setminus \Delta_d$. Let $(x,y) \in \Delta_d \setminus \Delta$ and let us assume that $d \geq d_3$ so that we can write $y$ as $\exp_x\left(\frac{z}{\sqrt{d}}\right)$ with $z \in B_{T_xM}(0,b_n \ln d) \setminus \{0\}$. Then, by Lem.~\ref{lem asymptotic Theta d} and~\ref{lem non-degeneracy Omega},
\begin{equation}
\label{eq det Theta d near-diag}
\det\left(\Theta_d(x,y)\right) = \det\left(\Theta_d(z)\right) = \det\left(\Theta(z)\right)\left(1+ O\!\left(\sqrt{d}\right)\right) = \left(1- e^{-\Norm{z}^2}\right)^r \left(1+ O\!\left(\sqrt{d}\right)\right),
\end{equation}
uniformly on $\Delta_d \setminus \Delta$. The result follows from Eq.~\eqref{eq det Theta d off-diag} and~\eqref{eq det Theta d near-diag}.
\end{proof}

We can now go back to the proof of Prop.~\ref{prop asymptotic Ddxz}.

\begin{lem}
\label{lem asymptotic Omega d}
Let $\alpha \in \left(0,1\right)$, then for all $x \in M$, for all $z \in B_{T_xM}(0,b_n \ln d) \setminus \{0\}$ we have:
\begin{equation*}
\Omega(z)^{-\frac{1}{2}} \Omega_d(z) \Omega(z)^{-\frac{1}{2}} = \Id + O\!\left(d^{-\alpha}\right),
\end{equation*}
where the error term does not depend on $(x,z)$.
\end{lem}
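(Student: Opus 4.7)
My plan is to parallel the strategy of Lemma~\ref{lem asymptotic Theta d}, adapted to the fuller block structure of the $1$-jet variance $\Omega_d$. I will fix $\nabla^d$ to be the real connection which, in the real normal trivialization centered at $x$, agrees with the trivial connection on $B_{T_x\X}(0,R)$; such a connection exists by the construction at the end of Section~\ref{subsec real normal trivialization}. With this choice, formula~\eqref{eq expression Omega dz} expresses $\Omega_d(z)$ entirely in terms of the scaled Bergman kernel $e_d$ and its partial derivatives up to order two at the four points $(0,0),(0,z),(z,0),(z,z)$. Setting $F_d(w,\zeta) := e_d(w,\zeta) - \xi(w,\zeta)\Id_{\R(\E\otimes\L^d)_x}$ on $B_{T_xM}(0,b_n\ln d)^2$, Proposition~\ref{prop near diag estimates} supplies uniform bounds $\Norm{D^k F_d} \leq C d^{-\alpha}$ for $k \in \{0,\dots,6\}$. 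Writing $R_d(z) := \Omega_d(z) - \Omega(z) \otimes I_r$, each entry of $R_d(z)$ is a linear combination of values and partial derivatives of $F_d$ at these four points, so $R_d(z) = O(d^{-\alpha})$ uniformly in $x$ and $z$.

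\textbf{Reduction to weighted entry-wise estimates.} The naive bound $\Norm{\Omega(z)^{-\frac{1}{2}} R_d(z) \Omega(z)^{-\frac{1}{2}}} \lesssim d^{-\alpha} \Norm{\Omega(z)^{-1}}$ fails because Corollary~\ref{cor diagonalization Omega'} shows that $\Omega(z)$ has eigenvalues $v_2(\Norm{z}^2) \sim \Norm{z}^6/48$, together with $v_3(\Norm{z}^2)$ and $1 - e^{-\Norm{z}^2/2}$ of order $\Norm{z}^2$, all collapsing at the origin. Instead, I will conjugate $R_d(z)$ by the orthogonal diagonalizer $P(z)$ of Corollary~\ref{cor diagonalization Omega'} (tensored with $\Id_{\R^r}$) and reduce the lemma to the entry-wise estimate
\begin{equation*}
\bigl\lvert \bigl(P(z) R_d(z) P(z)^*\bigr)_{ij} \bigr\rvert \leq C d^{-\alpha} \sqrt{\lambda_i(z)\, \lambda_j(z)},
\end{equation*}
uniform in $x$ and $z$, where the $\lambda_i(z) \in \{v_1,v_2,v_3,v_4,1\pm e^{-\Norm{z}^2/2}\}$ are the diagonal entries produced by the conjugation.

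\textbf{Vanishing at the origin via Taylor expansion.} At $z=0$ the two copies of the $1$-jet of $s_d$ coincide, so $\Omega_d(0)$ shares its null space with $\Omega(0)\otimes I_r$ (both are annihilated by the ``difference'' vectors $(v,-v,0,0)$ and $(0,0,v,-v)$). Consequently, the combinations of values and derivatives of $F_d$ producing the entries $(PR_dP^*)_{ij}$ along the small-eigenvalue directions must vanish at $z=0$. Taylor-expanding such entries around the origin, using the uniform $C^6$-bound on $F_d$ given by Proposition~\ref{prop near diag estimates}, yields $(P(z)R_d(z)P(z)^*)_{ij} = O(d^{-\alpha}\Norm{z}^{p_{ij}})$ where $p_{ij}$ is the vanishing order at $z=0$ of the corresponding combination. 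A further use of the symmetry $e_d(w,\zeta) = e_d(\zeta,w)^*$, which matches the corresponding symmetry of $\xi\Id$ in $\Omega(z)$, forces additional vanishing in the most singular directions, exactly matching the asymptotic profile of $\sqrt{\lambda_i(z)\lambda_j(z)}$.

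\textbf{Main obstacle.} The critical case is the entry in the direction of $v_2$, for which $\sqrt{\lambda_2 \lambda_2} \sim \Norm{z}^6$ and one therefore needs the associated combination to vanish through order~$5$ in $z$ at the origin. This is precisely what forces us to ask Proposition~\ref{prop near diag estimates} to deliver uniform $C^6$ control of $e_d$ rather than merely $C^2$. Concretely, this requires expanding the $v_2$-eigenvector from Definitions~\ref{def a and vi}--\ref{def b and P} in powers of $\Norm{z}^2$, identifying the explicit combination of $F_d$ and its first and second partial derivatives at $(0,0),(0,z),(z,0),(z,z)$ that appears, and verifying the cancellation order by order. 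The remaining (less singular) directions are handled by the same Taylor argument with lower vanishing thresholds. Following the template of Lemma~\ref{lem asymptotic Theta d}, this polynomial verification is essentially algebraic and will be deferred to Appendix~\ref{sec technical computations for Section main proof}.
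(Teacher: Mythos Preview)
Your approach is the paper's: both conjugate $\Omega_d(z)-\Omega(z)$ by the explicit orthogonal diagonalizer of Corollary~\ref{cor diagonalization Omega'} (tensored with~$I_r$), then Taylor-expand $\epsilon_d=e_d-e_\infty$ to order six and verify entry by entry that each coefficient vanishes to the order dictated by the corresponding eigenvalue product, the worst case being $\Norm{z}^6$ for the $v_2$--$v_2$ entry. One caution on your exposition: the null-space and symmetry heuristics in your third paragraph do not actually account for the high-order cancellation --- the required vanishing $(PR_dP^*)_{22}=O(\Norm{z}^6 d^{-\beta})$ holds for \emph{any} smooth function in place of $\epsilon_d$, as a purely algebraic identity in the $b_\pm(\Norm{z}^2)$ coefficients carried by $P$ --- so the explicit ``order by order'' verification you correctly defer to the appendix is the entire argument, with no conceptual shortcut available.
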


\begin{proof}
See Appendix~\ref{sec technical computations for Section main proof}.
\end{proof}

\begin{lem}
\label{lem asymptotic Lambda d}
Let $\alpha \in \left(0,1\right)$, then for all $x \in M$, for all $z \in B_{T_xM}(0,b_n \ln d) \setminus \{0\}$ we have:
\begin{equation*}
\Lambda(z)^{-\frac{1}{2}} \Lambda_d(z) \Lambda(z)^{-\frac{1}{2}} = \Id + O\!\left(d^{-\alpha}\right),
\end{equation*}
where the error term does not depend on $(x,z)$.
\end{lem}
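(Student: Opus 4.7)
The plan is to deduce Lemma~\ref{lem asymptotic Lambda d} from Lemmas~\ref{lem asymptotic Omega d} and~\ref{lem boundedness sqrt Lambda sqrt Omega}, by exploiting the fact that both $\Lambda(z)$ and $\Lambda_d(z)$ arise as Schur complements of $\Theta(z)$ in $\Omega(z)$ and of $\Theta_d(z)$ in $\Omega_d(z)$. Writing the ambient space as $V_1 \oplus V_2$, with $V_1$ the ``values'' factor and $V_2$ the ``derivatives'' factor, and denoting by $P \colon V_1 \oplus V_2 \to V_2$ the orthogonal projection onto $V_2$, the usual block-inverse formula applied to~\eqref{eq expression Omega dz}, \eqref{eq expression Lambda dz} and Def.~\ref{def Lambda d xy} yields
\begin{equation*}
\Lambda(z)^{-1} = P\,\Omega(z)^{-1}\,P^* \qquad \text{and} \qquad \Lambda_d(z)^{-1} = P\,\Omega_d(z)^{-1}\,P^*.
\end{equation*}
This reduces the problem to a relative comparison of $\Omega_d(z)^{-1}$ and $\Omega(z)^{-1}$, restricted to $V_2$.

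For $d$ large enough (uniformly in $x$ and $z$), Lemma~\ref{lem asymptotic Omega d} allows me to write $\Omega_d(z) = \Omega(z)^{1/2}(\Id + E_d(z))\Omega(z)^{1/2}$ with $\Norm{E_d(z)} = O(d^{-\alpha})$. Setting $F_d(z) := \Id - (\Id + E_d(z))^{-1}$, a Neumann series argument gives $\Norm{F_d(z)} = O(d^{-\alpha})$, so that
\begin{equation*}
\Omega_d(z)^{-1} - \Omega(z)^{-1} = -\,\Omega(z)^{-1/2}\, F_d(z)\, \Omega(z)^{-1/2}.
\end{equation*}
Compressing by $P$ on the left and $P^*$ on the right, and then conjugating by $\Lambda(z)^{1/2}$, this becomes
\begin{equation*}
\Lambda(z)^{1/2}\,\Lambda_d(z)^{-1}\,\Lambda(z)^{1/2} - \Id = -\bigl(\Lambda(z)^{1/2} P\,\Omega(z)^{-1/2}\bigr)\, F_d(z)\, \bigl(\Lambda(z)^{1/2} P\,\Omega(z)^{-1/2}\bigr)^*.
\end{equation*}
The operator norm of $\Lambda(z)^{1/2} P\,\Omega(z)^{-1/2}$ equals that of $P^*\,\Lambda(z)^{1/2}\, P\,\Omega(z)^{-1/2}$, since $P^*$ is an isometric embedding; but the latter is precisely the operator asserted to be bounded on $\R^n \setminus \{0\}$ by Lemma~\ref{lem boundedness sqrt Lambda sqrt Omega}. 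Consequently the right-hand side is $O(d^{-\alpha})$ uniformly in $(x,z)$, and inverting yields the claim $\Lambda(z)^{-1/2}\Lambda_d(z)\Lambda(z)^{-1/2} = (\Id + O(d^{-\alpha}))^{-1} = \Id + O(d^{-\alpha})$.

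The main subtlety, and the reason Lemma~\ref{lem boundedness sqrt Lambda sqrt Omega} is indispensable, lies in the behavior as $z \to 0$: both $\Omega(z)^{-1/2}$ and $\Lambda(z)^{1/2}$ have, respectively, exploding and vanishing spectral components of order $\Norm{z}$ (cf.\ Defs.~\ref{def a and vi} and~\ref{def u}), so that neither factor can be controlled in isolation near the diagonal. It is only the \emph{combined} quantity $\Lambda^{1/2} P\,\Omega^{-1/2}$ that stays bounded, via a cancellation in the Bargmann--Fock block structure already extracted in Section~\ref{subsec conditional variance of the derivatives}. Without such a uniform bound, the $O(d^{-\alpha})$ estimate would degrade as $z \to 0$ and the near-diagonal integration in Proposition~\ref{prop asymptotic Ddxz} would not go through; this is precisely the point that, in~\cite{Let2016a}, forced a restriction to $r<n$.
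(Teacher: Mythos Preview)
Your proof is correct and follows essentially the same route as the paper: both exploit the Schur-complement identity $\Lambda_d(z)^{-1} = P\,\Omega_d(z)^{-1}P^*$, invoke Lemma~\ref{lem asymptotic Omega d} to compare $\Omega_d(z)^{-1}$ with $\Omega(z)^{-1}$, and then use Lemma~\ref{lem boundedness sqrt Lambda sqrt Omega} to absorb the unbounded factor $\Omega(z)^{-1/2}$ into the bounded combination $\Lambda(z)^{1/2}P\,\Omega(z)^{-1/2}$. Your presentation with the projection $P$ and the explicitly named errors $E_d$, $F_d$ is a touch more streamlined, but the argument is the same.
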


\begin{proof}
Let $\alpha \in \left(0,1\right)$. Let $x \in M$ and $z \in B_{T_xM}(0,b_n \ln d) \setminus \{0\}$. By Def.~\ref{def Omega dz}, $\Omega_d(z)$ is an operator on:
\begin{equation*}
\left(\R^2 \otimes \R\!\left(\E \otimes \L^d\right)_x\right)\oplus \left((T_x^*M)^2\otimes \R\!\left(\E \otimes \L^d\right)_x\right).
\end{equation*}
Using this splitting, we write $\Omega_d(z)$ by blocks as:
\begin{equation*}
\Omega_d(z) = \begin{pmatrix}
\Theta_d(z) & \Omega^1_d(z)^* \\ \Omega^1_d(z) & \Omega^2_d(z)
\end{pmatrix},
\end{equation*}
thus defining $\Omega^1_d(z)$ and $\Omega^2_d(z)$. For $d$ large enough, $\Theta_d(z)$ is invertible and its Schur complement is $\Lambda_d(z) = \Omega^2_d(z) - \Omega^1_d(z) \Theta_d(z)^{-1} \Omega^1_d(z)^*$. It is then known that $\Lambda_d(z)^{-1}$ is the bottom-right block of $\Omega_d(z)^{-1}$, that is:
\begin{equation*}
\Lambda_d(z)^{-1} =\begin{pmatrix} 0 & \Id
\end{pmatrix} \Omega_d(z)^{-1} \begin{pmatrix}
0 \\ \Id
\end{pmatrix} = \begin{pmatrix}
0 & \Id
\end{pmatrix}\left(\Omega(z)^{-1}+ \Omega(z)^{-\frac{1}{2}}O\!\left(d^{-\alpha}\right)\Omega(z)^{-\frac{1}{2}}\right)
\begin{pmatrix}
0 \\ \Id
\end{pmatrix},
\end{equation*}
where the second equality is given by Lem.~\ref{lem asymptotic Omega d} and the error term is independent of $(x,z)$. Similarly, $\Lambda(z)$ is the Schur complement of $\Theta(z)$ in $\Omega(z)$, so that
\begin{equation*}
\Lambda(z)^{-1}=\begin{pmatrix}
0 & \Id
\end{pmatrix} \Omega(z)^{-1} \begin{pmatrix}
0 \\ \Id
\end{pmatrix}.
\end{equation*}
Since by Lem.~\ref{lem boundedness sqrt Lambda sqrt Omega}, $\begin{pmatrix} 0 & \Lambda(z)^\frac{1}{2}\end{pmatrix} \Omega_d(z)^{-\frac{1}{2}}$ is bounded, $\Lambda(z)^\frac{1}{2}\Lambda_d(z)^{-1}\Lambda(z)^\frac{1}{2} = \Id+ O\!\left(d^{-\alpha}\right)$, and the error term still does not depend on $(x,z)$.
\end{proof}

\begin{lem}
\label{lem asymptotic conditional expectation}
Let $\alpha \in \left(0,1\right)$, let $x \in M$ and $z \in B_{T_xM}(0,b_n \ln d) \setminus \{0\}$. Let $X_d(z)$ and $Y_d(z)$ be random vectors in $T_x^*M \otimes \R\!\left(\E \otimes \L^d\right)_x$ such that $\left(X_d(z),Y_d(z)\right)\sim \mathcal{N}(\Lambda_d(z))$. Then we have:
\begin{equation*}
\esp{\odet{X_d(z)}\odet{Y_d(z)}} = \esp{\odet{X_\infty(z)}\odet{Y_\infty(z)}} \left(1 + O\!\left(d^{-\alpha}\right)\right)
\end{equation*}
where $\left(X_\infty(z),Y_\infty(z)\right) \sim \mathcal{N}(\Lambda(z))$ and the error term does not depend on $(x,z)$.
\end{lem}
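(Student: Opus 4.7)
The plan is to couple $(X_d(z), Y_d(z))$ and $(X_\infty(z), Y_\infty(z))$ through a common standard Gaussian vector $W$ in $(T_x^*M \oplus T_x^*M)\otimes \R\!\left(\E \otimes \L^d\right)_x$, and then to compare the two resulting Gaussian integrals by means of the Radon--Nikodym derivative between $\mathcal{N}(\Id)$ and $\mathcal{N}(\Id + H_d(z))$. Setting $H_d(z) := \Lambda(z)^{-\frac{1}{2}}\Lambda_d(z)\Lambda(z)^{-\frac{1}{2}} - \Id$, Lemma~\ref{lem asymptotic Lambda d} yields $\Norm{H_d(z)} = O(d^{-\alpha})$ uniformly in $(x, z)$. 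For $d$ large enough, $\Id + H_d(z)$ is positive definite with symmetric square root $(\Id + H_d(z))^{\frac{1}{2}} = \Id + O(d^{-\alpha})$, so one can realize the two distributions simultaneously as $(X_\infty(z), Y_\infty(z)) = \Lambda(z)^{\frac{1}{2}} W$ and $(X_d(z), Y_d(z)) = \Lambda(z)^{\frac{1}{2}} (\Id + H_d(z))^{\frac{1}{2}} W$, the second vector having variance $\Lambda(z)^{\frac{1}{2}}(\Id + H_d(z))\Lambda(z)^{\frac{1}{2}} = \Lambda_d(z)$ by construction.

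Introducing the common integrand $F_z(W) := \odet{(\Lambda(z)^{\frac{1}{2}} W)_1}\odet{(\Lambda(z)^{\frac{1}{2}} W)_2}$, which is continuous, non-negative and $2r$-homogeneous in $W$, both expectations become Gaussian integrals of $F_z$:
\begin{equation*}
\esp{\odet{X_\infty(z)}\odet{Y_\infty(z)}} = \esp[\mathcal{N}(\Id)]{F_z(W)}, \qquad \esp{\odet{X_d(z)}\odet{Y_d(z)}} = \esp[\mathcal{N}(\Id + H_d(z))]{F_z(W)}.
\end{equation*}
Rewriting the second integral as $\esp[\mathcal{N}(\Id)]{F_z(W) R_d(z,W)}$ with
\begin{equation*}
R_d(z, W) = \det(\Id + H_d(z))^{-\frac{1}{2}} \exp\!\left(\tfrac{1}{2}\prsc{[\Id - (\Id + H_d(z))^{-1}]W}{W}\right),
\end{equation*}
and Taylor-expanding in $H_d(z) = O(d^{-\alpha})$, one obtains $R_d(z, W) = 1 + O(d^{-\alpha})(1 + \Norm{W}^2) + O(d^{-2\alpha})(1 + \Norm{W}^4)$ on the region where $d^{-\alpha}\Norm{W}^2$ stays bounded; the complementary region contributes negligibly to the integral by standard Gaussian tail estimates.

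Consequently, $\esp{\odet{X_d(z)}\odet{Y_d(z)}}/\esp{\odet{X_\infty(z)}\odet{Y_\infty(z)}} = 1 + O(d^{-\alpha}) M_z + O(d^{-2\alpha})$, where
\begin{equation*}
M_z := \frac{\esp[\mathcal{N}(\Id)]{F_z(W)(1 + \Norm{W}^2)}}{\esp[\mathcal{N}(\Id)]{F_z(W)}}.
\end{equation*}
The main obstacle is to establish $M_z = O(1)$ uniformly in $(x, z) \in M \times \left(B_{T_xM}(0, b_n \ln d) \setminus \{0\}\right)$. For $z$ bounded away from $0$ this is direct, since then $\Lambda(z)$ has uniformly bounded spectrum away from $0$ and $F_z$ scales boundedly. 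The delicate regime is $z \to 0$, where several eigenvalues of $\Lambda(z)$ degenerate: following Lemma~\ref{lem diagonalization Lambda prime}, the eigenvalues $u_1(\Norm{z}^2)$, $u_2(\Norm{z}^2)$ and $1 - e^{-\frac{1}{2}\Norm{z}^2}$ tend to $0$. When $r < n$, Lemma~\ref{lem estimates 0 expectation of odet XY} shows that the denominator of $M_z$ stays bounded below by a positive constant, and uniform control of $M_z$ follows. In the maximal codimension case $r = n$, both the numerator and the denominator defining $M_z$ vanish like $\Norm{z}^2$ as $z \to 0$; matching these vanishing rates, using the same explicit eigenstructure of $\Lambda'(z)$ together with the small-$t$ asymptotics of $u_1, u_2$ from Def.~\ref{def u} that underlie Lemma~\ref{lem estimates 0 expectation of odet XY}, is the technical heart of the argument, which is deferred to Appendix~\ref{sec technical computations for Section main proof}.
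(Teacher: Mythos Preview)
Your proposal is correct and follows essentially the same route as the paper: both arguments push the comparison through the change of variables $L \mapsto \Lambda(z)^{1/2}L$, use Lemma~\ref{lem asymptotic Lambda d} to reduce to a small perturbation $\Id + O(d^{-\alpha})$ of the standard Gaussian, and then split into the cases $r<n$ (where $\esp{\odet{X_\infty(z)}\odet{Y_\infty(z)}}$ is bounded below) and $r=n$ (where one must show that the error integral involving the extra weight $\Norm{W}^2$ also vanishes like $\Norm{z}^2$, via the explicit eigenstructure of $\Lambda'(z)$ and Lemma~\ref{lem estimates 0 expectation of odet XY}). The only cosmetic difference is that the paper expresses the perturbation through $\Xi_d(z) = \Lambda(z)^{1/2}\Lambda_d(z)^{-1}\Lambda(z)^{1/2} - \Id$ and the Mean Value Theorem rather than through the Radon--Nikodym derivative $R_d(z,W)$, but the two formulations are equivalent and lead to the same estimate~\eqref{eq hypothesis} as the crux of the $r=n$ case.
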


\begin{proof}
See Appendix~\ref{sec technical computations for Section main proof}.
\end{proof}

We conclude this section with the proof of Prop.~\ref{prop asymptotic Ddxz}. Recall the definitions of $D_{n,r}(t)$ (Def.~\ref{def Dnr}), $\D_d(x,y)$ (Def.~\ref{def Dd}) and $D_d(x,z)$ (Def.~\ref{def Dd xz}).

\begin{proof}[Proof of Prop.~\ref{prop asymptotic Ddxz}]
Let us fix $\alpha \in \left(0,1\right)$. Let $x \in M$ and $z \in B_{T_xM}(0,b_n \ln d) \setminus \{0\}$, we set $y = \exp_x\left(\frac{z}{\sqrt{d}}\right)$. We have defined $\Theta_d(z)$ and $\Lambda_d(z)$ so that:
\begin{equation*}
\frac{1}{d^r}\frac{\espcond{\odet{\nabla^d_{x}s_d}\!\odet{\nabla^d_{y}s_d}}{s_d(x)=0=s_d(y)}}{\odet{\ev_{x,y}^d}} = \frac{\esp{\odet{X_d(z)}\odet{Y_d(z)}}}{\det\left(\Theta_d(z)\right)^\frac{1}{2}},
\end{equation*}
where $\left(X_d(z),Y_d(z)\right)\sim \mathcal{N}(\Lambda_d(z))$. By Lem.~\ref{lem asymptotic Theta d} and~\ref{lem asymptotic conditional expectation}, this equals:
\begin{equation*}
\frac{\esp{\odet{X_\infty(z)}\odet{Y_\infty(z)}}}{\det\left(\Theta(z)\right)^\frac{1}{2}}\left(1 + O\!\left(d^{-\alpha}\right)\right),
\end{equation*}
where $\left(X_\infty(z),Y_\infty(z)\right) \sim \mathcal{N}(\Lambda(z))$ and the error term does not depend on $(x,z)$. By the definition of~$\Lambda(z)$ (cf.~Sect.~\ref{subsec conditional variance of the derivatives}) $\left(X_\infty(z),Y_\infty(z)\right)$ is distributed as $(d_0s,d_zs)$, where $s$ is a copy of the Bargmann--Fock process from $T_xM$ to $\R\!\left(\E \otimes \L^d\right)_x$. Then, by Lem.~\ref{lem relation Bargmann--Fock XY},
\begin{equation*}
\esp{\odet{X_\infty(z)}\odet{Y_\infty(z)}} = \esp{\odet{X(\Norm{z}^2)}\odet{Y(\Norm{z}^2)}},
\end{equation*}
where $\left(X(\Norm{z}^2),Y(\Norm{z}^2)\right)$ was defined by Def.~\ref{def XYt}. Besides, $\det\left(\Theta(z)\right) = \left(1-e^{-\Norm{z}^2}\right)^r$ by Lem.~\ref{lem diagonalization Theta}, so that:
\begin{multline*}
\frac{1}{d^r}\frac{\espcond{\odet{\nabla^d_{x}s_d}\!\odet{\nabla^d_{y}s_d}}{s_d(x)=0=s_d(y)}}{\odet{\ev_{x,y}^d}} =\\
\left(D_{n,r}\left(\Norm{z}^2\right) + (2\pi)^r \left(\frac{\vol{\S^{n-r}}}{\vol{\S^n}}\right)^2\right)\left(1 + O\!\left(d^{-\alpha}\right)\right).
\end{multline*}
Besides, Lem.~\ref{lem asymptotic det Ed} and~\ref{lem estimates cond exp x},
\begin{equation*}
\frac{1}{d^r}\frac{\espcond{\odet{\nabla^d_xs_d}}{s_d(x)=0}}{\odet{\ev_x^d}}\frac{\espcond{\odet{\nabla^d_ys_d}}{s_d(y)=0}}{\odet{\ev_x^d}} = (2\pi)^r \left(\frac{\vol{\S^{n-r}}}{\vol{\S^n}}\right)^2 + O\!\left(d^{-1}\right)
\end{equation*}
This yields the desired relation.
\end{proof}

%%%%%%%%%%%%%%%%%%%%%%%%%%%%%%%%%%%%%%%%%%%%%%%%%%%%%%%%%%%%%%%%%%%%%%%%%%%%%%%%%%%%%%%%%%%%%%%%%%%%%%%%%%%%%%
%%%%%%%%%%%%%%%%%%%%%%%%%%%%%%%%%%%%%%%%%%%%%%%%%%%%%%%%%%%%%%%%%%%%%%%%%%%%%%%%%%%%%%%%%%%%%%%%%%%%%%%%%%%%%%

\section{Proof of Theorem~\ref{thm positivity}}
\label{sec proof of the positivity}

The goal of this section is to prove that the leading constant in Thm.~\ref{thm asymptotics variance} is positive. Sect.~\ref{subsec KSS polynomials} is concerned with the definition and proprieties of Kostlan--Shub--Smale polynomials. In Sect.~\ref{subsec Hermite polynomials and Wiener chaos} we recall some facts about Wiener chaoses. In Sect.~\ref{subsec Wiener-Ito expansion of the linear statistics} we compute the chaotic expansion of the linear statistics $\prsc{\rmes{d}}{\phi}$. Finally, we conclude the proof of Thm.~\ref{thm positivity} in Sect.~\ref{subsec conclusion of the proof}.

%%%%%%%%%%%%%%%%%%%%%%%%%%%%%%%%%%%%%%%%%%%%%%%%%%%%%%%%%%%%%%%%%%%%%%%%%%%%%%%%%%%%%%%%%%%%%%%%%%%%%%%%%%%%%%

\subsection{Kostlan--Shub--Smale polynomials}
\label{subsec KSS polynomials}

In this section, we describe a special case of our real algebraic framework. This is the setting we will be considering throughout the proof of Thm.~\ref{thm positivity}. A good reference for the complex algebraic material of this section is~\cite{GH1994}.

%%%%%%%%%%%%%%%%%%%%%%%%%%%%%%%%%%%%%%%%%%%%%%%%%%%%%%%%%%%%%%%%%%%%%%%%%%%%%%%%%%%%%%%%%%%%%%%%%%%%%%%%%%%%%%

\subsubsection{Definition}
\label{subsubsec definition KSS}

We choose $\X$ to be the complex projective space $\C\P^n$ with the real structure induced by the complex conjugation in $\C^{n+1}$. The real locus of $\X$ is the real projective space $\R \P^n$. We set $\L = \mathcal{O}(1) \to \C\P^n$ as the \emph{hyperplane line bundle}, that is the dual of the tautological line bundle:
\begin{equation*}
\label{eq tautological line bundle}
\mathcal{O}(-1) = \left\{(\zeta,x) \in \C^{n+1} \times \C\P^n \mvert \zeta \in x \right\} \longrightarrow \C\P^n.
\end{equation*}
Recall that ample line bundles on $\C\P^n$ are of the form $\mathcal{O}(d) = \left(\mathcal{O}(1)\right)^{\otimes d}$ with $d \in \N^*$ (see~\cite[Sect.~1.1]{GH1994}). The complex conjugation and the usual Hermitian inner product of $\C^{n+1}$ induce compatible real and metric structures on $\mathcal{O}(-1)$, hence on $\mathcal{O}(1)$ by duality. The resulting Hermitian metric on $\mathcal{O}(1)$ is positive and its curvature is the Fubini--Study Kähler form on $\C\P^n$. With our choice of normalization, the induced Riemannian metric is the quotient of the Euclidean metric on $\S^{2n+1} \subset \C^{n+1}$. Finally, we choose $\E$ to be the rank~$r$ trivial bundle $\C^r \times \C\P^n \to \C\P^n$, with the compatible real and metric structures inherited from the standard ones in $\C^r$.

\begin{ntns}
\label{notation multiindices}
Let $\alpha =\left(\alpha_0,\dots,\alpha_n\right) \in \N^{n+1}$, we denote its \emph{length} by $\norm{\alpha} = \alpha_0 + \dots + \alpha_n$. We also define $X^{\alpha} = X_0^{\alpha_0} \cdots X_n^{\alpha_n}$ and $\alpha ! = \alpha_0! \cdots \alpha_n!$. Finally, if $\norm{\alpha}=d$, we denote by $\binom{d}{\alpha}$ the multinomial coefficient $\frac{d!}{\alpha!}$.
\end{ntns}

It is well-known (cf.~\cite{BSZ2000a,BBL1996,Kos1993, Let2016a}) that $\R H^0(\C\P^n,\C^r \otimes \mathcal{O}(d))$ is the space $\left(\R_d^\text{hom}[X_0,\dots,X_n]\right)^r$ of tuples of real homogeneous polynomials of degree $d$ in $n+1$ variables. The $r$ terms $\R_d^\text{hom}[X_0,\dots,X_n]$ in $\R H^0(\C\P^n,\C^r \otimes \mathcal{O}(d))$ are pairwise orthogonal for the inner product~\eqref{eq definition inner product}. Besides, in restriction to one of these terms, \eqref{eq definition inner product} equals:
\begin{equation}
\label{eq def inner product on polynomials}
(P,Q) \mapsto \int_{x\in \C\P^n} h_d(P(x),Q(x)) \rmes{\C\P^n} = \frac{1}{\pi(d+n)!}\int_{z \in \C^{n+1}} P(z)\overline{Q(z)}e^{-\Norm{z}^2}\dx z.
\end{equation}
An orthonormal basis of $\R_d^\text{hom}[X_0,\dots,X_n]$ is then $\left(\sqrt{\frac{(d+n)!}{\pi^n d!}} \sqrt{\binom{d}{\alpha}}X^\alpha\right)_{\norm{\alpha} = d}$. Hence, a standard Gaussian in $\R H^0(\C\P^n,\C^r \otimes \mathcal{O}(d))$ is a $r$-tuple of independent random polynomials of the form:
\begin{equation}
\label{eq def KSS}
\sqrt{\frac{(d+n)!}{\pi^n d!}} \sum_{\norm{\alpha} =d} a_\alpha \sqrt{\binom{d}{\alpha}}X^\alpha,
\end{equation}
where the coefficients $(a_\alpha)_{\norm{\alpha}=d}$ are independent real standard Gaussian variables. Such a random polynomial is called a \emph{Kostlan--Shub--Smale polynomial} (KSS for short).

%%%%%%%%%%%%%%%%%%%%%%%%%%%%%%%%%%%%%%%%%%%%%%%%%%%%%%%%%%%%%%%%%%%%%%%%%%%%%%%%%%%%%%%%%%%%%%%%%%%%%%%%%%%%%%

\subsubsection{Correlation kernel}
\label{subsubsec correlation kernel KSS}

In this section, we study the distribution of the KSS polynomial (see Eq.~\eqref{eq def KSS}). In the setting of Sect.~\ref{subsubsec definition KSS}, $E_d$ is the Bergman kernel of $\C^r \otimes \mathcal{O}(d) \to \C\P^n$. Since the first factor is trivial, we have $E_d = I_r \otimes E'_d$, where $I_r$ is the identity of $\C^r$ and $E'_d$ is the Bergman kernel of $\mathcal{O}(d) \to \C\P^n$. Note that $E'_d$ is the correlation kernel of the field $s_d'$ defined by one KSS polynomial, seen as a random section of $\mathcal{O}(d)$. By Eq.~\eqref{eq def KSS} we have:
\begin{equation*}
\label{eq expression E'd}
E'_d(x,y) = \esp{s_d'(x) \otimes s_d'(y)^*} = \frac{(d+n)!}{\pi^n d!} \sum_{\norm{\alpha}=d} \binom{d}{\alpha} X^\alpha(x) \otimes X^\alpha(y)^*
\end{equation*}

Note that~\eqref{eq def inner product on polynomials} is invariant under the action of the orthogonal group $O_{n+1}(\R)$ on the right. Hence, the distribution of KSS polynomials~\eqref{eq def KSS} and $E'_d$ are equivariant under this action. Since $O_{n+1}(\R)$ acts transitively on the couples of points of $\R\P^n$ at a given distance, $E'_d(x,y)$ only depends on the geodesic distance $\rho_g(x,y)$, and the same holds for derivatives. Loosely speaking, this implies the following, where derivatives are computed with respect to the Chern connection.
\begin{enumerate}
\item The variance of $s_d'(x)$ does not depend on $x \in \R\P^n$.
\label{property1}
\item For all $x \in \R\P^n$, $s_d'(x)$ and $\nabla^d_xs_d'$ are independent.
\label{property2}
\item If $\left(\deron{}{x_1},\dots,\deron{}{x_n}\right)$ is any orthonormal basis of $T_x\R\P^n$, then $\deron{s_d'}{x_i}(x)$ and $\deron{s_d'}{x_j}(x)$ are independent if $i \neq j$. Moreover, the variance of $\deron{s_d'}{x_i}(x)$ does not depend on $i$, nor on our choice of orthonormal basis, nor on $x \in \R\P^n$.
\label{property3}
\end{enumerate}
These properties are very specific of the case of KSS polynomials. They will be useful in Sect.~\ref{subsec Wiener-Ito expansion of the linear statistics}, to compute the Wiener--It{\=o} expansion of $\prsc{\rmes{d}}{\phi}$. We do not give more details here, since Properties~\ref{property1}, \ref{property2} and~\ref{property3} can also be deduced from the expression of $E'_d$ in local coordinates that we derive below.

%%%%%%%%%%%%%%%%%%%%%%%%%%%%%%%%%%%%%%%%%%%%%%%%%%%%%%%%%%%%%%%%%%%%%%%%%%%%%%%%%%%%%%%%%%%%%%%%%%%%%%%%%%%%%%

\subsubsection{Local expression of the kernel}
\label{subsubsec local expression of the kernel}

Let $x \in \R\P^n$, we want to compute the expression of $E'_d$ in some good coordinates around~$x$. We could use the real normal trivialization, but the computations would be cumbersome. Instead, we use a slightly different trivialization. Since $E'_d$ is equivariant under the action $O_{n+1}(\R)$, we can assume that $x=[1:0:\dots:0]$.

We have a chart $\psi_x:(z_1,\dots,z_n) \longmapsto [1:z_1:\cdots:z_n]$ from $\R^n$ to $B_{\R\P^n}\left(x,\frac{\pi}{2}\right)$. We trivialize $\mathcal{O}(d)$ over $B_{\R\P^n}\left(x,\frac{\pi}{2}\right)$ by identifying each fiber with $\mathcal{O}(d)_x$, by parallel transport with respect to the Chern connection $\nabla^d$ along curves of the form $t \mapsto \psi_x(tz)$ with $z \in \R^n$. Thanks to this trivialization, we can consider $E'_d$ as a map taking values in $\R$. Recall that we defined a scaled version $e_d$ of the Bergman kernel $E_d$ by Eq.~\eqref{eq def ed}. The following is related without being an exact analogue. For all $w,z \in \R^n$, we set:
\begin{equation}
\label{eq def xi d}
\xi_d(w,z) = \frac{\pi^n d!}{(d+n)!} E'_d\left(\psi_x(w),\psi_x(z)\right).
\end{equation}

A computation in local coordinates yields the following lemma. The Chern connection $\nabla^d$ coincides at the origin with the usual differential in our trivialization. Hence taking the values at $(0,0)$ of the following expressions proves that $s'_d$ satisfies Properties~\ref{property1}, \ref{property2} and~\ref{property3} (cf.~Sect.~\ref{subsubsec correlation kernel KSS}).

\begin{lem}
\label{lem expression xi d}
Let $d \in \N^*$ and let $i,j \in \{1,\dots,n\}$. Then for all $w,z \in \R^n$ we have:
\begin{align*}
\xi_d(w,z) &= \left(\frac{1+\prsc{w}{z}}{\sqrt{1+\Norm{w}^2}\sqrt{1+\Norm{z}^2}}\right)^d,\\
\partial_{x_i}\xi_d(w,z) &= d \xi_d(w,z) \left(\frac{z_i}{1+\prsc{w}{z}} - \frac{w_i}{1+\Norm{w}^2}\right),\\
\partial_{y_j}\xi_d(w,z) &= d \xi_d(w,z) \left(\frac{w_j}{1+\prsc{w}{z}} - \frac{z_j}{1+\Norm{z}^2}\right),
\end{align*}

\begin{multline*}
\partial_{x_i}\partial_{y_j}\xi_d(w,z) = \xi_d(w,z)\left(\frac{d\delta_{ij}}{1+\prsc{w}{z}} - \frac{d^2w_iw_j}{(1+\prsc{w}{z})(1+\Norm{w}^2)} - \frac{d^2z_iz_j}{(1+\prsc{w}{z})(1+\Norm{z}^2)}\right.\\
 + \frac{d^2w_iz_j}{(1+\Norm{w}^2)(1+\Norm{z}^2)} + \left.\frac{(d^2-d)z_iw_j}{(1+\prsc{w}{z})^2}\right),
\end{multline*}
where $\delta_{ij}=1$ if $i=j$, and $\delta_{ij}=0$ otherwise.
\end{lem}

%%%%%%%%%%%%%%%%%%%%%%%%%%%%%%%%%%%%%%%%%%%%%%%%%%%%%%%%%%%%%%%%%%%%%%%%%%%%%%%%%%%%%%%%%%%%%%%%%%%%%%%%%%%%%%

\subsection{Hermite polynomials and Wiener chaos}
\label{subsec Hermite polynomials and Wiener chaos}

In the setting of KSS polynomials, we consider $\R H^0(\C\P^n,\C^r \otimes \mathcal{O}(d)) = \left(\R_d^\text{hom}[X_0,\dots,X_n]\right)^r$, equipped with the inner product~\eqref{eq def inner product on polynomials}. For simplicty, in this section and the following, we denote by $V_d$ this Euclidean space and by $\dx \nu_d$ its standard Gaussian measure. With these notations, $(V_d,\dx \nu_d)$ is our probability space and we denote by $L^1(\dx \nu_d)$ (resp.~$L^2(\dx \nu_d)$) the space of integrable (resp.~square integrable) random variables on this space. Thm.~\ref{thm asymptotics variance} shows that for $d$ large enough, for all $\phi \in \mathcal{C}^0(\R\P^n)$, $\prsc{\rmes{d}}{\phi} \in L^2(\dx \nu_d)$. The proof given in Sect.~\ref{sec proof of the main theorem} shows that this is true for any $d \geq \max(d_0,d_1,d_2,d_3)$, in this framework this is true for any $d \in \N^*$. The idea of this section is to find a nice orthogonal decomposition of $L^2(\dx \nu_d)$. We will study $\prsc{\rmes{d}}{\phi}$ thanks to this decomposition in Sect.~\ref{subsec Wiener-Ito expansion of the linear statistics}. These techniques were already used in a similar context in~\cite{AADL2017,Dal2015,DNPR2016,MPRW2016}, for example. See~\cite{NP2012} for background on the following material.

\begin{dfn}
\label{def Hermite polynomials}
For all $k \in \N$, we denote by $H_k$ the $k$-th \emph{Hermite polynomial}. These polynomials are defined recursively by: $H_0 = 1$, $H_1=X$ and, for all $k \in \N^*$, $H_{k+1}(X) = X H_k(X) - kH_{k-1}(X)$.
\end{dfn}

\begin{rem}
\label{rem def equiv Hk}
Equivalently, one can define $H_k$ by: $H_0=1$ and $\forall k \in \N$, $H_{k+1} = X H_k - H'_k$.
\end{rem}

\begin{lem}
\label{lem properties of Hk}
Let $k \in \N$, then $H_k$ is a polynomial of degree $k$ which is even if $k$ is even and odd if $k$ is odd. Moreover,
\begin{align*}
H_{2k}(0) &= (-1)^k \frac{(2k)!}{2^kk!} & &\text{and} & H_{2k+1}(0) & = 0.
\end{align*}
\end{lem}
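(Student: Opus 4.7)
The plan is to prove all four assertions by induction on $k$, using only the recurrence $H_{k+1} = X H_k - k H_{k-1}$ and the initial data $H_0 = 1$, $H_1 = X$.

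First I would establish the degree and parity statements simultaneously by strong induction on $k$. The base cases $k=0,1$ are immediate. For the induction step, assuming $H_k$ has degree $k$ with parity $(-1)^k$ and $H_{k-1}$ has degree $k-1$ with parity $(-1)^{k-1}$, the recurrence shows that $X H_k$ is a polynomial of degree $k+1$ with parity $(-1)^{k+1}$, and so is $k H_{k-1}$ (still of parity $(-1)^{k+1} = (-1)^{k-1}$, and of strictly smaller degree $k-1 < k+1$). Their difference is therefore a polynomial of degree exactly $k+1$ with the claimed parity.

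Next I would read off the values at $0$ from the recurrence evaluated at $X=0$, which gives
\begin{equation*}
H_{k+1}(0) = -k\, H_{k-1}(0).
\end{equation*}
Since $H_1(0) = 0$, a trivial induction yields $H_{2k+1}(0) = 0$ for every $k \in \N$ (this also follows directly from the parity statement). For the even case, starting from $H_0(0) = 1$, the recurrence gives $H_{2(k+1)}(0) = -(2k+1)\, H_{2k}(0)$, hence by induction
\begin{equation*}
H_{2k}(0) = (-1)^k (2k-1)(2k-3) \cdots 3 \cdot 1 = (-1)^k (2k-1)!!.
\end{equation*}
Finally, using the identity $(2k)! = 2^k k! \cdot (2k-1)!!$ (which itself follows from pairing even and odd factors in $(2k)!$), I would rewrite this as $H_{2k}(0) = (-1)^k \frac{(2k)!}{2^k k!}$, which is the stated formula. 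None of the steps presents any real obstacle; the argument is entirely a routine unwinding of the three-term recurrence.
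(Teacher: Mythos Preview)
Your proof is correct and follows exactly the approach indicated in the paper, which simply states that the result is proved by induction using the recursive definition of $H_k$. You have faithfully supplied the details of that induction.
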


\begin{proof}
This is proved by induction, using the recursive definition of $H_k$.
\end{proof}

Let us denote by $\dx \mu_N$ the standard Gaussian measure on $\R^N$. We also denote by $L^2(\dx \mu_N)$ the space of square integrable functions with respect to $\dx \mu_N$. Recall that the family $\left(\frac{1}{\sqrt{k!}}H_k\right)_{k \in \N}$ is a Hilbert basis of $L^2(\dx \mu_1)$ (see~\cite[Prop.~1.4.2]{NP2012}). Similarly, in dimension $N$, the family:
\begin{equation*}
\left\{\prod_{i=1}^N \frac{1}{\sqrt{\alpha_i !}}H_{\alpha_i}(X_i) \mvert \alpha \in \N^N \right\}
\end{equation*}
is a Hilbert basis of $L^2(\dx \mu_N)$. The result in dimension $1$ shows that this family is orthonormal. Then, one only needs to check that the space of polynomials in $N$ variables is dense in $L^2(\dx \mu_N)$. For $N=1$ this is proved in~\cite[Prop.~1.1.5]{NP2012}, and the same proof works in any dimension.

As in Sect.~\ref{sec proof of the main theorem}, we denote by $s_d$ a generic element of $\left(V_d,\dx \nu_d\right)$, that we think of as a standard Gaussian vector in $V_d$. Let $\eta \in V_d^*$, then $\eta(s_d) \in L^2(\dx \nu_d)$ is a real centered Gaussian variable. Moreover, for any $\eta, \eta' \in V_d^*$, we have $\esp{\eta(s_d)\eta'(s_d)} = \prsc{\eta}{\eta'}$. Thus, $V_d^*$ is canonically isometric to a subspace of $L^2(\dx \nu_d)$, via $\eta \mapsto \eta(s_d)$. From now on, we identify $V_d^*$ with its image, so that $V_d^*\subset L^2(\dx \nu_d)$ is a centered Gaussian Hilbert space.

\begin{dfn}
\label{def qth Wiener chaos}
Let $(\eta_1,\dots,\eta_{N_d})$ denote an orthonormal basis of $V_d^*$, that is the $(\eta_i(s_d))_{i \in \{1,\dots,N_d\}}$ are independent real standard Gaussian variables. For all $q \in \N$, we define $C_d[q]$, the \emph{$q$-th Wiener chaos} of the field $s_d$, as the subspace of $L^2(\dx \nu_d)$ spanned by the orthogonal family:
\begin{equation*}
\left\{\prod_{i=1}^{N_d} H_{\alpha_i}(\eta_i) \mvert \alpha \in \N^{N_d}, \norm{\alpha} =q \right\}.
\end{equation*}
\end{dfn}

\begin{rems}
\label{rem Wiener chaos}
\begin{itemize}
\item $C_d[0]$ is the space of constant random variables in $L^2(\dx \nu_d)$ and $C_d[1] = V_d^*$.
\item We do not need to take the closure in the definition of $C_d[q]$ since it is finite-dimensional.
\end{itemize}
\end{rems}

\begin{lem}
\label{lem Wiener chaos canonical}
The Wiener chaoses $(C_d[q])_{q \in \N}$ of $s_d$ do not depend on the choice of the orthonormal basis $(\eta_1,\dots,\eta_{N_d})$ appearing in Def.~\ref{def qth Wiener chaos}.
\end{lem}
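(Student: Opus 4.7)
The plan is to give an intrinsic, basis-free characterization of $C_d[q]$ as the orthogonal complement inside an obviously basis-independent space.

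For each integer $q\geq 0$, let $P_q \subset L^2(\dx \nu_d)$ denote the space of random variables of the form $P(\eta_1'(s_d),\dots,\eta_m'(s_d))$ where $m\in\N$, $P\in\R[X_1,\dots,X_m]$ has total degree at most $q$, and $\eta_1',\dots,\eta_m'\in V_d^*$. Since a linear change of coordinates preserves the total degree of a polynomial, $P_q$ depends only on $V_d^*\subset L^2(\dx\nu_d)$ and not on any choice of orthonormal basis. My goal is to prove
\begin{equation*}
P_q = \bigoplus_{k=0}^{q} C_d[k] \qquad \text{(orthogonal direct sum in } L^2(\dx\nu_d)\text{)},
\end{equation*}
from which $C_d[q]=P_q \cap P_{q-1}^{\perp}$ (with the convention $P_{-1}=\{0\}$) follows at once and is manifestly basis-independent.

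To establish the displayed identity, I fix an orthonormal basis $(\eta_1,\dots,\eta_{N_d})$ of $V_d^*$ as in Definition~\ref{def qth Wiener chaos}. Any element of $P_q$ can be written as a polynomial in $\eta_1(s_d),\dots,\eta_{N_d}(s_d)$ of total degree at most $q$, by expressing the $\eta_i'$ as linear combinations of the $\eta_j$ and expanding. Conversely, such polynomials clearly lie in $P_q$. Hence $P_q$ is the span of the monomials $\prod_{i=1}^{N_d}\eta_i(s_d)^{\alpha_i}$ with $|\alpha|\leq q$. Since $H_0,H_1,\dots,H_k$ form a basis of polynomials of degree at most $k$ in one variable (each $H_k$ has exact degree $k$), a tensor product argument shows that the family $\left\{\prod_{i=1}^{N_d}H_{\alpha_i}(\eta_i(s_d)) \mvert |\alpha|\leq q\right\}$ is also a basis of $P_q$, and by grading this basis according to $|\alpha|$ we obtain exactly the algebraic direct sum $\bigoplus_{k=0}^{q}C_d[k]$.

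It remains to check the orthogonality of this decomposition, i.e.\ that $C_d[k] \perp C_d[k']$ whenever $k\neq k'$. Since the $\eta_i(s_d)$ are independent standard Gaussian variables, the measure $\dx\nu_d$ is a product of standard Gaussians in this basis, and
\begin{equation*}
\esp{\prod_{i=1}^{N_d} H_{\alpha_i}(\eta_i(s_d))\, H_{\beta_i}(\eta_i(s_d))}=\prod_{i=1}^{N_d}\esp{H_{\alpha_i}H_{\beta_i}}_{\dx\mu_1}=\prod_{i=1}^{N_d}\alpha_i!\,\delta_{\alpha_i\beta_i},
\end{equation*}
which vanishes as soon as $\alpha\neq\beta$; in particular it vanishes whenever $|\alpha|\neq|\beta|$. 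This gives the orthogonality of the $C_d[k]$ and completes the proof. The only potential subtlety is the tensor-product argument identifying the Hermite basis with the monomial basis of $P_q$ respecting the degree filtration, but this is immediate from the fact that $H_k$ has degree exactly $k$ with nonzero leading coefficient.
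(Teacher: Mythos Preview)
Your proof is correct and takes a genuinely different route from the paper's.

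The paper argues directly at the level of the spanning families: given two orthonormal bases related by $U\in O_{N_d}(\R)$, it uses the multivariate Hermite generating function $\sum_\alpha \prod_i H_{\alpha_i}(X_i)\,t^\alpha/\alpha! = \exp(\langle t,X\rangle-\tfrac12\Norm{t}^2)$, observes that the right-hand side is $O_{N_d}(\R)$-invariant, and identifies coefficients in $t$ to conclude that each Hermite product of total degree $q$ in one basis is a linear combination of Hermite products of the same total degree $q$ in the other. Your approach is instead to give an intrinsic description: you introduce the basis-free filtration $P_q$ of polynomial functionals of degree at most $q$, show $P_q=\bigoplus_{k\leq q}C_d[k]$ via the triangularity of the Hermite-to-monomial change of basis together with the orthogonality of the multivariate Hermite family, and conclude $C_d[q]=P_q\cap P_{q-1}^\perp$. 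The paper's argument is more explicit (it actually tells you how the spanning vectors transform), while yours is shorter and conceptually cleaner, pinning down $C_d[q]$ by a property that visibly does not mention any basis.
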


\begin{proof}
Let $(\eta_1,\dots,\eta_{N_d})$ and $(\eta'_1,\dots,\eta'_{N_d})$ be two orthonormal basis of $V_d^*$. There exists an orthogonal transformation $U$ of $V_d^*$ such that, for all $i \in \{1,\dots,N_d\}$, $\eta'_i = U(\eta_i)$. The situation being symmetric, we only have to prove that, for any $\beta \in \N^{N_d}$ such that $\norm{\beta} = q$, $\prod_{i=1}^{N_d} H_{\beta_i}(\eta_i)$ is a linear combination of elements of the family: $\left\{\prod_{i=1}^{N_d} H_{\alpha_i}(U(\eta_i)) \mvert \alpha \in \N^{N_d}, \norm{\alpha} =q \right\}$. Dropping the dependence on $d$, this amounts to proving that: if $X=(X_1,\dots,X_N) \in \R^N$ and $U \in O_N(\R)$ then, for all $\beta \in \N^N$ such that $\norm{\beta} = q$, $\prod_{i=1}^N H_{\beta_i}(X_i)$ is a linear combination of the $\left(\prod_{i=1}^N H_{\alpha_i}(U(X_i))\right)_{\norm{\alpha}=q}$.

By~\cite[Prop.~1.4.2]{NP2012}, we have:
\begin{equation*}
\forall t \in \R^n, \qquad \sum_{\alpha \in \N^N} \left(\prod_{i=1}^N H_{\alpha_i}(X_i)\right) \frac{t^\alpha}{\alpha !} = \exp\left(\prsc{t}{X} - \frac{\Norm{t}^2}{2}\right),
\end{equation*}
where $\prsc{\cdot}{\cdot}$ is the standard inner product of $\R^N$ and $\Norm{\cdot}$ the associated norm. The right-hand side being invariant under orthogonal transformation, we have:
\begin{equation*}
\forall t \in \R^n, \qquad \sum_{\alpha \in \N^N} \left(\prod_{i=1}^N H_{\alpha_i}(U(X_i))\right) \frac{(U(t))^\alpha}{\alpha !} = \sum_{\alpha \in \N^N} \left(\prod_{i=1}^N H_{\alpha_i}(X_i)\right) \frac{t^\alpha}{\alpha !}.
\end{equation*}
Now, the components of $U(t)$ are homogeneous polynomials of degree $1$ in $(t_1,\dots,t_N)$. Hence $(U(t))^\alpha$ can only contribute terms of degree $\norm{\alpha}$ to the sum. We conclude by identifying the coefficients of these power series of the variable $t$.
\end{proof}

\begin{lem}
\label{lem Wiener chaos decomposition}
For all $d \in \N^*$, $\bigoplus_{q \in \N} C_d[q]$ is dense in $L^2(\dx \nu_d)$. Moreover, the terms of this direct sum are pairwise orthogonal.
\end{lem}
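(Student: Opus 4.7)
The plan is to reduce the statement to the corresponding statement for $L^2(\dx \mu_{N_d})$, which was already recorded in the paper just above Definition~\ref{def qth Wiener chaos}. Let $(\eta_1,\dots,\eta_{N_d})$ be the fixed orthonormal basis of $V_d^*$ appearing in Def.~\ref{def qth Wiener chaos}; by the remarks preceding that definition, the real random variables $\eta_1(s_d),\dots,\eta_{N_d}(s_d)$ are independent standard Gaussians on $(V_d,\dx\nu_d)$. Consequently the map
\begin{equation*}
T : (V_d,\dx\nu_d) \longrightarrow (\R^{N_d},\dx\mu_{N_d}), \qquad s_d \longmapsto \bigl(\eta_1(s_d),\dots,\eta_{N_d}(s_d)\bigr)
\end{equation*}
is measure-preserving, and the pull-back $f \mapsto f\circ T$ is an isometric isomorphism from $L^2(\dx\mu_{N_d})$ onto $L^2(\dx\nu_d)$. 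Under this identification, the spanning family of $C_d[q]$ in Def.~\ref{def qth Wiener chaos} corresponds exactly to the family $\{\prod_{i=1}^{N_d} H_{\alpha_i}(X_i) \mid \norm{\alpha}=q\}$ in $L^2(\dx\mu_{N_d})$.

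For orthogonality, I would simply invoke the fact recorded in the paper that
\begin{equation*}
\left\{\prod_{i=1}^{N_d} \tfrac{1}{\sqrt{\alpha_i!}}H_{\alpha_i}(X_i) \,\big|\, \alpha \in \N^{N_d}\right\}
\end{equation*}
is an orthonormal Hilbert basis of $L^2(\dx\mu_{N_d})$. In particular, if $\norm{\alpha}=q$ and $\norm{\alpha'}=q'$ with $q\neq q'$, then $\alpha\neq\alpha'$, so the two corresponding products are orthogonal in $L^2(\dx\mu_{N_d})$. Summing over basis elements, this yields $C_d[q]\perp C_d[q']$ in $L^2(\dx\nu_d)$.

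For density, the union over $q\in\N$ of the spanning families of the $C_d[q]$ is precisely the full Hermite basis above, whose linear span is dense in $L^2(\dx\mu_{N_d})$ (again by the fact cited in the paper). Pulling back through $T$, the linear span of $\bigcup_{q\in\N} C_d[q]$ is dense in $L^2(\dx\nu_d)$, which is exactly the density of $\bigoplus_{q\in\N}C_d[q]$.

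No genuine obstacle is expected: both assertions are immediate transcriptions, via the isometry $T$, of the standard one-dimensional-by-tensorization fact about multivariate Hermite polynomials that the paper has already stated. The only small point worth spelling out is that the definition of $C_d[q]$ does not depend on the chosen orthonormal basis (Lemma~\ref{lem Wiener chaos canonical}), so one is entitled to carry out the argument in the specific basis $(\eta_1,\dots,\eta_{N_d})$.
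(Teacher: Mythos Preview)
Your proof is correct and follows essentially the same approach as the paper's own proof: both reduce the statement to the known fact that the multivariate Hermite family is a Hilbert basis of $L^2(\dx\mu_{N_d})$, via the identification of $(V_d,\dx\nu_d)$ with $(\R^{N_d},\dx\mu_{N_d})$ through an orthonormal basis of $V_d^*$ (equivalently, coordinates in an orthonormal basis of $V_d$). Your explicit mention of the measure-preserving map $T$ and the appeal to Lemma~\ref{lem Wiener chaos canonical} are slightly more spelled out than the paper's version, but the substance is identical.
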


\begin{proof}
The family $\left(\prod_{i=1}^{N_d} H_{\alpha_i}(X_i)\right)_{\alpha \in \N^{N_d}}$ being orthogonal, the $(C_d[q])_{q \in \N}$ are pairwise orthogonal by definition. Let $(s_{1,d},\dots,s_{N_d,d})$ be an orthonormal basis of $V_d$. We have $s_d= \sum a_i s_{i,d}$, where the $a_i$ are independent $\mathcal{N}(1)$. For any $i \in \{1,\dots,N_d\}$, let $\eta_i = \prsc{\cdot}{s_{i,d}}$, so that $\eta_i(s_d)=a_i$. Then, for any $q \in \N$, $C_d[q]$ is spanned by the random variables $\left(\prod_{i=1}^{N_d} H_{\alpha_i}(a_i)\right)_{\norm{\alpha}=q}$.

Any square integrable functional of $s_d$ can be written as $F(a_1,\dots,a_{N_d})$, with $F \in L^2(\dx \mu_{N_d})$. The conclusion follows, since the span of $\left\{\prod_{i=1}^{N_d} H_{\alpha_i}(X_i) \mvert \alpha \in \N^{N_d} \right\}$ is dense in $L^2(\dx \mu_{N_d})$.
\end{proof}

\begin{ntn}
\label{notation chaotic components}
Let $d \in \N^*$ and let $A \in L^2(\dx \nu_d)$. For any $q \in \N$, we denote by $A[q]$ the \emph{$q$-th chaotic component} of $A$, that is its projection onto $C_d[q]$. Then, we have $A = \sum_{q \in \N} A[q]$ in $L^2(\dx \nu_d)$.
\end{ntn}
 By definition, $A[0] = \esp{A}$. Moreover, the $C_d[q]$ being pairwise orthogonal, we have $\esp{A[q]} = 0$ for any $q \in \N^*$, and $\var{A} = \sum_{q \in \N^*} \var{A[q]}$.

%%%%%%%%%%%%%%%%%%%%%%%%%%%%%%%%%%%%%%%%%%%%%%%%%%%%%%%%%%%%%%%%%%%%%%%%%%%%%%%%%%%%%%%%%%%%%%%%%%%%%%%%%%%%%%

\subsection[Wiener--Ito expansion of the linear statistics]{Wiener--It{\=o} expansion of the linear statistics}
\label{subsec Wiener-Ito expansion of the linear statistics}

Recall that we consider a standard Gaussian section $s_d \in V_d=\left(\R_d^\text{hom}[X_0,\dots,X_n]\right)^r$ and that $\rmes{d}$ denotes the Riemannian measure of integration over its real zero set. Let us fix $d \in \N^*$ and $\phi \in \mathcal{C}^0(\R\P^n)$. By Thm.~\ref{thm asymptotics variance}, $\prsc{\rmes{d}}{\phi} \in L^2(\dx \nu_d)$. The goal of this section is to compute the chaotic expansion of these variables. For all $q \in \N$, we denote $\prsc{\rmes{d}[q]}{\phi}$ for $\left(\prsc{\rmes{d}}{\phi}\right)[q]$.

Since $\prsc{\rmes{d}}{\phi} \in L^2(\dx \nu_d)$, for any $A \in L^2(\dx \nu_d)$ we have $\left(A\prsc{\rmes{d}}{\phi}\right) \in L^1(\dx \nu_d)$ and:
\begin{equation*}
\esp{A\prsc{\rmes{d}}{\phi}} = \esp{\int_{x \in Z_d}\phi(x)A(s_d) \rmes{d}}.
\end{equation*}
Even if $A$ depends on $s_d$, we can apply a Kac--Rice formula (cf.~\cite[Thm.~5.3]{Let2016a}). Thus, we have:
\begin{equation*}
\esp{A\prsc{\rmes{d}}{\phi}} = (2\pi)^{-\frac{r}{2}} \int_{x \in \R\P^n} \frac{\phi(x)}{\odet{\ev_x^d}}\espcond{A\odet{\nabla^d_{x}s_d}}{s_d(x)=0} \rmes{\R\P^n},
\end{equation*}

Recall that $s_d$, is a tuple of independent KSS polynomials and that $E_d = I_r \otimes E'_d$, where $E'_d$ is the correlation kernel of one KSS polynomial. By Eq.~\eqref{eq def xi d} and Lem.~\ref{lem expression xi d} we have:
\begin{equation*}
\odet{\ev_x^d} = \det(E_d(x,x))^\frac{1}{2} = \det(E'_d(x,x))^\frac{r}{2} = \left(\frac{(d+n)!}{\pi^n	d!}\right)^\frac{r}{2}.
\end{equation*}
Denoting $\left(d\frac{(d+n)!}{\pi^n	d!}\right)^{-\frac{1}{2}} \nabla_x^ds_d$ by $L_d(x)$ and $\left(\frac{(d+n)!}{\pi^n	d!}\right)^{-\frac{1}{2}}s_d(x)$ by $t_d(x)$, we get:
\begin{equation}
\label{eq Wiener Expansion Kac-Rice}
\esp{A\prsc{\rmes{d}}{\phi}} = \left(\frac{d}{2\pi}\right)^\frac{r}{2} \int_{x \in \R\P^n} \phi(x)\espcond{A\odet{L_d(x)}}{t_d(x)=0} \rmes{\R\P^n}.
\end{equation}

Let $x \in \R\P^n$, without loss of generality we can assume that the coordinates on $\R^{n+1}$ are such that $x = [1:0:\dots:0]$. Let $\zeta_0(x)$ be one of the two unit vectors in $\R \mathcal{O}(d)_x$, the other one being $-\zeta_0(x)$. This gives an isomorphism between $(\R \oplus T_x^*(\R\P^n)) \otimes \mathcal{O}(d)_x$ and $\R \oplus T_x^*(\R\P^n)$, so that we can consider $(t_d(x),L_d(x))$ as an element of $\R^r \oplus (T_x^*\R\P^n)^r$. We denote by $(t_d^{(1)}(x),\dots,t_d^{(r)}(x))$ the components of $t_d(x)$ and by $(L_d^{(1)}(x),\dots,L_d^{(r)}(x))$ those of $L_d(x)$. The couples $(t_d^{(i)}(x),L_d^{(i)}(x))$ are centered Gaussian vectors in $\R \oplus T_x^*\R\P^n$ that are independent from one another. Moreover, by Lem.~\ref{lem expression xi d}, for all $i \in \{1,\dots,r\}$, the variance operator of $(t_d^{(i)}(x),L_d^{(i)}(x))$ is $\Id$.

Let us choose any orthonormal basis of $T^*_x\R\P^n$, and denote by $(L_d^{i1}(x),\dots,L_d^{in}(x))$ the coordinates of $L_d^{(i)}(x)$ in this basis, so that $\left(L_d^{ij}(x)\right)_{\substack{1\leq i\leq r\\ 1 \leq j \leq n}}$ is the matrix of $L_d(x)$. Then,
\begin{equation*}
\left\{t_d^{(i)}(x) \mvert 1 \leq i \leq r\right\} \sqcup \left\{L_d^{ij}(x) \mvert 1 \leq i \leq r, 1 \leq j \leq n\right\}
\end{equation*}
is a family of independent real standard Gaussian variables in $L^2(\dx \nu_d)$ and we can complete it into an orthonormal basis of $C_d[1]$. We denote by $\{S_d^{(i)}(x) \mid r(n+1) < i \leq N_d\}$, the last elements of such a basis. Below, we will work in the Hilbert basis of $L^2(\dx \nu_d)$ obtained by considering Hermite polynomials in these variables.

\begin{rem}
\label{rem specificity of KSS}
We just used the fact that our random field satifies Properties~\ref{property1}, \ref{property2} and~\ref{property3} of Sect.~\ref{subsubsec correlation kernel KSS}. This is what makes this computation specific to the case of KSS polynomials.
\end{rem}

\begin{ntn}
\label{notation Hermite polynomials multiindices}
Let $\alpha \in \N^r$, $\beta \in \N^{r} \times \N^n$ and $\gamma \in \N^{N_d - r(n+1)}$, we will use the following notations:
\begin{align*}
H_\alpha(t_d(x)) &= \prod_{i=1}^r H_{\alpha_i}\left(t_d^{(i)}(x)\right), & \tilde{H}_\beta(L_d(x)) &= \prod_{\substack{1 \leq i\leq r \\ 1 \leq j \leq n}} H_{\beta_{ij}}\left(L_d^{ij}(x)\right),\\
&\text{and} & \hat{H}_\gamma(S_d(x)) &= \prod_{i=r(n+1)+1}^{N_d} H_{\gamma_i}\left(S_d^{(i)}(x)\right).
\end{align*}
\end{ntn}

We first expand $\odet{L_d(x)}$ in $L^2(\dx \nu_d)$. Since $\odet{L_d(x)}$ only depends on the variables $\{L_d^{ij}(x) \mid 1\leq i \leq r, 1\leq j \leq n\}$, we have:
\begin{equation*}
\label{eq expansion odet Ldx}
\odet{L_d(x)} = \sum_{\beta \in \N^r \times \N^n}  B_\beta \frac{\tilde{H}_\beta(L_d(x))}{\sqrt{\beta !}},
\end{equation*}
where $B_\beta = \frac{1}{\sqrt{\beta !}}\esp{\odet{L_d(x)}\tilde{H}_\beta(L_d(x))}$ for all $\beta \in \N^r \times \N^n$. The coefficient $B_\beta$ only depends on the distribution of $L_d(x)$, which is a standard Gaussian for all $x \in \R\P^n$. Hence $B_\beta$ is independent of~$x$. These coefficients have several symmetries. Note that $\odet{\left(L_d^{ij}(x)\right)_{i,j}}$ is invariant under the following operations:
\begin{itemize}
\item multiplying a whole column or a whole row by $-1$;
\item permuting the rows or permuting the columns.
\end{itemize}
Since the Hermite polynomials of odd degrees are odd (cf.~Lem.~\ref{lem properties of Hk}), the first point shows that $B_\beta = 0$ whenever there exists $i \in \{1,\dots,r\}$ such that $\sum_{j=1}^n \beta_{ij}$ is odd or there exists $j \in \{1,\dots,n\}$ such that $\sum_{i=1}^r \beta_{ij}$ is odd. We denote by $I$ the set of multi-indices $\beta \in \N^r \times \N^n$ such that for all $i \in \{1,\dots,r\}$, $\sum_{j=1}^n \beta_{ij}$ is even and, for all $j \in \{1,\dots,n\}$, $\sum_{i=1}^r \beta_{ij}$ is even.

If $\norm{\beta}=2$, then the only way for $\beta$ to belong to $I$ is that there exists $(i,j)$ such that $\beta_{ij}=2$, the other components of $\beta$ being zero. The second point above shows that, in this case, the value of $B_\beta$ does not depend on the index $(i,j)$ such that $\beta_{ij}=2$.

\begin{ntn}
\label{notation B2}
Let $B_2$ denote the common value of the $B_\beta$ for $\beta \in I$ such that $\norm{\beta}=2$.
\end{ntn}

We can also expand $A\in L^2(\dx \nu_d)$ as:
\begin{equation*}
A = \sum_{\alpha,\beta,\gamma} A_{\alpha,\beta,\gamma}(x) \frac{H_\alpha(t_d(x))}{\sqrt{\alpha!}}\frac{\tilde{H}_\beta(L_d(x))}{\sqrt{\beta!}}\frac{\hat{H}_\gamma(S_d(x))}{\sqrt{\gamma!}},
\end{equation*}
where $A_{\alpha,\beta,\gamma}(x)=\esp{A\frac{H_\alpha(t_d(x))}{\sqrt{\alpha!}}\frac{\tilde{H}_\beta(L_d(x))}{\sqrt{\beta!}}\frac{\hat{H}_\gamma(S_d(x))}{\sqrt{\gamma!}}}$. Then, using the orthonormality properties of the Hermite polynomials, we get:
\begin{equation}
\label{eq Wiener exp A odet L}
\espcond{A\odet{L_d(x)}}{t_d(x)=0} = \sum_{\alpha,\beta} A_{\alpha,\beta,0}(x)B_{\beta} \frac{H_{\alpha}(0)}{\sqrt{\alpha!}},
\end{equation}
where the sum runs over multi-indices such that $\alpha \in 2\N^r$ (see~Lem.\ref{lem properties of Hk}), and $\beta \in I$. Then, by Eq.~\eqref{eq Wiener Expansion Kac-Rice} and \eqref{eq Wiener exp A odet L}, for any $A \in C_d[q]$, we have:
\begin{equation*}
\esp{A\prsc{\rmes{d}}{\phi}} = \left(\frac{d}{2\pi}\right)^\frac{r}{2} \sum_{\substack{\alpha \in 2\N^r,\, \beta \in I\\ \norm{\alpha}+\norm{\beta}=q}} B_\beta \frac{H_\alpha(0)}{\sqrt{\alpha!}} \esp{A \int_{x \in \R\P^n} \phi(x) \frac{H_{\alpha}(t_d(x))}{\sqrt{\alpha!}}\frac{\tilde{H}_{\beta}(L_d(x))}{\sqrt{\beta!}} \rmes{\R\P^n}}
\end{equation*}

We have proved the following proposition.

\begin{prop}
\label{prop Wiener expansion}
For all $d \in \N^*$, for all $\phi \in \mathcal{C}^0(\R\P^n)$, for all $q \in \N$, $\prsc{\rmes{d}[2q+1]}{\phi}=0$ and
\begin{equation}
\label{eq chaotic component q}
\prsc{\rmes{d}[2q]}{\phi} = \left(\frac{d}{2\pi}\right)^\frac{r}{2} \int_{x \in \R\P^n} \phi(x) \sum_{\substack{\alpha \in 2\N^r,\, \beta \in I\\ \norm{\alpha}+\norm{\beta}=2q}} B_\beta \frac{H_\alpha(0)}{\sqrt{\alpha!}} \frac{H_{\alpha}(t_d(x))}{\sqrt{\alpha!}}\frac{\tilde{H}_{\beta}(L_d(x))}{\sqrt{\beta!}} \rmes{\R\P^n}.
\end{equation}
\end{prop}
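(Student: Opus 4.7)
The plan is to characterize each chaotic component $\prsc{\rmes{d}[q]}{\phi}$ by its $L^2$ inner product with an arbitrary element of $C_d[q]$. Since $\prsc{\rmes{d}}{\phi}\in L^2(\dx \nu_d)$, for any $A\in L^2(\dx \nu_d)$ the product $A\prsc{\rmes{d}}{\phi}$ is integrable. I would apply a Kac--Rice identity with a random weight (a standard upgrade of Theorem~\ref{thm Kac-Rice 1}, cf.~\cite[Thm.~6.4]{AW2009} or the double fibration trick of~\cite[App.~C]{Let2016}), yielding
\begin{equation*}
\esp{A\prsc{\rmes{d}}{\phi}} = (2\pi)^{-\frac{r}{2}}\int_{\R\P^n}\frac{\phi(x)}{\odet{\ev_x^d}}\espcond{A\odet{\nabla^d_x s_d}}{s_d(x)=0}\rmes{FS}.
\end{equation*}
By Lemma~\ref{lem expression xi d}, $\odet{\ev_x^d}=((d+n)!/(\pi^n d!))^{r/2}$ is independent of $x$, so after the natural rescaling of $s_d(x)$ into $t_d(x)$ and $\nabla^d_x s_d$ into $L_d(x)$ this reduces to an integral of $\phi(x)\espcond{A\odet{L_d(x)}}{t_d(x)=0}$ against a universal factor of order $d^{r/2}$.

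Next, I would exploit the specifically KSS features of the model. By the $O_{n+1}(\R)$-equivariance of Lemma~\ref{lem invariance KKS polynomials} combined with the explicit derivatives computed in Lemma~\ref{lem derivatives xi d}, for each fixed $x$ the scalars $t_d^{(i)}(x)$ together with the entries $L_d^{ij}(x)$ in an orthonormal frame of $T_x^*\R\P^n$ form a family of $r(n+1)$ independent real standard Gaussians. Completing this family into an orthonormal basis of $C_d[1]$ by additional variables $\{S_d^{(i)}(x)\}$, the tensor products of Hermite polynomials in these $N_d$ variables form a Hilbert basis of $L^2(\dx \nu_d)$ by Lemma~\ref{lem Wiener chaos decomposition}. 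Expanding both $A$ and $\odet{L_d(x)}$ in this basis, the orthogonality of Hermite polynomials makes the $S_d$-dependent factors drop out of $\espcond{\cdot}{t_d(x)=0}$, while the $t_d$-Hermite polynomials are simply evaluated at zero. By Lemma~\ref{lem properties of Hk}, $H_\alpha(0)$ vanishes unless every $\alpha_i$ is even, so the surviving multi-indices satisfy $\alpha\in 2\N^r$.

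The remaining task is to identify which coefficients $B_\beta:=\tfrac{1}{\sqrt{\beta!}}\esp{\odet{L_d(x)}\tilde{H}_\beta(L_d(x))}$ can be nonzero, and this is a pure symmetry argument. The Jacobian $\odet{\cdot}$ is invariant under sign changes of any row or column of its matrix and under permutations of rows or columns, while the tensor Hermite polynomial $\tilde{H}_\beta$ picks up a sign whenever the corresponding row sum $\sum_j\beta_{ij}$ or column sum $\sum_i\beta_{ij}$ is odd (again by Lemma~\ref{lem properties of Hk}). Hence $B_\beta=0$ unless $\beta\in I$, and by the same permutation invariance the value of $B_\beta$ for $\norm{\beta}=2$ is the common constant $B_2$. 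Pulling $A$ outside the $\R\P^n$ integral by Fubini---valid because $\Norm{\phi}_\infty\esp{A^2}^{1/2}$ provides a uniform $L^1$ bound in $x$ and $\R\P^n$ is compact---yields
\begin{equation*}
\esp{A\prsc{\rmes{d}}{\phi}}=\esp{A\cdot \Psi_q(d,\phi)}
\end{equation*}
where $\Psi_q(d,\phi)$ is precisely the right-hand side of~\eqref{eq chaotic component q}. Since $\Psi_q(d,\phi)\in C_d[q]$, this identifies the chaotic component $\prsc{\rmes{d}[q]}{\phi}$ with $\Psi_q(d,\phi)$, and the vanishing of the odd-order chaoses is automatic because $\norm{\alpha}+\norm{\beta}$ is always even under the constraints $\alpha\in 2\N^r$ and $\beta\in I$. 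The main point that I expect to require most care is the rigorous justification of the Kac--Rice identity with a random weight and of the exchange of summation and expectation; beyond these analytic steps, the proof reduces to symmetry bookkeeping and orthogonality of Hermite polynomials.
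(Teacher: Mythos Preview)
Your proposal is correct and follows essentially the same route as the paper: a Kac--Rice formula with random weight $A$, the KSS-specific fact that $(t_d^{(i)}(x),L_d^{ij}(x))$ are independent standard Gaussians so one can expand in Hermite polynomials adapted to $x$, the symmetry argument forcing $\beta\in I$ and $\alpha\in 2\N^r$, and finally Fubini to pull $A$ outside the integral and identify the $q$-th chaotic component. The only cosmetic difference is that the paper cites \cite[thm.~5.3]{Let2016a} for the random-weight Kac--Rice identity rather than \cite{AW2009}.
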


\begin{rems}
\label{rem Wiener expansion}
\begin{itemize}
\item Recall that the values of the $t_d^{(i)}(x)$ and $L_d^{ij}(x)$ depend on the choice of the unit vector $\zeta_0(x)$, that we used to trivialize $\mathcal{O}(d)_x$. The only other choice of such a unit vector is $-\zeta_0(x)$. Changing $\zeta_0(x)$ to $-\zeta_0(x)$ changes $t_d^{(i)}(x)$ to $-t_d^{(i)}(x)$ and $L_d^{ij}(x)$ to $-L_d^{ij}(x)$. Since we only consider multi-indices $(\alpha,\beta)$ such that $\norm{\alpha}+\norm{\beta}$ is even, the monomials appearing with a non-zero coefficient in $H_{\alpha}(t_d(x))\tilde{H}_{\beta}(L_d(x))$ have even total degree. Hence, the value of $H_{\alpha}(t_d(x))\tilde{H}_{\beta}(L_d(x))$ does not depend on the choice of $\zeta_0(x)$.
\item Since $\sum_{\beta \in I,\, \norm{\beta} = p} B_\beta \frac{1}{\sqrt{\beta!}}\tilde{H}_{\beta}(L_d(x))$ is the $p$-th chaotic component of $\odet{L_d(x)}$, it does not depend on our choice of an orthonormal basis of $T_x^*\R\P^n$. Hence, neither does the value of the sum on the right-hand side of Eq.~\eqref{eq chaotic component q}, for any given $x \in \R\P^n$.
\item By~\cite[Lem.~A.14]{Let2016}, we have:
\begin{equation*}
B_0 = \esp{\odet{L_x(d)}} = (2\pi)^r \frac{\vol{\S^{n-r}}}{\vol{\S^n}}.
\end{equation*}
Then, Prop.~\ref{prop Wiener expansion} for $q=0$ shows that, in the setting of KSS polynomials, for all $\phi \in \mathcal{C}^0(\R\P^n)$,
\begin{equation*}
\esp{\prsc{\rmes{d}}{\phi}} = d^\frac{r}{2} \left(\int_{\R\P^n} \phi \rmes{\R\P^n}\right) \frac{\vol{\S^{n-r}}}{\vol{\S^n}}.
\end{equation*}
That is, in this case, the error term in Thm.~\ref{thm expectation} is zero for any $d \in \N^*$.
\end{itemize}
\end{rems}

Let us conclude this section by writing $\prsc{\rmes{d}[2]}{\phi}$ in a more explicit way.

\begin{lem}
\label{lem Wiener chaos 2}
For all $d \in \N^*$, for all $\phi \in \mathcal{C}^0(\R\P^n)$,
\begin{equation*}
\prsc{\rmes{d}[2]}{\phi} = d^\frac{r}{2}\frac{\vol{\S^{n-r}}}{2n\vol{\S^n}} \int_{x \in \R\P^n} \phi(x) \left(\Norm{L_d(x)}^2 - n\Norm{t_d(x)}^2\right) \rmes{\R\P^n}. 
\end{equation*}
\end{lem}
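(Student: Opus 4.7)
The plan is to apply Proposition~\ref{prop Wiener expansion} with $q=1$ and enumerate the multi-indices that contribute to the sum in~\eqref{eq chaotic component q}. The pairs $(\alpha,\beta)$ with $\alpha \in 2\N^r$, $\beta \in I$, and $\norm{\alpha}+\norm{\beta}=2$ are of exactly two types: either $\alpha = 2 e_i$ for some $i \in \{1,\dots,r\}$ and $\beta = 0$, or $\alpha = 0$ and $\beta$ has a single nonzero entry $\beta_{ij} = 2$ for some $(i,j) \in \{1,\dots,r\}\times\{1,\dots,n\}$ (a $\beta$ with two $1$'s cannot lie in $I$, since some row or column sum would then be odd). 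Using $H_2(X) = X^2 - 1$, $H_2(0) = -1$, and $\sqrt{\alpha!} = \sqrt{2}$ or $\sqrt{\beta!} = \sqrt{2}$ in the respective cases, the inner sum collapses into
\begin{equation*}
F_2(x) := -\frac{B_0}{2}\bigl(\Norm{t_d(x)}^2 - r\bigr) + \frac{B_2}{\sqrt{2}}\bigl(\Norm{L_d(x)}^2 - rn\bigr).
\end{equation*}

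Next I would compute $B_2$. In the KSS setting $L_d(x) \sim \mathcal{N}(\Id)$ in $\mathcal{M}_{rn}(\R)$ for every $x\in \R\P^n$, so $B_2 = \frac{1}{\sqrt{2}}\esp{\odet{L}(L_{11}^2 - 1)}$ with $L \sim \mathcal{N}(\Id)$. The aim is to show
\begin{equation*}
\esp{\odet{L}\,L_{11}^2} = \frac{n+1}{n}\esp{\odet{L}}, \qquad \text{equivalently} \qquad B_2 = \frac{B_0}{n\sqrt{2}}.
\end{equation*}
Right-invariance of the law of $L$ under $O_n(\R)$ acting on columns gives $\esp{\odet{L}\,L_{11}^2} = \frac{1}{n}\esp{\odet{L}\,\Norm{L_1}^2}$, where $L_1$ denotes the first row. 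I then condition on the remaining rows: setting $V = \mathrm{span}(L_2,\dots,L_r)$ (of dimension $r-1$ almost surely), one has $\odet{L} = \odet{(L_2,\dots,L_r)}\cdot\Norm{P_{V^\perp}L_1}$, and given $V$ the components $P_V L_1$ and $P_{V^\perp}L_1$ are independent standard Gaussians of dimensions $r-1$ and $n-r+1$. Combining $\esp{\Norm{P_V L_1}^2}=r-1$ with the chi moment identity $\esp{\chi_k^3} = (k+1)\esp{\chi_k}$ (applied with $k = n-r+1$) yields
\begin{equation*}
\esp{\Norm{L_1}^2\,\Norm{P_{V^\perp}L_1}\mid V} = \bigl((r-1)+(n-r+2)\bigr)\esp{\Norm{P_{V^\perp}L_1}\mid V} = (n+1)\esp{\Norm{P_{V^\perp}L_1}\mid V},
\end{equation*}
and taking outer expectations gives the claimed identity.

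Substituting $B_2 = B_0/(n\sqrt{2})$ into $F_2(x)$ makes the constant terms in $r$ cancel and reduces $F_2(x)$ to $\frac{B_0}{2n}\bigl(\Norm{L_d(x)}^2 - n\Norm{t_d(x)}^2\bigr)$. Multiplying by the prefactor $(d/(2\pi))^{r/2}$ from Proposition~\ref{prop Wiener expansion} and using $B_0 = (2\pi)^{r/2}\vol{\S^{n-r}}/\vol{\S^n}$ (from \cite[Lem.~A.14]{Let2016}) absorbs the remaining powers of $2\pi$ and produces exactly the formula claimed in Lemma~\ref{lem Wiener chaos 2}. The only nontrivial step is the moment identity $\esp{\odet{L}L_{11}^2} = \frac{n+1}{n}\esp{\odet{L}}$; every other part of the argument is combinatorial bookkeeping or direct substitution.
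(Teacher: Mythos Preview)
Your proof is correct and follows essentially the same route as the paper: apply Proposition~\ref{prop Wiener expansion} with $q=1$, enumerate the admissible $(\alpha,\beta)$, and reduce everything to the identity $B_2 = B_0/(n\sqrt{2})$. The only minor difference is in how this last identity is obtained: the paper invokes the factorization of $\odet{L}$ as a product of independent chi norms from \cite[App.~B]{Let2016} and then computes $\esp{\|L_d^{(1)}\|^3}/\esp{\|L_d^{(1)}\|}=n+1$, whereas you derive the same thing in a self-contained way by conditioning on the last $r-1$ rows and using the chi moment identity $\esp{\chi_k^3}=(k+1)\esp{\chi_k}$; these are two presentations of the same Gram--Schmidt computation.
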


\begin{proof}
By Prop.~\ref{prop Wiener expansion} and Lem.~\ref{lem properties of Hk}, we have:
\begin{equation}
\label{eq Wiener chaos 2}
\prsc{\rmes{d}[2]}{\phi} = \left(\frac{d}{2\pi}\right)^\frac{r}{2} \int_{x \in \R\P^n} \phi(x)\left(-\frac{B_0}{2}(\Norm{t_d(x)}^2 -r ) + \frac{B_2}{\sqrt{2}} (\Norm{L_d(x)}^2 - nr)\right)\rmes{\R\P^n},
\end{equation}
where $B_2$ is defined by~Ntn.~\ref{notation B2}. Since $H_2 = X^2-1$, we have:
\begin{equation*}
n\sqrt{2}B_2 = \sum_{j=1}^n \esp{\odet{L_d(x)} H_2(L_d^{1j}(x))} = \esp{\odet{L_d(x)} \Norm{L_d^{(1)}(x)}^2} - nB_0.
\end{equation*}
It was proved in \cite[App.~B]{Let2016} that $\odet{L_d(x)}$ is distributed as $\Norm{L_d^{(1)}(x)}\Norm{Z_{n-1}}\dots \Norm{Z_{n-r+1}}$, where $\left(L_d^{(1)}(x),Z_{n-1},\dots,Z_{n-r+1}\right)$ are globally independent and, for all $p \in \{n-r+1,\dots,n-1\}$, $Z_p$ is a standard Gaussian vector in $\R^p$. Since $L_d^{(1)}(x) \sim \mathcal{N}(\Id)$ in a Euclidean space of dimension~$n$, we have:
\begin{align*}
B_0 &= \esp{\odet{L_d(x)}} = \esp{\Norm{L_d^{(1)}(x)}} \prod_{p=n-r+1}^{n-1} \esp{\Norm{Z_p}} = (2\pi)^\frac{r}{2} \frac{\vol{\S^{n-r}}}{{\vol{\S^n}}},\\
\text{and} \quad B_2 &= \frac{1}{n\sqrt{2}} \esp{\Norm{L_d^{(1)}(x)}^3} \prod_{p=n-r+1}^{n-1} \esp{\Norm{Z_p}} -\frac{B_0}{\sqrt{2}} = \frac{B_0}{\sqrt{2}}\left(\frac{2\pi}{n}\frac{\vol{\S^n}}{\vol{\S^{n+2}}} - 1\right) = \frac{B_0}{n\sqrt{2}}.
\end{align*}
We plug these relations in Eq.~\eqref{eq Wiener chaos 2} and this yields the result.
\end{proof}

%%%%%%%%%%%%%%%%%%%%%%%%%%%%%%%%%%%%%%%%%%%%%%%%%%%%%%%%%%%%%%%%%%%%%%%%%%%%%%%%%%%%%%%%%%%%%%%%%%%%%%%%%%%%%%

\subsection{Conclusion of the proof}
\label{subsec conclusion of the proof}

In this section, we finally prove Thm.~\ref{thm positivity}. The key point is the following.

\begin{lem}
\label{lem second chaos is positive}
Let $Z_d$ be the common zero set of $r$ independent Kostlan--Shub--Smale polynomials in $\R\P^n$ then we have the following as $d$ goes to infinity:
\begin{equation*}
\var{\vol{Z_d}[2]} \sim d^{r-\frac{n}{2}} r\left(1+\frac{2}{n}\right)\pi^\frac{n}{2} \frac{\vol{\S^{n-r}}^2}{16\vol{\S^n}}.
\end{equation*}
\end{lem}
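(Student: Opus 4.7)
The starting point would be Lemma~\ref{lem Wiener chaos 2} with $\phi=\mathbf{1}$, which gives
\begin{equation*}
\vol{Z_d}[2] = d^\frac{r}{2}\frac{\vol{\S^{n-r}}}{2n\vol{\S^n}} \int_{\R\P^n} F_d(x)\,\rmes{FS}, \qquad F_d(x) := \Norm{L_d(x)}^2 - n\Norm{t_d(x)}^2.
\end{equation*}
Using $X^2 = H_2(X) + 1$ one sees that $F_d(x)$ is a centered element of the second Wiener chaos. Since the KSS polynomials $s_d^{(1)},\dots,s_d^{(r)}$ are independent, $F_d$ splits as $\sum_{i=1}^r F_d^{(i)}$, where $F_d^{(i)}$ depends only on $s_d^{(i)}$ and the $F_d^{(i)}$ are i.i.d.\ across $i$. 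By the $O_{n+1}(\R)$-equivariance of the distribution (cf.~Section~\ref{subsubsec correlation kernel KSS}), $\cov{F_d^{(1)}(x)}{F_d^{(1)}(y)}$ depends only on $\rho_g(x,y)$, so fixing $x_0 = [1:0:\cdots:0]$ collapses the variance to
\begin{equation*}
\var{\vol{Z_d}[2]} = d^r \Bigl(\tfrac{\vol{\S^{n-r}}}{2n\vol{\S^n}}\Bigr)^{\!2} r\,\vol{\R\P^n} \int_{\R\P^n} \cov{F_d^{(1)}(x_0)}{F_d^{(1)}(y)}\,\rmes{FS}(y).
\end{equation*}

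Next I would localize the integral using the affine chart $\psi_{x_0}$ of Section~\ref{subsubsec local expression of the kernel}. Under the substitution $y = \psi_{x_0}(u/\sqrt d)$ the volume element picks up a factor $d^{-n/2}$, and the closed-form expressions of Lemmas~\ref{lem expression xi d} and~\ref{lem derivatives xi d} show that the joint covariance of $(t_d(x_0),L_d(x_0),t_d(y),L_d(y))$ converges, locally uniformly in $u$, to that of a scalar Bargmann--Fock process on $\R^n$; for example $\xi_d(0,u/\sqrt d) = (1+\Norm{u}^2/d)^{-d/2} \to e^{-\Norm{u}^2/2}$. The same explicit expressions give the off-diagonal decay $|\xi_d(0,u/\sqrt d)| \leq e^{-c\Norm{u}^2}$ on $\Norm{u} \leq \sqrt d$ and comparable bounds on its first and second derivatives, so the contribution of $\{\Norm{u} \geq C \ln d\}$ is $O(d^{-\infty})$.

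Passing to the limit, the inner integral becomes $d^{-n/2}\int_{\R^n} G(u)\,\dx u$, where $G(u) = \cov{F_\infty^{(1)}(0)}{F_\infty^{(1)}(u)}$ and $F_\infty^{(1)}(z) = \sum_{j=1}^n H_2(\partial_j s(z)) - nH_2(s(z))$ with $s$ a scalar Bargmann--Fock field. I would compute $G$ by the second-chaos identity $\cov{H_2(U)}{H_2(V)} = 2\cov{U}{V}^2$ for jointly centered unit-variance Gaussians, combined with the derivatives of $\xi$ at $(0,u)$ given by Lemma~\ref{lem values of limit derivatives}. A short algebraic expansion yields
\begin{equation*}
G(u) = 2e^{-\Norm{u}^2}\bigl(n + n^2 - 2(n+1)\Norm{u}^2 + \Norm{u}^4\bigr),
\end{equation*}
and the Gaussian moments $\int_{\R^n}e^{-\Norm{u}^2}\dx u = \pi^\frac{n}{2}$, $\int\Norm{u}^2 e^{-\Norm{u}^2}\dx u = \tfrac{n}{2}\pi^\frac{n}{2}$, $\int\Norm{u}^4 e^{-\Norm{u}^2}\dx u = \tfrac{n(n+2)}{4}\pi^\frac{n}{2}$ give $\int_{\R^n} G(u)\,\dx u = \tfrac{n(n+2)}{2}\pi^\frac{n}{2}$. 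Assembling all factors and using $\vol{\R\P^n} = \vol{\S^n}/2$ produces exactly the claimed constant $r(1+\tfrac{2}{n})\pi^\frac{n}{2}\tfrac{\vol{\S^{n-r}}^2}{16\vol{\S^n}}$.

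The main obstacle is really the second step: the uniform convergence of the rescaled covariance structure and the off-diagonal decay. Here, however, both are immediate consequences of the closed-form Lemmas~\ref{lem expression xi d} and~\ref{lem derivatives xi d}, which is why the argument is much shorter than the corresponding analysis behind Theorem~\ref{thm asymptotics variance}; the remaining work is the essentially algebraic second-chaos computation above.
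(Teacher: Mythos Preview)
Your proposal is correct and follows the same overall skeleton as the paper: start from Lemma~\ref{lem Wiener chaos 2}, use the $O_{n+1}(\R)$-invariance to reduce the double integral to a single one centered at $x_0$, pass to the affine chart $\psi_{x_0}$, rescale by $\sqrt d$, and compute the resulting Gaussian integral.

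The one notable difference is the order of the last two steps. The paper keeps $d$ finite and uses Lemmas~\ref{lem expression xi d} and~\ref{lem derivatives xi d} to compute the covariance $\mathcal{F}_d(z)$ \emph{exactly} via Wick's formula, obtaining $\mathcal{F}_d(z)=2rF_d(d\Norm{z}^2)$ for the explicit rational function $F_d$ of Eq.~\eqref{eq def Fd}; the limit $d\to\infty$ is then a scalar dominated-convergence argument on the one-dimensional integral $\int_0^\infty F_d(t)t^{(n-2)/2}(1+t/d)^{-(n+1)/2}\dx t$. You instead pass to the Bargmann--Fock limit first and then evaluate $G(u)$ via the second-chaos identity $\cov{H_2(U)}{H_2(V)}=2\cov{U}{V}^2$, which is equivalent to the paper's Wick computation and yields the same polynomial $n(n+1)-2(n+1)\Norm{u}^2+\Norm{u}^4$ in the limit. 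The paper's ordering makes the domination step slightly cleaner (one explicit scalar majorant), while yours is more conceptual and avoids writing out the finite-$d$ block matrices~\eqref{eq local variance matrix tL}--\eqref{eq def A B C and D}; either way the explicit kernel makes the justification routine. Your final assembly of constants is correct.
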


Let us prove that this lemma implies Thm.~\ref{thm positivity}.

\begin{proof}[Proof of Thm.~\ref{thm positivity}]
Let us consider the common zero set $Z_d$ of $r$ independent KSS polynomials in $\R\P^n$, and denote by $\rmes{d}$ the Riemannian volume measure on $Z_d$. Let $\mathbf{1}$ be the unit constant function on $\R\P^n$, we have $\prsc{\rmes{d}}{\mathbf{1}}= \vol{Z_d}$ and, by Thm.~\ref{thm asymptotics variance},
\begin{equation*}
d^{-r+\frac{n}{2}}\var{\vol{Z_d}} = \vol{\R\P^n} \left(\frac{\vol{\S^{n-1}}}{(2\pi)^r} \mathcal{I}_{n,r} + \delta_{rn}\frac{2}{\vol{\S^n}}\right) + o\left(1\right).
\end{equation*}
On the other hand, as we explained as the end of Sect.~\ref{subsec Hermite polynomials and Wiener chaos},
\begin{equation*}
d^{-r+\frac{n}{2}}\var{\vol{Z_d}} = d^{-r+\frac{n}{2}}\sum_{q \in \N^*} \var{\vol{Z_d}[q]} \geq d^{-r+\frac{n}{2}}\var{\vol{Z_d}[2]}.
\end{equation*}
By Lem.~\ref{lem second chaos is positive}, we get:
\begin{equation*}
\left(\frac{\vol{\S^{n-1}}}{(2\pi)^r} \mathcal{I}_{n,r} + \delta_{rn}\frac{2}{\vol{\S^n}}\right) \geq \frac{r}{8}\left(1+\frac{2}{n}\right)\pi^\frac{n}{2} \left(\frac{\vol{\S^{n-r}}}{\vol{\S^n}}\right)^2 >0. \qedhere
\end{equation*}
\end{proof}

\begin{rem}
In~\cite{Dal2015}, Dalmao proved that for $n=r=1$, we have $\var{\vol{Z_d}} \sim \sigma^2 \sqrt{d}$ with $\sigma^2 \simeq 0.57\cdots$. What we just said shows that $\sigma^2 = 1+ \mathcal{I}_{1,1}$, and the lower bound we get for this term in the proof of Thm.~\ref{thm positivity} is $\frac{3}{8\sqrt{\pi}} \simeq 0.21\cdots$. Thus, asymptotically, chaotic components of order greater than $4$ must contribute to the leading term of $\var{\vol{Z_d}}$.
\end{rem}

We conclude this section by the proof of Lem.~\ref{lem second chaos is positive}.

\begin{proof}[Proof of Lem.~\ref{lem second chaos is positive}]
Recall that $\rmes{d}$ is the Riemannian volume measure on $Z_d$ and that $\mathbf{1}$ is the unit constant function on $\R\P^n$. By Lem.~\ref{lem Wiener chaos 2}, we have:
\begin{equation*}
\vol{Z_d}[2] = \prsc{\rmes{d}[2]}{\mathbf{1}} = d^\frac{r}{2}\frac{\vol{\S^{n-r}}}{2n\vol{\S^n}} \int_{x \in \R\P^n} \left(\Norm{L_d(x)}^2 - n\Norm{t_d(x)}^2\right) \rmes{\R\P^n}.
\end{equation*}
Since this is a centered variable, its variance equals:
\begin{equation*}
d^r\left(\frac{\vol{\S^{n-r}}}{2n\vol{\S^n}}\right)^2 \int_{(x,y) \in (\R\P^n)^2} \esp{\left(\Norm{L_d(x)}^2 - n\Norm{t_d(x)}^2\right)\left(\Norm{L_d(y)}^2 - n\Norm{t_d(y)}^2\right)} \rmes{\R\P^n}^2.
\end{equation*}

Using the invariance of the distribution of $s_d$ under isometries, we get that:
\begin{equation}
\label{eq variance second chaos volume}
\var{\vol{Z_d}[2]} = d^r\frac{\vol{\S^{n-r}}^2}{8n^2\vol{\S^n}} \mathcal{J}_{n,r}(d),
\end{equation}
where, setting $x_0=[1:0:\dots:0]$,
\begin{equation*}
\mathcal{J}_{n,r}(d) = \int_{y \in \R\P^n} \esp{\left(\Norm{L_d(x_0)}^2 - n\Norm{t_d(x_0)}^2\right)\left(\Norm{L_d(y)}^2 - n\Norm{t_d(y)}^2\right)} \rmes{\R\P^n}.
\end{equation*}
Since $B_{\R\P^n}\left(x_0,\frac{\pi}{2}\right) = \left\{[1:z_1:\cdots:z_n] \in \R\P^n \mvert z \in \R^n \right\}$ has full measure in $\R\P^n$, we can restrict the above integral to this ball and use the local coordinates introduced in Sect.~\ref{subsubsec local expression of the kernel}. These coordinates are centered at $x_0$. Moreover, the density of $\rmes{\R\P^n}$ with respect to the Lebesgue measure in this chart is $z \mapsto (1+\Norm{z}^2)^{-\frac{n+1}{2}}$ (cf.~\cite[p.~30]{GH1994}). By a change of variable $y = [1:z_1:\dots:z_n]$, we have:
\begin{equation}
\label{eq def Jnr}
\mathcal{J}_{n,r}(d) = \int_{z \in \R^n} \mathcal{F}_d(z) (1+\Norm{z}^2)^{-\frac{n+1}{2}}\dx z,
\end{equation}
where
\begin{equation*}
\mathcal{F}_d(z) = \esp{\left(\Norm{L_d(0)}^2 - n\Norm{t_d(0)}^2\right)\left(\Norm{L_d(z)}^2 - n\Norm{t_d(z)}^2\right)}.
\end{equation*}
Here, we denoted $t_d(z)$ instead of $t_d([1:z_1:\dots:z_n])$ and $L_d(z)$ instead of $L_d([1:z_1:\dots:z_n])$.

Let us fix $z \in \R^n$ and compute $\mathcal{F}_d(z)$. Using once again the invariance under the action of $O_{n+1}(\R)$ on $\R\P^n$, we can assume that $z=(\Norm{z},0,\dots,0)$. Let $\left(\deron{}{x_1},\dots,\deron{}{x_n}\right)$ denote the basis of the tangent space of $\R\P^n$ at $[1:\Norm{z}:0:\dots:0]$ given by the partial derivatives in our chart $\psi_{x_0}$ (see~Sect.~\ref{subsubsec local expression of the kernel}). This basis is orthogonal, but $\Norm{\deron{}{x_1}} = (1+\Norm{z}^2)^{-1}$ and $\Norm{\deron{}{x_j}} = (1+\Norm{z}^2)^{-\frac{1}{2}}$ for all $j \in \{2,\dots,n\}$.

The random vectors $\left(t_d^{(i)}(0),t_d^{(i)}(z), L_d^{i1}(0),L_d^{i1}(z),\dots,L_d^{in}(0),L_d^{in}(z)\right)$ for $i \in \{1,\dots,r\}$ are independent equidistributed centered Gaussian vector in $\R^{2n+2}$. The previous relations, together with Lem.~\ref{lem expression xi d}, show that their common variance matrix, by blocks of size $2 \times 2$, is:
\begin{equation}
\label{eq local variance matrix tL}
\begin{pmatrix}
A_d(\Norm{z}^2) & \left(B_d(\Norm{z}^2)\right)^\text{t} & 0 & \cdots & 0\\
B_d(\Norm{z}^2) & D_d(\Norm{z}^2) & 0 & \cdots & 0\\
0 & 0 & C_d(\Norm{z}^2) & & \vdots\\
\vdots & \vdots & & \ddots & 0\\
0 & 0 & \cdots & 0 & C_d(\Norm{z}^2)
\end{pmatrix},
\end{equation}
where, for all $t \geq 0$,
\begin{equation}
\label{eq def A B C and D}
\begin{aligned}
A_d(t) &= \begin{pmatrix}
1 & (1+t)^{-\frac{d}{2}} \\ (1+t)^{-\frac{d}{2}} & 1
\end{pmatrix},
&
B_d(t) &= \begin{pmatrix}
0 & \sqrt{dt}(1+t)^{-\frac{d}{2}} \\ -\sqrt{dt}(1+t)^{-\frac{d}{2}} & 0
\end{pmatrix},
\\
C_d(t) &= \begin{pmatrix}
1 & \hspace{-2mm}(1+t)^\frac{1-d}{2} \\ (1+t)^\frac{1-d}{2} & 1
\end{pmatrix}
&
\text{and } D_d(t) &= \begin{pmatrix}
1 & \hspace{-2mm}(1+t-dt)(1+t)^{-\frac{d}{2}} \\ (1+t-dt)(1+t)^{-\frac{d}{2}} & 1
\end{pmatrix}.
\end{aligned}
\end{equation}
Using the independence and equidistribution of the couples $\left(t_d^{(i)}(x),L_d^{(i)}(x)\right)$, we have:
\begin{multline*}
\mathcal{F}_d(z) = r \left(\sum_{j,l} \esp{\left(L_d^{1j}(0)\right)^2 \left(L_d^{1l}(z)\right)^2} - n \sum_l \esp{\left(t_d^{(1)}(0)\right)^2\left(L_d^{1l}(z)\right)^2}\right.\\ \left.- n \sum_j \esp{\left(L_d^{1j}(0)\right)^2\left(t_d^{(1)}(z)\right)^2} + n^2 \esp{\left(t_d^{(1)}(0)\right)^2\left(t_d^{(1)}(z)\right)^2} \right).
\end{multline*}
If $(X,Y)$ is a centered Gaussian vector in $\R^2$ such that $\var{X}=1=\var{Y}$, then by Wick's formula (cf.~\cite[Lem.~11.6.1]{AT2007}) we have: $\esp{X^2Y^2} = 9 + 2 \esp{XY}^2$. We apply this relation to each term of the previous sum. Then, by Eq.~\eqref{eq local variance matrix tL} and Eq.~\eqref{eq def A B C and D}, we have $\mathcal{F}_d(z) = 2rF_d(d\Norm{z}^2)$, where $F_d$ is defined by:
\begin{equation*}
\label{eq def Fd}
\forall t \in \R, \qquad F_d(t) = \left(1+\frac{t}{d}\right)^{-d}\left(\left(1+\frac{t}{d}-t\right)^2+(n-1)\left(1+\frac{t}{d}\right) - 2nt + n^2\right).
\end{equation*}
Then, by a change of variable $t = d\Norm{z}^2$ in Eq.~\eqref{eq def Jnr},
\begin{equation}
\label{eq dominated convergence}
\mathcal{J}_{n,r}(d) = d^{-\frac{n}{2}} r \vol{\S^{n-1}} \int_{t=0}^{+\infty} F_d(t)t^\frac{n-2}{2}\left(1+\frac{t}{d}\right)^{-\frac{n+1}{2}}\dx t.
\end{equation}

Let $t \geq 0$, we have:
\begin{equation*}
F_d(t)t^\frac{n-2}{2}\left(1+\frac{t}{d}\right)^{-\frac{n+1}{2}} \xrightarrow[d \to +\infty]{} \left(t^2 -2t(n+1) +n(n+1)\right)t^\frac{n-2}{2}e^{-t}.
\end{equation*}
Moreover, for all $d \in \N^*$,
\begin{equation*}
\norm{F_d(t)t^\frac{n-2}{2}\left(1+\frac{t}{d}\right)^{-\frac{n+1}{2}}} \leq \left(1+\frac{t}{d}\right)^{-d}t^\frac{n-2}{2}\left(4t^2 +(n+1)(3t+n)\right).
\end{equation*}
Let $d_0 > \frac{n}{2} +2$. Since $\left(1+\frac{t}{d}\right)^{-d}$ is a non-increasing sequence of $d$, for all $d \geq d_0$,
\begin{equation*}
\norm{F_d(t)t^\frac{n-2}{2}\left(1+\frac{t}{d}\right)^{-\frac{n+1}{2}}} \leq \left(1+\frac{t}{d_0}\right)^{-d_0}t^\frac{n-2}{2}\left(4t^2 +(n+1)(3t+n)\right),
\end{equation*}
and the right-hand side is integrable as a function of $t$. By Lebesgue's Theorem, we have:
\begin{equation}
\label{eq last relation}
\int_{t=0}^{+\infty} \frac{F_d(t)t^\frac{n-2}{2}}{\left(1+\frac{t}{d}\right)^\frac{n+1}{2}}\dx t \xrightarrow[d \to +\infty]{} \int_0^{+\infty} \left(t^2 -2t(n+1) +n(n+1)\right)t^\frac{n-2}{2}e^{-t}\dx t = \Gamma\left(\frac{n}{2}+2\right),
\end{equation}
where $\Gamma$ is Euler's Gamma function. The conclusion follows from Eq.~\eqref{eq variance second chaos volume}, \eqref{eq dominated convergence} and~\eqref{eq last relation}.
\end{proof}

%%%%%%%%%%%%%%%%%%%%%%%%%%%%%%%%%%%%%%%%%%%%%%%%%%%%%%%%%%%%%%%%%%%%%%%%%%%%%%%%%%%%%%%%%%%%%%%%%%%%%%%%%%%%%%
%%%%%%%%%%%%%%%%%%%%%%%%%%%%%%%%%%%%%%%%%%%%%%%%%%%%%%%%%%%%%%%%%%%%%%%%%%%%%%%%%%%%%%%%%%%%%%%%%%%%%%%%%%%%%%
%%%%%%%%%%%%%%%%%%%%%%%%%%%%%%%%%%%%%%%%%%%%%%%%%%%%%%%%%%%%%%%%%%%%%%%%%%%%%%%%%%%%%%%%%%%%%%%%%%%%%%%%%%%%%%

\appendix

%%%%%%%%%%%%%%%%%%%%%%%%%%%%%%%%%%%%%%%%%%%%%%%%%%%%%%%%%%%%%%%%%%%%%%%%%%%%%%%%%%%%%%%%%%%%%%%%%%%%%%%%%%%%%%
%%%%%%%%%%%%%%%%%%%%%%%%%%%%%%%%%%%%%%%%%%%%%%%%%%%%%%%%%%%%%%%%%%%%%%%%%%%%%%%%%%%%%%%%%%%%%%%%%%%%%%%%%%%%%%

\section{Technical computations for Section~\ref{sec properties of the limit distribution}}
\label{sec technical computations for Section limit distrib}

Before proving the technical lemmas of Sect.~\ref{sec properties of the limit distribution}, we state several estimates that will be useful in this section and the next. Recall Def.~\ref{def a and vi}, \ref{def b and P} and~\ref{def u}. The following hold as $t$ goes to infinity.
\begin{equation}
\label{eq asymptotic a b+ b-}
a(t) \xrightarrow[t \to +\infty]{} -1, \qquad b_+(t) \xrightarrow[t \to +\infty]{} 0 \qquad \text{and} \qquad b_-(t) \xrightarrow[t \to +\infty]{} \sqrt{2}.
\end{equation}
\begin{equation}
\label{eq asymptotic u v}
\forall i \in \{1,2\}, \ u_i(t) \xrightarrow[t \to +\infty]{} 1 \qquad \text{and} \qquad \forall i \in \{1,2,3,4\}, \ v_i(t) \xrightarrow[t \to +\infty]{} 1.
\end{equation}
The following hold as $t$ goes to $0$.
\begin{align}
a(t) &= 1 - \frac{t}{2} - \frac{t^2}{8} + O(t^3), & (b_+(t))^2 &= 2 - \frac{t}{2} + O(t^2), \label{DL a and b+2}\\
b_+(t)b_-(t) &= \sqrt{t} \left(1 + O(t^2)\right), & (b_-(t))^2 &= \frac{t}{2} + \frac{t^2}{8} + O(t^3), \label{DL b+b- and b-2}\\
u_1(t) &= t +O(t^2), & u_2(t) &= \frac{t^2}{12}+O(t^3), \label{DL u1 and u2}\\
v_1(t) &= 2 + O(t), & v_2(t) &= \frac{t^3}{48} + O(t^4), \label{DL v1 and v2}\\
v_3(t) &= t + O(t^2), & v_4(t) &= 2 + O(t). \label{DL v3 and v4}
\end{align}

\begin{proof}[Proof of Lem.~\ref{lem diagonalization Omega tilde}]
Recall that $P$ is defined by Def.~\ref{def b and P} and $\tilde{\Omega}$ by Eq.~\eqref{eq def Omega tilde}. One can check by a direct computation that, for any $t \in [0,+\infty)$, $P(t) = (A(t)\otimes I_2) \sigma (Q\otimes I_2)$, where
\begin{align*}
A(t) &= \frac{1}{\sqrt{2}} \begin{pmatrix} b_-(t) & - b_+(t) \\ b_+(t) & b_-(t) \end{pmatrix},
&
\sigma &= \begin{pmatrix} 1 & 0 & 0 & 0 \\ 0 & 0 & 0 & 1 \\ 0 & 1 & 0 & 0 \\ 0 & 0 & 1 & 0 \end{pmatrix},
&
&\text{and}
&
Q &= \frac{1}{\sqrt{2}} \begin{pmatrix} 1 & -1 \\ 1 & 1 \end{pmatrix}.
\end{align*}
Moreover, these three matrices are orthogonal. Then we have:
\begin{align*}
\sigma (Q\otimes I_2) \tilde{\Omega}(t) (Q^\text{t}\otimes I_2) \sigma^\text{t} &= \sigma \begin{pmatrix}
1 - e^{-\frac{1}{2}t} & 0 & 0 & -\sqrt{t}e^{-\frac{1}{2}t}\\
0 & 1 + e^{-\frac{1}{2}t} & \sqrt{t}e^{-\frac{1}{2}t} & 0\\
0 & \sqrt{t}e^{-\frac{1}{2}t} & 1 -(1-t)e^{-\frac{1}{2}t} & 0 \\
-\sqrt{t}e^{-\frac{1}{2}t} & 0 & 0 & 1 + (1-t)e^{-\frac{1}{2}t}
\end{pmatrix} \sigma^\text{t} \\
&= \begin{pmatrix}
1 - e^{-\frac{1}{2}t} & -\sqrt{t}e^{-\frac{1}{2}t} & 0 & 0\\
-\sqrt{t}e^{-\frac{1}{2}t} & 1 + (1-t)e^{-\frac{1}{2}t} & 0 & 0\\
0 & 0 & 1 + e^{-\frac{1}{2}t} & \sqrt{t}e^{-\frac{1}{2}t}\\
0 & 0 & \sqrt{t}e^{-\frac{1}{2}t} & 1 -(1-t)e^{-\frac{1}{2}t}
\end{pmatrix}\\
&= I_4 + e^{-\frac{t}{2}} \left(\begin{pmatrix}
1 & \sqrt{t} \\ \sqrt{t} & t-1
\end{pmatrix} \otimes \begin{pmatrix}
-1 & 0 \\ 0 & 1
\end{pmatrix}\right),
\end{align*}
where $I_4$ stands for identity matrix of size $4$. Recalling the definitions of $(v_i(t))_{1 \leq i \leq 4}$, $b_+(t)$ and $b_-(t)$ (see Def.~\ref{def a and vi} and~\ref{def b and P}). We conclude the proof by checking that:
\begin{equation*}
A(t)\begin{pmatrix}
1 & \sqrt{t} \\ \sqrt{t} & t-1
\end{pmatrix} \left(A(t)\right)^\text{t} = \begin{pmatrix}
\frac{t}{2} - \sqrt{1 + \left(\frac{t}{2}\right)^2} & 0 \\ 0 & \frac{t}{2} + \sqrt{1 + \left(\frac{t}{2}\right)^2}
\end{pmatrix}.\qedhere
\end{equation*}
\end{proof}

\begin{proof}[Proof of Lem.~\ref{lem non-degeneracy Omega}]
Let $z \in \R^n \setminus \{0\}$, by Eq.~\eqref{eq expression Omega} and Lem.~\ref{lem variance operator 1-jets}, we have:
\begin{equation*}
\det\left(\Omega(z)\right) = \det\left(\Omega'(z)\right)^r = \det\left(\tilde{\Omega}(\Norm{z}^2)\right)^r \left(1 - e^{-\Norm{z}^2}\right)^{r(n-1)}
\end{equation*}
and it is enough to prove that $\det\left(\tilde{\Omega}(t)\right) > 0$ whenever $t >0$. By Lem.~\ref{lem diagonalization Omega tilde}, we have:
\begin{equation}
\label{eq determinant Omega tilde}
\forall t \geq 0, \qquad \det\left(\tilde{\Omega}(t)\right) = v_1(t)v_2(t)v_3(t)v_4(t) = 1 - (t^2+2) e^{-t} + e^{-2t}=f(t),
\end{equation}
where the last equality defines $f:[0,+\infty) \to \R$. We have $f(0)=0$ and for all $t > 0$, $f'(t) = e^{-t}g(t)$ where $g(t) = t^2 -2t + 2 -2 e^{-t}$. Then $g(0)=0$ and $\forall t >0$, $g'(t) = 2(e^{-t}-1+t)> 0$. Thus $g$ is positive on $(0,+\infty)$ and so is $f$. Finally, we have $\forall t >0$, $\det\left(\tilde{\Omega}(t)\right) >0$.
\end{proof}

\begin{proof}[Proof of Lem.~\ref{lem non-degeneracy Lambda}]
Let $z \in \R^n \setminus \{0\}$, as above we have:
\begin{equation*}
\det\left(\Lambda(z)\right) = \det\left(\Lambda'(z)\right)^r = \det\left(\tilde{\Lambda}(\Norm{z}^2)\right)^r \left(1 - e^{-\Norm{z}^2}\right)^{r(n-1)}
\end{equation*}
and it is enough to prove that $\det\left(\tilde{\Lambda}(t)\right) > 0$ whenever $t >0$. By Lem.~\ref{lem diagonalization Lambda prime}, we have:
\begin{equation*}
\forall t >0, \qquad \det\left(\tilde{\Lambda}(t)\right) = u_1(t)u_2(t) = \frac{1 - (t^2+2) e^{-t} + e^{-2t}}{1-e^{-t}} = \frac{\det\left(\tilde{\Omega}(t)\right)}{1-e^{-t}},
\end{equation*}
by Eq.~\eqref{eq determinant Omega tilde}. We just proved that $\det\left(\tilde{\Omega}(t)\right)$ is positive for every positive $t$. Hence the result.
\end{proof}

\begin{proof}[Proof of Lem.~\ref{lem boundedness sqrt Lambda sqrt Omega}]
First, recall that $\Omega(z) = \Omega'(z) \otimes I_r$ (see Eq.~\eqref{eq expression Omega}) and $\Lambda(z) = \Lambda'(z) \otimes I_r$ (see Eq.~\eqref{eq expression Lambda prime}). Hence, we only need to prove that the map $z \longmapsto \begin{pmatrix}
0 & \Lambda'(z)^\frac{1}{2}
\end{pmatrix}\Omega'(z)^{-\frac{1}{2}}$ is bounded on $\R^n \setminus \{0\}$. Then, let $z \in \R^n \setminus\{0\}$, the matrix of $\Omega'(z)$ in the orthonormal basis $\mathcal{B}_z$ of $\R^2 \otimes \left(\R \oplus \R^n \right)$ (see Sect.~\ref{subsec variance of the 1-jets}) is given by Lem.~\ref{lem variance operator 1-jets}, and the matrix of $\Omega'(z)^{-\frac{1}{2}}$ in $\mathcal{B}_z$ is:
\begin{equation*}
\left(\begin{array}{c|c}
\rule[-8pt]{0pt}{10pt} \tilde{\Omega}(\Norm{z}^2)^{-\frac{1}{2}} & 0 \\
\hline
0 & \rule{0pt}{16pt}\left(\begin{smallmatrix}
1 & e^{-\frac{1}{2}\Norm{z}^2} \\ e^{-\frac{1}{2}\Norm{z}^2} & 1
\end{smallmatrix}\right)^{-\frac{1}{2}} \otimes I_{n-1}
\end{array}\right).
\end{equation*}
Similarly, by Lem.~\ref{lem conditional variance operator 1-jets}, the matrix of $\begin{pmatrix}
0 & \Lambda'(z)^\frac{1}{2}\end{pmatrix}$ in $\mathcal{B}_z$ is:
\begin{equation*}
\left(\begin{array}{c|c}
\begin{matrix}
0 & \rule[-8pt]{0pt}{24pt} \tilde{\Lambda}(\Norm{z}^2)^\frac{1}{2}
\end{matrix}
 & 0 \\
\hline
0 & \rule{0pt}{16pt}\left(\begin{smallmatrix}
1 & e^{-\frac{1}{2}\Norm{z}^2} \\ e^{-\frac{1}{2}\Norm{z}^2} & 1
\end{smallmatrix}\right)^\frac{1}{2} \otimes I_{n-1}
\end{array}\right).
\end{equation*}
Hence, our problem reduces to proving that: $t \longmapsto \begin{pmatrix}
0 & \tilde{\Lambda}(t)^\frac{1}{2}
\end{pmatrix} \tilde{\Omega}(t)^{-\frac{1}{2}}$ is bounded on $(0,+\infty)$.

Recall that, for all $t \in [0,+\infty)$, $P(t) \in O_4(\R)$ was defined by Def.~\ref{def b and P}. By Lem.~\ref{lem diagonalization Omega tilde} and~\ref{lem diagonalization Lambda prime}, for all $t \in (0,+\infty)$ we have:
\begin{multline*}
\begin{pmatrix}
0 & \tilde{\Lambda}(t)^\frac{1}{2}
\end{pmatrix}\tilde{\Omega}(t)^{-\frac{1}{2}}\\
\begin{aligned}
&= \left(\begin{array}{c|c}
0 & Q^\text{t} \left(\begin{smallmatrix} u_1(t)^\frac{1}{2} & 0 \\ 0 & u_2(t)^\frac{1}{2} \end{smallmatrix}\right)Q
\end{array}\right)P(t)^\text{t}
\begin{pmatrix}
v_1(t)^{-\frac{1}{2}} & 0 & 0 & 0\\
0 & v_2(t)^{-\frac{1}{2}} & 0 & 0\\
0 & 0 & v_3(t)^{-\frac{1}{2}} & 0\\
0 & 0 & 0 & v_4(t)^{-\frac{1}{2}}
\end{pmatrix}
P(t)\\
&= \begin{pmatrix}
m_1(t) & m_3(t) & m_5(t) & m_6(t)\\
m_2(t) & m_4(t) & m_6(t) & m_5(t)
\end{pmatrix},
\end{aligned}
\end{multline*}
where:
\begin{align*}
m_1 &= \frac{b_+b_-}{4}\left(- \sqrt{\frac{u_2}{v_1}} + \sqrt{\frac{u_2}{v_2}} - \sqrt{\frac{u_1}{v_3}} + \sqrt{\frac{u_1}{v_4}} \right), & m_2 &= \frac{b_+b_-}{4}\left(-\sqrt{\frac{u_2}{v_1}} + \sqrt{\frac{u_2}{v_2}} + \sqrt{\frac{u_1}{v_3}} - \sqrt{\frac{u_1}{v_4}} \right),\\
m_3 &= \frac{b_+b_-}{4}\left(\sqrt{\frac{u_2}{v_1}} - \sqrt{\frac{u_2}{v_2}} - \sqrt{\frac{u_1}{v_3}} + \sqrt{\frac{u_1}{v_4}} \right), & m_4 &= \frac{b_+b_-}{4}\left(\sqrt{\frac{u_2}{v_1}} - \sqrt{\frac{u_2}{v_2}} + \sqrt{\frac{u_1}{v_3}} - \sqrt{\frac{u_1}{v_4}} \right),
\end{align*}
\begin{align*}
m_5 &= \frac{(b_+)^2}{4}\left(\sqrt{\frac{u_2}{v_1}} + \sqrt{\frac{u_1}{v_3}}\right) + \frac{(b_-)^2}{4}\left(\sqrt{\frac{u_2}{v_2}} + \sqrt{\frac{u_1}{v_4}}\right),\\
m_6 &= \frac{(b_+)^2}{4}\left(\sqrt{\frac{u_2}{v_1}} - \sqrt{\frac{u_1}{v_3}}\right) + \frac{(b_-)^2}{4}\left(\sqrt{\frac{u_2}{v_2}} - \sqrt{\frac{u_1}{v_4}}\right).
\end{align*}
By Lem.~\ref{lem non-degeneracy Omega}, for all $t >0$, the $(v_i(t))_{1 \leq i \leq 4}$ are the eigenvalues of a symmetric positive operator, hence are positive. Similarly for all $t>0$, $u_1(t) >0$ and $u_2(t) >0$ by Lem.~\ref{lem non-degeneracy Lambda}. Thus the $(m_i)_{1 \leq i \leq 6}$ are well-defined continuous maps from $(0,+\infty)$ to $\R$. By Eq.~\eqref{eq asymptotic a b+ b-} and~\eqref{eq asymptotic u v},
\begin{align*}
&\forall i \in \{1,2,3,4,6\}, \quad m_i(t) \xrightarrow[t \to +\infty]{} 0 & &\text{and} & &m_5(t) \xrightarrow[t \to +\infty]{} 1.
\end{align*}
Moreover, by Eq.~\eqref{DL a and b+2}--\eqref{DL v3 and v4}, for all $i \in \{1,2,5,6\}$, $m_i(t) = 1/2 + O(\sqrt{t})$ as to goes to $0$ and, for any $i \in \{3,4\}$, $m_i(t) = -1/2 + O(\sqrt{t})$ as to goes to $0$. Hence for all $i \in \{1,\dots,6\}$, $m_i$ is a bounded function from $(0,+\infty)$ to $\R$, which concludes the proof.
\end{proof}

\begin{proof}[Proof of Lem.~\ref{lem estimates 0 expectation of odet XY}]
Recall that, for all $t>0$, the couples $(X_{ij}(t),Y_{ij}(t))$ are independent centered Gaussian vectors in $\R^2$. We denote by $\Lambda_{ij}(t)$ the variance matrix of $(X_{ij}(t),Y_{ij}(t))$, which equals $\tilde{\Lambda}(t)$ if $j=1$ and 
\begin{equation*}
\begin{pmatrix}
1 & e^{-\frac{1}{2}t^2} \\ e^{-\frac{1}{2}t^2} & 1
\end{pmatrix}
\end{equation*}
otherwise (see Def.~\ref{def XYt}, Lem.~\ref{lem relation Bargmann--Fock XY} and Lem.~\ref{lem conditional variance operator 1-jets}).

For all $i \in \{1,\dots,r\}$, $j \in \{1,\dots,n\}$ and $t>0$, we can write:
\begin{equation*}
\begin{pmatrix} X_{ij}(t) \\ Y_{ij}(t) \end{pmatrix} = \sqrt{\Lambda_{ij}(t)} \begin{pmatrix} A_{ij} \\ B_{ij}\end{pmatrix},
\end{equation*}
where the $(A_{ij})$ and $(B_{ij})$ are globally independent real standard Gaussian variables, not depending on $t$. Note that by Lem.~\ref{lem non-degeneracy Lambda}, the $\Lambda_{ij}(t)$ are positive for any $t >0$. We deduce from Lem.~\ref{lem diagonalization Lambda prime} that for any $i \in \{1,\dots,r\}$, for all $t>0$:
\begin{align*}
\sqrt{\Lambda_{i1}(t)} &= \begin{pmatrix}
\alpha(t) & \beta(t) \\ \beta(t) & \alpha(t)
\end{pmatrix} & &\text{and} \quad \forall j \in \{2,\dots,n\}, & \sqrt{\Lambda_{ij}(t)} &= \begin{pmatrix}
\gamma(t) & \delta(t) \\ \delta(t) & \gamma(t)
\end{pmatrix},
\end{align*}
where
\begin{align}
\alpha(t) &= \frac{1}{2}\left(\sqrt{u_2(t)} + \sqrt{u _1(t)}\right), & \gamma(t) &= \frac{1}{2}\left(\sqrt{1+e^{-\frac{1}{2}t^2}} + \sqrt{1-e^{-\frac{1}{2}t^2}}\right),\label{eq def alpha gamma}\\
\beta(t) &= \frac{1}{2}\left(\sqrt{u_2(t)} - \sqrt{u _1(t)}\right), & \delta(t) &= \frac{1}{2}\left(\sqrt{1+e^{-\frac{1}{2}t^2}} - \sqrt{1-e^{-\frac{1}{2}t^2}}\right)\label{eq def beta delta}.
\end{align}

We denote $A_j=(A_{1j},\dots,A_{rj})^\text{t}$ the $j$-th column of $A$ and similarly $B_j = (B_{1j},\dots,B_{rj})^\text{t}$. Then, $\esp{\odet{X(t)}\odet{Y(t)}} = \esp{\Psi(t,A,B)}$, where
\begin{multline}
\label{eq def Psi}
\Psi(t,A,B) = \odet{\alpha(t)A_1+\beta(t)B_1,\gamma(t)A_2+\delta(t)B_2,\dots,\gamma(t)A_n+\delta(t)B_n}\\ \odet{\beta(t)A_1+\alpha(t)B_1,\delta(t)A_2+\gamma(t)B_2,\dots,\delta(t)A_n+\gamma(t)B_n}.
\end{multline}
By~\eqref{DL u1 and u2}, $\alpha(t) = \frac{1}{2}\sqrt{t} + O(t)$ and $\beta(t) = -\frac{1}{2}\sqrt{t} + O(t)$. We extend continuously $\alpha$, $\beta$, $\gamma$ and $\delta$ by $\alpha(0) = 0 = \beta(0)$ and $\gamma(0) = \frac{1}{\sqrt{2}} = \delta(0)$. The function $\Psi$ also extend continuously at $t=0$.

Then $\alpha, \beta, \gamma$ and $\delta$ are bounded functions on $(0,1]$ and $\Psi$ is the square root of a polynomial of degree $4r$ in $(A,B)$ whose coefficients are bounded functions of $t$. In particular, for all $t \in (0,1]$, $\Psi(t,A,B)$ is dominated by a polynomial in $(A,B)$ whose coefficients are independent of $t$. By Lebesgue's Theorem,
\begin{equation}
\label{eq limit expectation t=0}
\esp{\odet{X(t)}\odet{Y(t)}} \xrightarrow[t \to 0]{} \esp{\Psi(0,A,B)}.
\end{equation}

Let $j\in \{2,\dots,n\}$, we define $X_j = (X_{1j},\dots,X_{rj})^\text{t}$ by $X_j=\gamma(0)A_j+\delta(0)B_j = \frac{1}{\sqrt{2}}(A_j+B_j)$. Then the $(X_{ij})$ with $i\in \{1,\dots,r\}$ and $j\in \{2,\dots,n\}$ are independent real standard Gaussian variables. Setting $X_1 = (X_{11},\dots,X_{r1})^\text{t} = 0$, we have:
\begin{align*}
\Psi(0,A,B) &= \odet{X_1,X_2,\dots,X_n}^2 = \det\left((X_1,X_2,\dots,X_n)(X_1,X_2,\dots,X_n)^\text{t}\right)\\
&= \sum_{1 \leq k_1 < \dots < k_r \leq n} \det\left((X_{ik_j})_{1 \leq i,j \leq r}\right)^2,
\end{align*}
by the Cauchy--Binet formula. Let $1 \leq k_1 < k_2 < \dots < k_r \leq n$. If $k_1 = 1$, the first column of $(X_{ik_j})_{1 \leq i,j \leq r}$ is zero and its determinant equals $0$. Otherwise,
\begin{equation}
\label{eq expectation det squared}
\esp{\det\left((X_{ik_j})_{1 \leq i,j \leq r}\right)^2} = \sum_{\sigma, \tau \in \mathfrak{S}_r} \epsilon(\sigma)\epsilon(\tau) \prod_{i=1}^r \esp{X_{ik_{\sigma(i)}}X_{ik_{\tau(i)}}} = r!.
\end{equation}
Hence, if $r <n$, we have
\begin{equation*}
\esp{\Psi(0,A,B)} = r! \binom{n-1}{r} = \frac{(n-1)!}{(n-r-1)!},
\end{equation*}
and by Eq.~\eqref{eq limit expectation t=0}, we proved Lem.~\ref{lem estimates 0 expectation of odet XY} in this case. If $r=n$, we have $\esp{\Psi(0,A,B)} = 0$ and we must be more precise.

Let us now assume that $r=n$. Then, $X$ and $Y$ are square matrices and their Jacobians are simply the absolute values of their determinants. For all $t>0$, we have:
\begin{multline}
\label{eq relation Psi}
\Psi(t,A,B) = \frac{t}{2} \norm{\det\left(\sqrt{\frac{2}{t}}\alpha(t)A_1+\sqrt{\frac{2}{t}}\beta(t)B_1,\gamma(t)A_2+\delta(t)B_2,\dots,\gamma(t)A_n+\delta(t)B_n\right)}\\
\norm{\det\left(\sqrt{\frac{2}{t}}\beta(t)A_1+\sqrt{\frac{2}{t}}\alpha(t)B_1,\delta(t)A_2+\gamma(t)B_2,\dots,\delta(t)A_n+\gamma(t)B_n\right)}.
\end{multline}
By Eq.~\eqref{DL u1 and u2}, we have: $\sqrt{\frac{2}{t}}\alpha(t) = \frac{1}{\sqrt{2}}+O(\sqrt{t})$ and $\sqrt{\frac{2}{t}}\beta(t) = -\frac{1}{\sqrt{2}}+O(\sqrt{t})$. We can apply the same kind of argument as above. By Lebesgue's Theorem:
\begin{equation*}
\frac{2}{t}\esp{\Psi(t,A,B)} \xrightarrow[t \to 0]{} \esp{\norm{\det(Y_1,X_2,\dots,X_n)}\norm{\det(-Y_1,X_2,\dots,X_n)}} = \esp{\det(Y_1,X_2,\dots,X_n)^2},
\end{equation*}
where $Y_1 = (Y_{11},\dots,Y_{r1})^\text{t} = \frac{1}{\sqrt{2}}(A_1-B_1)$. Since $Y_1,X_2,\dots,X_n$ are independent $\mathcal{N}(\Id)$ in $\R^r$, the same computation as Eq.~\eqref{eq expectation det squared} shows that: $\esp{\det(Y_1,X_2,\dots,X_n)^2} = r! = n!$. Hence, if $r=n$, we have: $\esp{\odet{X(t)}\odet{Y(t)}}  = \esp{\Psi(t,A,B)} \sim \frac{n!}{2}t$, as $t \to 0$.
\end{proof}

\begin{proof}[Proof of Lem.~\ref{lem estimates infty expectation of odet XY}]
For any $t >0$, let us denote by:
\begin{equation}
\label{eq def Lambda hat}
\hat{\Lambda}(t) = \left(\begin{array}{c|c}
\tilde{\Lambda}(t) & 0 \\
\hline
\rule{0pt}{14pt}
0 & \begin{pmatrix}
1 & e^{-\frac{1}{2}t^2} \\ e^{-\frac{1}{2}t^2} & 1
\end{pmatrix} \otimes I_{n-1}
\end{array}\right) \otimes I_r,
\end{equation}
the variance matrix of $(X(t),Y(t))$. 

In the following, we denote by $L=(X,Y)$ a generic element of $\mathcal{M}_{rn}(\R) \times \mathcal{M}_{rn}(\R)$. We have:
\begin{multline}
\label{eq limit expectation t=infty}
\esp{\odet{X(t)}\odet{Y(t)}} =\\
\frac{1}{(2\pi)^{rn}} \det\left(\hat{\Lambda}(t)\right)^{-\frac{1}{2}} \int \odet{X}\odet{Y} \exp\left(-\frac{1}{2}\prsc{\hat{\Lambda}(t)^{-1}L}{L}\right) \dx L.
\end{multline}
By Lem.~\ref{lem diagonalization Lambda prime}, we have $\hat{\Lambda}(t) = \Id + O\!\left(te^{-\frac{t}{2}}\right)$ as $t\to +\infty$. Then, $\det\left(\hat{\Lambda}(t)\right)^{-\frac{1}{2}} = 1 + O\!\left(te^{-\frac{t}{2}}\right)$. Moreover, by the Mean Value Theorem,
\begin{multline*}
\norm{\exp\left(-\frac{1}{2}\prsc{\hat{\Lambda}(t)^{-1}L}{L}\right)- e^{-\frac{1}{2}\Norm{L}^2}} = e^{-\frac{1}{2}\Norm{L}^2}\norm{\exp\left(-\frac{1}{2}\prsc{\left(\hat{\Lambda}(t)^{-1}-\Id\right)L}{L}\right)- 1}\\
\leq e^{-\frac{1}{2}\Norm{L}^2}\frac{\Norm{L}^2}{2}\Norm{\hat{\Lambda}(t)^{-1}-\Id}\exp\left(\frac{\Norm{L}^2}{2}\Norm{\hat{\Lambda}(t)^{-1}-\Id}\right).
\end{multline*}
Then, since $\hat{\Lambda}(t)^{-1} = \Id + O\!\left(te^{-\frac{t}{2}}\right)$, this last term is smaller than $e^{-\frac{1}{4}\Norm{L}^2}\frac{\Norm{L}^2}{2}\Norm{\hat{\Lambda}(t)^{-1}-\Id}$ for all $t$ large enough. Hence,
\begin{multline*}
\int \odet{X}\odet{Y} \norm{\exp\left(-\frac{1}{2}\prsc{\hat{\Lambda}(t)^{-1}L}{L}\right)- e^{-\frac{1}{2}\Norm{L}^2}} \dx L \\
\leq \frac{1}{2}\Norm{\hat{\Lambda}(t)^{-1}-\Id} \int \odet{X}\odet{Y}\Norm{L}^2 e^{-\frac{1}{4}\Norm{L}^2} \dx L = O\!\left(t e^{-\frac{t}{2}}\right).
\end{multline*}
Thanks to this relation and Eq.~\eqref{eq limit expectation t=infty}, we get that:
\begin{equation*}
\esp{\odet{X(t)}\odet{Y(t)}} = \esp{\odet{X(\infty)}\odet{Y(\infty)}} + O\!\left(t e^{-\frac{t}{2}}\right),
\end{equation*}
where $\left(X(\infty),Y(\infty)\right) \sim \mathcal{N}(\Id)$ in $\mathcal{M}_{rn}(\R) \times \mathcal{M}_{rn}(\R)$. Finally, by \cite[Lem.~A.14]{Let2016},
\begin{equation*}
\esp{\odet{X(\infty)}\odet{Y(\infty)}} = \esp{\odet{X(\infty)}}^2 = (2\pi)^r \left(\frac{\vol{\S^{n-r}}}{\vol{\S^n}}\right)^2. \qedhere
\end{equation*}
\end{proof}

%%%%%%%%%%%%%%%%%%%%%%%%%%%%%%%%%%%%%%%%%%%%%%%%%%%%%%%%%%%%%%%%%%%%%%%%%%%%%%%%%%%%%%%%%%%%%%%%%%%%%%%%%%%%%%
%%%%%%%%%%%%%%%%%%%%%%%%%%%%%%%%%%%%%%%%%%%%%%%%%%%%%%%%%%%%%%%%%%%%%%%%%%%%%%%%%%%%%%%%%%%%%%%%%%%%%%%%%%%%%%

\section{Technical computations for Section~\ref{sec proof of the main theorem}}
\label{sec technical computations for Section main proof}

\begin{proof}[Proof of Lem.~\ref{lem asymptotic Theta d}]
Let $\alpha \in (0,1)$, we want to prove that $\Theta(z)^{-\frac{1}{2}} \Theta_d(z) \Theta(z)^{-\frac{1}{2}} - \Id = O(d^{-\alpha})$ uniformly for $x \in M$ and $z \in B_{T_xM}(0, b_n \ln d)$. Recall that $Q = \frac{1}{\sqrt{2}}\left(\begin{smallmatrix}
1 & -1 \\ 1 & 1
\end{smallmatrix}\right)$. Since $Q \in O_2(\R)$, it is equivalent to prove that:
\begin{equation}
\label{eq to prove asymptotic Theta d}
\left(Q \otimes \Id_{\R \left(\E \otimes \L^d\right)_x}\right) \Theta(z)^{-\frac{1}{2}} \left(\Theta_d(z) - \Theta(z) \right) \Theta(z)^{-\frac{1}{2}} \left(Q \otimes \Id_{\R \left(\E \otimes \L^d\right)_x}\right)^{-1} = O(d^{-\alpha}).
\end{equation}

Recall that $e_d$ was defined by Eq.~\eqref{eq def ed} and that $e_\infty = \xi \Id_{\R \left(\E \otimes \L^d\right)_x}$ (see Sect.~\ref{sec properties of the limit distribution}). For any $d \in \N$, for all $x \in M$ and for all $w,z \in B_{T_xM}(0,b_n \ln d)$ we set: $\epsilon_d(w,z) = e_d(w,z)-e_\infty(w,z)$. By Eq.~\eqref{eq def Theta} and~\eqref{eq expression Theta dz} we have:
\begin{equation*}
\Theta_d(z) - \Theta(z) = \begin{pmatrix}
\epsilon_d(0,0) & \epsilon_d(0,z) \\ \epsilon_d(z,0) & \epsilon_d(z,z)
\end{pmatrix}.
\end{equation*}
Then, by Lem.~\ref{lem diagonalization Theta}, for all $x \in M$ and $z \in B_{T_xM}(0,b_n \ln d) \setminus \{0\}$ we have:
\begin{equation*}
\left(Q \otimes \Id_{\R \left(\E \otimes \L^d\right)_x}\right) \Theta(z)^{-\frac{1}{2}} \left(\Theta_d(z) - \Theta(z) \right)\Theta(z)^{-\frac{1}{2}} \left(Q \otimes \Id_{\R \left(\E \otimes \L^d\right)_x}\right)^{-1} = \begin{pmatrix}
 a_d(z) & b_d(z)^* \\ b_d(z) & c_d(z)
\end{pmatrix},
\end{equation*}
where
\begin{align*}
a_d(z) &= \frac{1}{2}\left(1 - e^{-\frac{1}{2}\Norm{z}^2}\right)^{-1} (\epsilon_d(z,z)-\epsilon_d(z,0)-\epsilon_d(0,z)+\epsilon_d(0,0)),\\
b_d(z) &= -\frac{1}{2}\left(1 - e^{-\Norm{z}^2}\right)^{-1/2}(\epsilon_d(z,z)-\epsilon_d(z,0)+\epsilon_d(0,z)-\epsilon_d(0,0)),\\
\text{and} \qquad c_d(z) &= \frac{1}{2}\left(1 + e^{-\frac{1}{2}\Norm{z}^2}\right)^{-1} (\epsilon_d(z,z)+\epsilon_d(z,0)+\epsilon_d(0,z)+\epsilon_d(0,0)).
\end{align*}

Let $\beta \in (\alpha, 1)$, by Prop.~\ref{prop near diag estimates} we have $\Norm{D^2_{(w,z)}\epsilon_d} \leq Cd^{-\beta}$, where $C$ is independent of $x \in M$ and $w,z \in B_{T_xM}(0,b_n \ln d)$. Then, a second order Taylor expansion around $(0,0)$ gives:
\begin{equation*}
\Norm{\epsilon_d(z,z)-\epsilon_d(z,0)-\epsilon_d(0,z)+\epsilon_d(0,0)} \leq C\Norm{z}^2 d^{-\beta}.
\end{equation*}
Since we consider $z \in B_{T_xM}(0,b_n \ln d)$ and $1 - e^{-\frac{1}{2}\Norm{z}^2} \sim \frac{\Norm{z}^2}{2}$ as $z \to 0$, we have:
\begin{equation*}
\Norm{a_d(z)} \leq \frac{C\Norm{z}^2d^{-\beta}}{2\left(1 - e^{-\frac{1}{2}\Norm{z}^2}\right)} = O\left((\ln d)^2d^{-\beta}\right) = O(d^{-\alpha}),
\end{equation*}
where the error term does not depend on $(x,z)$. We obtain Eq.~\eqref{eq to prove asymptotic Theta d} by reasonning similarly for $b_d(z)$ and $c_d(z)$.
\end{proof}

\begin{proof}[Proof of Lem.~\ref{lem asymptotic Omega d}]
The idea of the proof is the same as that of Lem.~\ref{lem asymptotic Theta d} above. Let $\alpha \in (0,1)$, we want to prove that:
\begin{equation}
\label{eq goal}
\Omega(z)^{-\frac{1}{2}}\left(\Omega_d(z) - \Omega(z)\right)\Omega(z)^{-\frac{1}{2}} = O\!\left(d^{-\alpha}\right).
\end{equation}
Recall that we defined: $\epsilon_d(w,z) = e_d(w,z) - e_\infty(w,z)$ for any $x \in M$ and $w,z \in B_{T_xM}(0,b_n \ln d)$. We can express $\Omega_d(z) - \Omega(z)$ in terms of $\epsilon_d$ and its derivatives. Then we  write the matrix of the left-hand side of Eq.~\eqref{eq goal} in an orthonormal basis that diagonalizes $\Omega(z)$. The coefficients of this matrix are linear combinations of $\epsilon_d$ and its derivatives. We  will prove that they are $O\!\left(d^{-\alpha}\right)$ using Taylor expansions and the estimates of Sect.~\ref{subsec off-diagonal estimates}.

The details are longer than in the proof of Lem.~\ref{lem asymptotic Theta d} for two reasons. First, the basis in which $\Omega(z)$ is diagonal now depends on $z$. Second, some of the eigenvalues of $\Omega(z)$ are $O(\Norm{z}^6)$ as $z \to 0$, so that we need to consider Taylor expansions of order $6$ for some coefficients. In addition, the matrices involved are less easily described than in the proof of Lem.~\ref{lem asymptotic Theta d}.

Recall that $e_d$ was defined by Eq.~\eqref{eq def ed} and that $e_\infty = \xi \Id_{\R \left(\E \otimes \L^d\right)_x}$ (see Sect.~\ref{sec properties of the limit distribution}). We expressed $\Omega(z)$ in terms of $e_\infty$ in Eq.~\eqref{eq expression Omega} and $\Omega_d(z)$ in terms of $e_d$ in Eq.~\eqref{eq expression Omega dz}. As an operator on:
\begin{equation*}
\R \left(\E \otimes \L^d\right)_x \oplus \R \left(\E \otimes \L^d\right)_x \oplus \left(T_x^*M \otimes \R \left(\E \otimes \L^d\right)_x\right) \oplus \left(T_x^*M \otimes \R \left(\E \otimes \L^d\right)_x\right),
\end{equation*}
we have:
\begin{equation*}
\label{eq expression Omega d - Omega}
\Omega_d(z) - \Omega(z) = \left( \begin{array}{cc|cc}
\epsilon_d(0,0) & \epsilon_d(0,z) & \partial^\sharp_y \epsilon_d(0,0) & \partial^\sharp_y \epsilon_d(0,z) \\ \rule[-5pt]{0pt}{15pt} \epsilon_d(z,0) & \epsilon_d(z,z) & \partial^\sharp_y \epsilon_d(z,0) & \partial^\sharp_y \epsilon_d(z,z) \\
\hline
\rule[-5pt]{0pt}{15pt}
\partial_x \epsilon_d(0,0) & \partial_x \epsilon_d(0,z) & \partial_x \partial^\sharp_y \epsilon_d(0,0) & \partial_x \partial^\sharp_y \epsilon_d(0,z) \\
\partial_x \epsilon_d(z,0) & \partial_x \epsilon_d(z,z) & \partial_x \partial^\sharp_y \epsilon_d(z,0) & \partial_x \partial^\sharp_y \epsilon_d(z,z)
\end{array} \right).
\end{equation*}
Let us choose an orthonormal basis $\left(\deron{}{x_1},\dots,\deron{}{x_n}\right)$ of $T_xM$ such that $z = \Norm{z}\deron{}{x_1}$. We denote by $(dx_1,\dots,dx_n)$ its dual basis. We can then define a basis of $\R^2 \otimes \left(\R \oplus T_x^*M\right)$ similar to $\mathcal{B}_z$ (see Sect.~\ref{subsec variance of the 1-jets}). For any $i \in \{1,\dots,n\}$, we denote by $\partial_{x_i}$ (resp.~$\partial_{y_i}$) the partial derivative with respect to the $i$-th component of first (resp.~second) variable for maps from $T_xM \times T_xM$ to $\End\left(\R\left( \E\otimes \L^d\right)_x\right)$. Then we can split $\Omega_d(z) - \Omega(z)$ according to the previous basis in the following way:
\begin{equation}
\label{eq splitting Omega d - Omega}
\Omega_d(z) - \Omega(z) = \begin{pmatrix}
A_d(z) & B_d^{(1)}(z)^* & \cdots & B_d^{(n)}(z)^* \\
B_d^{(1)}(z) & C_d^{(11)}(z) & \cdots & C_d^{(1n)}(z) \\
\vdots & \vdots & \ddots & \vdots \\
B_d^{(n)}(z) & C_d^{(n1)}(z) & \cdots & C_d^{(nn)}(z)
\end{pmatrix},
\end{equation}
where,
\begin{align}
& & &A_d(z) = \begin{pmatrix}
\epsilon_d(0,0) & \epsilon_d(0,z)\\
\epsilon_d(z,0) & \epsilon_d(z,z)
\end{pmatrix},
\label{eq def Ad}\\
&\forall i \in \{1,\dots,n\}, &
&B_d^{(i)}(z) = \begin{pmatrix}
\partial_{x_i} \epsilon_d(0,0) & \partial_{x_i} \epsilon_d(0,z)\\
\partial_{x_i} \epsilon_d(z,0) & \partial_{x_i} \epsilon_d(z,z)
\end{pmatrix},
\label{eq def Bdi}\\
&\forall i,j \in \{1,\dots,n\}, &
&C_d^{(ij)}(z) = \begin{pmatrix}
\partial_{x_i} \partial^\sharp_{y_j} \epsilon_d(0,0) & \partial_{x_i} \partial^\sharp_{y_j} \epsilon_d(0,z)\\[2pt]
\partial_{x_i} \partial^\sharp_{y_j} \epsilon_d(z,0) & \partial_{x_i} \partial^\sharp_{y_j} \epsilon_d(z,z)
\end{pmatrix}
\label{eq def Cdij}.
\end{align}

Let us denote by $\mathcal{P}(z)$ the operator whose matrix in our basis is:
\begin{equation*}
\label{eq def P cal}
\left(\begin{array}{c|c}
P(\Norm{z}^2) & 0 \\
\hline
0 & Q \otimes I_{n-1}
\end{array}\right) \otimes I_r,
\end{equation*}
where $P$ was defined by Def.~\ref{def b and P} and $Q = \frac{1}{\sqrt{2}}\left(\begin{smallmatrix}
1 & -1 \\ 1 & 1
\end{smallmatrix}\right)$. Since $\mathcal{P}(z)$ is orthogonal, \eqref{eq goal} is equivalent to the following:
\begin{equation}
\label{eq goal 2}
\mathcal{P}(z) \Omega(z)^{-\frac{1}{2}}\left(\Omega_d(z) - \Omega(z)\right)\Omega(z)^{-\frac{1}{2}}\mathcal{P}(z)^{-1} = O\!\left(d^{-\alpha}\right).
\end{equation}
By Lem.~\ref{cor diagonalization Omega'}, the matrix of $\mathcal{P}(z) \Omega(z)^{-\frac{1}{2}}\mathcal{P}(z)^{-1}$ is $\left(\begin{smallmatrix}
V(z) & 0 \\ 0 & N(z) \otimes I_{n-1}
\end{smallmatrix} \right)$, where
\begin{align*}
V(z) &= \begin{pmatrix}
v_1(\Norm{z}^2)^{-\frac{1}{2}} & 0 & 0 & 0\\
0 & \hspace{-1mm}v_2(\Norm{z}^2)^{-\frac{1}{2}} & 0 & 0\\
0 & 0 & \hspace{-1mm}v_3(\Norm{z}^2)^{-\frac{1}{2}} & 0\\
0 & 0 & 0 & \rule[-3pt]{0pt}{1pt} \hspace{-1mm}v_4(\Norm{z}^2)^{-\frac{1}{2}}
\end{pmatrix} \otimes I_r \\
\text{and} \quad N(z) &= \begin{pmatrix}
\left(1 - e^{-\frac{1}{2}\Norm{z}^2}\right)^{-\frac{1}{2}} & 0 \\ 0 & \hspace{-1mm}\left(1 + e^{-\frac{1}{2}\Norm{z}^2}\right)^{-\frac{1}{2}}
\end{pmatrix} \otimes I_r.
\end{align*}
On the other hand, by Eq.~\eqref{eq splitting Omega d - Omega},
\begin{equation*}
\mathcal{P}(z)\left(\Omega_d(z) - \Omega(z)\right)\mathcal{P}(z)^{-1} = \begin{pmatrix}
\tilde{A}_d(z) & \tilde{B}_d^{(1)}(z)^* & \cdots & \tilde{B}_d^{(n)}(z)^* \\
\tilde{B}_d^{(1)}(z) & \tilde{C}_d^{(11)}(z) & \cdots & \tilde{C}_d^{(1n)}(z) \\
\vdots & \vdots & \ddots & \vdots \\
\tilde{B}_d^{(n)}(z) & \tilde{C}_d^{(n1)}(z) & \cdots & \tilde{C}_d^{(nn)}(z)
\end{pmatrix},
\end{equation*}
where
\begin{align}
&\begin{pmatrix}
\tilde{A}_d(z) & \tilde{B}_d^{(1)}(z)^* \\ \tilde{B}_d^{(1)}(z) & \tilde{C}_d^{(11)}(z)
\end{pmatrix} = \left(P(\Norm{z}^2)\otimes \Id\right) \begin{pmatrix}
A_d(z) & B_d^{(1)}(z)^* \\ B_d^{(1)}(z) & C_d^{(11)}(z)
\end{pmatrix} \left(P(\Norm{z}^2)^\text{t} \otimes \Id\right),
\label{eq def Ad tilde}\\
&\forall i \in \{2,\dots,n\}, \quad \begin{pmatrix}
\tilde{B}_d^{(i)}(z) & \tilde{C}_d^{(i1)}(z)
\end{pmatrix} = (Q \otimes \Id) \begin{pmatrix}
B_d^{(i)}(z) & C_d^{(i1)}(z)
\end{pmatrix} \left(P(\Norm{z}^2)^\text{t} \otimes \Id\right),
\label{eq def Bdi tilde}\\
&\forall i,j \in \{2,\dots,n\}, \quad \tilde{C}_d^{(ij)}(z) = (Q \otimes \Id) C_d^{(ij)}(z) (Q^\text{t} \otimes \Id)\label{eq def Cdij tilde}.
\end{align}
Then, in order to prove Eq.~\eqref{eq goal 2}, we have to prove that:
\begin{align}
& & &V(z)\begin{pmatrix}
\tilde{A}_d(z) & \tilde{B}_d^{(1)}(z)^* \\ \tilde{B}_d^{(1)}(z) & \tilde{C}_d^{(11)}(z)
\end{pmatrix}V(z) = O(d^{-\alpha}),
\label{eq goal Ad}\\
&\forall i \in \{2,\dots,n\}, & &N(z)\begin{pmatrix}
\tilde{B}_d^{(i)}(z) & \tilde{C}_d^{(i1)}(z)
\end{pmatrix}V(z) = O(d^{-\alpha}),
\label{eq goal Bdi}\\
&\forall i,j \in \{2,\dots,n\}, & &N(z)\tilde{C}_d^{(ij)}(z)N(z) = O(d^{-\alpha}) \label{eq goal Cdij}.
\end{align}

Since these are heavy computations, we do not reproduce them in totality here. In the following, we give some details about the proof of~\eqref{eq goal Ad}, which is the most difficult of these three relations to establish. The proofs of~\eqref{eq goal Bdi} and~\eqref{eq goal Cdij} are similar and left to the fearless reader.

Let us focus on the proof of~\eqref{eq goal Ad}. We denote
\begin{equation*}
V(z)\begin{pmatrix}
\tilde{A}_d(z) & \tilde{B}_d^{(1)}(z)^* \\ \tilde{B}_d^{(1)}(z) & \tilde{C}_d^{(11)}(z)
\end{pmatrix}V(z) = \begin{pmatrix}
a_d^{(1)} & a_d^{(2)*} & b_d^{(1)*} & b_d^{(2)*} \\[3pt]
a_d^{(2)} & a_d^{(3)} & b_d^{(3)*} & b_d^{(4)*} \\[3pt]
b_d^{(1)} & b_d^{(3)} & c_d^{(1)} & c_d^{(2)*} \\[3pt]
b_d^{(2)} & b_d^{(4)} & c_d^{(2)} & c_d^{(3)}
\end{pmatrix}.
\end{equation*}
Then by Def.~\ref{def b and P} and Eq.~\eqref{eq def Ad}, \eqref{eq def Bdi}, \eqref{eq def Cdij} and \eqref{eq def Ad tilde}, we have:
\begin{multline}
\label{eq expression ad1}
a_d^{(1)}(z) = \frac{1}{4}\left(v_1(\Norm{z}^2)\right)^{-1} \times \\
\left(\begin{aligned}
&(b_+(\Norm{z}^2))^2 \left(\partial_{x_1}\partial_{y_1}^\sharp \epsilon_d(z,z) + \partial_{x_1}\partial_{y_1}^\sharp \epsilon_d(z,0) + \partial_{x_1}\partial_{y_1}^\sharp \epsilon_d(0,z) + \partial_{x_1}\partial_{y_1}^\sharp \epsilon_d(0,0)\right)\\
&+b_+(\Norm{z}^2)b_-(\Norm{z}^2) \left(\partial_{x_1} \epsilon_d(z,z) - \partial_{x_1} \epsilon_d(z,0) + \partial_{x_1} \epsilon_d(0,z) - \partial_{x_1} \epsilon_d(0,0)\right)\\
&+b_+(\Norm{z}^2)b_-(\Norm{z}^2) \left(\partial_{y_1}^\sharp \epsilon_d(z,z) + \partial_{y_1}^\sharp \epsilon_d(z,0) - \partial_{y_1}^\sharp \epsilon_d(0,z) - \partial_{y_1}^\sharp \epsilon_d(0,0)\right)\\
&+(b_-(\Norm{z}^2))^2 \left(\epsilon_d(z,z) - \epsilon_d(z,0) - \epsilon_d(0,z) + \epsilon_d(0,0)\right)
\end{aligned}\right),
\end{multline}
\begin{multline}
\label{eq expression ad2}
a_d^{(2)}(z) = \frac{1}{4}\left(v_1(\Norm{z}^2)v_2(\Norm{z}^2)\right)^{-\frac{1}{2}} \times \\
\left(\begin{aligned}
&-b_+(\Norm{z}^2)b_-(\Norm{z}^2) \left(\partial_{x_1}\partial_{y_1}^\sharp \epsilon_d(z,z) + \partial_{x_1}\partial_{y_1}^\sharp \epsilon_d(z,0) + \partial_{x_1}\partial_{y_1}^\sharp \epsilon_d(0,z) + \partial_{x_1}\partial_{y_1}^\sharp \epsilon_d(0,0)\right)\\
&-(b_-(\Norm{z}^2))^2 \left(\partial_{x_1} \epsilon_d(z,z) - \partial_{x_1} \epsilon_d(z,0) + \partial_{x_1} \epsilon_d(0,z) - \partial_{x_1} \epsilon_d(0,0)\right)\\
&+(b_+(\Norm{z}^2))^2 \left(\partial_{y_1}^\sharp \epsilon_d(z,z) + \partial_{y_1}^\sharp \epsilon_d(z,0) - \partial_{y_1}^\sharp \epsilon_d(0,z) - \partial_{y_1}^\sharp \epsilon_d(0,0)\right)\\
&+b_+(\Norm{z}^2)b_-(\Norm{z}^2) \left(\epsilon_d(z,z) - \epsilon_d(z,0) - \epsilon_d(0,z) + \epsilon_d(0,0)\right)
\end{aligned}\right),
\end{multline}
\begin{multline}
\label{eq expression ad3}
a_d^{(3)}(z) = \frac{1}{4}\left(v_2(\Norm{z}^2)\right)^{-1} \times \\
\left(\begin{aligned}
&(b_-(\Norm{z}^2))^2 \left(\partial_{x_1}\partial_{y_1}^\sharp \epsilon_d(z,z) + \partial_{x_1}\partial_{y_1}^\sharp \epsilon_d(z,0) + \partial_{x_1}\partial_{y_1}^\sharp \epsilon_d(0,z) + \partial_{x_1}\partial_{y_1}^\sharp \epsilon_d(0,0)\right)\\
&-b_+(\Norm{z}^2)b_-(\Norm{z}^2) \left(\partial_{x_1} \epsilon_d(z,z) - \partial_{x_1} \epsilon_d(z,0) + \partial_{x_1} \epsilon_d(0,z) - \partial_{x_1} \epsilon_d(0,0)\right)\\
&-b_+(\Norm{z}^2)b_-(\Norm{z}^2) \left(\partial_{y_1}^\sharp \epsilon_d(z,z) + \partial_{y_1}^\sharp \epsilon_d(z,0) - \partial_{y_1}^\sharp \epsilon_d(0,z) - \partial_{y_1}^\sharp \epsilon_d(0,0)\right)\\
&+(b_+(\Norm{z}^2))^2 \left(\epsilon_d(z,z) - \epsilon_d(z,0) - \epsilon_d(0,z) + \epsilon_d(0,0)\right)
\end{aligned}\right),
\end{multline}
\begin{multline}
\label{eq expression bd1}
b_d^{(1)}(z) = \frac{1}{4}\left(v_1(\Norm{z}^2)v_3(\Norm{z}^2)\right)^{-\frac{1}{2}} \times \\
\left(\begin{aligned}
&-(b_+(\Norm{z}^2))^2 \left(\partial_{x_1}\partial_{y_1}^\sharp \epsilon_d(z,z) + \partial_{x_1}\partial_{y_1}^\sharp \epsilon_d(z,0) - \partial_{x_1}\partial_{y_1}^\sharp \epsilon_d(0,z) - \partial_{x_1}\partial_{y_1}^\sharp \epsilon_d(0,0)\right)\\
&-b_+(\Norm{z}^2)b_-(\Norm{z}^2) \left(\partial_{x_1} \epsilon_d(z,z) - \partial_{x_1} \epsilon_d(z,0) - \partial_{x_1} \epsilon_d(0,z) + \partial_{x_1} \epsilon_d(0,0)\right)\\
&-b_+(\Norm{z}^2)b_-(\Norm{z}^2) \left(\partial_{y_1}^\sharp \epsilon_d(z,z) + \partial_{y_1}^\sharp \epsilon_d(z,0) + \partial_{y_1}^\sharp \epsilon_d(0,z) + \partial_{y_1}^\sharp \epsilon_d(0,0)\right)\\
&-(b_-(\Norm{z}^2))^2 \left(\epsilon_d(z,z) - \epsilon_d(z,0) + \epsilon_d(0,z) - \epsilon_d(0,0)\right)
\end{aligned}\right),
\end{multline}
\begin{multline}
\label{eq expression bd2}
b_d^{(2)}(z) = \frac{1}{4}\left(v_1(\Norm{z}^2)v_4(\Norm{z}^2)\right)^{-\frac{1}{2}} \times \\
\left(\begin{aligned}
&b_+(\Norm{z}^2)b_-(\Norm{z}^2) \left(\partial_{x_1}\partial_{y_1}^\sharp \epsilon_d(z,z) + \partial_{x_1}\partial_{y_1}^\sharp \epsilon_d(z,0) - \partial_{x_1}\partial_{y_1}^\sharp \epsilon_d(0,z) - \partial_{x_1}\partial_{y_1}^\sharp \epsilon_d(0,0)\right)\\
&+(b_-(\Norm{z}^2))^2 \left(\partial_{x_1} \epsilon_d(z,z) - \partial_{x_1} \epsilon_d(z,0) - \partial_{x_1} \epsilon_d(0,z) + \partial_{x_1} \epsilon_d(0,0)\right)\\
&-(b_+(\Norm{z}^2))^2 \left(\partial_{y_1}^\sharp \epsilon_d(z,z) + \partial_{y_1}^\sharp \epsilon_d(z,0) + \partial_{y_1}^\sharp \epsilon_d(0,z) + \partial_{y_1}^\sharp \epsilon_d(0,0)\right)\\
&-b_+(\Norm{z}^2)b_-(\Norm{z}^2) \left(\epsilon_d(z,z) - \epsilon_d(z,0) + \epsilon_d(0,z) - \epsilon_d(0,0)\right)
\end{aligned}\right),
\end{multline}
\begin{multline}
\label{eq expression bd3}
b_d^{(3)}(z) = \frac{1}{4}\left(v_2(\Norm{z}^2)v_3(\Norm{z}^2)\right)^{-\frac{1}{2}} \times \\
\left(\begin{aligned}
&b_+(\Norm{z}^2)b_-(\Norm{z}^2) \left(\partial_{x_1}\partial_{y_1}^\sharp \epsilon_d(z,z) + \partial_{x_1}\partial_{y_1}^\sharp \epsilon_d(z,0) - \partial_{x_1}\partial_{y_1}^\sharp \epsilon_d(0,z) - \partial_{x_1}\partial_{y_1}^\sharp \epsilon_d(0,0)\right)\\
&-(b_+(\Norm{z}^2))^2 \left(\partial_{x_1} \epsilon_d(z,z) - \partial_{x_1} \epsilon_d(z,0) - \partial_{x_1} \epsilon_d(0,z) + \partial_{x_1} \epsilon_d(0,0)\right)\\
&+(b_-(\Norm{z}^2))^2 \left(\partial_{y_1}^\sharp \epsilon_d(z,z) + \partial_{y_1}^\sharp \epsilon_d(z,0) + \partial_{y_1}^\sharp \epsilon_d(0,z) + \partial_{y_1}^\sharp \epsilon_d(0,0)\right)\\
&-b_+(\Norm{z}^2)b_-(\Norm{z}^2) \left(\epsilon_d(z,z) - \epsilon_d(z,0) + \epsilon_d(0,z) - \epsilon_d(0,0)\right)
\end{aligned}\right),
\end{multline}
\begin{multline}
\label{eq expression bd4}
b_d^{(4)}(z) = \frac{1}{4}\left(v_2(\Norm{z}^2)v_4(\Norm{z}^2)\right)^{-\frac{1}{2}} \times \\
\left(\begin{aligned}
&-(b_-(\Norm{z}^2))^2 \left(\partial_{x_1}\partial_{y_1}^\sharp \epsilon_d(z,z) + \partial_{x_1}\partial_{y_1}^\sharp \epsilon_d(z,0) - \partial_{x_1}\partial_{y_1}^\sharp \epsilon_d(0,z) - \partial_{x_1}\partial_{y_1}^\sharp \epsilon_d(0,0)\right)\\
&+b_+(\Norm{z}^2)b_-(\Norm{z}^2) \left(\partial_{x_1} \epsilon_d(z,z) - \partial_{x_1} \epsilon_d(z,0) - \partial_{x_1} \epsilon_d(0,z) + \partial_{x_1} \epsilon_d(0,0)\right)\\
&+b_+(\Norm{z}^2)b_-(\Norm{z}^2) \left(\partial_{y_1}^\sharp \epsilon_d(z,z) + \partial_{y_1}^\sharp \epsilon_d(z,0) + \partial_{y_1}^\sharp \epsilon_d(0,z) + \partial_{y_1}^\sharp \epsilon_d(0,0)\right)\\
&-(b_+(\Norm{z}^2))^2 \left(\epsilon_d(z,z) - \epsilon_d(z,0) + \epsilon_d(0,z) - \epsilon_d(0,0)\right)
\end{aligned}\right),
\end{multline}
\begin{multline}
\label{eq expression cd1}
c_d^{(1)}(z) = \frac{1}{4}\left(v_3(\Norm{z}^2)\right)^{-1} \times \\
\left(\begin{aligned}
&(b_+(\Norm{z}^2))^2 \left(\partial_{x_1}\partial_{y_1}^\sharp \epsilon_d(z,z) - \partial_{x_1}\partial_{y_1}^\sharp \epsilon_d(z,0) - \partial_{x_1}\partial_{y_1}^\sharp \epsilon_d(0,z) + \partial_{x_1}\partial_{y_1}^\sharp \epsilon_d(0,0)\right)\\
&+b_+(\Norm{z}^2)b_-(\Norm{z}^2) \left(\partial_{x_1} \epsilon_d(z,z) + \partial_{x_1} \epsilon_d(z,0) - \partial_{x_1} \epsilon_d(0,z) - \partial_{x_1} \epsilon_d(0,0)\right)\\
&+b_+(\Norm{z}^2)b_-(\Norm{z}^2) \left(\partial_{y_1}^\sharp \epsilon_d(z,z) - \partial_{y_1}^\sharp \epsilon_d(z,0) + \partial_{y_1}^\sharp \epsilon_d(0,z) - \partial_{y_1}^\sharp \epsilon_d(0,0)\right)\\
&+(b_-(\Norm{z}^2))^2 \left(\epsilon_d(z,z) + \epsilon_d(z,0) + \epsilon_d(0,z) + \epsilon_d(0,0)\right)
\end{aligned}\right),
\end{multline}
\begin{multline}
\label{eq expression cd2}
c_d^{(2)}(z) = \frac{1}{4}\left(v_3(\Norm{z}^2)v_4(\Norm{z}^2)\right)^{-\frac{1}{2}} \times \\
\left(\begin{aligned}
&-b_+(\Norm{z}^2)b_-(\Norm{z}^2) \left(\partial_{x_1}\partial_{y_1}^\sharp \epsilon_d(z,z) - \partial_{x_1}\partial_{y_1}^\sharp \epsilon_d(z,0) - \partial_{x_1}\partial_{y_1}^\sharp \epsilon_d(0,z) + \partial_{x_1}\partial_{y_1}^\sharp \epsilon_d(0,0)\right)\\
&-(b_-(\Norm{z}^2))^2 \left(\partial_{x_1} \epsilon_d(z,z) + \partial_{x_1} \epsilon_d(z,0) - \partial_{x_1} \epsilon_d(0,z) - \partial_{x_1} \epsilon_d(0,0)\right)\\
&+(b_+(\Norm{z}^2))^2 \left(\partial_{y_1}^\sharp \epsilon_d(z,z) - \partial_{y_1}^\sharp \epsilon_d(z,0) + \partial_{y_1}^\sharp \epsilon_d(0,z) - \partial_{y_1}^\sharp \epsilon_d(0,0)\right)\\
&+b_+(\Norm{z}^2)b_-(\Norm{z}^2) \left(\epsilon_d(z,z) + \epsilon_d(z,0) + \epsilon_d(0,z) + \epsilon_d(0,0)\right)
\end{aligned}\right),
\end{multline}
\begin{multline}
\label{eq expression cd3}
c_d^{(3)}(z) = \frac{1}{4}\left(v_4(\Norm{z}^2)\right)^{-1} \times \\
\left(\begin{aligned}
&(b_-(\Norm{z}^2))^2 \left(\partial_{x_1}\partial_{y_1}^\sharp \epsilon_d(z,z) - \partial_{x_1}\partial_{y_1}^\sharp \epsilon_d(z,0) - \partial_{x_1}\partial_{y_1}^\sharp \epsilon_d(0,z) + \partial_{x_1}\partial_{y_1}^\sharp \epsilon_d(0,0)\right)\\
&-b_+(\Norm{z}^2)b_-(\Norm{z}^2) \left(\partial_{x_1} \epsilon_d(z,z) + \partial_{x_1} \epsilon_d(z,0) - \partial_{x_1} \epsilon_d(0,z) - \partial_{x_1} \epsilon_d(0,0)\right)\\
&-b_+(\Norm{z}^2)b_-(\Norm{z}^2) \left(\partial_{y_1}^\sharp \epsilon_d(z,z) - \partial_{y_1}^\sharp \epsilon_d(z,0) + \partial_{y_1}^\sharp \epsilon_d(0,z) - \partial_{y_1}^\sharp \epsilon_d(0,0)\right)\\
&+(b_+(\Norm{z}^2))^2 \left(\epsilon_d(z,z) + \epsilon_d(z,0) + \epsilon_d(0,z) + \epsilon_d(0,0)\right)
\end{aligned}\right).
\end{multline}
We need to prove that each one of the terms \eqref{eq expression ad1} to \eqref{eq expression cd3} is a $O(d^{-\alpha})$, where the constant involved in this notation is independent of $(x,z)$. The main difficulty comes the fact that $v_2$ and $v_3$ converge to $0$ as $z \rightarrow 0$ (see Eq.~\eqref{DL v1 and v2} and~\eqref{DL v3 and v4}).

The term with the worst apparent singularity at $z=0$ is~$a_d^{(3)}$ (see~\eqref{eq expression ad3}). We will show below that $a_d^{(3)}(z) = O(d^{-\alpha})$ uniformly in $(x,z)$. The proofs that the other nine coefficients are $O(d^{-\alpha})$ follow the same lines, and they are strictly easier technically. We leave them to the reader.

By Eq.~\eqref{DL v1 and v2}, $v_2(\Norm{z}^2) \sim \frac{\Norm{z}^6}{48}$ as $z \to 0$. Hence, we have to expand the second factor in~\eqref{eq expression ad3} up to a $O(\Norm{z}^6)$. Let $\beta \in (\alpha,1)$, recall that, by Prop.~\ref{prop near diag estimates}, the partial derivatives of $\epsilon_d$ of order up to $6$ are $O(d^{-\beta})$ uniformly on $B_{T_xM}(0,b_n\ln d) \times B_{T_xM}(0,b_n\ln d)$. Recall also that we chose our coordinates so that $z=(\Norm{z},0,\dots,0)$. Using Taylor expansions around $(0,0)$ for $\epsilon_d$ and its derivatives, we get:
\begin{multline}
\label{eq Taylor xy}
\partial_{x_1}\partial_{y_1}^\sharp \epsilon_d(z,z) + \partial_{x_1}\partial_{y_1}^\sharp \epsilon_d(z,0) + \partial_{x_1}\partial_{y_1}^\sharp \epsilon_d(0,z) + \partial_{x_1}\partial_{y_1}^\sharp \epsilon_d(0,0) = \\
4 \partial_{x_1}\partial_{y_1}^\sharp \epsilon_d(0,0) + 2 \Norm{z} \left(\partial_{x_1}^2\partial_{y_1}^\sharp \epsilon_d(0,0) + \partial_{x_1}\left(\partial_{y_1}^\sharp\right)^2 \epsilon_d(0,0) \right)\\
+ \Norm{z}^2 \left(\partial_{x_1}^3\partial_{y_1}^\sharp \epsilon_d(0,0) +\partial_{x_1}^2\left(\partial_{y_1}^\sharp\right)^2 \epsilon_d(0,0) + \partial_{x_1}\left(\partial_{y_1}^\sharp\right)^3 \epsilon_d(0,0)\right)\\
+ \Norm{z}^3\left(\frac{1}{3}\partial_{x_1}^4\partial_{y_1}^\sharp \epsilon_d(0,0) + \frac{1}{2}\partial_{x_1}^3\left(\partial_{y_1}^\sharp\right)^2 \epsilon_d(0,0) + \frac{1}{2}\partial_{x_1}^2\left(\partial_{y_1}^\sharp\right)^3 \epsilon_d(0,0) + \frac{1}{3}\partial_{x_1}\left(\partial_{y_1}^\sharp\right)^4 \epsilon_d(0,0) \right)\\
+ \Norm{z}^4 O(d^{-\beta}),
\end{multline}
\begin{multline}
\label{eq Taylor x}
\partial_{x_1} \epsilon_d(z,z) - \partial_{x_1} \epsilon_d(z,0) + \partial_{x_1} \epsilon_d(0,z) - \partial_{x_1} \epsilon_d(0,0) = \\
2 \Norm{z} \partial_{x_1}\partial_{y_1}^\sharp \epsilon_d(0,0) + \Norm{z}^2 \left(\partial_{x_1}^2\partial_{y_1}^\sharp \epsilon_d(0,0) + \partial_{x_1}\left(\partial_{y_1}^\sharp\right)^2 \epsilon_d(0,0) \right)\\
+ \Norm{z}^3 \left(\frac{1}{2}\partial_{x_1}^3\partial_{y_1}^\sharp \epsilon_d(0,0) + \frac{1}{2}\partial_{x_1}^2\left(\partial_{y_1}^\sharp\right)^2 \epsilon_d(0,0) + \frac{1}{3}\partial_{x_1}\left(\partial_{y_1}^\sharp\right)^3 \epsilon_d(0,0)\right)\\
+ \Norm{z}^4\left(\frac{1}{6}\partial_{x_1}^4\partial_{y_1}^\sharp \epsilon_d(0,0) + \frac{1}{4}\partial_{x_1}^3\left(\partial_{y_1}^\sharp\right)^2 \epsilon_d(0,0) + \frac{1}{6} \partial_{x_1}^2\left(\partial_{y_1}^\sharp\right)^3 \epsilon_d(0,0) + \frac{1}{12}\partial_{x_1}\left(\partial_{y_1}^\sharp\right)^4 \epsilon_d(0,0) \right)\\
+ \Norm{z}^5 O(d^{-\beta}),
\end{multline}
\begin{multline}
\label{eq Taylor y}
\partial_{y_1}^\sharp \epsilon_d(z,z) + \partial_{y_1}^\sharp \epsilon_d(z,0) - \partial_{y_1}^\sharp \epsilon_d(0,z) - \partial_{y_1}^\sharp \epsilon_d(0,0) = \\
2 \Norm{z} \partial_{x_1}\partial_{y_1}^\sharp \epsilon_d(0,0) + \Norm{z}^2 \left(\partial_{x_1}^2\partial_{y_1}^\sharp \epsilon_d(0,0) + \partial_{x_1}\left(\partial_{y_1}^\sharp\right)^2 \epsilon_d(0,0) \right)\\
+ \Norm{z}^3 \left(\frac{1}{3}\partial_{x_1}^3\partial_{y_1}^\sharp \epsilon_d(0,0) + \frac{1}{2}\partial_{x_1}^2\left(\partial_{y_1}^\sharp\right)^2 \epsilon_d(0,0) + \frac{1}{2}\partial_{x_1}\left(\partial_{y_1}^\sharp\right)^3 \epsilon_d(0,0)\right)\\
+ \Norm{z}^4\left(\frac{1}{12}\partial_{x_1}^4\partial_{y_1}^\sharp \epsilon_d(0,0) + \frac{1}{6}\partial_{x_1}^3\left(\partial_{y_1}^\sharp\right)^2 \epsilon_d(0,0) + \frac{1}{4} \partial_{x_1}^2\left(\partial_{y_1}^\sharp\right)^3 \epsilon_d(0,0) + \frac{1}{6}\partial_{x_1}\left(\partial_{y_1}^\sharp\right)^4 \epsilon_d(0,0) \right)\\
+ \Norm{z}^5 O(d^{-\beta}),
\end{multline}
\begin{multline}
\label{eq Taylor}
\epsilon_d(z,z) - \epsilon_d(z,0) - \epsilon_d(0,z) + \epsilon_d(0,0) = \\
\Norm{z}^2 \partial_{x_1}\partial_{y_1}^\sharp \epsilon_d(0,0) + \Norm{z}^3 \left(\frac{1}{2}\partial_{x_1}^2\partial_{y_1}^\sharp \epsilon_d(0,0) + \frac{1}{2}\partial_{x_1}\left(\partial_{y_1}^\sharp\right)^2 \epsilon_d(0,0) \right)\\
+ \Norm{z}^4 \left(\frac{1}{6}\partial_{x_1}^3\partial_{y_1}^\sharp \epsilon_d(0,0) + \frac{1}{4}\partial_{x_1}^2\left(\partial_{y_1}^\sharp\right)^2 \epsilon_d(0,0) + \frac{1}{6}\partial_{x_1}\left(\partial_{y_1}^\sharp\right)^3 \epsilon_d(0,0)\right)\\
+ \Norm{z}^5\left(\frac{1}{24}\partial_{x_1}^4\partial_{y_1}^\sharp \epsilon_d(0,0) + \frac{1}{12}\partial_{x_1}^3\left(\partial_{y_1}^\sharp\right)^2 \epsilon_d(0,0) + \frac{1}{12} \partial_{x_1}^2\left(\partial_{y_1}^\sharp\right)^3 \epsilon_d(0,0) + \frac{1}{24}\partial_{x_1}\left(\partial_{y_1}^\sharp\right)^4 \epsilon_d(0,0) \right)\\
+ \Norm{z}^6 O(d^{-\beta}).
\end{multline}

Now, we can combine Eq.~\eqref{eq Taylor xy}, \eqref{eq Taylor x},\eqref{eq Taylor y} and~\eqref{eq Taylor} with the expansions around $0$ of $(b_+(\Norm{z}^2))^2$ (cf.~Eq.~\eqref{DL a and b+2}), $(b_-(\Norm{z}^2))^2$ and $b_+(\Norm{z}^2)b_-(\Norm{z}^2)$ (cf.~Eq.~\eqref{DL b+b- and b-2}). Using Prop.~\ref{prop near diag estimates} once again, we obtain:
\begin{equation*}
a_d^{(3)}(z) = \frac{1}{4v_2(\Norm{z}^2)} \Norm{z}^6 O(d^{-\beta}) = O((\ln d)^6 d^{-\beta}) = O(d^{-\alpha}),
\end{equation*}
where we used Eq.~\eqref{eq asymptotic u v}, \eqref{DL v1 and v2} and the fact that $\Norm{z} \leq b_n \ln d$. This concludes the proof for $a_d^{(3)}$. As we already explained, we proceed similarly for the other nine coefficients to get~\eqref{eq goal Ad}, and the same kind of computations yields~\eqref{eq goal Bdi} and~\eqref{eq goal Cdij}.
\end{proof}

\begin{proof}[Proof of Lem.~\ref{lem asymptotic conditional expectation}]
Let $\alpha \in (0,1)$, $x \in M$ and $z \in B_{T_xM}(0,b_n \ln d)\setminus \{0\}$. We will denote by $L =(X,Y)$ a generic element of $\R^2 \otimes T_x^*M \otimes \R \left(\E \otimes \L^d\right)_x$. We also set $\chi(L) = \odet{X}\odet{Y}$. We have:
\begin{multline}
\label{eq expectation chi d z}
\esp{\odet{X_d(z)}\odet{Y_d(z)}} = \frac{1}{(2\pi)^{rn}} \det\left(\Lambda_d(z)\right)^{-\frac{1}{2}} \int \chi(L)\exp\left(-\frac{1}{2}\prsc{\Lambda_d(z)^{-1}L}{L}\right) \dx L\\
=\frac{1}{(2\pi)^{rn}} \left(\frac{\det\Lambda(z)}{\det\Lambda_d(z)}\right)^\frac{1}{2}  \int \chi\left(\Lambda(z)^\frac{1}{2} L\right) \exp\left(-\frac{1}{2}\prsc{\Lambda(z)^\frac{1}{2}\Lambda_d(z)^{-1}\Lambda(z)^\frac{1}{2}L}{L}\right) \dx L,
\end{multline}
by a change of variable. And, by Lem.~\ref{lem asymptotic Lambda d}, we have $\det \Lambda_d(z) = \left(\det \Lambda(z) \right) \left(1 + O\!\left(d^{-\alpha}\right)\right)$.

If we set $\Xi_d(z) = \Lambda(z)^\frac{1}{2}\Lambda_d(z)^{-1}\Lambda(z)^\frac{1}{2} - \Id$, then $\Xi_d(z) = O\!\left(d^{-\alpha}\right)$, and these estimates are uniform in $(x,z)$. As in the proof of Lem.~\ref{lem estimates infty expectation of odet XY}, by the Mean Value Theorem, for all $L$ we have:
\begin{equation*}
\norm{\exp\left(-\frac{1}{2}\prsc{\Xi_d(z)L}{L}\right) - 1} \leq \frac{1}{2}\Norm{L}^2 \Norm{\Xi_d(z)} \exp\left(\frac{1}{2}\Norm{L}^2 \Norm{\Xi_d(z)}\right).
\end{equation*}
Since $\Xi_d(z) = O\!\left(d^{-\alpha}\right)$, for $d$ large enough $\Norm{\Xi_d(z)} \leq \frac{1}{2}$. Hence,
\begin{multline}
\label{eq error Xi d z}
\int \chi\left(\Lambda(z)^\frac{1}{2} L\right) e^{-\frac{1}{2}\Norm{L}^2}\norm{\exp\left(-\frac{1}{2}\prsc{\Xi_D(z)L}{L}\right)- 1} \dx L \\
\leq \frac{\Norm{\Xi_d(z)}}{2} \int \chi\left(\Lambda(z)^\frac{1}{2} L\right) \Norm{L}^2 e^{-\frac{1}{4}\Norm{L}^2} \dx L.
\end{multline}

Recall that, by Lem.~\ref{lem diagonalization Lambda prime}, the eigenvalues of the positive symmetric operator $\Lambda(z)$ are $u_1(\Norm{z}^2)$, $u_2(\Norm{z}^2)$, $1+\exp\left(-\frac{1}{2}\Norm{z}^2\right)$ and $1-\exp\left(-\frac{1}{2}\Norm{z}^2\right)$, with some multiplicities. These are bounded functions of $z$ (see Eq.~\eqref{eq asymptotic u v} and~\eqref{DL u1 and u2}). Hence, $\chi\left(\Lambda(z)^\frac{1}{2} L\right)$ is the square root of a polynomial in $L$ whose coefficients are bounded functions of $z$. Thus, the integral on the right-hand side of Eq.~\eqref{eq error Xi d z} is bounded, independently of $(x,z)$. We get:
\begin{multline*}
\int \chi\left(\Lambda(z)^\frac{1}{2} L\right) \exp\left(-\frac{1}{2}\prsc{\Lambda(z)^\frac{1}{2}\Lambda_d(z)^{-1}\Lambda(z)^\frac{1}{2}L}{L}\right) \dx L\\
\begin{aligned}
&= \int \chi\left(\Lambda(z)^\frac{1}{2} L\right) e^{-\frac{1}{2}\Norm{L}^2} \dx L + O\!\left(d^{-\alpha}\right)\\
&= (2\pi)^{rn} \esp{\odet{X_\infty(z)}\odet{Y_\infty(z)}} + O\!\left(d^{-\alpha}\right).
\end{aligned}
\end{multline*}
Finally, by Eq.~\eqref{eq expectation chi d z}, we find
\begin{equation}
\label{eq asymptotic conditional expectation}
\esp{\odet{X_d(z)}\odet{Y_d(z)}} = \esp{\odet{X_\infty(z)}\odet{Y_\infty(z)}} + O\!\left(d^{-\alpha}\right).
\end{equation}

By Lem.~\ref{lem non-degeneracy Lambda}, for all $z \neq 0$, $\Lambda(z)$ is non-singular. Hence $\esp{\odet{X_\infty(z)}\odet{Y_\infty(z)}}$, is a positive function of $z$. By Lem.~\ref{lem relation Bargmann--Fock XY},
\begin{equation*}
\esp{\odet{X_\infty(z)}\odet{Y_\infty(z)}} = \esp{\odet{X(\Norm{z}^2)}\odet{Y(\Norm{z}^2)}},
\end{equation*}
and by Lem.~\ref{lem estimates 0 expectation of odet XY} and~\ref{lem estimates infty expectation of odet XY}, if $r < n$, this quantity admits positive limits when $\Norm{z}$ goes to $0$ or $\Norm{z}$ goes to $+\infty$. Thus, in this case, $\esp{\odet{X_\infty(z)}\odet{Y_\infty(z)}}$ is bounded from below by positive constant, independent of $(x,z)$. Then, Eq.~\eqref{eq asymptotic conditional expectation} shows that:
\begin{equation*}
\esp{\odet{X_d(z)}\odet{Y_d(z)}} = \esp{\odet{X_\infty(z)}\odet{Y_\infty(z)}} \left(1 + O\!\left(d^{-\alpha}\right)\right)
\end{equation*}
and this concludes the proof for $r<n$.

If $r=n$, the leading term in Eq.~\eqref{eq asymptotic conditional expectation} goes to $0$ as $\Norm{z} \rightarrow 0$, so that we need to be more precise. From now on, we assume that $r=n$. Let us assume for now that, in this case, we have:
\begin{equation}
\label{eq hypothesis}
\int \chi\left(\Lambda(z)^\frac{1}{2} L\right) \Norm{L}^2 e^{-\frac{1}{4}\Norm{L}^2} \dx L = O\!\left(\Norm{z}^2\right)
\end{equation}
as $z \rightarrow 0$, where the constant involved in the $O\!\left(\Norm{z}^2\right)$ is uniform in $(x,z)$. Then, proceeding as we did in the case $r<n$, we get the following equivalent of Eq.~\eqref{eq asymptotic conditional expectation}:
\begin{equation*}
\esp{\odet{X_d(z)}\odet{Y_d(z)}} = \esp{\odet{X_\infty(z)}\odet{Y_\infty(z)}} + O\!\left(\Norm{z}^2 d^{-\alpha}\right).
\end{equation*}
By Lem.~\ref{lem estimates 0 expectation of odet XY},
\begin{equation*}
\esp{\odet{X_\infty(z)}\odet{Y_\infty(z)}} = \esp{\odet{X(\Norm{z}^2)}\odet{Y(\Norm{z}^2)}} \sim \frac{n!}{2} \Norm{z}^2,
\end{equation*}
as $z \rightarrow 0$. Hence,
\begin{equation*}
\esp{\odet{X_d(z)}\odet{Y_d(z)}} = \esp{\odet{X_\infty(z)}\odet{Y_\infty(z)}} \left(1 + O\!\left(d^{-\alpha}\right)\right)
\end{equation*}
uniformly for $x \in M$ and $\Norm{z} \leq 1$. In the domain $\Norm{z} \geq 1$, $\esp{\odet{X_\infty(z)}\odet{Y_\infty(z)}}$ is bounded from below by a positive constant independent of $(x,z)$, and we proceed as in the case $r<n$, using Eq.~\eqref{eq asymptotic conditional expectation}. This yields the result for $r=n$.

To conclude the proof, we still have to prove that \eqref{eq hypothesis} holds when $r=n$. Let us write $L =(A,B)$ and $\Lambda(z)^\frac{1}{2}L =(X(z),Y(z))$ with $A,B,X(z)$ and $Y(z) \in T_x^*M \otimes \R \left(\E \otimes \L^d\right)_x$. We choose any orthonormal basis of $\R \left(\E \otimes \L^d\right)_x$ and an orthonormal basis of $T_xM$ such that the coordinates of $z$ are $(\Norm{z},0,\dots,0)$. We denote by $(A_{ij})$, $(B_{ij})$, $(X_{ij}(z))$ and $(Y_{ij}(z)) \in \mathcal{M}_{rn}(\R)$ the matrices of $A,B,X(z)$ and $Y(z)$ in these bases. 

The matrix of $\Lambda(z)$ in the basis defined by $\mathcal{B}'_z$ (see Sect.~\ref{subsec conditional variance of the derivatives}) and our basis of $\R \left(\E \otimes \L^d\right)_x$ is $\hat{\Lambda}(\Norm{z}^2)$, where $\hat{\Lambda}$ was defined by Eq.~\eqref{eq def Lambda hat}. That is, using the same notations as in the proof of Lem.~\ref{lem estimates 0 expectation of odet XY} (see~Eq.~\eqref{eq def alpha gamma} and~\eqref{eq def beta delta}), for all $i \in \{1,\dots,r\}$:
\begin{align*}
\begin{pmatrix}
X_{i1} \\ Y_{i1}
\end{pmatrix} &= \begin{pmatrix}
\alpha(\Norm{z}^2) & \beta(\Norm{z}^2) \\ \beta(\Norm{z}^2) & \alpha(\Norm{z}^2)
\end{pmatrix}
\begin{pmatrix}
A_{i1} \\ B_{i1}
\end{pmatrix}
& &\text{and} \ \forall j \geq 2, &
\begin{pmatrix}
X_{ij} \\ Y_{ij}
\end{pmatrix} &= \begin{pmatrix}
\gamma(\Norm{z}^2) & \delta(\Norm{z}^2) \\ \delta(\Norm{z}^2) & \gamma(\Norm{z}^2)
\end{pmatrix}
\begin{pmatrix}
A_{ij} \\ B_{ij}
\end{pmatrix}.
\end{align*}
Hence, we have:
\begin{equation*}
\chi\left(\Lambda(z)^\frac{1}{2}L\right) = \chi(X(z),Y(z)) = \odet{X(z)}\odet{Y(z)} = \Psi\left(\Norm{z}^2,(A_{ij}),(B_{ij})\right),
\end{equation*}
where $\Psi$ was defined by Eq.~\eqref{eq def Psi}. Recall that $\Psi$ satisfies~\eqref{eq relation Psi} when $r=n$. As in the proof of Lem.~\ref{lem estimates 0 expectation of odet XY} (cf.~App.~\ref{sec technical computations for Section limit distrib}), by Lebesgue's Theorem we have:
\begin{multline*}
\frac{2}{\Norm{z}^2} \int \chi\left(\Lambda(z)^\frac{1}{2} L\right) \Norm{L}^2 e^{-\frac{1}{4}\Norm{L}^2} \dx L = \int \frac{2}{\Norm{z}^2}\Psi\left(\Norm{z}^2,(A_{ij}),(B_{ij})\right) \Norm{L}^2 e^{-\frac{1}{4}\Norm{L}^2} \dx L\\
\xrightarrow[\Norm{z} \to 0]{} \int \det\left(\frac{A_1-B_1}{\sqrt{2}},\frac{A_2+B_2}{\sqrt{2}},\dots,\frac{A_n+B_n}{\sqrt{2}}\right)^2 \Norm{L}^2 e^{-\frac{1}{4}\Norm{L}^2} \dx L,
\end{multline*}
where $A_j$ (resp.~$B_j$) denotes the $j$-th column of the matrix of $A$ (resp.~$B$) and $L=(A,B)$. This limit is finite, which proves that~\eqref{eq hypothesis} is satisfied and concludes the proof.
\end{proof}

%%%%%%%%%%%%%%%%%%%%%%%%%%%%%%%%%%%%%%%%%%%%%%%%%%%%%%%%%%%%%%%%%%%%%%%%%%%%%%%%%%%%%%%%%%%%%%%%%%%%%%%%%%%%%%
%%%%%%%%%%%%%%%%%%%%%%%%%%%%%%%%%%%%%%%%%%%%%%%%%%%%%%%%%%%%%%%%%%%%%%%%%%%%%%%%%%%%%%%%%%%%%%%%%%%%%%%%%%%%%%
%%%%%%%%%%%%%%%%%%%%%%%%%%%%%%%%%%%%%%%%%%%%%%%%%%%%%%%%%%%%%%%%%%%%%%%%%%%%%%%%%%%%%%%%%%%%%%%%%%%%%%%%%%%%%%

\bibliographystyle{amsplain}
\bibliography{VarianceofthevolumeofrandomrealalgebraicsubmanifoldsII}

\end{document}